\newcommand{\startlist}{\ \@beginparpenalty=10000}
\setlist{itemsep=1ex, topsep=0ex}
    \newcommand{\thzfc}{\mathrm{ZFC}}
     \newcommand{\Ed}{\mathbf{Ed}}
    \newcommand{\Mbf}{\mathbf{M}}
    \newcommand{\Cn}{\mathbf{Cn}}
    \newcommand{\Awf}{\mathcal{A}}
    \newcommand{\Ewf}{\mathcal{E}}
    \newcommand{\Gwf}{\mathcal{G}}
    \newcommand{\Iwf}{\mathcal{I}}
    \newcommand{\Jwf}{\mathcal{J}}
    \newcommand{\Mwf}{\mathcal{M}}
    \newcommand{\Nwf}{\mathcal{N}}
    \newcommand{\Pwf}{\mathcal{P}}
    \newcommand{\Swf}{\mathcal{S}}
    \newcommand{\bfrak}{\mathfrak{b}}
    \newcommand{\cfrak}{\mathfrak{c}}
    \newcommand{\dfrak}{\mathfrak{d}}
    \newcommand{\sfrak}{\mathfrak{s}}
    \newcommand{\menos}{\smallsetminus}
    \newcommand{\pts}{\mathcal{P}}
    \newcommand{\frestr}{\!\!\upharpoonright\!\!}
    \newcommand{\add}{\mbox{\rm add}}
    \newcommand{\cov}{\mbox{\rm cov}}
    \newcommand{\non}{\mbox{\rm non}}
    \newcommand{\cof}{\mbox{\rm cof}}
    \DeclareMathOperator{\limdir}{limdir}
    \DeclareMathOperator{\Fin}{Fin}
    \newcommand{\Bor}{\mathds{B}}
    \newcommand{\Cor}{\mathds{C}}
    \newcommand{\Dor}{\mathds{D}}
    \newcommand{\Loc}{\mathds{LOC}}
    \newcommand{\Por}{\mathds{P}}
    \newcommand{\Qor}{\mathds{Q}}
    \newcommand{\Ior}{\mathds{I}}
    \newcommand{\Qnm}{\dot{\mathds{Q}}}
    \newcommand{\SNwf}{\mathcal{SN}}
    \newcommand{\SMwf}{\mathcal{SM}}
    \newcommand{\cf}{\mbox{\rm cf}}
    \newcommand{\sii}{{\ \mbox{$\Leftrightarrow$} \ }}
    \newcommand{\la}{\langle}
    \newcommand{\ra}{\rangle}
\newcommand{\minLc}{\mathrm{minLc}}
\newcommand{\sqsubm}{\sqsubset^{\rm m}}
\newcommand{\sqrb}{\sqsubset^{\bullet}}
\newcommand{\Seq}{\mathrm{seq}}
\newcommand{\Fr}{\mathrm{Fr}}
\newcommand{\Rbf}{\mathbf{R}}
\newcommand{\Cbf}{\mathbf{C}}
\newcommand{\Lc}{\mathbf{Lc}}
\newcommand{\Lb}{\mathbf{Lb}}
\newcommand{\Hcal}{\mathcal{H}}
\newcommand{\Scal}{\mathcal{S}}
\newcommand{\id}{\mathrm{id}}
\newcommand{\blc}{\mathfrak{b}^{\mathrm{Lc}}}
\newcommand{\dlc}{\mathfrak{d}^{\mathrm{Lc}}}
\newcommand{\balc}{\mathfrak{b}^{\mathrm{aLc}}}
\newcommand{\dalc}{\mathfrak{d}^{\mathrm{aLc}}}
\DeclareMathOperator{\scf}{{\rm scf}}
\newcommand{\Fn}{\mathrm{Fn}}
\newcommand{\leqT}{\preceq_{\mathrm{T}}}
\newcommand{\eqT}{\cong_{\mathrm{T}}}
\newcommand{\gen}{\mathrm{gen}}
\newcommand{\hgt}{\mathrm{ht}}
\newcommand{\MAwf}{\mathcal{MA}}
\newcommand{\IAwf}{\mathcal{IA}}
\newcommand{\NAwf}{\Nwf\!\Awf}
\newcommand{\EAwf}{\mathcal{EA}}
\newcommand{\st}{\mid}
\newcommand{\set}[2]{\{#1 \st\, #2\}}
\newcommand{\largeset}[2]{\left\{#1 \;\middle|\; #2\right\}}
\newcommand{\seq}[2]{\la #1 \st\, #2\ra}
\newcommand{\baire}{\omega^\omega}
\newcommand{\cantor}{2^\omega}
\newcommand\subsetdot{\mathrel{\ooalign{$\subset$\cr
  \hidewidth\hbox{$\cdot\mkern3mu$}\cr}}}
	\definecolor{ultramarineblue}{rgb}{0.25, 0.4, 0.96}
\definecolor{cornellred}{rgb}{0.7, 0.11, 0.11}
\definecolor{cobalt}{rgb}{0.0, 0.28, 0.67}
\definecolor{bleudefrance}{rgb}{0.19, 0.55, 0.91}
\definecolor{darkblue}{rgb}{0.0, 0.0, 0.55}
\definecolor{ferrarired}{rgb}{1.0, 0.11, 0.0}
\definecolor{brandeisblue}{rgb}{0.0, 0.44, 1.0}
\definecolor{azure(colorwheel)}{rgb}{0.0, 0.5, 1.0}
\definecolor{aqua}{rgb}{0.0, 1.0, 1.0}
\definecolor{aguamarina}{cmyk}{0.85,0,0.33,0}
\definecolor{cafe}{cmyk}{0,0.81,1,0.60}
\definecolor{canela}{cmyk}{0.14,0.42,0.56,0}
\definecolor{darkgray}{cmyk}{0,0,0,0.50}
\definecolor{emerald}{cmyk}{0.91,0,0.88,0.12}
\definecolor{fresa}{cmyk}{0,1,0.50,0}
\definecolor{gold}{cmyk}{0,0.10,0.84,0}
\definecolor{lightgray}{cmyk}{0,0,0,0.30}
\definecolor{marron}{cmyk}{0,0.72,1,0.45}
\definecolor{melon}{cmyk}{0,0.29,0.84,0}
\definecolor{ladri}{cmyk}{0,0.77,0.87,0}
\definecolor{olive}{cmyk}{0.64,0,0.95,0.40}
\definecolor{orange}{cmyk}{0,0.42,1,0}
\definecolor{peach}{cmyk}{0,0.46,0.50,0}
\definecolor{pink}{cmyk}{0,0.10,0.10,0}
\definecolor{orange}{cmyk}{0,0.42,1,0}
\definecolor{pine}{cmyk}{0.92,0,0.59,0.25}
\definecolor{purple}{cmyk}{0.45,0.86,0,0}
\definecolor{violet}{cmyk}{0.07,0.90,0,0.34}
\definecolor{craneorange}{RGB}{252,187,6}
\definecolor{red(ncs)}{rgb}{0.77, 0.01, 0.2}
\newcommand{\blue}[1]{{\color{blue}#1}}
\definecolor{sub0}{RGB}{29,32,137}
\definecolor{sub1}{RGB}{1,71,157}
\definecolor{sub2}{RGB}{1,104,183}
\definecolor{sub3}{RGB}{0,160,234}
\definecolor{sug}{RGB}{0,154,68}
\definecolor{suy}{RGB}{208,219,1}
\newcommand{\subiii}[1]{{\color{sub3}#1}}
\DeclareSymbolFont{extraup}{U}{zavm}{m}{n}
\DeclareMathSymbol{\varheart}{\mathalpha}{extraup}{86}
\DeclareMathSymbol{\vardiamond}{\mathalpha}{extraup}{87}
\definecolor{dodger}{rgb}{0.0,0.5,1.0}
\newcommand{\red}[1]{{\color{red}#1}}
\definecolor{amber}{rgb}{1.0,0.49,0.0}
\definecolor{ogreen}{RGB}{107,142,35}
\title[The uniformities of the null-additive and meager-additive ideals]{Uniformity numbers of the null-additive and meager-additive ideals}
\author[M.A.~Cardona]{Miguel A. Cardona}
\address{Einstein Institute of Mathematics, Edmond J. Safra Campus, Givat Ram, The Hebrew University of Jerusalem, Jerusalem, 91904, Israel}
\email{\href{mailto:miguel.cardona@mail.huji.ac.il}{miguel.cardona@mail.huji.ac.il}}
\urladdr{\url{https://sites.google.com/view/miacardonamo}}
\author[D.A.~Mej\'ia]{Diego A.\ Mej\'ia}
\address{Graduate School of System Informatics, Kobe University. 1-1 Rokkodai-cho, Nada-ku, Kobe, Hyogo 657-8501 Japan}
\email{\href{mailto:damejiag@people.kobe-u.ac.jp}{damejiag@people.kobe-u.ac.jp}}
\urladdr{\url{https://researchmap.jp/mejia?lang=en}}
\author[I.E.~Rivera-Madrid]{Ismael E.\ Rivera-Madrid}
\address{Faculty of Engineering, Instituci\'on Universitaria Pascual Bravo. Calle 73 No. 73A - 226, Medell\'in, Colombia.}
\email{\href{mailto:ismael.rivera@pascualbravo.edu.co}{ismael.rivera@pascualbravo.edu.co}}
\thanks{This work is supported by the Slovak Research and Development Agency under Contract No. APVV-20-0045 and by Pavol Jozef \v{S}af\'arik University at a postdoctoral position (first author), the Grants-in-Aid for Scientific Research (C) 23K03198, Japan Society for the Promotion of Science (second author), and by the grant No.~IN202204, Direcci\'on de Tecnolog\'ia e Investigaci\'on and Oficina de Internacionalizaci\'on, Instituci\'on Universitaria Pascual Bravo (all authors).}
\subjclass[2020]{03E17, 03E05, 03E35, 03E40}
\keywords{Null-additive sets, meager-additive sets, strong measure zero, cardinal characteristics of the continuum, uf-extendable matrix iterations}
\date{}
\definecolor{burntumber}{rgb}{0.54, 0.2, 0.14}
\definecolor{burgundy}{rgb}{0.5, 0.0, 0.13}
\numberwithin{equation}{section}
\begin{document}

\makeatletter
\def\@roman#1{\romannumeral #1}
\makeatother

\newcounter{enuAlph}
\renewcommand{\theenuAlph}{\Alph{enuAlph}}


\theoremstyle{plain}
  \newtheorem{theorem}[equation]{Theorem}
  \newtheorem{corollary}[equation]{Corollary}
  \newtheorem{lemma}[equation]{Lemma}
  \newtheorem{mainlemma}[equation]{Main Lemma}
  \newtheorem*{mainthm}{Main Theorem}
  \newtheorem{prop}[equation]{Proposition}
  \newtheorem{clm}[equation]{Claim}
  \newtheorem{fact}[equation]{Fact}
  \newtheorem{exer}[equation]{Exercise}
  \newtheorem{question}[equation]{Question}
  \newtheorem{problem}[equation]{Problem}
  \newtheorem{conjecture}[equation]{Conjecture}
  \newtheorem{assumption}[equation]{Assumption}
  \newtheorem*{thm}{Theorem}
  \newtheorem{teorema}[enuAlph]{Theorem}
  \newtheorem*{corolario}{Corollary}
\theoremstyle{definition}
  \newtheorem{definition}[equation]{Definition}
  \newtheorem{example}[equation]{Example}
  \newtheorem{remark}[equation]{Remark}
  \newtheorem{notation}[equation]{Notation}
  \newtheorem{context}[equation]{Context}

  \newtheorem*{defi}{Definition}
  \newtheorem*{acknowledgements}{Acknowledgements}

\def\sectionautorefname{Section}
\def\subsectionautorefname{Subsection}

\begin{abstract}

Denote by $\mathcal{NA}$ and $\mathcal{MA}$ the ideals of null-additive and meager-additive subsets of~$2^\omega$, respectively. We prove in ZFC that $\mathrm{add}(\mathcal{NA})=\mathrm{non}(\mathcal{NA})$ and introduce a new (Polish) relational system to reformulate Bartoszy\'nski's and Judah's characterization of the uniformity of $\mathcal{MA}$, which is helpful to understand the combinatorics of $\mathcal{MA}$ and to prove consistency results. As for the latter, we prove that $\mathrm{cov}(\mathcal{MA})<\mathfrak{c}$ (even $\mathrm{cov}(\mathcal{MA})<\mathrm{non}(\mathcal{N})$) is consistent with ZFC, as well as several constellations of Cicho\'n's diagram with $\mathrm{non}(\mathcal{NA})$, $\mathrm{non}(\mathcal{MA})$ and $\mathrm{add}(\mathcal{SN})$, which include $\mathrm{non}(\mathcal{NA})<\mathfrak{b}< \mathrm{non}(\mathcal{MA})$ and $\mathfrak{b}< \mathrm{add}(\mathcal{SN})<\mathrm{cov}(\mathcal{M})<\mathfrak{d}=\mathfrak{c}$.
\end{abstract}
\maketitle











\section{Introduction and Preliminaries}\label{sec:intro}

This work forms part of the study of the cardinal characteristics of the continuum related to the ideals of null-additive and meager-additive subsets of~$\cantor$, with particular focus on the uniformity number of these ideals. 
The study of these cardinals has been ongoing for some time. Some of the first results were achieved by Pawlikowski~\cite{paw85}, who studied these cardinal characteristics under the name of \emph{transitive additivity}. 
Later, Bartoszy\'nski and Judah~\cite[Thm.~2.2]{bartJudah} and Shelah~\cite{shmn} formulated very practical characterizations of the null-additive and meager-additive ideals, and provided combinatorial characterizations of their uniformity numbers (see \autoref{ch:NA-MA} and \autoref{addtch} below).

The goal of this work is to prove new results about the combinatorics of the null-additive and meager-additive ideals, mostly
concerning their uniformity numbers. We also consider the additivity of the strong measure zero ideal and prove several consistency results, strengthening those from Pawlikowski~\cite{paw85}.

Before plunging into details, we review some basic notation:   

\begin{notation}\

\begin{enumerate}[label=\rm(\arabic*)]
    \item Given a formula $\phi$, $\forall^\infty\, n<\omega\colon \phi$ means that all but finitely many natural numbers satisfy $\phi$; $\exists^\infty\, n<\omega\colon \phi$ means that infinitely many natural numbers satisfy $\phi$.

    \item Denote by $\Nwf$ and $\Mwf$ the $\sigma$-ideals of Lebesgue null sets and of meager sets in~$\cantor$, respectively, and let $\Ewf$ be the $\sigma$-ideal generated by the closed measure zero subsets of $\cantor$. It is well-known that $\Ewf\subseteq\Nwf\cap\Mwf$. Even more, it was proved that $\Ewf$ is a proper subideal of $\Nwf\cap\Mwf$ (see~\cite[Lemma 2.6.1]{BJ}).  

    \item $\cfrak:=2^{\aleph_0}$.
\end{enumerate}
\end{notation}

Let $\Iwf$ be an ideal of subsets of $X$ such that $\{x\}\in \Iwf$ for all $x\in X$. Throughout this paper, we demand that all ideals satisfy this latter requirement. We introduce the following four \emph{cardinal characteristics associated with $\Iwf$}: 
\begin{align*}
 \add(\Iwf)&=\min\largeset{|\Jwf|}{\Jwf\subseteq\Iwf,\,\bigcup\Jwf\notin\Iwf},\\
 \cov(\Iwf)&=\min\largeset{|\Jwf|}{\Jwf\subseteq\Iwf,\,\bigcup\Jwf=X},\\
 \non(\Iwf)&=\min\set{|A|}{A\subseteq X,\,A\notin\Iwf},\textrm{\ and}\\
 \cof(\Iwf)&=\min\set{|\Jwf|}{\Jwf\subseteq\Iwf,\ \forall\, A\in\Iwf\ \exists\, B\in \Jwf\colon A\subseteq B}.
\end{align*}
These cardinals are referred to as the \emph{additivity, covering, uniformity} and \emph{cofinality of $\Iwf$}, respectively. The relationship between the cardinals defined above is illustrated in \autoref{diag:idealI}.

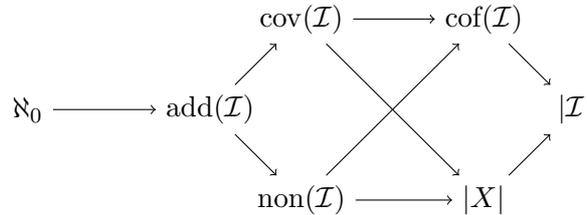
\begin{figure}[ht!]
\centering
\begin{tikzpicture}[scale=1.2]
\small{
\node (azero) at (-1,1) {$\aleph_0$};
\node (addI) at (1,1) {$\add(\Iwf)$};
\node (covI) at (2,2) {$\cov(\Iwf)$};
\node (nonI) at (2,0) {$\non(\Iwf)$};
\node (cofI) at (4,2) {$\cof(\Iwf)$};
\node (sizX) at (4,0) {$|X|$};
\node (sizI) at (5,1) {$|\Iwf|$};

\draw (azero) edge[->] (addI);
\draw (addI) edge[->] (covI);
\draw (addI) edge[->] (nonI);
\draw (covI) edge[->] (sizX);
\draw (nonI) edge[->] (sizX);
\draw (covI) edge[->] (cofI);
\draw (nonI) edge[->] (cofI);
\draw (sizX) edge[->] (sizI);
\draw (cofI) edge[->] (sizI);
}
\end{tikzpicture}
\caption{Diagram of the cardinal characteristics associated with $\Iwf$. An arrow  $\mathfrak x\rightarrow\mathfrak y$ means that (provably in ZFC) 
    $\mathfrak x\le\mathfrak y$.}
\label{diag:idealI}
\end{figure}

Throughout this paper, we consider the Cantor space $\cantor$ as a topological group with the standard modulo $2$ coordinatewise addition.  We say that an ideal $\Iwf\subseteq\Pwf(\cantor)$ is 
\emph{translation invariant} if $A+x\in\Iwf$ for each $A\in\Iwf$ and $x\in\cantor$.

\begin{definition}
Let $\Iwf\subseteq\Pwf(\cantor)$ be an ideal. A set $X\subseteq\cantor$ is termed \emph{$\Iwf$-additive} if, for every $A\in\Iwf$, $A+X\in\Iwf$. Denote by $\IAwf$ the collection of the $\Iwf$-additive subsets of $\cantor$. Notice that $\IAwf$ is a ($\sigma$-)ideal and $\IAwf\subseteq\Iwf$ when $\Iwf$ is a translation invariant ($\sigma$-)ideal.\footnote{Notice that $\IAwf$ contains all finite sets (which we demand for all ideals) iff $\Iwf$ is translation invariant. However, translation invariance is not required to check the other properties of $\sigma$-ideal for $\IAwf$, as well as $\IAwf\subseteq\Iwf$ and \autoref{bas:NA}.}
\end{definition}

We have an easy observation:

\begin{lemma}\label{bas:NA}
For any translation invariant ideal $\Iwf$ on $\cantor$, we have:   
\begin{enumerate}[label=\rm(\arabic*)]
    \item\label{bas:NA1} $\add(\Iwf)\leq\add(\IAwf)$.

    \item\label{bas:NA2} $\cov(\Iwf)\leq\cov(\IAwf)$ and $\non(\IAwf)\leq\non(\Iwf)$.
\end{enumerate}
\end{lemma}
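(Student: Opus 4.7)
The plan is to derive both inequalities directly from the definition of $\IAwf$ and the observation, already recorded in the paper, that $\IAwf\subseteq\Iwf$ (which in turn uses that $\{0\}\in\Iwf$ by the standing assumption on ideals together with translation invariance).

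For \ref{bas:NA2}, I would simply note that the containment $\IAwf\subseteq\Iwf$ means: any witness to $\non(\Iwf)$, i.e.\ a set $X\subseteq\cantor$ with $X\notin\Iwf$ and $|X|=\non(\Iwf)$, also satisfies $X\notin\IAwf$, and is therefore a witness to $\non(\IAwf)\le|X|$. So \ref{bas:NA2} is immediate.

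For \ref{bas:NA1}, let $\kappa:=\add(\IAwf)$ and pick a family $\{X_\alpha\colon\alpha<\kappa\}\subseteq\IAwf$ whose union $Y:=\bigcup_{\alpha<\kappa}X_\alpha$ fails to be in $\IAwf$. Unfolding the definition of $\IAwf$, there exists $A\in\Iwf$ such that $A+Y\notin\Iwf$. The key observation is the set-theoretic identity
\[
 A+Y \;=\; A+\bigcup_{\alpha<\kappa}X_\alpha \;=\; \bigcup_{\alpha<\kappa}(A+X_\alpha),
\]
and, since each $X_\alpha\in\IAwf$ and $A\in\Iwf$, we have $A+X_\alpha\in\Iwf$ for every $\alpha<\kappa$. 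Thus $\{A+X_\alpha\colon\alpha<\kappa\}$ is a family of $\kappa$ many members of $\Iwf$ whose union is not in $\Iwf$, which yields $\add(\Iwf)\le\kappa=\add(\IAwf)$.

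There is essentially no obstacle here; the only point to keep track of is that the element $A\in\Iwf$ witnessing $Y\notin\IAwf$ actually exists, which is just the negation of the defining property of $\IAwf$, and the distributivity of Minkowski sum over unions, which is immediate from the pointwise description $A+B=\{a+b\colon a\in A,\ b\in B\}$.
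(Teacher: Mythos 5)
Your proof is correct and takes essentially the same approach as the paper: part \ref{bas:NA2} rests on the containment $\IAwf\subseteq\Iwf$ (which the paper establishes in-line via $X=X+\mathbb{0}$), and part \ref{bas:NA1} uses the same key identity $A+\bigcup_{\alpha<\kappa}X_\alpha=\bigcup_{\alpha<\kappa}(A+X_\alpha)$, the only difference being that you argue contrapositively (from a union escaping $\IAwf$) whereas the paper argues directly (showing a union of fewer than $\add(\Iwf)$ members of $\IAwf$ stays in $\IAwf$).
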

\begin{proof}
\noindent\ref{bas:NA1}: Let $\set{X_\alpha}{\alpha<\kappa}\subseteq\IAwf$ with $\kappa<\add(\Iwf)$. We show that $\bigcup_{\alpha<\kappa}X_\alpha\in\IAwf$. Let $B\in\Iwf$. Then $X_\alpha+B\in\Iwf$. Observe that
\[\bigg(\bigcup_{\alpha<\kappa}X_\alpha\bigg)+B=\bigcup_{\alpha<\kappa}(X_\alpha+B).\]
Since $\kappa<\add(\Iwf)$, $\bigcup_{\alpha<\kappa}(X_\alpha+B)\in\Iwf$. Therefore, $\bigcup_{\alpha<\kappa}X_\alpha\in\IAwf$. 

\noindent\ref{bas:NA2}: 
Clear because $\IAwf\subseteq\Iwf$.
\end{proof}

The cardinal $\non(\IAwf)$ has been studied in~\cite{paw85,Kra} under the different name \emph{transitive additivity of $\Iwf$}:\footnote{In~\cite{BJ} is denoted by $\add^\star(\Iwf)$.}
\[\add_t^*(\Iwf)=\min\set{|X|}{X\subseteq\cantor\text{ and }\exists A\in\Iwf\colon A+X\notin\Iwf}.\]
It is clear from the definition that $\non(\IAwf)=\add_t^*(\Iwf)$.

The ideal $\IAwf$ has received a lot of attention when $\Iwf$ is either $\Mwf$ or $\Nwf$.  Pawlikowski~\cite{paw85} characterized $\add^*_t(\Nwf)$ (i.e.\ $\non(\NAwf)$) employing slaloms.

\begin{definition}\label{defloc}
Given a sequence of non-empty sets $b = \seq{b(n)}{n\in\omega}$ and $h\colon \omega\to\omega$, define
\begin{align*}
 \prod b &:= \prod_{n\in\omega}b(n),\textrm{\ and} \\
 \Swf(b,h) &:= \prod_{n\in\omega} [b(n)]^{\leq h(n)}.
\end{align*}
For two functions $x\in\prod b$ and $\varphi\in\Swf(b,h)$ write  
\[x\,\in^*\varphi\textrm{\ iff\ }\forall^\infty n\in\omega:x(n)\in \varphi(n).\]
We set
\[\blc_{b,h}:=\min\largeset{|F|}{F\subseteq \prod b \text{ and } \neg\exists \varphi \in \Swf(b,h)\,\forall x \in F\colon x\in^* \varphi},\]
and set $\minLc:=\min\set{\blc_{b,\id_\omega}}{b\in\baire}$. Here, $\id_\omega$ denotes the identity function on $\omega$.
\end{definition}

\begin{theorem}[{\cite[Lemma~2.2]{paw85}}]\label{chNPaw}
$\non(\NAwf)=\minLc$.
\end{theorem}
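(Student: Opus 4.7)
The plan is to reduce the statement to Shelah's block-slalom characterization of null-additive sets (the one mentioned in the introduction and recorded below in \autoref{ch:NA-MA}): $X\subseteq\cantor$ lies in $\NAwf$ if and only if, for every strictly increasing $f\in\baire$, there exists a sequence $\seq{S(n)}{n<\omega}$ with $S(n)\subseteq 2^{[f(n),f(n+1))}$, $|S(n)|\le n$, and $x\restriction[f(n),f(n+1))\in S(n)$ for all $x\in X$ and all but finitely many $n$. Once this tool is available, each inequality becomes a translation between the two slalom formalisms.

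For $\non(\NAwf)\ge\minLc$, I would take an arbitrary $X\subseteq\cantor$ with $|X|<\minLc$ and verify the Shelah condition. Given an increasing $f\in\baire$, I would set $b(n):=2^{f(n+1)-f(n)}$ and use the natural bijection between $\prod b$ and $\prod_{n<\omega}2^{[f(n),f(n+1))}$ to attach to each $x\in X$ an element $\tilde x\in\prod b$ via $\tilde x(n):=x\restriction[f(n),f(n+1))$. Since $|X|<\minLc\le\blc_{b,\id_\omega}$, some $\varphi\in\Swf(b,\id_\omega)$ satisfies $\tilde x\in^*\varphi$ for every $x\in X$, and viewing $\varphi(n)$ back inside $2^{[f(n),f(n+1))}$ yields exactly the Shelah slalom witnessing $X\in\NAwf$.

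For $\non(\NAwf)\le\minLc$, I would pick $b\in\baire$ realizing the minimum and $F\subseteq\prod b$ of size $\blc_{b,\id_\omega}=\minLc$ not $\in^*$-localized by any $\varphi\in\Swf(b,\id_\omega)$. I would then choose $f\in\baire$ strictly increasing with $2^{f(n+1)-f(n)}\ge b(n)$, fix injections $\iota_n\colon b(n)\to 2^{[f(n),f(n+1))}$, and for each $x\in F$ define $\hat x\in\cantor$ by $\hat x\restriction[f(n),f(n+1)):=\iota_n(x(n))$ (padding with zeros below $f(0)$ if necessary). Set $X:=\set{\hat x}{x\in F}$. If $X$ were null-additive, Shelah's characterization applied to this particular $f$ would provide a slalom $S$ with $|S(n)|\le n$ localizing $X$; defining $\varphi(n):=\iota_n^{-1}[S(n)\cap\ran(\iota_n)]\in[b(n)]^{\le n}$ would then give $\varphi\in\Swf(b,\id_\omega)$ localizing $F$, contradicting the choice of $F$. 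Hence $X\notin\NAwf$ and $\non(\NAwf)\le|X|\le|F|=\minLc$.

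The crux of the argument is of course Shelah's characterization itself; once it is granted, both directions reduce to a routine block encoding of $\prod b$ into $\cantor$, and the numerical match between the Shelah bound $|S(n)|\le n$ and the localization function $h=\id_\omega$ used in the definition of $\minLc$ is automatic. The only small point of care is to arrange $2^{f(n+1)-f(n)}\ge b(n)$ in the coding step, which is easily achieved by letting $f$ grow fast enough, and to note that the minimum in the definition of $\minLc$ is attained since it ranges over cardinals.
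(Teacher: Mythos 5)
Your proposal is a correct and complete proof, and since the paper merely cites this statement from Pawlikowski without reproducing an argument, there is no in-paper proof to compare against; what you wrote fills that gap using exactly the block-encoding technique that the authors themselves employ nearby (e.g.\ in the proof of Theorem~A, where they pass from an interval partition $I$ to the sequence $b^I = \seq{[2^{I_n}]^{\le n}}{n\in\omega}$, and again in the proof of $\minLc\le\add(\SNwf)$). Both of your directions translate faithfully between Shelah's characterization of $\NAwf$ and the $\in^*$-localization defining $\blc_{b,\id_\omega}$, and the injectivity of $x\mapsto\hat x$ (needed to get $|X|\le|F|$) is clear because the $\iota_n$ are injections.

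The one point where you should be slightly more careful is the parenthetical ``padding with zeros below $f(0)$ if necessary'' in the second direction. If $f(0)>0$, the blocks $[f(n),f(n+1))$ do not form a member of $\Ior$ by themselves; the genuine partition has $I_0=[0,f(0))$ and $I_{n+1}=[f(n),f(n+1))$, so Shelah's bound $|S(n)|\le n$ translates, after the index shift, into $|\varphi(n)|\le n+1$ rather than $|\varphi(n)|\le n$. That would not directly contradict the choice of $F$ as unlocalizable with width $\id_\omega$; you would then need to invoke the robustness of $\minLc$ under changing the width function (the paper's Lemma~1.8, $\minLc=\min_b\blc_{b,h}$ for any $h\to\infty$), or else pick $F$ to witness $\blc_{b,\,n\mapsto n+1}$. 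The cleanest repair, and clearly what you intend, is simply to insist $f(0)=0$ so that the intervals literally form the partition and no padding or shift occurs; then $|\varphi(n)|\le n$ falls out directly and the contradiction is immediate. With that convention made explicit, the argument is airtight.
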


Another characterization of $\minLc$ is the following.

\begin{lemma}[{\cite[Lemma~3.8]{CM}}]\label{minlcCM}
$\minLc=\min\set{\blc_{b,h}}{b\in\baire}$ when $h$ goes to infinity.  
\end{lemma}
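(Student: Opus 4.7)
The plan is to show both inequalities.

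For $\minLc \le \min\set{\blc_{b,h}}{b\in\baire}$, I use a grouping argument along $h$. Fix $b\in\baire$ and a witness $F \subseteq \prod b$ of size $\blc_{b,h}$ not localized by $\Swf(b,h)$. Since $h \to \infty$, choose $0 = n_0 < n_1 < \cdots$ with $h(m) \ge k$ for all $m \ge n_k$, and set $I_k := [n_k, n_{k+1})$. Define $b'(k) := \prod_{m \in I_k} b(m)$ and $F' := \set{x^*}{x \in F}$, where $x^*(k) := x \restrictto{I_k}$. Any $\varphi' \in \Swf(b', \id)$ pulls back to $\varphi \in \Swf(b, h)$ via coordinate projections $\varphi(m) := \set{s(m)}{s \in \varphi'(k)}$ for $m \in I_k$, since $|\varphi(m)| \le |\varphi'(k)| \le k \le h(m)$; any $x \in F$ escaping $\varphi$ infinitely often then makes $x^*$ escape $\varphi'$ infinitely often. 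Hence $\blc_{b',\id} \le |F'| \le |F| = \blc_{b,h}$, so $\minLc \le \blc_{b, h}$, and the infimum over $b$ yields the direction.

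For $\min\set{\blc_{b,h}}{b\in\baire} \le \minLc$, I carry out a dual relabelling. Fix a witness $b^* \in \baire$ with $\blc_{b^*,\id} = \minLc$ and $F^* \subseteq \prod b^*$ of minimum size, arranged so that $|b^*(k)| = 1$ whenever $k \notin h(\omega)$ (see the obstacle below). Define $b(n) := b^*(h(n))$ and $F \subseteq \prod b$ as the image of $F^*$ under $x^* \mapsto (n \mapsto x^*(h(n)))$, so $|F| \le |F^*|$. Any $\varphi \in \Swf(b,h)$ induces $\varphi^* \in \Swf(b^*, \id)$ via
\[
\varphi^*(k) \;:=\; \bigcap_{n \,:\, h(n) = k} \varphi(n) \quad \text{if $k \in h(\omega)$}, \qquad \varphi^*(k) \;:=\; b^*(k) \quad \text{otherwise,}
\]
with $|\varphi^*(k)| \le k$ in both cases (pick any $n$ with $h(n) = k$ in the first, and use $|b^*(k)| = 1$ in the second, modulo a finite exception at $k=0$ that is absorbed by $\in^*$). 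Since each preimage $h^{-1}(\{k\})$ for $k \in h(\omega)$ eventually avoids any fixed finite initial segment of $\omega$, and $x^*(k) \in b^*(k) = \varphi^*(k)$ automatically for $k \notin h(\omega)$, a cofiniteness check shows that if $\varphi$ localizes $F$ then $\varphi^*$ localizes $F^*$, a contradiction. So $F$ is not localized by $\Swf(b, h)$, giving $\blc_{b,h} \le |F| \le \minLc$.

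The main obstacle is the construction in the $(\le)$ direction: producing a witness $b^*$ of $\minLc$ with $|b^*(k)| = 1$ for $k \notin h(\omega)$. Enumerating $h(\omega) = \{k_0 < k_1 < \cdots\}$ (with $k_i \ge i$), this reduces to arranging $\blc_{(B_i)_i,\,(k_i)_i} = \minLc$ for some sequence of non-empty sets $B_i$. Choosing $|B_i|$ to grow sufficiently fast with $k_i$ (for instance, $|B_i| \ge k_i\cdot 2^i$, so that $\sum_i k_i/|B_i| < \infty$) realizes this by the Bartoszy\'nski-type insensitivity of the slalom number to the precise sizes, provided they grow fast enough beyond a threshold.
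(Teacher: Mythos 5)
Your first direction is correct and is the standard ``grouping'' argument: the blocks $I_k=[n_k,n_{k+1})$ with $h\restrictto{I_k}\geq k$ convert an $\id$-slalom on $\prod b'$ into an $h$-slalom on $\prod b$ via coordinate projections, and escaping the latter infinitely often forces escaping the former infinitely often. That part is fine.

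The second direction, however, has a genuine gap. Your pullback $b(n):=b^*(h(n))$ works only once you have a witness $b^*$ that is trivial off $h(\omega)$, and you rightly flag this as ``the main obstacle.'' But notice what the obstacle actually asks for: writing $h(\omega)=\{k_0<k_1<\cdots\}$, you need $\blc_{(B_i)_i,(k_i)_i}=\minLc$ for some $(B_i)_i$ --- which is precisely the statement of the lemma for the strictly increasing, unbounded function $h'(i):=k_i$. So the reduction is circular. The appeal to a ``Bartoszy\'nski-type insensitivity'' with a summability condition $\sum_i k_i/|B_i|<\infty$ does not break the circle: that kind of hypothesis is tied to the measure-theoretic characterization of $\add(\Nwf)$ (the constant-$\omega$-base case), and it gives at best $\blc_{(B_i),(k_i)}\geq\add(\Nwf)$, not the needed upper bound $\blc_{(B_i),(k_i)}\leq\minLc$ (recall $\minLc$ can exceed $\add(\Nwf)$, indeed even $\bfrak$, so the two quantities really can differ). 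The clean fix is to make the second direction a mirror image of the first: fix $b^*$ witnessing $\minLc$, choose $0=m_0<m_1<\cdots$ with $m_i\geq h(i)$ for $i\geq 1$, set $J_i:=[m_i,m_{i+1})$ and $b(i):=\prod_{k\in J_i}|b^*(k)|$, and map $F^*$ to $F$ via $x\mapsto\seq{x\restrictto{J_i}}{i<\omega}$. Any $\varphi\in\Swf(b,h)$ then projects to $\psi(k):=\set{s(k)}{s\in\varphi(i)}$ for $k\in J_i$, and for $i\geq 1$ and $k\in J_i$ one has $|\psi(k)|\leq|\varphi(i)|\leq h(i)\leq m_i\leq k$, so (after setting $\psi\restrictto{J_0}:=\emptyset$) $\psi\in\Swf(b^*,\id)$ and it would localize $F^*$ if $\varphi$ localized $F$ --- a contradiction, giving $\blc_{b,h}\leq|F^*|=\minLc$ directly, with no auxiliary witness needed.
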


Hence, we can infer:

\begin{corollary}
 $\non(\NAwf)=\min\set{\blc_{b,h}}{b\in\baire}$ when $h$ goes to infinity.
\end{corollary}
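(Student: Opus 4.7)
The statement is an immediate consequence of the two results cited just above it, so my plan is essentially to chain them together and to be careful about the role of the parameter $h$. By \autoref{chNPaw}, we have the equality $\non(\NAwf)=\minLc$, and by \autoref{minlcCM}, for every $h\in\baire$ that goes to infinity we have $\minLc=\min\set{\blc_{b,h}}{b\in\baire}$. Substituting the second identity into the first yields exactly the corollary.

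The only thing worth spelling out is the quantification over $h$: the right-hand side of \autoref{minlcCM} is (provably in ZFC) independent of the choice of $h$ as long as $h$ tends to infinity, and this is precisely what makes the statement of the corollary well-posed. So in the write-up I would simply fix an arbitrary $h\in\baire$ with $\lim_{n\to\infty}h(n)=\infty$, invoke the lemma to get $\minLc=\min\set{\blc_{b,h}}{b\in\baire}$, and then apply \autoref{chNPaw}.

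There is no real obstacle here since both ingredients are already in place; the corollary is essentially a restatement recording that Pawlikowski's slalom characterization of $\non(\NAwf)$ is robust under replacing the identity function by any function tending to infinity. Consequently the proof will be a single short paragraph in the final text.
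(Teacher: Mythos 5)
Your proof is correct and is exactly the intended derivation: the paper introduces this corollary with ``Hence, we can infer,'' signaling that it follows immediately by combining \autoref{chNPaw} with \autoref{minlcCM}. Your remark on the quantification over $h$ is a reasonable clarification but does not represent a departure from the paper's approach.
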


Yet another characterization of $\add(\Nwf)$ and $\add(\Mwf)$ in terms of the uniformity of the null-additive and meager-additive ideals was accomplished by Pawlikowski. Here, $\bfrak$ denotes the \emph{bounding number}, which is defined in~\autoref{b-d}.

\begin{theorem}[{\cite[Lem.~2.3]{paw85}}]\label{chPawmn}\
\begin{enumerate}[label=\rm(\arabic*)]
    \item $\add(\Nwf)=\min\{\bfrak,\non(\NAwf)\}$.

    \item $\add(\Mwf)=\min\{\bfrak,\non(\MAwf)\}$. 
\end{enumerate}

\end{theorem}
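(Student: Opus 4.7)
For both parts, the plan is to prove two inequalities: the easy direction $\add(\Iwf)\leq\min\{\bfrak,\non(\IAwf)\}$ for $\Iwf\in\{\Nwf,\Mwf\}$, and then the reverse inequality via a combinatorial argument that combines the domination witness from $\bfrak$ with the localization-type witness from $\non(\IAwf)$.

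For the easy direction, $\add(\Iwf)\leq\bfrak$ is a classical result of Bartoszy\'nski for both $\Iwf=\Nwf$ and $\Iwf=\Mwf$; and $\add(\Iwf)\leq\non(\IAwf)$ follows by composing \autoref{bas:NA}\ref{bas:NA1} (which gives $\add(\Iwf)\leq\add(\IAwf)$) with the trivial bound $\add(\IAwf)\leq\non(\IAwf)$, valid for any ideal whose singletons lie in it.

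For the reverse direction in~(1), let $\kappa<\min\{\bfrak,\non(\NAwf)\}$. By Bartoszy\'nski's characterization of $\add(\Nwf)$ it suffices to show that any family $F\subseteq\baire$ of size~$\kappa$ is localized by a single slalom in~$\Swf(\omega,\id_\omega)$. First, using $\kappa<\bfrak$, pick $b\in\baire$ with $f\leq^* b$ for every $f\in F$. For each $f\in F$ define a modification $\tilde f\in\prod b$ agreeing with~$f$ cofinitely (e.g.\ $\tilde f(n)=f(n)$ when $f(n)<b(n)$ and $\tilde f(n)=0$ otherwise). The family $\{\tilde f:f\in F\}\subseteq\prod b$ still has size $\kappa<\non(\NAwf)=\minLc\leq\blc_{b,\id_\omega}$ (by \autoref{chNPaw} and \autoref{defloc}), so there exists $\varphi\in\Swf(b,\id_\omega)$ with $\tilde f\in^*\varphi$ for all $f\in F$. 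Since $\varphi(n)\subseteq b(n)\subseteq\omega$ and $|\varphi(n)|\leq n$, we can view $\varphi\in\Swf(\omega,\id_\omega)$, and $\tilde f=^* f$ gives $f\in^*\varphi$ for all $f\in F$. Hence $\kappa<\add(\Nwf)$.

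Part~(2) should follow the same template, but using the meager counterparts throughout: the Bartoszy\'nski-style combinatorial characterization of $\add(\Mwf)$ (reducing the $\add(\Mwf)$-problem to a covering/matching problem on sequences in $\baire$) and the characterization of $\non(\MAwf)$ referred to as \autoref{addtch} in the introduction. Concretely, given $\kappa<\min\{\bfrak,\non(\MAwf)\}$ and a family of $\kappa$ meager sets, code each meager set by a pair consisting of a function in $\baire$ (determining a block partition) and a ``chopped'' pattern; dominate the block-partition functions by a single $b\in\baire$ using $\kappa<\bfrak$, reducing to a fixed block structure; then apply the $\non(\MAwf)$ characterization on that fixed structure to produce a single meager-additive-style witness capturing all~$\kappa$ sets simultaneously, giving a single meager set containing their union. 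The principal obstacle is matching the exact form of the meager-additive combinatorial object to the \emph{chopped real} presentation of meager sets used in the $\add(\Mwf)$ characterization; everything else is the same two-step ``dominate, then localize'' scheme as in~(1).
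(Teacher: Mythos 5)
The paper does not prove this theorem; it only cites it as Pawlikowski's~\cite[Lem.~2.3]{paw85}, so there is no in-paper argument to compare against. Assessing your proposal on its own: the easy direction and the ``dominate, then localize'' scheme for part~(1) are correct and are indeed the standard way to prove this. One small slip in~(1): $f\leq^* b$ only gives $f(n)\leq b(n)$ cofinitely, so your modification $\tilde f$ (which resets to $0$ whenever $f(n)\geq b(n)$) can disagree with $f$ at infinitely many $n$ where $f(n)=b(n)$, and then $\tilde f\in^*\varphi$ does not yield $f\in^*\varphi$. This is harmless to fix: replace $b$ by $b+1$, or use $\kappa<\bfrak$ to choose $b$ with $f<^*b$ for all $f\in F$.

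Part~(2) is only a sketch, and the step you flag as the ``principal obstacle'' is genuinely the whole content, so as written there is a gap; however the sketch does fill in correctly along the lines you indicate. Concretely: given meager $A_\alpha$, use \autoref{ChM} to write $A_\alpha\subseteq B_{x_\alpha,I^\alpha}$; use $\kappa<\bfrak=\bfrak(\Ior)$ to pick one $J\in\Ior$ with $I^\alpha\sqsubseteq J$ for all $\alpha$, so $B_{x_\alpha,I^\alpha}\subseteq B_{x_\alpha,J}$ by \autoref{baseM}; then set $b(n):=2^{J_n}$ and regard $f_\alpha:=\seq{x_\alpha\frestr J_n}{n<\omega}\in\prod b$. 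Since $\kappa<\non(\MAwf)\leq\bfrak(\Rbf_b)$ (\autoref{th:BJ}), there is $(I',h)$ with $f_\alpha\sqrb(I',h)$ for all $\alpha$; coarsening $J$ to $J^*_n:=\bigcup_{k\in I'_n}J_k$ and letting $y$ be the concatenation of the $h(k)$'s, \autoref{baseM} gives $B_{x_\alpha,J}\subseteq B_{y,J^*}$, hence $\bigcup_\alpha A_\alpha\subseteq B_{y,J^*}\in\Mwf$. So the strategy is sound and parallel to~(1), but~(2) needs to be written out; as submitted it would not stand alone.
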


As a consequence of the previous, we immediately have the following:

\begin{corollary}\label{cor:addnonMA}
If $\non(\IAwf) \leq \bfrak$, then $\add(\Iwf)=\add(\IAwf)=\non(\IAwf)$, when $\Iwf$ is $\Nwf$ or $\Mwf$.    
\end{corollary}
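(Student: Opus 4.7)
The plan is to chain together \autoref{chPawmn}, \autoref{bas:NA}, and the trivial inequality $\add(\IAwf)\leq\non(\IAwf)$ from \autoref{diag:idealI}. Writing $\Iwf$ for either $\Nwf$ or $\Mwf$, \autoref{chPawmn} gives
\[\add(\Iwf)=\min\{\bfrak,\non(\IAwf)\},\]
so the hypothesis $\non(\IAwf)\leq\bfrak$ immediately yields $\add(\Iwf)=\non(\IAwf)$.

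Next I combine this with \autoref{bas:NA}\ref{bas:NA1}, which says $\add(\Iwf)\leq\add(\IAwf)$, and with the general fact (see \autoref{diag:idealI} applied to the ideal $\IAwf$) that $\add(\IAwf)\leq\non(\IAwf)$. Stringing these together:
\[\non(\IAwf)=\add(\Iwf)\leq\add(\IAwf)\leq\non(\IAwf),\]
so all three cardinals coincide.

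No real obstacle is anticipated: the statement is an algebraic consequence of \autoref{chPawmn} and the two pieces of \autoref{bas:NA} together with the standard diagram of cardinal invariants of an ideal. The only point deserving a line of justification is that $\IAwf$ is genuinely an ideal containing the singletons (so that $\add(\IAwf)$ and $\non(\IAwf)$ are defined and satisfy $\add(\IAwf)\leq\non(\IAwf)$); this is already observed right after the definition of $\IAwf$, since $\Nwf$ and $\Mwf$ are translation invariant $\sigma$-ideals.
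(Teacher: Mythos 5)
Your argument is correct and coincides with the one the paper intends: \autoref{chPawmn} and the hypothesis give $\add(\Iwf)=\non(\IAwf)$, and then \autoref{bas:NA}\ref{bas:NA1} together with $\add(\IAwf)\leq\non(\IAwf)$ (valid since $\IAwf$ is an ideal containing singletons) squeeze $\add(\IAwf)$ to the same value. The paper omits these routine details, labelling the corollary as immediate, and your write-up supplies exactly the expected chain.
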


On the other hand, 
Bartoszy\'nski and Judah~\cite{bartJudah} and Shelah ~\cite{shmn} provided important combinatorial characterizations of the null-additive and meager-additive sets, which are stated below. Shelah used them to prove that every null-additive set is meager-additive, that is, $\NAwf\subseteq\MAwf$. 

Denote by $\Ior$ the set of partitions of $\omega$ into finite non-empty intervals.

\begin{theorem}\label{ch:NA-MA} Let $X\subseteq\cantor$.
    \begin{enumerate}[label=\rm(\arabic*)]
        \item \emph{(\cite[Thm.~13]{shmn})}\label{ch:NA}
 $X\in\NAwf$ iff for all $I=\seq{I_n}{n\in\omega}\in\Ior$ there is some $\varphi\in\prod_{n\in \omega}\pts(2^{I_n})$ such that $\forall n\in \omega\colon |\varphi(n)|\leq n$ and $X\subseteq H_\varphi$,  where \[
H_\varphi:=\set{x\in\cantor}{\forall^{\infty} n\in \omega\colon x{\upharpoonright}I_n\in \varphi(n)}.
\]
        \item\emph{(\cite[Thm.~2.2]{bartJudah})}\label{ch:MA}
 $X\in\MAwf$ iff for all $I\in\Ior$ there are $J\in\Ior$ and $y\in\cantor$  such that  \[\forall x\in X\, \forall^\infty n<\omega\, \exists k<\omega\colon I_k\subseteq J_n\text{\ and\ }x{\upharpoonright}I_k=y{\upharpoonright}I_k.\] 
 Moreover, Shelah~\cite[Thm.~18]{shmn} proved that $J$ can be found coarser than $I$, i.e.\ every member of $J$ is the union of members of $I$
    \end{enumerate}
\end{theorem}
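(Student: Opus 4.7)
Both characterizations are reformulations of $\Iwf$-additivity via the standard interval-partition descriptions of $\Nwf$ and $\Mwf$. I describe the plan for each part.

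For part (1), I would invoke the classical characterization that $N\in\Nwf$ iff there exist $I\in\Ior$ and $\psi\in\prod_{n}\Pwf(2^{I_n})$ with $\sum_n |\psi(n)|/2^{|I_n|}<\infty$ and $N\subseteq\set{x\in\cantor}{\exists^\infty n\colon x{\upharpoonright}I_n\in\psi(n)}$. For $(\Leftarrow)$, given $X\subseteq H_\varphi$ with $|\varphi(n)|\le n$ on a partition $I$ and a null set $N$ covered as above (one may pass to a common coarsening of the partitions, still called $I$), I would check that $X+N$ is contained in a slalom cover whose $n$th level has size at most $n\cdot|\psi(n)|$; after a further block-coarsening to restore summability this yields $X+N\in\Nwf$. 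For $(\Rightarrow)$, given $X\in\NAwf$ and $I\in\Ior$, I would construct a tailor-made null set $N_I$ whose cover structure aligns with $I$ in such a way that the null cover of $X+N_I$ can be ``quotiented'' by $N_I$ to produce a slalom $\varphi$ with $|\varphi(n)|\le n$ covering $X$.

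For part (2), I would use the classical characterization that $M\in\Mwf$ iff there exist $I\in\Ior$ and $y\in\cantor$ with $M\subseteq\set{w\in\cantor}{\forall^\infty n\colon w{\upharpoonright}I_n\neq y{\upharpoonright}I_n}$. For $(\Leftarrow)$, given such an $M$ with witnesses $I,y$, apply the combinatorial hypothesis to $I$ to obtain a coarser $J$ and $z\in\cantor$, and define $y'\in\cantor$ by $y'{\upharpoonright}I_k:=(y+z){\upharpoonright}I_k$ for each $k$. For $x\in X$ and $m\in M$ and $n$ sufficiently large, pick $k$ with $I_k\subseteq J_n$ and $x{\upharpoonright}I_k=z{\upharpoonright}I_k$; since $J$ is coarser than $I$, the minimum such $k$ grows with $n$, so for $n$ large enough also $m{\upharpoonright}I_k\neq y{\upharpoonright}I_k$. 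Then $(x+m){\upharpoonright}I_k=(z+m){\upharpoonright}I_k\neq(z+y){\upharpoonright}I_k=y'{\upharpoonright}I_k$, hence $(x+m){\upharpoonright}J_n\neq y'{\upharpoonright}J_n$, showing $X+M\in\Mwf$. For $(\Rightarrow)$, given $X\in\MAwf$ and $I\in\Ior$, I would use the canonical meager set $M_I:=\set{w\in\cantor}{\forall^\infty n\colon w{\upharpoonright}I_n\neq\mathbb{0}{\upharpoonright}I_n}$: meager-additivity gives a witness $(J',y')$ for $X+M_I$, and by translating back the meagerness witness through shifts $m\in M_I$ one extracts the required partition $J$ and string $z$, invoking Shelah's refinement to arrange that $J$ be coarser than $I$.

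The main obstacle will be the $(\Rightarrow)$ directions, where the null or meager set used to ``probe'' $X$ via translation must be designed so that its translate genuinely encodes the combinatorial structure of $X$; this is the substance of the cited theorems of Shelah and Bartoszy\'nski--Judah, and the reason these are considered deep results rather than routine manipulations. The $(\Leftarrow)$ directions, by contrast, reduce to bookkeeping once the interval-partition characterizations of $\Nwf$ and $\Mwf$ are in hand.
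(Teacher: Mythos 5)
The paper does not actually prove this theorem; parts (1) and (2) are cited verbatim from Shelah~\cite{shmn} and Bartoszy\'nski--Judah~\cite{bartJudah} respectively, so there is no in-paper proof to compare against. I will therefore assess your sketch on its own terms.

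Your $(\Leftarrow)$ argument for part (2) is essentially correct and is the standard computation: setting $y':=y+z$ and observing that $(x+m){\upharpoonright}I_k\neq y'{\upharpoonright}I_k$ exactly when $m{\upharpoonright}I_k\neq y{\upharpoonright}I_k$ does the job. One small point: you invoke coarseness of $J$ to conclude that ``the minimum such $k$ grows with $n$,'' but this is automatic even without coarseness, since any $I_k\subseteq J_n$ has $\min I_k\geq\min J_n\to\infty$.

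Your $(\Leftarrow)$ sketch for part (1) contains a subtle order-of-operations problem. You propose to ``pass to a common coarsening of the partitions,'' but the bound $|\varphi(n)|\le n$ is \emph{not} preserved under coarsening: if $I'_m$ groups $I_{n_m},\dots,I_{n_{m+1}-1}$, the induced slalom at level $m$ has size $\prod_{k=n_m}^{n_{m+1}-1}|\varphi(k)|$, which far exceeds $m$. The correct order is the reverse: first pick the partition $I$ and the cover $\psi$ of $N$ so that $\sum_n n\,|\psi(n)|/2^{|I_n|}<\infty$ (achievable since the hypothesis is ``for all $I$''), and only then apply the hypothesis with this $I$ to obtain $\varphi$ with $|\varphi(n)|\le n$. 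Then $\varphi(n)+\psi(n)$ has size at most $n|\psi(n)|$ and the Borel--Cantelli summability is already in place, with no post hoc coarsening of $\varphi$ needed.

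For both $(\Rightarrow)$ directions you describe a plan (``quotient the cover of $X+N_I$ by $N_I$''; ``translate back the meagerness witness through shifts $m\in M_I$'') but do not carry it out, and you say as much. These are indeed the nontrivial halves of the Bartoszy\'nski--Judah and Shelah theorems and cannot be dispatched in a few lines; your sketch correctly locates where the real work lies but does not supply it. As a blind reconstruction of what is, in the paper, a citation rather than a proof, this is a reasonable summary of the easy halves plus an honest acknowledgment of where the depth is, with the one genuine technical misstep being the coarsening issue noted above.
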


Bartoszy\'nski and Judah provided a characterization of the uniformity of the meager-additive ideal:

 \begin{theorem}[{\cite[Thm.~2.2]{bartJudah}}, see also {\cite[Thm.~2.7.14]{BJ}}]\label{addtch}\ \\ 
The cardinal $\non(\MAwf)$ is the largest cardinal $\kappa$ such that, for every bounded family $F\subseteq\baire$ of size ${<}\kappa$, \[\tag{\faPagelines}\exists r,h\in\baire\, \forall f\in F\, \exists n\in\omega\, \forall m\geq n\,  \exists k\in[r(m),r(m+1)]\colon f(k)=h(k).\label{addtchtag}\]
\end{theorem}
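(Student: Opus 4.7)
The plan is to transfer the characterization of $\MAwf$ in Theorem~\ref{ch:NA-MA}\ref{ch:MA} (in Shelah's form with $J$ coarser than $I$) to the combinatorial condition~\eqref{addtchtag} via a natural coding. For a fixed $I=\seq{I_n}{n\in\omega}\in\Ior$, choose bijections $\pi_n\colon 2^{I_n}\to\{0,\dots,2^{|I_n|}-1\}$ and define $f_x(n):=\pi_n(x\restrictto{I_n})$ for $x\in\cantor$; then $x\mapsto f_x$ is a bijection between $\cantor$ and the functions bounded by $n\mapsto 2^{|I_n|}$. Under this coding $x\restrictto{I_k}=y\restrictto{I_k}$ becomes $f_x(k)=f_y(k)$, and a partition $J$ coarser than $I$ corresponds to a strictly increasing $r\in\baire$ via $J_m=\bigcup_{r(m)\le k<r(m+1)}I_k$.

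For the direction asserting that every bounded $F\subseteq\baire$ of size $<\non(\MAwf)$ satisfies~\eqref{addtchtag}, fix a bound $g$ for $F$ and pick $I\in\Ior$ with $2^{|I_n|}>g(n)$. Set $X:=\{x_f:f\in F\}\subseteq\cantor$, where $x_f$ is the unique point with $f_{x_f}=f$; then $|X|<\non(\MAwf)$, so $X\in\MAwf$. Shelah's form of Theorem~\ref{ch:NA-MA}\ref{ch:MA} supplies a coarser $J$ and a $y\in\cantor$; reading off $r$ from $J$ and setting $h(k):=\pi_k(y\restrictto{I_k})$ yields~\eqref{addtchtag}, using that the half-open $[r(m),r(m+1))$ is contained in the closed $[r(m),r(m+1)]$.

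For the reverse direction, take $X\subseteq\cantor$ with $|X|=\non(\MAwf)$ and $X\notin\MAwf$; by Theorem~\ref{ch:NA-MA}\ref{ch:MA}, some $I\in\Ior$ admits no coarser $J$ and no $y$ witnessing the $\MAwf$-condition for $X$. Let $F:=\{f_x:x\in X\}$, which is bounded by $n\mapsto 2^{|I_n|}$ and has size $\non(\MAwf)$ by injectivity of the coding. If~\eqref{addtchtag} held for $F$ with witnesses $r,h$, we would define $J_m:=\bigcup_{r(m)\le k<r(m+1)}I_k$ (merging the initial segment $k<r(0)$ into $J_0$ to obtain a bona fide partition in $\Ior$) and $y\in\cantor$ with $y\restrictto{I_k}:=\pi_k^{-1}(h(k))$ when $h(k)<2^{|I_k|}$ (arbitrary otherwise). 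The resulting coarser $J$ and $y$ would witness the $\MAwf$-condition for $X$, contradicting the choice of $I$.

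The one subtlety deserving care is the alignment of the closed interval $[r(m),r(m+1)]$ in~\eqref{addtchtag} with the disjoint blocks of $J$: the endpoint $k=r(m+1)$ lies in $J_{m+1}$ rather than $J_m$, but the quantifier ``for almost all $m$'' absorbs the resulting shift by one. The remaining verifications---that the encoding is injective and bounded, and that the ``eventually'' quantifiers on both sides match up under finite modifications---are routine once the bijections $\pi_n$ are fixed.
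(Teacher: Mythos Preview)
Your approach is essentially the paper's own: both arguments code $x\in 2^\omega$ as $f_x\in\prod_n 2^{|I_n|}$ via bijections $\pi_n$ and transfer the characterization of $\MAwf$ in \autoref{ch:NA-MA}\ref{ch:MA} to the combinatorial condition~\eqref{addtchtag}. The paper packages the two directions as Tukey connections (Lemmas~\ref{RsAMone} and~\ref{prodRb}) rather than as direct cardinality arguments, and it works with the relation $f\sqrb(I,h)$ (i.e.\ $k\in I_n$) instead of the closed interval $[r(m),r(m+1)]$, but the content is the same. Your direction ``$|F|<\non(\MAwf)\Rightarrow$~\eqref{addtchtag}'' is fine (modulo the routine reduction from $\leq^*$-bounded to everywhere-bounded, which you tacitly use when writing $x_f$).

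There is, however, a genuine (though easily repaired) gap in the reverse direction. Your claim that ``the quantifier `for almost all $m$' absorbs the resulting shift by one'' is not correct. From~\eqref{addtchtag} you only know that for almost all $m$ there is a match in $[r(m),r(m+1)]$; if that match always sits at an endpoint you may miss infinitely many blocks of your $J$. Concretely, take $r(m)=2m$ and suppose $f(k)=h(k)$ iff $k\equiv 0\pmod 4$. Then each $[2m,2m+2]$ contains a multiple of $4$, so~\eqref{addtchtag} holds for this $f$; but with $J_m=[r(m),r(m+1))=\{2m,2m+1\}$, the block $J_m$ contains a match only when $m$ is even, so the $\MAwf$-condition fails for infinitely many $n$. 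Thus your $J,y$ need not witness membership in $\MAwf$.

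The fix is to coarsen: set $r'(m):=r(2m)$ and $J'_m:=\bigcup_{r'(m)\leq k<r'(m+1)}I_k$. For almost all $m$ the match in $[r(2m),r(2m+1)]$ lies inside $[r(2m),r(2m+2))$, hence in some $I_k\subseteq J'_m$; now $(J',y)$ contradicts the choice of $I$ exactly as you intended. With this adjustment your argument goes through and matches the paper's proof.
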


In~\cite{zin}, Zindulka used combinatorial properties
of meager-additive sets described by Shelah and Pawlikowski to
characterize meager-additive sets in $\cantor$ in a way that nicely parallels the definition of strong measure zero sets. This led him to establish that $\EAwf=\MAwf$. Therefore:

\begin{corollary}\label{ma=me}
 $\non(\MAwf)=\non(\EAwf)$. As a consequence, 
 $\non(\MAwf)\leq\non(\Ewf)$. 
\end{corollary}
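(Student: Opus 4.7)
The plan is to derive both assertions directly from the cited result of Zindulka, that $\EAwf=\MAwf$. Once the two $\sigma$-ideals coincide, the equality of their uniformity numbers is immediate, since $\non$ depends only on the underlying ideal. So the first clause $\non(\MAwf)=\non(\EAwf)$ requires no further work beyond invoking Zindulka's characterization.

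For the second clause $\non(\MAwf)\le \non(\Ewf)$, I would first remark that $\Ewf$ is a translation-invariant $\sigma$-ideal on $\cantor$: the collection of closed null subsets of $\cantor$ is preserved under the group operation (translations are homeomorphisms preserving Lebesgue measure), and hence so is the $\sigma$-ideal it generates. Consequently, by the general inclusion noted in the paper immediately after the definition of $\IAwf$, we have $\EAwf\subseteq\Ewf$. Monotonicity of $\non$ then yields $\non(\EAwf)\le\non(\Ewf)$, and combining with the first clause gives $\non(\MAwf)\le\non(\Ewf)$.

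There is essentially no obstacle here: the whole content is shifted onto Zindulka's theorem $\EAwf=\MAwf$, which we are citing and do not reprove. The only point that requires a one-line justification is the translation invariance of $\Ewf$, which in turn justifies the inclusion $\EAwf\subseteq\Ewf$ used to pass from $\non(\EAwf)$ to $\non(\Ewf)$.
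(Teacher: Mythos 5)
Your argument is correct and matches the paper's (implicit) reasoning: the first clause is an immediate consequence of Zindulka's identity $\EAwf=\MAwf$, and the second follows from $\non(\EAwf)\le\non(\Ewf)$, which the paper already records as \autoref{bas:NA}~\ref{bas:NA2} for translation-invariant ideals. Your slight variant — deriving it from the inclusion $\EAwf\subseteq\Ewf$ plus monotonicity of $\non$, after verifying translation invariance of $\Ewf$ — is an equally valid way to land on the same lemma, so this is essentially the same proof.
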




In the present paper, we use the previously mentioned combinatorial properties
of null and meager-additive sets described by Bartoszy\'nski, Judah, Shelah, and Pawlikowski, to prove our main results. First, we show that the hypothesis $\non(\NAwf) \leq \bfrak$ is not required in \autoref{cor:addnonMA} (for the null-additive ideal) to show that: 

\begin{teorema}\label{MainThm1} In $\thzfc$ we have that $\add(\NAwf)=\non(\NAwf)$.
\end{teorema}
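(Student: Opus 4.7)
The inequality $\add(\NAwf)\le\non(\NAwf)$ is automatic for any ideal, so the whole content of the theorem is the reverse $\non(\NAwf)\le\add(\NAwf)$. My plan is to combine three ingredients already laid out in the paper: Shelah's slalom characterization of null-additive sets (\autoref{ch:NA-MA}, part~\ref{ch:NA}), Pawlikowski's identity $\non(\NAwf)=\minLc$ (\autoref{chNPaw}), and the width-flexibility refinement $\minLc=\min\set{\blc_{b,h_0}}{b\in\baire}$ valid for any $h_0$ tending to infinity (\autoref{minlcCM}).

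Fix a family $\set{X_\alpha}{\alpha<\kappa}\subseteq\NAwf$ with $\kappa<\non(\NAwf)$ and an arbitrary interval partition $I=\seq{I_n}{n\in\omega}\in\Ior$. The goal is to produce one slalom $\varphi$ with $|\varphi(n)|\le n$ such that $\bigcup_{\alpha<\kappa}X_\alpha\subseteq H_\varphi$. The naive route --- apply \autoref{ch:NA-MA} to each $X_\alpha$ to obtain $\varphi_\alpha$ for $I$, localize the family $\set{\varphi_\alpha}{\alpha<\kappa}$ by a single slalom $\Psi$ of width $h_0$ via \autoref{minlcCM}, and set $\varphi(n):=\bigcup\Psi(n)$ --- produces a slalom of width roughly $h_0(n)\cdot n$, which is too wide. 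The idea is to perform the combining on a sufficiently coarse partition of $\omega$, so that this overhead is absorbed when one projects back to $I$.

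Concretely: fix any $h_0\to\infty$ and choose inductively indices $0=r_0<r_1<\cdots$ with $r_m\ge h_0(m)\cdot m$; set $K_m:=[r_m,r_{m+1})$ and $\tilde I_m:=\bigcup_{n\in K_m}I_n$, so that $\tilde I:=\seq{\tilde I_m}{m\in\omega}\in\Ior$ is coarser than $I$. Applying \autoref{ch:NA-MA} to each $X_\alpha$ with respect to $\tilde I$ yields slaloms $\tilde\varphi_\alpha$ with $|\tilde\varphi_\alpha(m)|\le m$ and $X_\alpha\subseteq H_{\tilde\varphi_\alpha}$. Setting $b(m):=|[2^{\tilde I_m}]^{\le m}|$, the sequences $\tilde\varphi_\alpha$ lie in $\prod b$, and $\kappa<\minLc\le\blc_{b,h_0}$ (by \autoref{minlcCM}) yields a slalom $\Psi\in\Swf(b,h_0)$ localizing the family. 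Take $\tilde\varphi(m):=\bigcup\Psi(m)\subseteq 2^{\tilde I_m}$, whose size is bounded by $h_0(m)\cdot m$, and project back to $I$ by setting $\varphi(n):=\set{s{\upharpoonright}I_n}{s\in\tilde\varphi(m)}$ for $n\in K_m$. By the choice of $r_m$, $|\varphi(n)|\le h_0(m)\cdot m\le r_m\le n$, as required, and a direct trace shows that $x\in X_\alpha$ implies $x{\upharpoonright}\tilde I_m\in\tilde\varphi(m)$ cofinitely, hence $x{\upharpoonright}I_n\in\varphi(n)$ cofinitely, so $x\in H_\varphi$.

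The only delicate point is the calibration of the coarsening $\tilde I$: the starting index $r_m$ of $K_m$ must dominate the combined slalom width $h_0(m)\cdot m$, while $h_0$ must still tend to infinity for \autoref{minlcCM} to apply. Since $r_m$ is entirely at our disposal and $h_0$ may be chosen to grow as slowly as desired, the two conditions are easily reconciled, and everything else is routine bookkeeping.
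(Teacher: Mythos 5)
Your proposal is correct and takes essentially the same route as the paper: both proofs combine Shelah's slalom characterization of $\NAwf$ (\autoref{ch:NA-MA}), Pawlikowski's identity $\non(\NAwf)=\minLc$ (\autoref{chNPaw}), and one localization step on the family of witnessing slaloms. The paper applies the localization directly to the given partition $I$, obtaining a slalom $\varphi^I\in\Swf(2^I,\id_\omega^2)$ of width $\leq n^2$, and then invokes \autoref{ch:NA-MA} to conclude $\bigcup_{\alpha<\kappa}X_\alpha\in\NAwf$; this implicitly uses the (standard but unstated) fact that the characterization is unchanged if the bound $|\varphi(n)|\leq n$ is relaxed to $|\varphi(n)|\leq p(n)$ for any $p$ tending to infinity, which is exactly the regrouping you perform explicitly by coarsening $I$ to $\tilde I$ in advance and projecting back at the end. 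So your extra coarsening step is not a different idea but a self-contained spelling-out of a step the paper leaves tacit, and it is a legitimate small improvement in rigor. One minor remark: the ``delicate calibration'' you flag at the end is in fact no constraint at all --- $r_m$ may be taken arbitrarily large (e.g.\ $r_m:=m\cdot h_0(m)$, or simply $r_m:=m^2$ with $h_0=\id_\omega$), so there is no tension whatsoever between the coarseness of $\tilde I$ and the requirement that $h_0\to\infty$; the appeal to \autoref{minlcCM} is therefore optional and the argument goes through verbatim with $h_0=\id_\omega$.
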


This is one of the main results of the paper. 
It is unclear whether ZFC proves $\add(\MAwf) = \non(\MAwf)$.

Another result of Pawlikowski concerns the relationship between $\add_t(\Nwf)$ and $\add(\SNwf)$, where $\SNwf$ denotes the $\sigma$-ideal of the strong measure zero sets (see~\autoref{def:SN}). Namely, he states $\add_t(\Nwf)\leq\add(\SNwf)$,  but this proof does not appear anywhere. We offer our own proof of this inequality in~\autoref{Sec:zfc}. Concretely, we prove:

\begin{theorem}[{\cite{paw85}}]\label{MainThm1c}  $\minLc\leq\add(\SNwf)$. 
\end{theorem}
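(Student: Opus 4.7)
The plan is to use the standard combinatorial characterization of strong measure zero in $\cantor$: $X\in\SNwf$ iff for every $I=\seq{I_n}{n\in\omega}\in\Ior$ there exists $y\in\cantor$ with $\forall x\in X\,\exists^\infty n\in\omega\colon x{\upharpoonright}I_n=y{\upharpoonright}I_n$. (This is a consequence of the Galvin--Mycielski--Solovay theorem combined with the combinatorial description of meager sets in $\cantor$: every meager set is contained in some $\set{x\in\cantor}{\forall^\infty n\colon x{\upharpoonright}I_n\neq z{\upharpoonright}I_n}$.) Given this characterization, it suffices to show that whenever $\kappa<\minLc$ and $\set{X_\alpha}{\alpha<\kappa}\subseteq\SNwf$, the union $\bigcup_{\alpha<\kappa}X_\alpha$ satisfies the above criterion for every $I$.

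Fix $I\in\Ior$. The key maneuver is to coarsen $I$ into a partition $J\in\Ior$ whose $k$-th block consists of $k+1$ consecutive blocks of $I$: set $m_0:=0$, $m_{k+1}:=m_k+k+1$, and $J_k:=\bigcup_{j=m_k}^{m_{k+1}-1}I_j$. For each $\alpha<\kappa$, applying the characterization to $X_\alpha$ with the coarser partition $J$ produces some $y_\alpha\in\cantor$ such that every $x\in X_\alpha$ satisfies $x{\upharpoonright}J_k=y_\alpha{\upharpoonright}J_k$ for infinitely many $k$.

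Next, view each $y_\alpha$ as an element of $\prod b$ with $b(k):=2^{J_k}$, via $k\mapsto y_\alpha{\upharpoonright}J_k$. Since $\kappa<\minLc$, \autoref{minlcCM} applied with $h(k):=k+1$ yields a slalom $\varphi\in\Swf(b,h)$ satisfying $\forall\alpha<\kappa\,\forall^\infty k\colon y_\alpha{\upharpoonright}J_k\in\varphi(k)$. The witness $y\in\cantor$ is then constructed by \emph{distributing} the (at most) $k+1$ elements of $\varphi(k)$ across the $k+1$ sub-intervals of $I$ that form $J_k$: enumerate $\varphi(k)=\{\tau_0^k,\ldots,\tau_k^k\}$ (padding arbitrarily if $|\varphi(k)|<k+1$) and set $y{\upharpoonright}I_{m_k+j}:=\tau_j^k{\upharpoonright}I_{m_k+j}$ for each $0\leq j\leq k$.

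Verification is then straightforward: for each $x\in X_\alpha$, infinitely many $k$ satisfy both $x{\upharpoonright}J_k=y_\alpha{\upharpoonright}J_k$ and $y_\alpha{\upharpoonright}J_k\in\varphi(k)$; any such $k$ forces $x{\upharpoonright}J_k=\tau_j^k$ for some $j\leq k$, whence $x{\upharpoonright}I_{m_k+j}=y{\upharpoonright}I_{m_k+j}$. Since the indices $m_k+j$ are distinct as $k$ varies, we conclude $\exists^\infty n\colon x{\upharpoonright}I_n=y{\upharpoonright}I_n$. The main technical obstacle is calibrating the coarsening so that each block $J_k$ contains \emph{at least} as many $I$-sub-intervals as the allowed width of the slalom, which is precisely why the flexibility granted by \autoref{minlcCM} (namely, that $h$ may be any function tending to infinity rather than $\id_\omega$) is indispensable.
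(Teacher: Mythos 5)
Your argument is correct, and the core combinatorial move is identical to the paper's: coarsen the given partition so that the $k$-th block consists of $k+1$ of the original intervals, invoke the characterization of $\minLc$ (via \autoref{minlcCM}) with a width-$(k+1)$ slalom to catch all $\kappa$ witnesses simultaneously, and then spread the $\leq k+1$ slalom entries across the $k+1$ sub-intervals to build a single witness hitting each $X_\alpha$ infinitely often. The one genuine difference is the characterization of strong measure zero you start from. You use the Galvin--Mycielski--Solovay ``chopped real'' form, i.e.\ $X\in\SNwf$ iff for every $I\in\Ior$ there is $y\in\cantor$ with $\forall x\in X\,\exists^\infty n\colon x{\upharpoonright}I_n = y{\upharpoonright}I_n$; the forward direction of this equivalence \emph{is} the nontrivial direction of GMS. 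The paper instead works with \autoref{charSN}, which reformulates \autoref{def:SN} directly in terms of $[\sigma]_\infty$ and a dominating family, and is provable from the definition of $\SNwf$ by an elementary interleaving argument with no appeal to GMS. So your proof is valid, but it imports a heavier black box; the paper's choice of characterization keeps the argument more self-contained, while the distribution trick and the use of $h\approx n+1$ from \autoref{minlcCM} are the same in both.
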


As a noteworthy consequence, we get:

\begin{corollary}
 $\non(\NAwf)\leq\add(\SNwf)$.    
\end{corollary}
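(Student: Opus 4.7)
The plan is essentially a one-line chain of two results already stated in the paper, so there is no real combinatorial work to be done. The cardinal $\non(\NAwf)$ has been identified with a combinatorial slalom-type invariant, and the preceding theorem bounds that invariant by $\add(\SNwf)$; composing these yields the corollary.

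More precisely, I would argue as follows. By Pawlikowski's characterization quoted as \autoref{chNPaw}, we have the equality
\[
\non(\NAwf) = \minLc,
\]
where $\minLc = \min\set{\blc_{b,\id_\omega}}{b\in\baire}$. On the other hand, \autoref{MainThm1c} gives
\[
\minLc \leq \add(\SNwf).
\]
Chaining these two, $\non(\NAwf) = \minLc \leq \add(\SNwf)$, which is exactly the claim.

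Since the corollary is labeled as a ``noteworthy consequence'' immediately after \autoref{MainThm1c}, the expected proof is this one-line deduction; there is no obstacle to overcome beyond invoking the two cited results. The substantive content lies entirely in \autoref{MainThm1c} (which the authors announce they will prove in \autoref{Sec:zfc}) and in the Pawlikowski characterization \autoref{chNPaw}; the corollary itself is purely formal.
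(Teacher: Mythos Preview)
Your proof is correct and matches the paper's intended argument: the corollary is stated immediately after \autoref{MainThm1c} with no proof given, precisely because it follows at once from $\non(\NAwf)=\minLc$ (\autoref{chNPaw}) together with $\minLc\leq\add(\SNwf)$ (\autoref{MainThm1c}).
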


\autoref{cichonext} summarizes the inequalities among some cardinal characteristics associated with $\Ewf$, $\MAwf$,  $\NAwf$, and $\SNwf$, with the cardinals in Cicho\'n's diagram. 
Notice that $\non(\MAwf) \leq \non(\SNwf)$ follows from $\MAwf\subseteq \SNwf$ (by~Galvin's, Mycielski's, and Solovay's~\cite{GaMS} characterization of strong measure zero sets),
 and that $\add(\Mwf) \leq \non(\Ewf)$ is a consequence of $\add(\Ewf) = \add(\Mwf)$ (\cite{BS1992}).

\begin{figure}[ht!]
\centering
\begin{tikzpicture}[scale=1.2]
\small{
\node (aleph1) at (-1,2.3) {$\aleph_1$};
\node (addn) at (0.5,2.3){$\add(\Nwf)$};
\node (covn) at (0.5,7.5){$\cov(\Nwf)$};
\node (nonn) at (10.4,2.3) {$\non(\Nwf)$} ;
\node (cfn) at (10.4,7.5) {$\cof(\Nwf)$} ;
\node (addm) at (3.8,2.3) {$\add(\Mwf)$} ;
\node (covm) at (6.9,2.3) {$\cov(\Mwf)$} ;
\node (nonsn) at (8.8,2.3) {$\non(\SNwf)$} ;
\node (nonm) at (3.8,7.5) {$\non(\Mwf)$} ;
\node (cfm) at (6.9,7.5) {$\cof(\Mwf)$} ;
\node (b) at (3.8,5.1) {$\bfrak$};
\node (d) at (6.9,5.1) {$\dfrak$};
\node (c) at (11.7,7.5) {$\cfrak$};
\node (addsn) at (5.2,3.2) {$\add(\SNwf)$};
\node (addma) at (2.4,3.2) {\subiii{$\add(\MAwf)$}};
\node (nonna) at (1.3,3.9) {\subiii{$\non(\NAwf)$}};
\node (nonma) at (1.8,5.3) {\subiii{$\non(\MAwf)$}};
\node (none) at (2.5,6.8) {$\non(\Ewf)$};
\draw (aleph1) edge[->] (addn)
      (addn) edge[->] (covn)
      (covn) edge [->] (nonm)
      (nonm)edge [->] (cfm)
      (cfm)edge [->] (cfn)
      (cfn) edge[->] (c);

\draw (addm) edge [->]  (addma);
     
\draw (addma) edge [->]  (nonma);
\draw
   (addn) edge [->]  (addm)
   (addm) edge [->]  (covm)
   (covm) edge [->]  (nonsn)
   (nonsn) edge [->]  (nonn)
   (nonna) edge [->]  (nonma)
   (nonn) edge [->]  (cfn);
\draw (addm) edge [->] (b)
      (b)  edge [->] (nonm);
\draw (covm) edge [->] (d)
      (d)  edge[->] (cfm);
\draw (b) edge [->] (d);

\draw  
(addsn) edge [line width=.15cm,white,-]  (nonna)
(addsn) edge [<-]  (nonna)
(nonma) edge [line width=.15cm,white,-]  (nonsn)
(nonma) edge [->]  (nonsn)
(addsn) edge [line width=.15cm,white,-]  (nonsn)
(addsn) edge [->]  (nonsn)
(none) edge [line width=.15cm,white,-]  (nonn)
(none) edge [->]  (nonn)
(nonma) edge [->] (none)
        (none) edge [->] (nonm)
        (addn) edge [->] (nonna);
      
      



}
\end{tikzpicture}
\caption{Cicho\'n's diagram with some cardinal characteristics associated with $\Ewf$, $\MAwf$, $\SNwf$, and $\NAwf$.}
\label{cichonext}
\end{figure}

Many cardinal characteristics can be described using a \emph{relational system}, as reviewed in \autoref{Sec:zfc}. For any relational system $\Rbf$, their bounding and dominating numbers are denoted by $\bfrak(\Rbf)$ and $\dfrak(\Rbf)$, respectively. 

\autoref{addtchtag} has inspired us to develop a new (Polish) relational system $\Rbf_b$, parametrized by $b\in\omega^\omega$, which can be used to reformulate \autoref{addtch} as 
\[\non(\MAwf)=\min\set{\bfrak(\Rbf_b)}{b\in\baire}.\] 
This relational system will play an important role in this work, also for our consistency results. First, we prove (in~\autoref{Sec:zfc}) the following connections between their associated cardinal characteristics and some other classical characteristics. 

\begin{teorema}\label{MoreZFC}\ 
\begin{enumerate}[label=\rm(\arabic*)]
    \item 
    $\displaystyle \sup_{b\in D}\dfrak(\Rbf_b) \leq \cov(\MAwf) \leq \dfrak\left(\prod_{b\in D}\Rbf_b\right)$ for any dominating family $D\subseteq \baire$. 

    \item For all $b\in\omega^\omega$, $\dfrak(\Rbf_b) \leq \cof(\Mwf)$.

    \item Let $b\in\omega^\omega$.  If $\displaystyle \sum_{k<\omega}\frac{1}{b(k)}<\infty$, then $\bfrak(\Rbf_b) \leq \non(\Ewf)$ and $\cov(\Ewf)\leq \dfrak(\Rbf_b)$.
\end{enumerate}
\end{teorema}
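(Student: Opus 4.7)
My plan is to prove the three parts separately.

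\textbf{Part (1).} For the first inequality $\sup_b\dfrak(\Rbf_b)\leq\cov(\MAwf)$, I would fix $b\in\baire$, identify $\prod b$ with $\cantor$ via a canonical partition $I^b\in\Ior$, and, from a cover $\{X_\alpha\}_{\alpha<\cov(\MAwf)}\subseteq\MAwf$ of $\cantor$, apply Shelah's refinement (item~(2) of \autoref{ch:NA-MA}, with $J$ coarser than $I$) to each $X_\alpha$. This extracts $(r_\alpha, h_\alpha)\in\baireinc\times\prod b$ with $X_\alpha\subseteq\{f\in\prod b : f\sqsubset_b(r_\alpha, h_\alpha)\}$, so that the family $\{(r_\alpha, h_\alpha)\}$ dominates $\Rbf_b$. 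For the reverse inequality, interpreting the product coordinatewise, I would take a dominating family $\mathcal{F}$ of size $\kappa$ in $\prod_{b\in D}\Rbf_b$ and, for each $\bar c=(c_b)_{b\in D}\in\mathcal{F}$, define $X_{\bar c}:=\{x\in\cantor : \forall b\in D\colon f^b_x\sqsubset_b c_b\}$, where $f^b_x\in\prod b$ codes $x$ via $I^b$. The dominating property applied to the tuple $(f^b_x)_{b\in D}$ immediately gives $\cantor=\bigcup_{\bar c\in\mathcal{F}}X_{\bar c}$; to see $X_{\bar c}\in\MAwf$, given an arbitrary $I'\in\Ior$, I would exploit $\leq^*$-domination of $D$ to find $b\in D$ whose partition $I^b$ is eventually coarser than $I'$, and then translate the matching condition imposed by $c_b$ on $I^b$-intervals into the meager-additive condition for $I'$ via Shelah's characterization.

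\textbf{Part (2).} I would invoke Bartoszy\'nski's characterization of $\cof(\Mwf)$ as the dominating number of the unbounded interval-matching relational system $(\baire, \baireinc\times\baire, \sqsubset)$ (see \cite[Sec.~2.4]{BJ}). From a dominating family $\{(r_\alpha, h_\alpha)\}_{\alpha<\cof(\Mwf)}$ there, I would produce one in $\Rbf_b$ by setting $h'_\alpha(k):=h_\alpha(k)\bmod b(k)\in\prod b$. Since any match $f(k)=h_\alpha(k)$ with $f\in\prod b$ forces $h_\alpha(k)<b(k)$, we have $h_\alpha(k)=h'_\alpha(k)$, so matches are preserved under this replacement; hence $\{(r_\alpha, h'_\alpha)\}$ dominates $\Rbf_b$.

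\textbf{Part (3).} The key geometric input under $\sum_k 1/b(k)<\infty$ is that each set $X_{r,h}^b:=\{f\in\prod b : f\sqsubset_b(r,h)\}$ lies in $\Ewf$ when $\prod b$ carries its uniform product measure. I would write $X_{r,h}^b=\bigcup_N A_N$ with $A_N=\bigcap_{m\geq N}B_m$ closed and $B_m=\bigcup_{k\in[r(m),r(m+1))}\{f:f(k)=h(k)\}$, and use independence of the $B_m$ across disjoint coordinates to compute $\mu(A_N)=\prod_{m\geq N}\bigl(1-\prod_{k\in[r(m),r(m+1))}(1-1/b(k))\bigr)$. Since $1/b(k)\to 0$, the inner products tend to $1$ and the outer factors tend to $0$, forcing $\mu(A_N)=0$; so $X_{r,h}^b$ is a union of closed null sets, whence $X_{r,h}^b\in\Ewf$. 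Both inequalities then follow at once: a witness $F\subseteq\prod b$ for $\non(\Ewf)$ cannot lie inside any $X_{r,h}^b\in\Ewf$, so $F$ is $\sqsubset$-unbounded and $\bfrak(\Rbf_b)\leq\non(\Ewf)$; conversely, a $\sqsubset$-dominating family $\{(r_\alpha, h_\alpha)\}$ covers $\prod b$ by $\Ewf$-sets, transferred to $\cantor$ via the canonical measure-preserving identification, giving $\cov(\Ewf)\leq\dfrak(\Rbf_b)$.

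The main obstacle will be Part~(1)'s second inequality, specifically verifying $X_{\bar c}\in\MAwf$. Shelah's characterization quantifies over \emph{every} partition $I'$, so for each such $I'$ I must find a coordinate $b\in D$ whose $I^b$ is coarse enough (relative to $I'$) that the matching imposed by $c_b$ translates into the meager-additive condition on $I'$; making this alignment work forces a careful combination of the $\leq^*$-domination of $D$ with the specific encoding of partitions into $\baire$ implicit in the definition of $\Rbf_b$. Parts~(2) and~(3) should be more routine, reducing respectively to a Tukey-type modification $h\mapsto h\bmod b$ and a Borel--Cantelli-style probability estimate.
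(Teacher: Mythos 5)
Parts (1) and (3) follow the paper's route. For (1), the first inequality is \autoref{RsAMone}: you invoke the coarsening form of \autoref{ch:NA-MA}~(2) directly, while the paper goes through $X+B_{I^b}\in\Mwf$, Talagrand's \autoref{ChM}, and \autoref{baseM} — equivalent in substance. The second inequality is \autoref{prodRb}; the ``alignment'' issue you flag is precisely what the paper's WLOG handles, replacing $D$ by a family of the form $\{2^I:I\in D_0\}$ with $D_0$ an $\Ior$-dominating family (justified by $\Ior\eqT\baire$, \autoref{Charb-d}). Be careful here: $\leq^*$-domination of the $b$'s does \emph{not} by itself make the associated partitions $I^b$ $\Ior$-dominating, so this replacement is not optional and your phrase ``exploit $\leq^*$-domination of $D$'' should be tightened. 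For (3), \autoref{RbE} does the same computation; the paper bounds $\Lb_b(\bigcup_{k\in J_n}A^{h(k)}_k)\leq\sum_{k\in J_n}1/b(k)$ by the union bound whereas you compute the exact complementary product, but both use $\sum_k 1/b(k)<\infty$ to drive the tail product to $0$, yielding $G(J,h)\in\Ewf$ and the Tukey connection $\Cbf_\Ewf\leqT\Rbf_b$ identically.

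Part (2) is where you genuinely diverge. The paper's \autoref{dRbup} builds a Tukey connection $\Rbf_b\leqT(\Ed_b^\perp;\Ior)$ directly, then uses the composition formula (\autoref{blascomp}) together with the $\Ed_b$-characterization of $\cof(\Mwf)$ (\autoref{charM2}) to get $\dfrak(\Rbf_b)\leq\max\{\balc_{b,1},\dfrak\}\leq\cof(\Mwf)$. You instead quote Bartoszy\'nski's characterization of $\cof(\Mwf)$ as the dominating number of the unrestricted interval-matching system $\langle\baire,\Ior\times\baire,\sqrb\rangle$, and reduce $\Rbf_b$ to it via $h\mapsto h\bmod b$ — a neat observation, since a match $f(k)=h(k)$ with $f\in\prod b$ forces $h(k)<b(k)$, so truncating $h$ modulo $b$ preserves every match of $f$. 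Your route is shorter if one takes the cited characterization as a black box; the paper's route is self-contained (it needs only the easy ZFC inequality $\max\{\non(\Mwf),\dfrak\}\leq\cof(\Mwf)$, repackaged via \autoref{charM2}) and the same Tukey connection yields the dual bound $\min\{\dalc_{b,1},\bfrak\}\leq\bfrak(\Rbf_b)$ for free.
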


In~\cite{CMRM}, it was established that $\cof(\Nwf)=\aleph_1$ and $\cov(\SNwf)=\cfrak=\aleph_2$ hold in Sacks' model. There,  $\cov(\MAwf)=\aleph_2$ because $\cov(\SNwf) \leq \cov(\MAwf) \leq \cov(\NAwf)$ (remember that $\NAwf \subseteq \MAwf\subseteq \SNwf$). By~\autoref{MoreZFC}, we obtain that $\sup_{b\in\omega^\omega}\dfrak(\Rbf_b) =\aleph_1$ in Sacks' model, which yields the consistency of $\sup_{b\in\omega^\omega}\dfrak(\Rbf_b)<\cov(\MAwf)$. In addition, it follows that no classical cardinal characteristic of the continuum (different from $\cfrak$) is an upper bound of $\cov(\MAwf)$. Because of the latter,  we ask whether is it consistent with ZFC that $\cov(\MAwf)<\cfrak$, or even $\cov(\NAwf)<\cfrak$. The upper bound of $\cov(\MAwf)$ from \autoref{MoreZFC} is used in~\autoref{Secmain} to show that this holds for $\MAwf$, but the case for $\NAwf$ remains open.

\begin{teorema}[\autoref{Thm:covMA}]\label{ThmCovma}
Let $\theta<\nu\leq\lambda$ be uncountable cardinals such that $\theta^{<\theta}=\theta$, $\nu^{\theta} = \nu$ and $\lambda^{\aleph_0} = \lambda$. Then there is a poset, preserving cofinalities, forcing 
\[
     \cov(\Nwf) = \aleph_1  \leq \add(\Mwf) = \cof(\Mwf) = \theta 
      \leq \cov(\MAwf)\leq \nu \leq  \non(\Nwf) =\cfrak =\lambda.
\]
In particular, it is consistent with $\thzfc$ that $\cov(\MAwf) < \non(\Nwf)$.    
\end{teorema}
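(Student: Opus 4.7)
The plan is to exploit the upper bound $\cov(\MAwf)\le\dfrak\!\left(\prod_{b\in D}\Rbf_b\right)$ from \autoref{MoreZFC}: if, in the extension, a dominating family $D\subseteq\baire$ of size $\theta$ is available (which is automatic once $\dfrak=\theta$) together with a dominating family for $\prod_{b\in D}\Rbf_b$ of size $\nu$, then $\cov(\MAwf)\le\nu$ comes for free, and it remains to pin the other Cicho\'n cardinals at the desired values. Starting from a ground model of GCH (consistent with the cardinal arithmetic hypotheses $\theta^{<\theta}=\theta$, $\nu^\theta=\nu$, $\lambda^{\aleph_0}=\lambda$), the poset $\Pbb$ is built as a ccc construction in the spirit of the ``uf-extendable matrix iteration'' mentioned in the paper's keywords, conceptually in three stages.

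First I would construct a finite-support ccc iteration $\Pbb_0$ of length $\theta$ that, at cofinally many stages, alternates amoeba-for-category with eventually-different real forcings to pin $\add(\Mwf)=\cof(\Mwf)=\theta$ in $V^{\Pbb_0}$, while using only $\Nwf$-preserving factors so that $\cov(\Nwf)=\aleph_1$ is maintained. Since $\dfrak=\theta$ holds in this intermediate model, fix a dominating family $D\in V^{\Pbb_0}$ of size $\theta$. Next, iterate over $V^{\Pbb_0}$ with a ccc poset $\dot{\Qor}$ of size $\nu$ which, via $\nu$-long bookkeeping (available because $\nu^\theta=\nu$), adds for every $b\in D$ and every $f\in\prod b$ arising along the iteration a Cohen-type generic pair $(r,h)$ that $\Rbf_b$-dominates $f$. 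A standard dovetailing then gives $\dfrak(\prod_{b\in D}\Rbf_b)\le\nu$ in $V^{\Pbb_0\ast\dot{\Qor}}$, hence $\cov(\MAwf)\le\nu$ by \autoref{MoreZFC}; choosing the factors of $\dot{\Qor}$ to be Cohen-like preserves the values of $\add(\Mwf)$, $\cof(\Mwf)$, and $\cov(\Nwf)$ from the first stage.

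Finally, to push $\non(\Nwf)=\cfrak=\lambda$, I would further force with a $\lambda$-wide matrix product (indexed over the rows of the matrix iteration) of ccc forcings whose generics collectively fall outside every intermediate-model null set; $\lambda^{\aleph_0}=\lambda$ then yields $\cfrak=\lambda$, and the wide-product design provides a non-null set of size $\lambda$, forcing $\non(\Nwf)\ge\lambda$. Ccc of the whole construction ensures cofinality preservation. The main obstacle is the preservation argument for the last stage: one must verify that the wide product leaves the $\prod_{b\in D}\Rbf_b$-dominating family of stage two intact (so $\cov(\MAwf)\le\nu$ survives), adds no new null cover of $\cantor$ (so $\cov(\Nwf)=\aleph_1$ survives), and does not enlarge $\cof(\Mwf)$ beyond $\theta$—exactly the preservation suite that the uf-extendable matrix iteration framework is engineered to provide, and whose verification for this particular constellation is the technical core of the theorem.
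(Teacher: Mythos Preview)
Your staged approach has genuine gaps that the uf-extendable framework does not repair. First, stage~2 as described is a length-$\nu$ finite-support iteration of Cohen-type posets; any such iteration forces $\cov(\Mwf)\ge\cf(\nu)$ via the Cohen reals at limit stages, which (for $\nu$ regular) contradicts $\cof(\Mwf)=\theta<\nu$ obtained in stage~1. Second, and more fundamentally, stage~3 adds $\lambda$ many new reals, hence $\lambda$ many new elements of each $\prod b$; there is no reason these are $\Rbf_b$-dominated by the $\nu$ witnesses produced in stage~2, so the dominating family for $\prod_{b\in D}\Rbf_b$ does not survive. The uf-extendable machinery is engineered to control $\bfrak$ (via ultrafilter limits preventing dominating reals), not to preserve $\cof(\Mwf)$ or $\Rbf_b$-dominating families across later stages, so invoking it here is a non-sequitur.

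The paper's proof is both simpler and structurally different. It first forces with the ${<}\theta$-closed poset $\Fn_{<\theta}(\nu,\theta)$, which adds no reals but sets $\dfrak_\theta:=\dfrak(\langle\theta,\theta,{\le}\rangle^\theta)=\nu$. It then performs a \emph{single} FS iteration of length $\lambda\theta$ (hence cofinality $\theta$) whose iterands are $\Dor\ast\Por_{\dot d_\xi}$, i.e.\ Hechler followed by the $\Rbf_b$-generic forcing for the just-added dominating real. This one iteration simultaneously yields $\cfrak=\lambda$, $\add(\Mwf)=\cof(\Mwf)=\dfrak=\theta$, and $\cov(\Nwf)=\aleph_1$ (the iterands are $\sigma$-centered, hence $\Cn$-good). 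The key observation is that in the final model each $\Rbf_{d'_\rho}\eqT\theta$ as a directed order: every $x\in\prod d'_\rho$ appears at some intermediate stage $V_{\lambda\eta}$, and the $\Por_{d'_\varrho}$-generic at any later $\varrho\ge\eta$ dominates it. Hence $\prod_{b\in D}\Rbf_b\eqT\langle\theta,{\le}\rangle^\theta$, and since ccc forcing preserves $\dfrak_\theta=\nu$, one gets $\cov(\MAwf)\le\dfrak_\theta=\nu$ directly. No matrix iteration is used for this theorem; the reduction to $\dfrak_\theta$ is the idea you are missing.
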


Concerning more consistency results, 
Pawlikowski~\cite[Thm.~2.4]{paw85} constructed a FS (finite support) iteration of ccc posets to obtain a model where 
\[
\add(\Nwf)=\add(\Mwf)=\bfrak=\aleph_1<\non(\NAwf)=\non(\MAwf)=\cfrak=\aleph_2.
\]
On the other hand, in~\cite[Thm.~5.15]{CM} we constructed a model where 
\[
\add(\Nwf)=\add(\Mwf)=\bfrak=\mu\leq\minLc=\non(\Mwf)=\cov(\Mwf)=\nu\leq\dfrak=\cfrak=\lambda
\]
for arbitrary regular cardinals $\mu\leq\nu$ and a cardinal $\lambda\geq\nu$ such that $\lambda=\lambda^{<\mu}$.\footnote{This last requirement can be weakened to $\cof([\lambda]^{<\mu}) = \lambda = \lambda^{\aleph_0}$.} Hence, by~\autoref{chNPaw} and \autoref{bas:NA}~\ref{bas:NA2}, 
\begin{equation*}\label{cmresult}
\bfrak=\mu \leq \non(\NAwf)=\non(\MAwf)=\nu\tag{\faFilePdfO}    
\end{equation*}
holds in this model. 
Consequently, it is consistent that $\bfrak<\add(\NAwf)=\non(\NAwf)$ by~\autoref{MainThm1}. On the other hand,  $\aleph_1=\bfrak=\non(\MAwf)<\cov(\Nwf)=\aleph_2$ holds in the model obtained by a FS iteration of length $\aleph_2$ of random forcing
(see e.g~\cite[Thm.~5.4]{Car23}) because $\non(\MAwf)=\non(\EAwf)\leq\non(\Ewf)$ by \autoref{ma=me}.

Motivated by~(\ref{cmresult}), we could ask:

\begin{problem}\label{P:ma_na} Are each of the following statements consistent with $\thzfc$?
\begin{enumerate}[label=\normalfont(\alph*)]
    \item\label{P:ma_na1} $\bfrak<\non(\NAwf)<\non(\MAwf)$.
    
    \item\label{P:ma_na2} $\non(\NAwf)<\non(\MAwf)<\bfrak$.
    
    \item\label{P:ma_na3} $\non(\NAwf)<\bfrak<\non(\MAwf)$.
\end{enumerate}
\end{problem}
Concerning~\autoref{P:ma_na}~\ref{P:ma_na1},~\ref{P:ma_na2}, it is known that ``$\non(\Nwf)=\aleph_1$ and $\cov(\Nwf)=\bfrak=\aleph_2=\cfrak$" is consistent with ZFC (see e.g~\cite[Model~7.6.7]{BJ}), which implies $\non(\NAwf)=\non(\MAwf)=\aleph_1<\bfrak$. On the other hand, a model for $\non(\NAwf)=\aleph_1<\non(\MAwf)=\bfrak=\cfrak=\aleph_2$ is obtained by adding $\aleph_2$-many dominating reals by using a FS iteration of length $\aleph_2$ of Hechler forcing $\Dor$ (see~\autoref{defposet}~\ref{defposet2}) because $\add(\Mwf)=\min\{\bfrak,\non(\MAwf)\}$ by~\autoref{chPawmn} and $\non(\NAwf)=\aleph_1$  by~\cite[Lem.~4.24]{CM}.

We give a positive answer to~\autoref{P:ma_na}~\ref{P:ma_na3}, which is the main result of this work. By separating even more cardinal characteristics of the continuum, we prove:

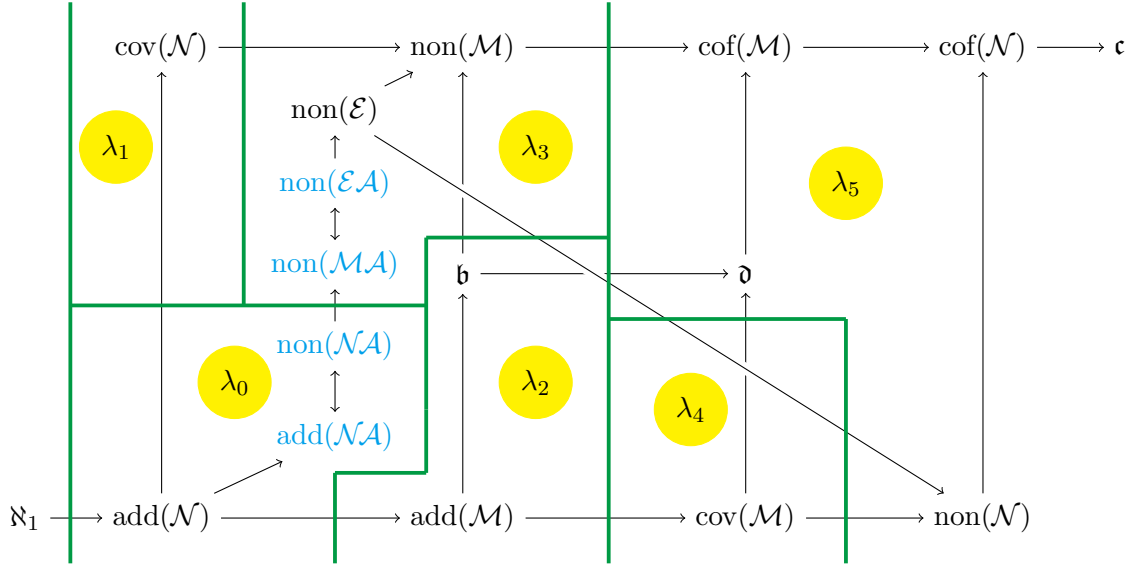
\begin{figure}[t]
\centering
\begin{tikzpicture}[scale=1.2]
\small{
\node (aleph1) at (-1,2.3) {$\aleph_1$};
\node (addn) at (0.5,2.3){$\add(\Nwf)$};
\node (covn) at (0.5,7.5){$\cov(\Nwf)$};
\node (nonn) at (9.5,2.3) {$\non(\Nwf)$} ;
\node (cfn) at (9.5,7.5) {$\cof(\Nwf)$} ;
\node (addm) at (3.8,2.3) {$\add(\Mwf)$} ;
\node (covm) at (6.9,2.3) {$\cov(\Mwf)$} ;
\node (nonm) at (3.8,7.5) {$\non(\Mwf)$} ;
\node (cfm) at (6.9,7.5) {$\cof(\Mwf)$} ;
\node (b) at (3.8,5) {$\bfrak$};
\node (d) at (6.9,5) {$\dfrak$};
\node (c) at (11,7.5) {$\cfrak$};
\node (addna) at (2.4,3.2) {\subiii{$\add(\NAwf)$}};
\node (nonna) at (2.4,4.2) {\subiii{$\non(\NAwf)$}};
\node (nonma) at (2.4,5.1) {\subiii{$\non(\MAwf)$}};
\node (nonea) at (2.4,6) {\subiii{$\non(\EAwf)$}};
\node (none) at (2.4,6.8) {$\non(\Ewf)$};
\draw (aleph1) edge[->] (addn)
      (addn) edge[->] (covn)
      (covn) edge [->] (nonm)
      (nonm)edge [->] (cfm)
      (cfm)edge [->] (cfn)
      (cfn) edge[->] (c);

\draw (nonma) edge [<->]  (nonea)
   (addn) edge [->]  (addm)
   (addm) edge [->]  (covm)
   (covm) edge [->]  (nonn)
   (addna) edge [<->]  (nonna)
   (nonna) edge [->]  (nonma)
   (nonn) edge [->]  (cfn);
\draw (addm) edge [->] (b)
      (b)  edge [->] (nonm);
\draw (covm) edge [->] (d)
      (d)  edge[->] (cfm);
\draw (b) edge [->] (d);

\draw   (nonea) edge [->] (none)
        (none) edge [->] (nonm)
        (addn) edge [->] (addna);

\draw (none) edge [line width=.15cm,white,-] (nonn)
(none) edge [->] (nonn);

\draw[color=sug,line width=.05cm] (-0.5,4.65)--(3.4,4.65);
\draw[color=sug,line width=.05cm] (3.4,5.4)--(5.4,5.4);
\draw[color=sug,line width=.05cm] (-0.5,1.8)--(-0.5,8);
\draw[color=sug,line width=.05cm] (5.4,8)--(5.4,1.8);
\draw[color=sug,line width=.05cm] (3.4,3.5)--(3.4,5.4);
\draw[color=sug,line width=.05cm] (3.4,2.8)--(3.4,3.5);
\draw[color=sug,line width=.05cm] (3.4,2.8)--(2.4,2.8);
\draw[color=sug,line width=.05cm] (2.4,1.8)--(2.4,2.8);
\draw[color=sug,line width=.05cm] (1.4,4.65)--(1.4,8);
\draw[color=sug,line width=.05cm] (8,1.8)--(8,4.5);
\draw[color=sug,line width=.05cm] (5.4,4.5)--(8,4.5); 

\draw[circle, fill=yellow,color=yellow] (1.3,3.8) circle (0.4);
\draw[circle, fill=yellow,color=yellow] (0,6.4) circle (0.4);
\draw[circle, fill=yellow,color=yellow] (4.6,3.8) circle (0.4);
\draw[circle, fill=yellow,color=yellow] (4.6,6.4) circle (0.4);
\draw[circle, fill=yellow,color=yellow] (6.3,3.5) circle (0.4);
\draw[circle, fill=yellow,color=yellow] (8,6) circle (0.4);
\node at (1.3,3.8) {$\lambda_0$};
\node at (0,6.4) {$\lambda_1$};
\node at (4.6,3.8) {$\lambda_2$};
\node at (4.6,6.4) {$\lambda_3$};
\node at (6.3,3.5) {$\lambda_4$};
\node at (8,6) {$\lambda_5$};
}
\end{tikzpicture}
\caption{Constellation forced in~\autoref{Mainthm}.}
\label{Figtthm:a}
\end{figure}

\begin{teorema}[\autoref{Mthm}]\label{Mainthm}
Let $\lambda_0\leq\lambda_1\leq\lambda_2\leq\lambda_3\leq\lambda_4$ be uncountable regular cardinals, and $\lambda_5$ a cardinal such that $\lambda_5\geq\lambda_4$ and $\cof([\lambda_5]^{<\lambda_i})=\lambda_5 = \lambda_5^{\aleph_0}$ for $i\leq 2$. 
Then there is a ccc poset forcing~\autoref{Figtthm:a}. 
\end{teorema}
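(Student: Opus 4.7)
The strategy is a \emph{uf-extendable matrix iteration} of ccc forcings, in the style developed in~\cite{CM} and the wider literature on separating many entries of Cicho\'n's diagram. One constructs a two-dimensional system of FS iterations whose horizontal length is~$\lambda_5$ and whose vertical heights realize the ordered cardinals $\lambda_0\leq\cdots\leq\lambda_4$; at successive horizontal stages, a bookkeeping function of size $\lambda_5$ selects a ccc component from a prescribed list, so that every ``yellow circle'' in \autoref{Figtthm:a} is eventually attended to. The target value at each circle is then read off by pairing (i) lower-bound arguments of the LCU (linearly cofinal unbounded) and COB (cofinally often bounded) style, exploiting that the chosen forcings generically add reals witnessing unboundedness against everything earlier in the matrix, with (ii) upper-bound arguments coming from the ZFC inequalities collected in \autoref{MoreZFC} and the arrows of Cicho\'n's diagram.

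The ccc ingredients are the usual ones: random forcing to push $\cov(\Nwf)\geq\lambda_1$, Hechler's $\Dor$ to push $\bfrak=\add(\Mwf)\geq\lambda_2$, localization forcing $\Loc$ to bound $\cof(\Nwf)$ and hence to drive $\minLc=\non(\NAwf)$ down, together with an eventually-different real forcing for $\non(\Mwf)$ and an amoeba-style forcing for the remaining $\Mwf$-related Cicho\'n values. The genuinely new ingredient is the separation of $\non(\NAwf)=\lambda_0$ from $\non(\MAwf)\geq\lambda_1$: by the reformulation $\non(\MAwf)=\min_{b\in\baire}\bfrak(\Rbf_b)$ coming from \autoref{addtch}, it suffices to preserve, for each $b\in\baire$, an $\Rbf_b$-unbounded family of size $\lambda_1$ throughout the iteration, while simultaneously making sure that every slalom family of size $\lambda_0$ in the final model fails to localize a cofinally added generic real (which forces $\minLc\leq\lambda_0$ via \autoref{chNPaw}).

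The main obstacle is exactly this last step: to verify that each ccc component employed at the ``low'' horizontal stages (those designed to bring $\minLc$ down to $\lambda_0$) preserves $\Rbf_b$-unboundedness for every $b$. I expect this to demand a preservation theorem of Judah--Shelah type, now adapted to the Polish relational system $\Rbf_b$, together with a check that the standard components (random, Hechler, $\Loc$, eventually-different, amoeba) all enjoy this property. Once such a preservation lemma is in place, the matrix-iteration machinery of~\cite{CM} applies essentially verbatim: the LCU/COB schemes at the generic reals of the appropriate rows furnish the lower bounds at each $\lambda_i$, the cardinal-arithmetic hypotheses $\cof([\lambda_5]^{<\lambda_i})=\lambda_5=\lambda_5^{\aleph_0}$ guarantee cofinality preservation and $\cfrak=\lambda_5$ in the final model, and the ZFC content summarized in \autoref{cichonext} together with \autoref{MoreZFC} collapses the remaining cardinals to the correct circles of \autoref{Figtthm:a}.
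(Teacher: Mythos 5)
The central idea you need is missing, and in fact you have the direction reversed. In the target constellation $\non(\MAwf)=\lambda_3$, which sits \emph{above} $\bfrak=\lambda_2$; since $\non(\MAwf)=\min_{b}\bfrak(\Rbf_b)$, you therefore need each $\bfrak(\Rbf_b)$ to be pushed \emph{up} to $\lambda_3$. Your plan of preserving $\Rbf_b$-unbounded families of size $\lambda_1$ would instead pin $\bfrak(\Rbf_b)\leq\lambda_1$, which contradicts the goal (and already your claim ``$\non(\MAwf)\geq\lambda_1$'' does not match the statement, which asks for $\lambda_3$). There is no Judah--Shelah-style $\Rbf_b$-goodness preservation theorem in the paper, and none is needed: the paper's genuinely new ingredient is the forcing $\Por_b$ of \autoref{def_Pb}, whose generic $(J_{\gen},h_{\gen})$ is $\Rbf_b$-dominating over the ground model, and which is shown to be uniformly $\sigma$-uf-$\lim$-linked (\autoref{uf:Pb}) so that it fits into a ${<}\lambda_2$-uf-extendable matrix iteration. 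Inserting $\Por_{\dot b}$ cofinally and with appropriate bookkeeping of names $\dot b$ is what drives $\bfrak(\Rbf_b)$ up.

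Your proposed ingredient list also does not match the proof: the paper's iterands are $\Loc$, random $\Bor$, Hechler $\Dor$, Cohen, and the new $\Por_b$ (plus $\Qor_f$ in the companion theorem). There is no eventually-different or amoeba forcing; the $\Mwf$-side values are produced purely by the Cohen reals at the matrix corners via \autoref{lem:strongCohen} and \autoref{matsizebd}. Moreover, the upper bound on $\bfrak(\Rbf_b)$ (and on $\dfrak(\Rbf_b)$) does not come from a goodness lemma; it is read off directly from an explicit Tukey connection $\Rbf_b\leqT\lambda_4\times\lambda_3$ constructed by locating, for each $x\in\prod b$, the cell $V_{\alpha,\lambda_\rho}$ of the matrix where $x$ first appears and sending $(\alpha,\rho)$ to a later $\Por_b$-generic (step $\boxplus_4$ of the proof). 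Combined with the lower bound $\Cbf_\Mwf\leqT\Rbf_b$ and $\non(\Mwf)=\lambda_3$, this yields $\bfrak(\Rbf_b)=\lambda_3$ and $\dfrak(\Rbf_b)=\lambda_4$. Finally, you do not address $\add(\SNwf)=\lambda_0$ and $\cov(\SNwf)=\lambda_1$, which require the $\Rbf^f_\Gwf$-goodness preservation (\autoref{mainpresaddSN}) and the precaliber argument for $\cov(\SNwf)$ (\autoref{thm:precaliber}). In short: the broad matrix-with-ultrafilter-limits framework you name is right, but without $\Por_b$ (and without the correct direction for controlling $\bfrak(\Rbf_b)$) the proof cannot get off the ground.
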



We describe the method to approach \autoref{Mainthm}. 
Goldstern, Mej\'ia,  and Shelah~\cite{GMS} discovered a way to construct sequences of ultrafilters along a FS iteration to control that restrictions of the eventually different real forcing do not add dominating reals, a technique that was used to force
the consistency of the constellation of~\autoref{leftGMS}. The latter was used and improved in~\cite{BCM} to force seven values in Cichon's diagram with the left side separated (see~\autoref{BCM}). The latter method consists of building ultrafilters along a matrix iteration, which is known as an~\emph{ultrafilter extendable matrix iteration} (uf-extendable matrix iteration, see~\autoref{Defmatuf}). Recently, in~\cite{Car23} this method was used to force that the four cardinal characteristics associated with $\Ewf$ can be pairwise different, and in~\cite{BCM2} to force \emph{Cicho\'n's maximum} ($10$ different values in Cicho\'n's diagram, the maximum possible) along with pairwise different values for the cardinal characteristics associated with $\SNwf$.  

\begin{figure}[t]
\centering
\begin{tikzpicture}[scale=1.2]
\small{
\node (aleph1) at (-1,3) {$\aleph_1$};
\node (addn) at (1,3){$\add(\Nwf)$};
\node (covn) at (1,7){$\cov(\Nwf)$};
\node (nonn) at (9,3) {$\non(\Nwf)$} ;
\node (cfn) at (9,7) {$\cof(\Nwf)$} ;
\node (addm) at (3.66,3) {$\add(\Mwf)$} ;
\node (covm) at (6.33,3) {$\cov(\Mwf)$} ;
\node (nonm) at (3.66,7) {$\non(\Mwf)$} ;
\node (cfm) at (6.33,7) {$\cof(\Mwf)$} ;
\node (b) at (3.66,5) {$\bfrak$};
\node (d) at (6.33,5) {$\dfrak$};
\node (c) at (11,7) {$\cfrak$};
\draw (aleph1) edge[->] (addn)
      (addn) edge[->] (covn)
      (covn) edge [->] (nonm)
      (nonm)edge [->] (cfm)
      (cfm)edge [->] (cfn)
      (cfn) edge[->] (c);
\draw
   (addn) edge [->]  (addm)
   (addm) edge [->]  (covm)
   (covm) edge [->]  (nonn)
   (nonn) edge [->]  (cfn);
\draw (addm) edge [->] (b)
      (b)  edge [->] (nonm);
\draw (covm) edge [->] (d)
      (d)  edge[->] (cfm);
\draw (b) edge [->] (d);

\draw[color=blue,line width=.05cm] (-0.2,5.6)--(4.6,5.6);
\draw[color=blue,line width=.05cm] (-0.2,2.5)--(-0.2,7.5);
\draw[color=blue,line width=.05cm] (2.5,2.5)--(2.5,7.5);
\draw[color=blue,line width=.05cm] (4.6,2.5)--(4.6,7.5);

\draw[circle, fill=yellow,color=yellow] (1.7,4.4) circle (0.4);
\draw[circle, fill=yellow,color=yellow] (7.8,5)   circle (0.4);
\draw[circle, fill=yellow,color=yellow] (1.7,6.3) circle (0.4);
\draw[circle, fill=yellow,color=yellow] (3.1,4) circle (0.4);
\draw[circle, fill=yellow,color=yellow] (3.1,6.3) circle (0.4);
\node at (1.7,4.4) {$\theta_0$};
\node at (7.8,5) {$\theta_4$};
\node at (1.7,6.3) {$\theta_1$};
\node at (3.1,4) {$\theta_2$};
\node at (3.1,6.3) {$\theta_3$};
}
\end{tikzpicture}
\caption{Separating the cardinal characteristics on the left side of Cicho\'n's diagram. This constellation was forced in~~\cite[Main Thm.~6.1]{GMS} where $\aleph_1\leq\theta_0\leq\theta_1 \leq\theta_2 \leq \theta_3=\theta_3^{\aleph_0}$ are regular and $\theta_4$ is a cardinal such that $\theta_3 < \theta_4=\theta_4^{<\theta_3}$.}
\label{leftGMS}
\end{figure}
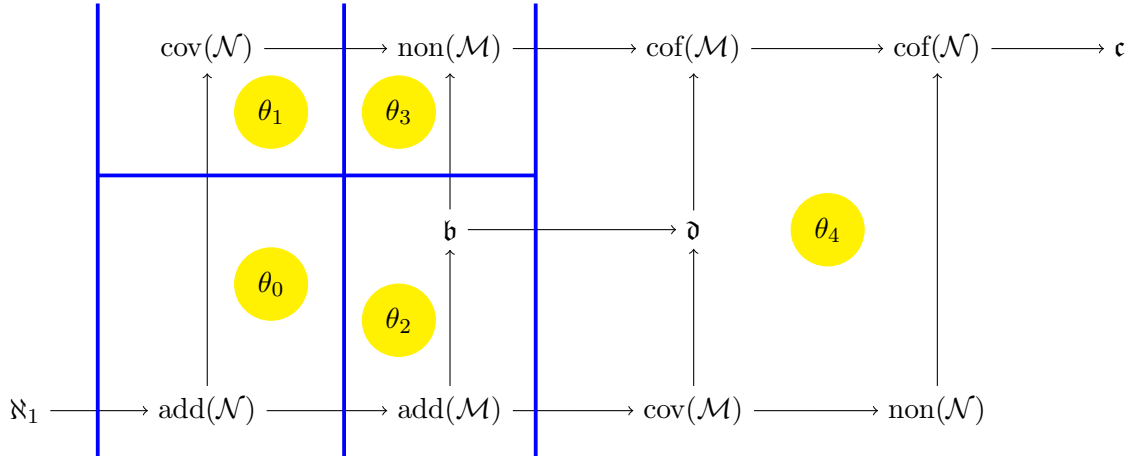  

 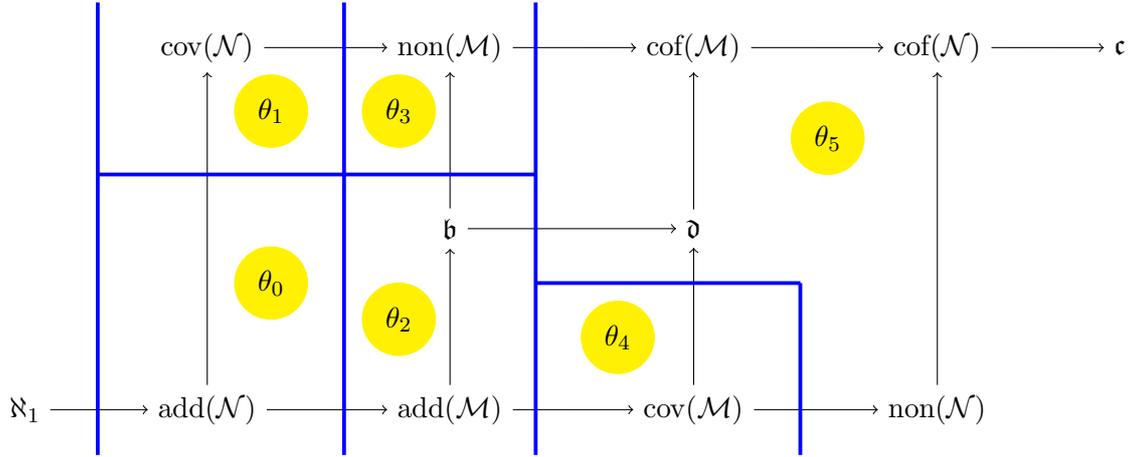
\begin{figure}[ht!]
\centering
\begin{tikzpicture}[scale=1.2]
\small{
\node (aleph1) at (-1,3) {$\aleph_1$};
\node (addn) at (1,3){$\add(\Nwf)$};
\node (covn) at (1,7){$\cov(\Nwf)$};
\node (nonn) at (9,3) {$\non(\Nwf)$} ;
\node (cfn) at (9,7) {$\cof(\Nwf)$} ;
\node (addm) at (3.66,3) {$\add(\Mwf)$} ;
\node (covm) at (6.33,3) {$\cov(\Mwf)$} ;
\node (nonm) at (3.66,7) {$\non(\Mwf)$} ;
\node (cfm) at (6.33,7) {$\cof(\Mwf)$} ;
\node (b) at (3.66,5) {$\bfrak$};
\node (d) at (6.33,5) {$\dfrak$};
\node (c) at (11,7) {$\cfrak$};
\draw (aleph1) edge[->] (addn)
      (addn) edge[->] (covn)
      (covn) edge [->] (nonm)
      (nonm)edge [->] (cfm)
      (cfm)edge [->] (cfn)
      (cfn) edge[->] (c);
\draw
   (addn) edge [->]  (addm)
   (addm) edge [->]  (covm)
   (covm) edge [->]  (nonn)
   (nonn) edge [->]  (cfn);
\draw (addm) edge [->] (b)
      (b)  edge [->] (nonm);
\draw (covm) edge [->] (d)
      (d)  edge[->] (cfm);
\draw (b) edge [->] (d);

\draw[color=blue,line width=.05cm] (-0.2,5.6)--(4.6,5.6);
\draw[color=blue,line width=.05cm] (-0.2,2.5)--(-0.2,7.5);
\draw[color=blue,line width=.05cm] (2.5,2.5)--(2.5,7.5);
\draw[color=blue,line width=.05cm] (4.6,2.5)--(4.6,7.5);
\draw[color=blue,line width=.05cm] (4.6,4.4)--(7.5,4.4);
\draw[color=blue,line width=.05cm] (7.5,2.5)--(7.5,4.4);
 

\draw[circle, fill=yellow,color=yellow] (1.7,4.4) circle (0.4);
\draw[circle, fill=yellow,color=yellow] (7.8,6)   circle (0.4);
\draw[circle, fill=yellow,color=yellow] (1.7,6.3) circle (0.4);
\draw[circle, fill=yellow,color=yellow] (3.1,4) circle (0.4);
\draw[circle, fill=yellow,color=yellow] (3.1,6.3) circle (0.4);
\draw[circle, fill=yellow,color=yellow] (5.5,3.8) circle (0.4);
\node at (1.7,4.4) {$\theta_0$};
\node at (7.8,6) {$\theta_5$};
\node at (1.7,6.3) {$\theta_1$};
\node at (3.1,4) {$\theta_2$};
\node at (5.5,3.8) {$\theta_4$};
\node at (3.1,6.3) {$\theta_3$};
}
\end{tikzpicture}
\caption{Seven values in Cicho\'n's diagram. This constellation was forced in~\cite[Thm~5.3]{BCM} where $\aleph_1\leq\theta_0\leq\theta_1 \leq\theta_2 \leq \theta_3\leq\theta_4$ are regular cardinals and $\theta_5$ is a cardinal such that $\theta_4\leq\theta_5=\theta_5^{{<}\theta_2}$.
}
\label{BCM}
\end{figure}

The proof of~\autoref{Mainthm} is settled by the construction of a  ${<}\lambda_3$-uf-extendable matrix iteration. Details are provided in~\autoref{Secmain}.


Though it is well-known the consistency with ZFC of each of $\add(\Nwf)=\bfrak<\add(\SNwf)$ and  $\add(\Nwf)<\bfrak=\add(\SNwf)$ (see~\cite[Sec.~8.4B]{BJ}), it is not known any model where the values of $\add(\Nwf)$, $\bfrak$, and $\add(\SNwf)$ are pairwise different. Intending to solve the latter, we introduce a $\sigma$-linked poset that increases $\add(\SNwf)$ and does not add dominating reals, which is used to prove our next main result:

\begin{teorema}[{\autoref{Thm:ufQsn}}]\label{Thm:faddSN}
Let $\lambda_0\leq\lambda_3\leq\lambda_4$ be uncountable regular cardinals, and $\lambda_5$ a cardinal such that $\lambda_5\geq\lambda_4$ and $\cof([\lambda_5]^{<\lambda_0})=\lambda_5 = \lambda_5^{\aleph_0}$. Then there is some ccc poset forcing~\autoref{Fig:addsn}. 
\end{teorema}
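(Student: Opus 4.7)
The plan is to prove the theorem by constructing a ccc poset $\Pbb$ via a ${<}\lambda_3$-uf-extendable matrix iteration in the spirit of~\autoref{Mainthm}, but now with the new $\sigma$-linked poset $\Qor_{\SNwf}$ (introduced just before the statement) plugged in as the engine for pushing $\add(\SNwf)$ upward. Because $\Qor_{\SNwf}$ is $\sigma$-linked and adds no dominating reals, it is precisely the right kind of forcing to raise $\add(\SNwf)$ past $\bfrak$ while leaving the uf-extendable preservation of $\bfrak$ intact. The target constellation \autoref{Fig:addsn} should have, in particular, $\add(\Nwf)=\lambda_0$, $\bfrak=\lambda_3$, $\add(\SNwf)\geq\lambda_4$, and $\cfrak=\lambda_5$, giving a model in which the three cardinals $\add(\Nwf)$, $\bfrak$, $\add(\SNwf)$ are pairwise distinct.

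First, I would lay out the matrix iteration of height $\lambda_3$ and cofinal length $\lambda_5$, following the ultrafilter-extension machinery of~\cite{GMS,BCM}: restrictions of the eventually different real forcing along each row, controlled by the generated ultrafilters, ensure that no column adds dominating reals over previous columns, pinning $\bfrak=\lambda_3$ in the final model. Second, using a bookkeeping (enabled by the hypothesis $\cof([\lambda_5]^{<\lambda_0})=\lambda_5=\lambda_5^{\aleph_0}$), I would interleave along the columns three families of posets: amoeba-like forcings to set $\add(\Nwf)=\lambda_0$; Hechler forcing $\Dor$ cofinally in the rightmost part of the matrix to guarantee $\bfrak\geq\lambda_3$; and $\lambda_4$ many iterations of $\Qor_{\SNwf}$, distributed cofinally, to force $\add(\SNwf)\geq\lambda_4$. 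The matching upper bounds should come from the $\Lc$-preservation framework of~\cite{CM} adapted to the matrix setting, plus a direct diagonalization against the generics added by $\Qor_{\SNwf}$ to pin $\add(\SNwf)\leq\lambda_4$.

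The main obstacle is verifying that $\Qor_{\SNwf}$ integrates correctly with the uf-extendable framework. Concretely, one needs $\Qor_{\SNwf}$ to be sufficiently absolute---ideally Suslin ccc, or at least admitting ultrafilter-limit conditions analogous to those of random forcing in~\cite{GMS}---so that inserting it into a column does not destroy the ultrafilter extensions being maintained at the higher rows, and hence preserves the unboundedness of the row-generics over earlier columns. Since $\Qor_{\SNwf}$ is $\sigma$-linked and non-dominating, this preservation should follow the same template as for other non-dominating ccc posets treated in~\cite{BCM2,Car23}, but the definability of its conditions in terms of the coded strong-measure-zero witnesses will require care. A secondary subtlety is tuning the bookkeeping to ensure the \emph{strict} separation $\bfrak<\add(\SNwf)$: the family of strong-measure-zero sets with non-small union must not be realized until the final block of $\Qor_{\SNwf}$ instances is processed, which amounts to an ordering constraint on the bookkeeping function.
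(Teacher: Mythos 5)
Your reading of the target constellation (\autoref{Fig:addsn}) is off in a way that sends the whole construction in the wrong direction. The theorem forces $\add(\Nwf)=\bfrak=\lambda_0$ and $\add(\SNwf)=\lambda_3$ (with $\non(\SNwf)=\lambda_4$, $\cfrak=\lambda_5$); it does \emph{not} realize a pairwise-distinct triple $\add(\Nwf)<\bfrak<\add(\SNwf)$. Indeed, the paper explicitly lists the consistency of $\add(\Nwf)<\bfrak<\add(\SNwf)$ (and of $\add(\Nwf)<\add(\SNwf)<\bfrak$) as an \emph{open} problem in the final section, remarking that $\Qor_f$ was introduced with that question in mind but that it is not yet known how to avoid inadvertently raising $\minLc$. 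So your plan, even if carried out, would be proving something the paper does not claim and cannot currently prove.

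This misreading creates a concrete contradiction in your construction. You propose to interleave Hechler forcing cofinally to drive $\bfrak$ up to $\lambda_3$, while simultaneously running a ${<}\lambda_3$-uf-extendable matrix iteration. But the whole point of the uf-extendable machinery is to \emph{prevent} columns from adding dominating reals over earlier columns, so that $\bfrak$ stays at the small regular parameter; Hechler forcing adds dominating reals and is incompatible with being uf-linked. The paper's actual construction uses no Hechler at all: the matrix is ${<}\lambda_0$-uf-extendable (not ${<}\lambda_3$), has height $\lambda_4$ (not $\lambda_3$) and length $\lambda_4+\lambda_5\lambda_4\lambda_3$, and the iterands are only Cohen reals at the start, then $\Por_{\dot b}$, $\Qor_{\dot f}$, random forcing $\Bor$, and restricted $\Loc^N$. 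The height $\lambda_4$ together with the length structure pins $\non(\Mwf)=\lambda_3$ and $\cov(\Mwf)=\lambda_4$, while the ${<}\lambda_0$-uf-extendability plus the $\Loc^N$ bookkeeping pins $\add(\Nwf)=\bfrak=\lambda_0$.

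Two further points. First, your stated ``main obstacle''---whether $\Qor_f$ (you call it $\Qor_{\SNwf}$) admits the required ultrafilter limits---is exactly what \autoref{ufQf} establishes: $\Qor_f$ is \emph{uniformly $\sigma$-uf-$\lim$-linked}, which is what qualifies it as an iterand in a uf-extendable matrix; this is not left open. Second, the control of $\add(\SNwf)$ and $\non(\SNwf)$ is not done by ``direct diagonalization'': it is done via the Tukey connections $\lambda_3\leqT\Cbf_\SNwf^\perp$, $\lambda_4\leqT\Cbf_\SNwf^\perp$ (from \autoref{thm:cfcovSN} applied to the two directions of the matrix), and $\SNwf\leqT(\lambda_4\times\lambda_3)^{\lambda_5}$, the last of which is proved by explicitly locating each $\SNwf$-witness in the grid $\lambda_4\times\lambda_4\lambda_3$ and feeding it the appropriate $\Qor_f$-generic. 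Getting the two grid dimensions right ($\lambda_4$ rows, order type $\lambda_4\lambda_3$ of blocks) is what separates $\add(\SNwf)=\lambda_3$ from $\non(\SNwf)=\lambda_4$, and it is precisely here that a height-$\lambda_3$ matrix would fail.
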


\begin{figure}[ht!]
\centering
\begin{tikzpicture}[scale=1.15]
\small{
\node (aleph1) at (-1,2.3) {$\aleph_1$};
\node (addn) at (0.5,2.3){$\add(\Nwf)$};
\node (covn) at (0.5,7.5){$\cov(\Nwf)$};
\node (nonn) at (10,2.3) {$\non(\Nwf)$} ;
\node (cfn) at (10,7.5) {$\cof(\Nwf)$} ;
\node (addm) at (3.8,2.3) {$\add(\Mwf)$} ;
\node (covm) at (6.5,2.3) {$\cov(\Mwf)$} ;
\node (nonm) at (3.8,7.5) {$\non(\Mwf)$} ;
\node (cfm) at (6.5,7.5) {$\cof(\Mwf)$} ;
\node (b) at (3.8,4.8) {$\bfrak$};
\node (d) at (6.5,4.8) {$\dfrak$};
\node (c) at (12,7.5) {$\cfrak$};
\node (covsn) at (5.25,5.75) {$\cov(\SNwf)$};
\node (addsn) at (2.5,4.8) {$\add(\SNwf)$};
\node (nonma) at (2.2,6.2) {$\non(\MAwf)$};
\node (nonsn) at (8.25,2.3) {$\non(\SNwf)$} ;

\draw (aleph1) edge[->] (addn)
      (addn) edge[->] (covn)
      (covn) edge [->] (nonm)
      (nonm)edge [->] (cfm)
      (cfm)edge [->] (cfn)
      (cfn) edge[->] (c);

\draw   (covm) edge [->]  (nonsn)
(nonsn) edge [->]  (nonn)
(addm) edge [->]  (covm)
(addn) edge [->]  (addm)
(addn) edge [->]  (addsn)
(nonn) edge [->]  (cfn)
 (addm) edge [->] (b)
      (b)  edge [->] (nonm);
\draw (covm) edge [->] (d)
      (d)  edge[->] (cfm);
\draw (b) edge [->] (d);

\draw (addn) edge [->] (nonma);
\draw (nonma) edge [->] (nonm);

\draw   (addsn) edge [line width=.15cm,white,-]  (covsn)
(addsn) edge [->]  (covsn);
\draw   (covsn) edge [line width=.15cm,white,-]  (c)
(covsn) edge [->]  (c);
\draw   (addsn) edge [line width=.15cm,white,-]  (nonsn)
(addsn) edge [->]  (nonsn);
(covsn) edge [->]  (c);
\draw   (nonma) edge [line width=.15cm,white,-]  (nonsn)
(nonma) edge [->]  (nonsn);
\draw   (covn) edge [line width=.15cm,white,-]  (covsn)
(covn) edge [->]  (covsn);
\draw[color=sug,line width=.05cm] (-0.5,1.8)--(-0.5,8);
\draw[color=sug,line width=.05cm] (-0.5,4.3)--(1.4,4.3);
\draw[color=sug,line width=.05cm] (1.4,4.3)--(3.4,4.3);
\draw[color=sug,line width=.05cm] (3.4,4.3)--(3.4,5.2);
\draw[color=sug,line width=.05cm] (3.4,5.2)--(6.1,5.2);
\draw[color=sug,line width=.05cm] (5.4,6.2)--(5.4,8);
\draw[color=sug,line width=.05cm] (5.4,6.2)--(6.1,6.2);
\draw[color=sug,line width=.05cm] (6.1,5.2)--(6.1,6.2);
\draw[color=sug,line width=.05cm] (5.4,1.8)--(5.4,5.2);
\draw[color=sug,line width=.05cm] (5.4,4)--(12,4);
\draw[circle, fill=yellow,color=yellow] (2.4,3.5) circle (0.4);
\draw[circle, fill=yellow,color=yellow] (1.2,5.5) circle (0.4);
\draw[circle, fill=yellow,color=yellow] (8.2,3.2) circle (0.4);
\draw[circle, fill=yellow,color=yellow] (8.2,5.8) circle (0.4);
\node at (2.4,3.5) {$\lambda_0$};
\node at (1.2,5.5) {$\lambda_3$};
\node at (8.2,3.2) {$\lambda_4$};
\node at (8.2,5.8) {$\lambda_5$};
}

\end{tikzpicture}
\caption{Constellation forced in~\autoref{Thm:faddSN}.}
\label{Fig:addsn}
\end{figure}
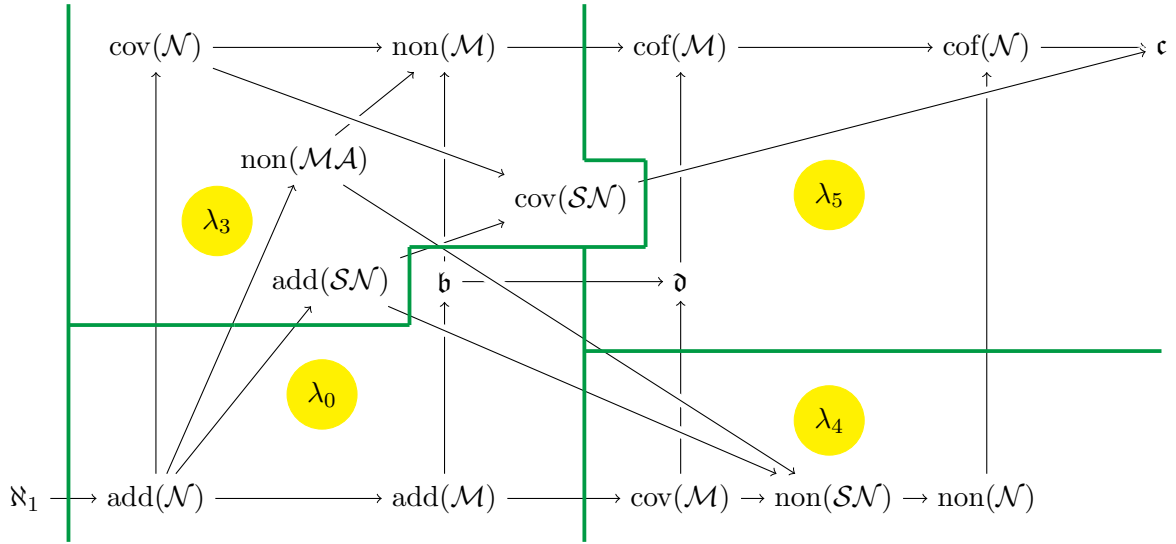

At the end of~\autoref{Secmain}, we shall show the previous theorem by using uf-extendable matrix iterations.

\section{ZFC results}\label{Sec:zfc}

We employ the combinatorial properties of the null-additive and meager-additive sets referred to in~\autoref{sec:intro} to prove~\autoref{MainThm1}, and to extract a relational system $\Rbf_b$ (\autoref{RelsAM}), parametrized with $b\in(\omega+1)^\omega$, present in the characterization of $\non(\MAwf)$ from \autoref{addtch}, which will be useful to prove consistency results.  
In addition, we present connections of $\Rbf_b$ with measure and category,  
and provide our proof of \autoref{MainThm1c}.

Recall that $\Ior$ denotes the set of partitions of $\omega$ into finite non-empty intervals. We use the following strengthening of \autoref{ch:NA-MA}~\ref{ch:NA}.

\begin{theorem}\label{ch:NA2}
    Let $h\in\baire$ be a function diverging to infinity. For $X\subseteq \cantor$, $X\in\NAwf$ iff the statement below holds.
    \begin{enumerate}[label = $(\text{\faThemeisle})_h$]
        \item For all $I \in\Ior$ there is some $\varphi\in\prod_{n\in \omega}\pts(2^{I_n})$ such that $\forall n\in \omega\colon |\varphi(n)|\leq h(n)$ and $X\subseteq H_\varphi$.
    \end{enumerate}
\end{theorem}
\begin{proof}
    See~\cite[Thm~3.2]{bartJudah} and \cite[Thm.~2.7.18]{BJ}, but we provide an argument in connection with \autoref{ch:NA-MA}~\ref{ch:NA} (which is this theorem for $h = \id_\omega$). It is enough to show that, for any $h,h'\in\baire$, if $h$ diverges to infinity, then $(\text{\faThemeisle})_{h'}$ implies $(\text{\faThemeisle})_h$. We use an argument similar to~\cite[Thm.~3.16]{CMlocalc}. Fix $I\in\Ior$. Pick an increasing function $g\in\baire$ with $g(0)=0$ such that, for $0<k<\omega$, $h(n)\geq h'(k)$ for all $n\geq g(k)$. Refine $I$ to $I'\in\Ior$ such that $I'_k:=\bigcup\set{I_n}{g(k)\leq n <g(k+1)}$. So, by $(\text{\faThemeisle})_h'$, there is some $\varphi'\in\prod_{k\in \omega}\pts(2^{I'_k})$ such that $|\varphi'(k)|\leq h'(k)$ for all $k<\omega$, and $X\subseteq H_{\varphi'}$. Set
    \[\varphi(n):=
    \begin{cases}
        \set{s\frestr I_n}{s\in\varphi'(k)} & \text{if $g(k)\leq n<g(k+1)$ for some (unique) $k\geq 1$,}\\
        \emptyset & \text{if $n<g(1)$.}
    \end{cases}\]
    It is clear that $\varphi\in\prod_{n\in \omega}\pts(2^{I_n})$ and $|\varphi(n)| = 0 \leq h(n)$ whenever $n<g(1)$; in the case that $n$ is as in the first case above, $|\varphi(n)|\leq |\varphi'(k)| \leq h'(k)\leq h(n)$. Moreover, $X\subseteq H_{\varphi'}\subseteq H_\varphi$, which finishes the proof.
\end{proof}

\begin{proof}[Proof of \autoref{MainThm1}]
It just suffices to prove that $\non(\NAwf)\leq\add(\NAwf)$. Suppose that $\set{X_\alpha}{\alpha<\kappa}\subseteq\NAwf$ and $\kappa<\non(\NAwf)$. We prove that $\bigcup_{\alpha<\kappa}X_\alpha\in\NAwf$. Let $I\in\Ior$. Then by using~\autoref{ch:NA-MA}~\ref{ch:NA}, for each $\alpha<\kappa$ choose $\varphi^I_\alpha\in\Swf(2^{I},\id_\omega)$ such that $X_\alpha\subseteq H_{\varphi^I_\alpha}$. Let $b^I:=\seq{[2^{I_n}]^{\leq n}}{n\in\omega}$. Since $\seq{\varphi^I_\alpha}{\alpha<\kappa}\subseteq\prod b^I$, by applying~\autoref{chNPaw} there is some $\psi^I\in\Swf(b^I,\id_\omega)$ such that $\forall\alpha<\kappa\colon \varphi^I_\alpha\in^*\psi^I$. Set $\varphi^I\in\Swf(2^{I},\id_\omega^2)$ by $\varphi^I(n):=\bigcup\psi^I(n)$. As a result, we get that $\forall\alpha<\kappa\,\forall^\infty n\colon \varphi^I_\alpha(n) \subseteq\varphi^I(n)$. This implies that $\bigcup_{\alpha<\kappa}X_\alpha\subseteq H_{\varphi^I}.$ Thus, $\bigcup_{\alpha<\kappa}X_\alpha\in\NAwf$ by \autoref{ch:NA2} (applied to $h(n):= n^2$). 
\end{proof}



We now recall the combinatorial description of the meager ideal from Bartoszy\'nski, Just, and Scheepers~\cite{BWS}, which was originally introduced by Talagrand~\cite{Tal98}. First, we establish some preliminary results. Instead of dealing with all meager sets, we only need to consider suitable cofinal families.

\begin{definition}
Let $I\in\Ior$ and let $x\in\cantor$. Define 
\[B_{x,I}:=\set{y\in\cantor}{\forall^\infty n\in\omega\colon y{\upharpoonright}I_n\neq x{\upharpoonright}I_n}.\]
For $n\in\omega$, define
\[B_{x,I}^n:=\set{y\in\cantor}{\forall m\geq n\colon x{\upharpoonright}I_m\neq y{\upharpoonright}I_m}.\]
Then $B_{x,I}^m\subseteq B_{x,I}^n$ whenever $m < n <\omega$. Thus, $B_{x,I}=\bigcup_{n\in\omega}B_{x,I}^n$. 

Denote by $B_{I}$ the set $B_{0,I}=\set{y\in\cantor}{\forall^\infty n\in\omega\colon y{\upharpoonright}I_n\neq0)}$.    
\end{definition}

A pair $(x,I)\in 2^\omega\times \Ior$ is known as a \emph{chopped real}, and these are used to produce a cofinal family of meager sets.
It is clear that $B_{x,I}$ is a meager subset 
of $\cantor$ (see, e.g.~\cite{blass}).

\begin{theorem}[{Talagrand~\cite{Tal98}}, see e.g.~{\cite[Prop.~13]{BWS}}]\label{ChM}
For every meager set $F\subseteq\cantor$ and $I\in\Ior$ there are $x\in\cantor$ and $I'\in\Ior$ such that $F\subseteq B_{I',x}$ and each $I'_n$ is the union of finitely many $I_k$'s.
\end{theorem}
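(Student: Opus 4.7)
The plan is to follow the standard ``Talagrand'' construction. First I reduce to the case $F=\bigcup_{n\in\omega}F_n$ with each $F_n$ closed nowhere dense and $F_n\subseteq F_{n+1}$, which is immediate from the definition of a meager set. Then I construct $I'$ and $x$ by recursion on $n$, ensuring that at stage $n$ every $y\in F_n$ disagrees with $x$ on the block $I'_n$. Since every $y\in F$ lies in $F_{n_0}$ for some $n_0$, hence in all $F_n$ with $n\geq n_0$, this forces $y\in B_{x,I'}$ as required, while the inductive clause that $I'_n$ is built out of consecutive $I_k$'s is baked into the construction.

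For the recursive step, suppose $I'_0,\dots,I'_{n-1}$ and $x\restriction\bigcup_{k<n}I'_k$ have been chosen so that $\bigcup_{k<n}I'_k=\bigcup_{k\leq k_n}I_k$ for some $k_n\in\omega$, and let $M_n=\max I_{k_n}+1$. For each of the finitely many $s\in 2^{M_n}$, the tail fiber
\[
F_n^s:=\{z\in 2^{[M_n,\infty)}:s^\frown z\in F_n\}
\]
is closed (it is the preimage of $F_n$ under the continuous map $z\mapsto s^\frown z$) and nowhere dense in $2^{[M_n,\infty)}$, a direct consequence of nowhere density of $F_n$. Hence the finite union $F_n^*:=\bigcup_{s\in 2^{M_n}}F_n^s$ is also closed nowhere dense. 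By nowhere density of $F_n^*$, pick $\sigma\in 2^{[M_n,L)}$ with $[\sigma]\cap F_n^*=\emptyset$, then choose $k_{n+1}>k_n$ large enough that $L':=\max I_{k_{n+1}}+1\geq L$, and extend $\sigma$ to some $\sigma'\in 2^{[M_n,L')}$ (e.g.\ with zeros); this extension preserves $[\sigma']\cap F_n^*=\emptyset$ because $[\sigma']\subseteq[\sigma]$. Finally set $I'_n:=I_{k_n+1}\cup\cdots\cup I_{k_{n+1}}$ and $x\restriction I'_n:=\sigma'$. Any $y\in F_n$ with $y\restriction I'_n=x\restriction I'_n$ would satisfy $y\restriction[M_n,\infty)\in F_n^{y\restriction M_n}\cap[\sigma']\subseteq F_n^*\cap[\sigma']=\emptyset$, a contradiction.

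The step that requires the most care is juggling two competing demands at each stage: $\sigma$ must avoid the tail fibers over \emph{all} $s\in 2^{M_n}$ simultaneously, and its length must be adjustable so that $I'_n$ terminates at a block boundary of the prescribed partition $I$. The first is handled cleanly by passing from the individual fibers $F_n^s$ to their finite union $F_n^*$, legitimate because closed nowhere dense sets are closed under finite unions; the second by freely extending $\sigma$, which is harmless because shrinking a clopen set disjoint from $F_n^*$ preserves disjointness. A minor bookkeeping point is to ensure that the sequence $(k_n)$ is unbounded in $\omega$, so that $I'$ genuinely partitions $\omega$; this is automatic since $k_{n+1}>k_n$ at every step.
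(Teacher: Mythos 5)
Your proof is correct and is the standard Talagrand-style diagonalization; the paper does not supply a proof of this result, citing Talagrand and Bartoszy\'nski--Just--Scheepers instead, and your argument is the one those references use in substance. The only wrinkle is the base case $n=0$: your inductive hypothesis requires $\bigcup_{k<0}I'_k=\bigcup_{k\leq k_0}I_k$ for some $k_0\in\omega$, which cannot hold since the left side is empty while the right side contains $I_0$; this is repaired by simply starting the recursion with $M_0:=0$ (equivalently, taking the convention $k_0:=-1$), after which every step goes through exactly as you describe, and your reduction to the closed nowhere dense case, the passage from the fibers $F_n^s$ to the finite union $F_n^*$, the genericity argument producing $\sigma$, and the free extension of $\sigma$ to align with block boundaries of $I$ are all sound.
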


\begin{lemma}[{\cite[Prop~9]{BWS}}]\label{baseM}
For $x, y\in\cantor$ and for $I, J\in\Ior$, the following statements are equivalent:
\begin{enumerate}[label=\normalfont(\arabic*)]
    \item $B_{I,x}\subseteq B_{J,y}$.
    \item $\forall^\infty n<\omega\,\exists k<\omega\colon I_k\subseteq J_n$ and $x{\upharpoonright}I_k=y{\upharpoonright}I_k$.
\end{enumerate}
\end{lemma}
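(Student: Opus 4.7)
My plan is to establish the two directions $(2)\Rightarrow(1)$ and $(1)\Rightarrow(2)$ separately by direct combinatorial arguments on the interval partitions, treating $B_{I,x}$ and $B_{x,I}$ as the same object from \autoref{ChM}.

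For $(2)\Rightarrow(1)$, I would fix an arbitrary $z\in B_{x,I}$ and show $z\in B_{y,J}$. By hypothesis (2), for almost all $n$ there exists $k=k(n)$ with $I_{k(n)}\subseteq J_n$ and $x{\upharpoonright}I_{k(n)}=y{\upharpoonright}I_{k(n)}$. Since $I_{k(n)}\subseteq J_n$ forces $\min I_{k(n)}\geq\min J_n$, we get $k(n)\to\infty$ as $n\to\infty$. Because $z\in B_{x,I}$, for almost all $k$ we have $z{\upharpoonright}I_k\neq x{\upharpoonright}I_k$, so in particular this applies to $k(n)$ for large $n$. Combining with $x{\upharpoonright}I_{k(n)}=y{\upharpoonright}I_{k(n)}$ yields $z{\upharpoonright}I_{k(n)}\neq y{\upharpoonright}I_{k(n)}$, and since $I_{k(n)}\subseteq J_n$ this disagreement propagates to $z{\upharpoonright}J_n\neq y{\upharpoonright}J_n$. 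Hence $z\in B_{y,J}$, giving (1).

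For $(1)\Rightarrow(2)$, I would argue by contrapositive and construct an explicit witness $z\in B_{x,I}\setminus B_{y,J}$. Assuming (2) fails, the set $M:=\{n<\omega:\forall k<\omega\,(I_k\subseteq J_n\Rightarrow x{\upharpoonright}I_k\neq y{\upharpoonright}I_k)\}$ is infinite. Define $z$ in two stages: first, for each $n\in M$, set $z{\upharpoonright}J_n:=y{\upharpoonright}J_n$, which immediately kills membership in $B_{y,J}$; second, on the complement $\omega\setminus\bigcup_{n\in M}J_n$, define $z$ interval by interval, using the remaining free coordinates in each $I_k$ to force $z{\upharpoonright}I_k\neq x{\upharpoonright}I_k$. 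Verification of $z\in B_{x,I}$ splits into cases: if $I_k$ is entirely inside some $J_n$ with $n\in M$, then $z{\upharpoonright}I_k=y{\upharpoonright}I_k\neq x{\upharpoonright}I_k$ by the defining property of $M$; otherwise $z{\upharpoonright}I_k\neq x{\upharpoonright}I_k$ holds by the explicit construction on the free coordinates.

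The only mildly delicate step is the handling of \emph{boundary} intervals $I_k$ that straddle the junction of some $J_n$ and $J_{n+1}$ with $n\in M$: such an $I_k$ is not contained in any $J_n$, so $M$ gives no information about $x{\upharpoonright}I_k$ versus $y{\upharpoonright}I_k$. However, by assumption $I_k\setminus\bigcup_{n\in M}J_n$ is nonempty, so there is at least one coordinate in $I_k$ on which $z$ is still free, and we can set it so that $z{\upharpoonright}I_k\neq x{\upharpoonright}I_k$ irrespective of what $y$ prescribed on the part of $I_k$ already determined. I expect this boundary bookkeeping to be the only nonroutine point; everything else is just unpacking the definitions of $B_{x,I}$ and $B_{y,J}$.
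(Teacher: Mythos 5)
The paper cites this as Proposition 9 of Bartoszy\'nski--Just--Scheepers and gives no proof of its own, so your argument is being judged on its own merits rather than compared to anything in the text.

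Your $(2)\Rightarrow(1)$ direction is correct. Your $(1)\Rightarrow(2)$ direction has a genuine gap at exactly the point you flagged: you assert that for a boundary interval $I_k$ (one not contained in any single $J_n$) we have $I_k\setminus\bigcup_{n\in M}J_n\neq\emptyset$, but this can fail. If $M$ contains two or more \emph{consecutive} integers, say $n,n+1\in M$, then an $I_k$ can satisfy $I_k\subseteq J_n\cup J_{n+1}$ while being contained in neither piece alone. For such a $k$, once you have set $z\restrictto{J_n}:=y\restrictto{J_n}$ and $z\restrictto{J_{n+1}}:=y\restrictto{J_{n+1}}$, you have $z\restrictto{I_k}=y\restrictto{I_k}$ with no free coordinate left, and the definition of $M$ tells you nothing about whether $x\restrictto{I_k}=y\restrictto{I_k}$ for intervals straddling two $J$-blocks. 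If $M=\omega$ (which is possible when (2) fails uniformly) and the $I$-partition is coarser than the $J$-partition, this happens for infinitely many $k$, so $z\in B_{x,I}$ is no longer guaranteed and the contrapositive breaks down.

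The fix is cheap and worth making explicit: before building $z$, replace $M$ by an infinite subset $M'\subseteq M$ containing no two consecutive integers. Then for any $k$, if $I_k\subseteq\bigcup_{n\in M'}J_n$ and $I_k$ meets $J_n$ and $J_{n'}$ with $n<n'$ both in $M'$, the interval $I_k$ must contain $J_{n+1}$, and $n+1\notin M'$, contradicting $I_k\subseteq\bigcup_{n\in M'}J_n$. Hence $I_k\subseteq\bigcup_{n\in M'}J_n$ forces $I_k\subseteq J_n$ for a single $n\in M'$, and in every other case $I_k\setminus\bigcup_{n\in M'}J_n\neq\emptyset$ genuinely provides a free coordinate. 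Running your construction with $M'$ in place of $M$ then closes the case analysis and the proof goes through.
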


We now review some basic notation about relational systems. A~\emph{relational system} is a triple $\Rbf=\la X, Y, \sqsubset\ra$ where $\sqsubset$ is a relation and $X$ and $Y$ are non-empty sets. Such a relational system has two cardinal characteristics associated with it:
\begin{align*}
    \bfrak(\Rbf)&:=\min\set{|F|}{F\subseteq X\text{\ and }\neg\exists y\in Y\, \forall x\in F\colon x \sqsubset y}\\
    \dfrak(\Rbf)&:=\min\set{|D|}{D\subseteq Y\text{\ and }\forall x\in X\, \exists y\in D\colon x \sqsubset y}.
\end{align*}
We also define the \emph{dual} $\Rbf^\perp := \la Y,X,\sqsubset^\perp\ra$ where $y \sqsubset^\perp x$ means $x \not\sqsubset y$. Note that $\bfrak(\Rbf^\perp) = \dfrak(\Rbf)$ and $\dfrak(\Rbf^\perp) = \bfrak(\Rbf)$.

Given another relational system $\Rbf'=\la X',Y',R'\ra$,  say that a pair $(\Psi_-,\Psi_+)\colon\Rbf\to\Rbf'$ is a \emph{Tukey connection from $\Rbf$ into $\Rbf'$} if 
 $\Psi_-\colon X\to X'$ and $\Psi_+\colon Y'\to Y$ are functions such that  $\forall\, x\in X\ \forall\, y'\in Y'\colon \Psi_-(x) \sqsubset' y' \Rightarrow x \sqsubset \Psi_+(y')$. Say that $\Rbf$ is \emph{Tukey below} $\Rbf'$, denoted by $\Rbf\leqT\Rbf'$, if there is a Tukey connection from $\Rbf$ to $\Rbf'$. 
 Say that $\Rbf$ is \emph{Tukey equivalent} to $\Rbf'$, denoted by $\Rbf\eqT\Rbf'$, if $\Rbf\leqT\Rbf'$ and $\Rbf'\leqT\Rbf$. It is well-known that $\Rbf\leqT\Rbf'$ implies $\bfrak(\Rbf')\leq\bfrak(\Rbf)$ and $\dfrak(\Rbf)\leq\dfrak(\Rbf')$. Hence, $\Rbf\eqT\Rbf'$ implies $\bfrak(\Rbf')=\bfrak(\Rbf)$ and $\dfrak(\Rbf)=\dfrak(\Rbf')$. 

 \begin{example}\label{ex:CI}
 It is well-known that, for any ideal $\Iwf$ on $X$, via the relational system $\Cbf_\Iwf:=\la X,\Iwf,\in\ra$, $\bfrak(\Cbf_\Iwf)=\non(\Iwf)$ and $\dfrak(\Cbf_\Iwf)=\cov(\Iwf)$. 
 \end{example}

\begin{example}\label{b-d}
Note that $\leq^*$ is a directed preorder on $\baire$, where $x\leq^* y$ means $\forall^\infty n<\omega\colon x(n)\leq y(n)$. We think of $\baire$ as the relational system with the relation $\leq^*$. Then $\bfrak:=\bfrak(\baire)$ and $\dfrak:=\dfrak(\baire)$ are the well known \emph{bounding number} and \emph{dominating number}, respectively.
\end{example}

\begin{example}\label{Charb-d}
Define the following relation on $\Ior$:
   \[ I \sqsubseteq J \text{ iff } \forall^\infty n<\omega\, \exists m<\omega\colon I_m\subseteq J_n.
   \]
Note that $\sqsubseteq$ is a directed preorder on $\Ior$, so we think of $\Ior$ as the relational system with the relation $\sqsubseteq$. 
In Blass~\cite{blass}, it is proved that $\Ior \eqT \omega^\omega$. Hence, $\bfrak=\bfrak(\Ior)$ and $\dfrak=\dfrak(\Ior)$. 
\end{example}

\begin{example}\label{exm:Lc}
    The cardinals in \autoref{defloc} can be defined by the relational system $\Lc(b,h):=\la\prod b,\Swf(b,h),\in^*\ra$, i.e.\ $\bfrak(\Lc(b,h))=\blc_{b,h}$ and $\dfrak(\Lc(b,h)) := \dlc_{b,h}$.
\end{example}

We now introduce the relational systems involved in the characterization of $\non(\MAwf)$ in~\autoref{addtch}.

\begin{definition}\label{RelsAM} Fix $b\in(\omega+1)^\omega$.
\begin{enumerate}[label=(\arabic*)]
\item\label{RelsAMone} For  $I\in\Ior$, $f, h\in\baire$, define 
    \[f  \sqrb (I,h)\textrm{\ iff\ }\forall^\infty n\in\omega\,\exists k\in I_n\colon f(k)=h(k).\]
    \item\label{RelsAMtwo} Define the relational system $\Rbf_b:=\la\prod b,\Ior\times\prod b,\sqrb\ra$. When $b(n)=\omega$ for all $n<\omega$, we denote this relational system by $\Rbf_\omega$.
\end{enumerate}
In the context of $\Rbf_b$, we will always consider that $b(n)>0$ for all $n$, even if we just write ``$b\in(\omega+1)^\omega$" (or $b\in\baire$).\footnote{In~\cite{cardonaRb}, the cardinals $\bfrak(\Rbf_b)$ and $\dfrak(\Rbf_b)$ are denoted by $\bfrak_{b}^\mathsf{eq}$ and $\dfrak_{b}^\mathsf{eq}$, respectively.}
\end{definition}

\begin{remark}\label{rem:Rb-nonM}
    Notice that, for fixed $(I,h)\in \Ior\times \prod b$, $\set{f\in\prod b}{f\sqrb (I,h)}$ is meager whenever $b\geq^* 2$, so $\Cbf_\Mwf\leqT\Rbf_b$, which implies $\bfrak(\Rbf_b)\leq\non(\Mwf)$ and $\cov(\Mwf)\leq \dfrak(\Rbf_b)$. On the other hand, if $b\ngeq^*2$ then we can find some $(I,h)\in \Ior\times \prod b$ such that $f\sqrb (I,h)$ for all $f\in\prod b$, so $\dfrak(\Rbf_b) = 1$ and $\bfrak(\Rbf_b)$ is undefined.
\end{remark}

\begin{fact}\label{RelsAMhree}
For $b\in(\omega+1)^\omega$, 
$\Rbf_b\eqT\la\prod b,\Ior\times\baire,\sqrb\ra$. 
As a consequence, if $b'\in(\omega+1)^\omega$ and
$b\leq^* b'$, then $\Rbf_b\leqT\Rbf_{b'}$. In particular, $\bfrak(\Rbf_{b'}) \leq \bfrak(\Rbf_b)$ and $\dfrak(\Rbf_b)\leq \dfrak(\Rbf_{b'})$.   
\end{fact}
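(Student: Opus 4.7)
The plan is to prove the Tukey equivalence by exhibiting Tukey connections in both directions and then obtain the monotonicity statement as a corollary by composing with a coordinate-wise embedding, using the wide version to bypass the coordinate-bound mismatch.

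For the direction $\langle \prod b, \Ior\times\baire, \sqrb\rangle \leqT \Rbf_b$, I would take $\Psi_- = \id_{\prod b}$ and let $\Psi_+$ be the set-theoretic inclusion $\Ior \times \prod b \hookrightarrow \Ior \times \baire$; this is immediate, since any witness $(I,h)$ with $h \in \prod b$ also qualifies when $h$ is allowed to range over $\baire$.

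For the nontrivial direction $\Rbf_b \leqT \langle \prod b, \Ior\times\baire, \sqrb\rangle$, the key observation is that the matching relation $f(k) = h(k)$ automatically confines $h(k)$ to values below $b(k)$ whenever $f \in \prod b$. Concretely, given $(I,h) \in \Ior \times \baire$, I would define $h' \in \prod b$ by $h'(k) = h(k)$ when $h(k) < b(k)$ and $h'(k) = 0$ otherwise (using the standing convention $b(k) > 0$), and set $\Psi_+(I,h) := (I, h')$. If $f \in \prod b$ satisfies $f \sqrb (I,h)$ in the wide system, then for cofinitely many $n$ there is $k \in I_n$ with $f(k) = h(k)$, which forces $h(k) = f(k) < b(k)$, hence $h'(k) = h(k) = f(k)$; so $f \sqrb (I,h')$ in $\Rbf_b$. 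This establishes the equivalence.

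For the consequence, fix $N$ with $b(n) \leq b'(n)$ for all $n \geq N$. Define $\Psi_-\colon \prod b \to \prod b'$ by $\Psi_-(f)(n) = f(n)$ for $n \geq N$ (valid since $f(n) < b(n) \leq b'(n)$) and $\Psi_-(f)(n) = 0$ otherwise, and let $\Psi_+$ be the identity on $\Ior \times \baire$. If $\Psi_-(f) \sqrb (I,h)$ in the wide version of $\Rbf_{b'}$, then for all large enough $n$ we have $I_n \subseteq [N,\infty)$ and some $k \in I_n$ with $\Psi_-(f)(k) = h(k)$; since $k \geq N$, $\Psi_-(f)(k) = f(k)$, so $f \sqrb (I,h)$ in the wide version of $\Rbf_b$. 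Combining with the already-established equivalence on both sides yields $\Rbf_b \leqT \Rbf_{b'}$, and the cardinal inequalities $\bfrak(\Rbf_{b'}) \leq \bfrak(\Rbf_b)$ and $\dfrak(\Rbf_b) \leq \dfrak(\Rbf_{b'})$ follow from the general properties of Tukey connections. There is no real obstacle here: the content of the argument lies entirely in the trivial but useful remark that $f(k) = h(k)$ pins $h(k)$ into $b(k)$, which lets us pass freely between the narrow and wide versions.
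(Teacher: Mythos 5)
Your proof is correct, and since the paper states this as a \emph{Fact} with no proof supplied, there is nothing to compare against; the argument you give---observing that $f(k)=h(k)$ with $f\in\prod b$ forces $h(k)<b(k)$, so the wide and narrow systems coincide up to truncation, and then passing through the wide version to handle the finitely many coordinates where $b(n)>b'(n)$---is precisely the intended short argument.
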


We now aim to prove the following reformulation of \autoref{addtch}:

\begin{theorem}[{\cite[Thm.~2.2]{bartJudah}}]\label{th:BJ}
$\non(\MAwf)=\min\set{\bfrak(\Rbf_b)}{b\in\baire}$.  
\end{theorem}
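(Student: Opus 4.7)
The statement is essentially a reformulation of \autoref{addtch} into the language of the relational systems $\Rbf_b$, so the plan is to verify that the two combinatorial formulations match up via a direct dictionary. The dictionary is: a strictly increasing $r\in\baire$ with $r(0)=0$ corresponds to the partition $I\in\Ior$ given by $I_n:=[r(n),r(n+1))$; a bounded family $F\subseteq\baire$ corresponds to a subset of $\prod b$ for some $b\in\baire$ (pointwise upper bound); and the condition ``$\exists k\in[r(m),r(m+1)]:f(k)=h(k)$'' from (\faPagelines) corresponds to ``$\exists k\in I_m:f(k)=h(k)$'' in the definition of $\sqrb$, modulo the harmless choice of half-open versus closed intervals.

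I would prove the two inequalities separately. For $\non(\MAwf)\leq\bfrak(\Rbf_b)$ for every $b\in\baire$, fix $b$ and take $F\subseteq\prod b$ with $|F|<\non(\MAwf)$; since $F$ is a bounded subset of $\baire$, \autoref{addtch} furnishes $r,h\in\baire$ witnessing (\faPagelines) for $F$. Defining $I\in\Ior$ from $r$ as above (adjusting finitely many values of $r$ if needed so that it is strictly increasing with $r(0)=0$, which absorbs the discrepancy between the closed interval $[r(m),r(m+1)]$ and $I_m$ into the ``$\forall^\infty$'' quantifier), we obtain $f\sqrb(I,h)$ for every $f\in F$; by \autoref{RelsAMhree}, the auxiliary $h\in\baire$ can be replaced by some $h'\in\prod b$, so $(I,h')$ witnesses $|F|<\bfrak(\Rbf_b)$. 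For the reverse inequality $\min_b\bfrak(\Rbf_b)\leq\non(\MAwf)$, invoke \autoref{addtch} to get a bounded family $F\subseteq\baire$ of size $\non(\MAwf)$ admitting no witness of (\faPagelines); pick $b\in\baire$ with $b\geq 2$ and $F\subseteq\prod b$, and suppose towards contradiction that $|F|\geq\bfrak(\Rbf_b)$, so there is $(I,h)\in\Ior\times\prod b$ with $f\sqrb(I,h)$ for all $f\in F$. Setting $r(n):=\min I_n$ yields $r\in\baire$, and $(r,h)$ then witnesses (\faPagelines) for $F$, contradicting the choice of $F$.

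The only mildly delicate point is the passage between $h\in\baire$ and $h\in\prod b$ in the first direction; this is precisely the content of \autoref{RelsAMhree}, since whenever $f\in\prod b$ and $f(k)=h(k)$ we automatically have $h(k)<b(k)$, so truncating $h$ pointwise to $\prod b$ preserves all the equalities $f(k)=h(k)$ that matter. The rest is bookkeeping about interval partitions, and no genuinely new combinatorial content is needed beyond \autoref{addtch}.
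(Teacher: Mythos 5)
Your route is genuinely different from the paper's. The paper does not apply \autoref{addtch} as a black box; rather, it \emph{reproves} the result from the combinatorial characterization of meager-additivity. Concretely, \autoref{RsAMone} establishes the Tukey connection $\Rbf_b\leqT\Cbf_\MAwf$ (encoding $f\in\prod b$ as a real $x_f^{I^b}$, then using that $X+B_{I^b}\in\Mwf$ for $X\in\MAwf$ together with \autoref{ChM} and \autoref{baseM}), which gives $\non(\MAwf)\leq\bfrak(\Rbf_b)$ for every $b$; and \autoref{prodRb} establishes $\Cbf_\MAwf\leqT\prod_{b\in D}\Rbf_b$ for any dominating $D$, which gives the reverse inequality and, crucially, the upper bound $\cov(\MAwf)\leq\dfrak\bigl(\prod_{b\in D}\Rbf_b\bigr)$ that is indispensable later for the consistency of $\cov(\MAwf)<\non(\Nwf)$. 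Your dictionary translation of (\faPagelines) is shorter and perfectly adequate for proving \autoref{th:BJ} in isolation, but it does not by itself produce those reusable Tukey connections, which is why the paper structures the proof as it does.

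Two slips in your write-up are worth flagging. First, the contradiction hypothesis in the reverse direction is reversed: from ``$|F|\geq\bfrak(\Rbf_b)$'' one cannot conclude that $F$ has a $\sqrb$-bound; you should assume $|F|<\bfrak(\Rbf_b)$, extract a bound $(I,h)$, and then $(r,h)$ with $r(n):=\min I_n$ witnesses (\faPagelines) for $F$, contradicting the choice of $F$. Second, ``pick $b$ with $F\subseteq\prod b$'' is not automatic, since a $\leq^*$-bounded family need not be pointwise bounded by any $b\in\baire$; this is repaired by modifying each $f\in F$ on a finite initial segment (which affects neither the (\faPagelines)-condition nor the $\sqrb$-relation, both being $\forall^\infty$-notions), but it needs saying. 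The coarsening step turning $[r(m),r(m+1)]$ into a genuine $\Ior$-partition (e.g.\ $J_m:=[r(2m),r(2m+2))$) also deserves a sentence rather than ``absorbs into $\forall^\infty$''. None of these is fatal, and the appeal to \autoref{RelsAMhree} to move between $h\in\baire$ and $h\in\prod b$ is exactly right.
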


We follow the proof of the cited reference under our notation using $\Rbf_b$. 
The following lemma establishes one of the inequalities.

\begin{lemma}\label{RsAMone}
Let $b\in\baire$. Then $\Rbf_{b}\leqT \Cbf_\MAwf$. 
 In particular, \[\non(\MAwf)\leq\min\set{\bfrak(\Rbf_b)}{b\in\baire} \text{ and } \sup\set{\dfrak(\Rbf_b)}{b\in\baire}\leq\cov(\MAwf).\]   
\end{lemma}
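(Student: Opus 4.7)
The plan is to construct an explicit Tukey connection $(\Psi_-,\Psi_+)\colon \Rbf_b\to \Cbf_\MAwf$, from which the cardinal inequalities follow at once via $\bfrak(\Cbf_\MAwf)=\non(\MAwf)$ and $\dfrak(\Cbf_\MAwf)=\cov(\MAwf)$ (\autoref{ex:CI}). We may assume $b\geq 2$, since the degenerate case is handled by \autoref{rem:Rb-nonM}.

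The first step is to design $\Psi_-\colon\prod b\to\cantor$ as a natural block-by-block binary encoding. Letting $\ell(k)$ be minimal with $2^{\ell(k)}\geq b(k)$, I would fix the interval partition $\bar{I}=\la \bar{I}_k: k<\omega\ra\in\Ior$ of $\omega$ with $|\bar{I}_k|=\ell(k)$, and define $\Psi_-(f)\restrictto{\bar{I}_k}$ to be the length-$\ell(k)$ binary expansion of $f(k)<b(k)\leq 2^{\ell(k)}$. This embeds $\prod b$ into $\cantor$ in a way that makes each ``coordinate'' of $f$ readable from a fixed finite block of bits.

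To define $\Psi_+$, given $X\in\MAwf$, I would apply \autoref{ch:NA-MA}~\ref{ch:MA} together with Shelah's strengthening to the partition $\bar{I}$, obtaining a \emph{coarser} partition $\bar{J}\in\Ior$ and a real $y\in\cantor$ such that, for every $x\in X$, for almost every $n$ some $k$ satisfies $\bar{I}_k\subseteq \bar{J}_n$ and $x\restrictto{\bar{I}_k}=y\restrictto{\bar{I}_k}$. Because $\bar{J}$ is coarser than $\bar{I}$, the sets $I_n:=\{k<\omega : \bar{I}_k\subseteq \bar{J}_n\}$ form an interval partition $I\in\Ior$ of $\omega$. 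I would then define $h\in\prod b$ by $h(k):=$ the number coded (in binary) by $y\restrictto{\bar{I}_k}$ when it is $<b(k)$, and $h(k):=0$ otherwise, and set $\Psi_+(X):=(I,h)$. The Tukey verification is then essentially immediate: if $\Psi_-(f)\in X$, then for almost every $n$ some $k\in I_n$ witnesses $\Psi_-(f)\restrictto{\bar{I}_k}=y\restrictto{\bar{I}_k}$; since $\Psi_-(f)\restrictto{\bar{I}_k}$ codes $f(k)<b(k)$, the code $y\restrictto{\bar{I}_k}$ yields the same value, forcing $h(k)=f(k)$ and hence $f\sqrb (I,h)$.

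I expect the only genuine subtlety to lie in the appeal to Shelah's coarsening: without the coarser $\bar{J}$, the sets $I_n$ would not automatically be consecutive intervals covering $\omega$ (boundary $\bar{I}_k$'s straddling two blocks of $\bar{J}$ would be lost) and $\Psi_+(X)$ would fail to be a legitimate element of $\Ior\times\prod b$. Everything else is routine bookkeeping about the binary encoding.
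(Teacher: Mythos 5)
Your proof is correct, and while the high-level strategy (block-encode $\prod b$ into $2^\omega$ and translate meager-additivity of $X$ into the relation $\sqrb$) matches the paper's, you reach the combinatorial condition by a shorter route using a different key lemma. The paper works from the definition of $\MAwf$: it forms $X+B_{I^b}=\bigcup_{x\in X}B_{x,I^b}\in\Mwf$, applies Talagrand's cofinality theorem (\autoref{ChM}) to capture this in a single $B_{y,J}$, and then translates the containment via \autoref{baseM}; since that $J$ need not be coarser than $I^b$, the paper must hand-build an interval partition $J'$ from the positions of $\max I^b_k$ relative to the $J_n$'s, a somewhat delicate bookkeeping step. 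You instead invoke \autoref{ch:NA-MA}~\ref{ch:MA} together with Shelah's coarsening remark, which hands you a coarser $\bar J$ outright and makes the partition $I_n=\set{k}{\bar I_k\subseteq\bar J_n}$ automatic; as you correctly observe, the coarsening is precisely what removes the boundary-straddling issue. In effect you quote the already-stated characterization while the paper essentially re-derives the relevant direction of it inline via chopped reals. Your version is cleaner and makes the role of coarsening transparent; the paper's is more explicit about the underlying $B_{x,I}$-sets and avoids relying on Shelah's extra coarsening result.
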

\begin{proof}
Given $b\in\baire$, thanks to \autoref{RelsAMhree} we may assume that there is some $I^b\in\Ior$ such that $b(n)=2^{|I_n^b|}$. 
Then, 
we can identify numbers ${<}b(n)$ with $0$-$1$ sequences of length $|I_n^b|$. 
We have to find maps $\Psi_-\colon\prod b\to \cantor$  and  $\Psi_+\colon \MAwf\to \Ior\times\prod b$ such that, for any $ f\in\prod b$ and for any $X\in\MAwf$, $\Psi_-(f)\in X$ implies $f\sqrb \Psi_+(X)$.

For $f\in\prod b$ define $x_f^{I^b}\in\cantor$ by 
\[x_f^{I^b}={\underbrace{f(0)}_{\text{length }|I_0^b|}}^{\frown} \cdots\cdots{}^{\frown}{\underbrace{f(n)}_{\text{length } |I_n^b|}}^{\frown}\cdots,\]  
so put $\Psi_-(f):= x^{I^b}_{f}$. 

For $X\in \MAwf$, $X+B_{I^b}\in\Mwf$. Note that \[X+B_{I^b}=\bigcup_{x\in X}B_{x,I^b}.\]
Then, by~\autoref{ChM}, there are $y\in\cantor$ and $J\in\Ior$ such that 
\[\bigcup_{x\in X}B_{x,I^b}\subseteq B_{y, J}.\]
Let $h\in\prod b$ such that $y=x_{h}^{I^b}$ (recall that $b(n)=2^{|I^b_n|}$), so put $\Psi_+(X):=(J',h)$ where
\[k\in J'_n \text{ iff } \min J_n<\max I^b_k \leq \max J_n.\]

It remains to prove that, for any $ f\in\prod b$ and for any $X\in\MAwf$, $\Psi_-(f)\in X$ implies $ f\sqrb\Psi_+(X)$. Suppose that $x_{f}^{I_b}\in X$ and $\Psi_+(X) = (J',h)$. Then $B_{x_{f}^{I_b},I^b}\subseteq B_{x_{h}^{I^b},J}$. Hence, by using~\autoref{baseM}, 
\[\forall^\infty n\,\exists k\colon I_k^b\subseteq J_n\text{ and }x_f^{I^b}{\upharpoonright}I_k^b=x_{h}^{I^b}{\upharpoonright}I_k^b.\]
Since $I^b_k\subseteq J_n$ implies $k\in J'_n$, the equation above implies that $f\sqrb(J',h)$. 
\end{proof}

To prove the converse inequality of \autoref{th:BJ}, we employ products of relational systems.

\begin{definition}\label{def:prodrel}
Let $\overline{\Rbf} = \seq{\Rbf_i}{i\in K}$ be a sequence of relational systems $\Rbf_i = \la X_i,Y_i,\sqsubset_i\ra$. Define $\prod \overline{\Rbf} = \prod_{i\in K} \Rbf_i:=\left\la \prod_{i\in K}X_i, \prod_{i\in K}Y_i, \sqsubset^\times \right\ra$ where $x \sqsubset^\times y$ iff $x_i\sqsubset_i y_i$ for all $i\in K$.

For two relational systems $\Rbf$ and $\Rbf'$, write $\Rbf\times \Rbf'$ to denote their product, and when $\Rbf_i = \Rbf$ for all $i\in K$, we write $\Rbf^K := \prod \overline{\Rbf}$.
\end{definition}

\begin{fact}[{\cite{CarMej23}}]\label{products}
Let $\overline{\Rbf}$ be as in \autoref{def:prodrel}. Then $\sup_{i\in K}\dfrak(\Rbf_i)\leq\dfrak(\prod \overline{\Rbf})\leq\prod_{i\in K}\dfrak(\Rbf_i)$ and $\bfrak(\prod \overline{\Rbf})=\min_{i\in K}\bfrak(\Rbf_i)$.
\end{fact}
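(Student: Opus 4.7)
The plan is to prove each of the three inequalities separately by direct arguments, relying only on the non-emptiness of the sets $X_i,Y_i$ built into the definition of a relational system and on the pointwise nature of $\sqsubset^\times$.

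First I would handle the equality $\bfrak(\prod \overline{\Rbf})=\min_{i\in K}\bfrak(\Rbf_i)$. For the inequality ``$\leq$'', fix $i_0\in K$ and a witnessing unbounded family $F_{i_0}\subseteq X_{i_0}$ with $|F_{i_0}|=\bfrak(\Rbf_{i_0})$; choose $x_i^{*}\in X_i$ for every $i\ne i_0$ (using non-emptiness) and lift $F_{i_0}$ to $\widetilde{F}\subseteq \prod_i X_i$ by placing $x_i^{*}$ in each coordinate $i\ne i_0$. If some $y\in \prod_i Y_i$ witnessed $\widetilde{F}$ is dominated in $\prod \overline{\Rbf}$, then $y_{i_0}$ would bound $F_{i_0}$ in $\Rbf_{i_0}$, a contradiction. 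For the reverse inequality, take any $F\subseteq \prod_i X_i$ with $|F|<\min_{i\in K}\bfrak(\Rbf_i)$; each projection $F_i:=\{x_i:x\in F\}$ has $|F_i|<\bfrak(\Rbf_i)$, so choose $y_i\in Y_i$ with $x_i\sqsubset_i y_i$ for all $x_i\in F_i$, and then $(y_i)_{i\in K}$ witnesses $x\sqsubset^\times y$ for every $x\in F$.

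Next I would turn to the dominating inequalities. For $\sup_{i\in K}\dfrak(\Rbf_i)\leq \dfrak(\prod \overline{\Rbf})$, let $D\subseteq \prod_i Y_i$ be dominating. For any $i\in K$ and any $x_i\in X_i$, lift $x_i$ arbitrarily to some $x\in \prod_i X_i$ using the non-emptiness of the remaining factors; the $y\in D$ dominating $x$ then satisfies $x_i\sqsubset_i y_i$, so the projection $\{y_i:y\in D\}$ is dominating in $\Rbf_i$, giving $\dfrak(\Rbf_i)\leq |D|$ for every $i$. For the bound $\dfrak(\prod \overline{\Rbf})\leq \prod_{i\in K}\dfrak(\Rbf_i)$, pick dominating families $D_i\subseteq Y_i$ of minimal size for each $i$; then the cartesian product $\prod_{i\in K}D_i$ is dominating in $\prod \overline{\Rbf}$ by the coordinatewise definition of $\sqsubset^\times$, and it has cardinality exactly $\prod_{i\in K}\dfrak(\Rbf_i)$.

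I do not expect a real obstacle: every step is a direct unpacking of the product relation. The only subtlety to keep track of is the systematic use of non-emptiness of the $X_i$ (and of the other $X_j$'s when lifting a single coordinate), which is needed both for the ``$\leq$'' half of $\bfrak$ and for the projection argument on $\dfrak$; without it, neither the lifted unbounded family nor the coordinate dominations would be well-defined.
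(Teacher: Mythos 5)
Your proof is correct, and since the paper cites this Fact from \cite{CarMej23} without reproducing an argument, there is no paper proof to compare against; your direct unpacking of the coordinatewise relation $\sqsubset^\times$ (projecting for the lower bounds, lifting via non-emptiness for the upper/matching bounds, and taking a Cartesian product of dominating families for the final estimate) is the standard and expected argument. The only caveat, which is a matter of convention rather than a gap, is that some $\bfrak(\Rbf_i)$ or $\dfrak(\Rbf_i)$ may be undefined (no unbounded/dominating family exists in $\Rbf_i$); under the usual convention of assigning the value $\infty$ in that case your argument goes through verbatim, so this is not a genuine obstacle.
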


In the following result, not only do we complete the proof of \autoref{th:BJ}, but we find an upper bound of $\cov(\MAwf)$ that will be useful to show the consistency with ZFC of $\cov(\MAwf)<\non(\Nwf)$ in \autoref{Secmain}.

\begin{lemma}\label{prodRb}
    For any dominating family $D\subseteq\omega^\omega$, 
    $\Cbf_{\MAwf}\leqT \prod_{b\in D}\Rbf_b$. In particular, $\min_{b\in D}\bfrak(\Rbf_b) \leq\non(\MAwf)$ and $\cov(\MAwf)\leq \dfrak\left(\prod_{b\in D}\Rbf_b\right)$.
\end{lemma}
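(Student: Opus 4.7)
The plan is to construct an explicit Tukey connection $(\Psi_-,\Psi_+)\colon \Cbf_\MAwf\to\prod_{b\in D}\Rbf_b$. For each $b\in D$, by \autoref{RelsAMhree} we may associate a partition $I^b\in\Ior$ with $b(n)=2^{|I^b_n|}$, and, using the equivalence $\Ior\eqT\baire$ from \autoref{Charb-d}, we can arrange that the map $b\mapsto I^b$ sends the dominating family $D\subseteq\baire$ to a cofinal family $\{I^b:b\in D\}$ in $(\Ior,\sqsubseteq)$. As in the proof of \autoref{RsAMone}, each $x\in\cantor$ is identified with $f^b_x\in\prod b$ by letting $f^b_x(k)$ code $x\restriction I^b_k\in 2^{|I^b_k|}$, and each $h\in\prod b$ with $x^{I^b}_h\in\cantor$ by concatenation. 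I set
\[
\Psi_-(x):=(f^b_x)_{b\in D},\qquad
\Psi_+\bigl((J^b,h^b)_{b\in D}\bigr):=\set{x\in\cantor}{\forall b\in D\colon f^b_x\sqrb (J^b,h^b)}.
\]
With these definitions the Tukey connection property becomes a tautology: $\Psi_-(x)\sqsubset^\times (J^b,h^b)_{b\in D}$ unfolds directly to $x\in\Psi_+\bigl((J^b,h^b)_{b\in D}\bigr)$.

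The bulk of the work is showing $\Psi_+\bigl((J^b,h^b)_{b\in D}\bigr)\in\MAwf$ via \autoref{ch:NA-MA}\ref{ch:MA}. Given $I\in\Ior$, pick, by cofinality, some $b\in D$ with $I\sqsubseteq I^b$, set $y:=x^{I^b}_{h^b}\in\cantor$, and define $J\in\Ior$ by $J_n:=\bigcup_{k\in J^b_n}I^b_k$ (a single finite interval, since $J^b_n$ is a finite consecutive block of indices and $I^b$ partitions $\omega$ into consecutive intervals; finitely many initial terms may be merged into $J_0$ to keep $J$ a partition of $\omega$). For any $x\in \Psi_+((J^b,h^b)_{b\in D})$, the relation $f^b_x \sqrb (J^b,h^b)$ supplies, for cofinitely many $n$, some $k\in J^b_n$ with $f^b_x(k)=h^b(k)$, equivalently $x\restriction I^b_k=y\restriction I^b_k$. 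Since $\min J^b_n\to\infty$ and $I\sqsubseteq I^b$, for $n$ large enough any such $k$ admits an $m$ with $I_m\subseteq I^b_k$; hence $I_m\subseteq J_n$ and $x\restriction I_m=y\restriction I_m$, which is exactly the criterion required.

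The ``in particular'' inequalities follow by combining the Tukey connection with \autoref{products}: $\min_{b\in D}\bfrak(\Rbf_b)=\bfrak(\prod_{b\in D}\Rbf_b)\leq \bfrak(\Cbf_\MAwf)=\non(\MAwf)$ and $\cov(\MAwf)=\dfrak(\Cbf_\MAwf)\leq \dfrak(\prod_{b\in D}\Rbf_b)\leq \prod_{b\in D}\dfrak(\Rbf_b)$.

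The principal obstacles are (i) making a choice of $I^b$ for which $\{I^b:b\in D\}$ is cofinal in $(\Ior,\sqsubseteq)$ whenever $D\subseteq\baire$ is dominating, which amounts to picking $|I^b_n|$ as a sufficiently fast function of $b(n)$ so that growth of $b$ in the $\leq^*$-sense translates to growth of $I^b$ in the $\sqsubseteq$-sense; and (ii) the interval bookkeeping in the verification of $\Psi_+\bigl((J^b,h^b)_{b\in D}\bigr)\in\MAwf$, where the conclusion of \autoref{ch:NA-MA}\ref{ch:MA} is extracted by sandwiching some $I_m$ between $I^b_k$ and $J_n$. Everything else is immediate from the definitions.
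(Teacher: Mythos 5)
Your construction coincides with the paper's: you identify $\prod b$ with $2^\omega$ via a partition $I^b$ with $b(n)=2^{|I^b_n|}$, define $\Psi_-$ by block-coding $x\restriction I^b_n$, define $\Psi_+$ as the intersection of the corresponding tail-conditions, and verify $\Psi_+(z)\in\MAwf$ against the Bartoszy\'nski--Judah--Shelah criterion by passing from a given $I$ to some $I^b$ with $I\sqsubseteq I^b$. The only substantive difference is that you spell out the "inheritance'' step (sandwiching $I_m\subseteq I^b_k\subseteq J_n$ for $n$ large) explicitly, whereas the paper verifies the criterion only for the cofinal family and leaves this reduction implicit; your version is the cleaner exposition of the same argument.

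One caveat about your framing of obstacle (i). You describe the cofinality requirement on $\{I^b:b\in D\}$ as a matter of "picking $|I^b_n|$ as a sufficiently fast function of $b(n)$,'' but notice the freedom runs the other way: after the $\leq^*$-shrinking granted by Fact~\ref{RelsAMhree}, you need $2^{I^b}\leq^* b$, i.e.\ $|I^b_n|\leq\log_2 b(n)$ eventually, so $|I^b_n|$ is \emph{capped}, not free to be fast. Moreover, the relation $I\sqsubseteq I^b$ depends on the \emph{positions} $\min I^b_n=\sum_{k<n}|I^b_k|$, not just on the lengths, and increasing $b$ pointwise increases both $|I^b_n|$ and $\min I^b_n$ simultaneously, so it is not automatic that $\leq^*$-domination of $D$ transfers to $\sqsubseteq$-cofinality of $\{I^b\}$. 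The paper's ``without loss of generality'' passes over exactly this point, so you are not worse off than the source, but your gloss in (i) misidentifies where the difficulty lives: it is a self-referential alignment of lengths against positions, not a question of choosing $|I^b_n|$ to grow quickly in $b(n)$.
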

\begin{proof}
    Without loss of generality, we may assume that there is some $\Ior$-dominating family $D_0$, i.e.\ $\forall I\in \Ior\, \exists J\in D_0\colon I\sqsubseteq J$, such that for each $b\in D$ there is some $I\in D_0$ such that $b = 2^I$, i.e.\ $b(n) = 2^{I_n}$ for all $n<\omega$. 
    
    Define $\Psi_-\colon 2^\omega\to \prod_{I\in D_0} 2^I$ by $\Psi_-(x)(I):=\seq{x\frestr I_n}{n<\omega}$. And define $\Psi_+\colon \prod_{I\in D_0}\Ior\times 2^I\to \MAwf$ such that, for $z=\seq{(J^I,z^I)}{I\in\Ior}$,
    \[\Psi_+(z) := \set{x\in 2^\omega}{\forall I\in D_0\, \forall^\infty n<\omega\, \exists k\in J^I_k\colon x\frestr I_k = z^I(k)}.\]
    For each $I\in D_0$ let $I'_n:=\bigcup_{k\in J^I_n}I_k$ and $y^I\in 2^\omega$ the concatenation of all the $z^I(k)\in 2^{I_k}$ for $k<\omega$, i.e., $y^I\frestr I_k = z^I(k)$. Then $I':=\seq{I'_n}{n<\omega}\in \Ior$, $I\sqsubseteq I'$ and
    \[\forall x \in \Psi_+(z)\, \forall^\infty n<\omega\, \exists k<\omega\colon I_k\subseteq I'_n \text{ and }x\frestr I_k = y^I\frestr I_k.\]
    Therefore, by \autoref{ch:NA-MA}, $\Psi_+(z)\in \MAwf$.

    It is clear that $(\Psi_-,\Psi_+)$ is the required Tukey connection.
\end{proof}






As we mentioned in~\autoref{sec:intro}, $\cof(\Nwf)=\aleph_1$ and $\cov(\SNwf)=\cfrak=\aleph_2$ holds in Sacks model (see~\cite[Thm.~4.7]{CMRM}). There,  $\cov(\MAwf)=\aleph_2$ because $\cov(\SNwf) \leq \cov(\MAwf) \leq \cov(\NAwf)$.

We now focus on proving that $\sup\set{\dfrak(\Rbf_b)}{b\in\omega^\omega} \leq \cof(\Mwf)$. Therefore, $\sup\set{\dfrak(\Rbf_b)}{b\in\omega^\omega} =\aleph_1$ in Sacks model, so we cannot dualize \autoref{th:BJ}, i.e.\ $\thzfc$ (if consistent) cannot prove that $\cov(\MAwf)$ equals $\sup\set{\dfrak(\Rbf_b)}{b\in \omega^\omega}$.
Another consequence is that no classical cardinal characteristics of the continuum (different from  $\cfrak$) is an upper bound of $\cov(\MAwf )$.

We use the composition of relational systems to prove our claim.

\begin{definition}[{\cite[Sec.~4]{blass}}]\label{compTK}
Let $\Rbf_e=\la X_e,Y_e,\sqsubset_e\ra$ be a relational system for $e\in\{0,1\}$. 
The \emph{composition of $\Rbf_0$ with $\Rbf_1$} is defined by $(\Rbf_0;\Rbf_1):=\la X_0\times X_1^{Y_0}, Y_0\times Y_1, \sqsubset_* \ra$ where
\[(x,f)\sqsubset_*(y,b)\text{ iff }x \sqsubset_0 y\text{ and }f(y) \sqsubset_1 b.\]
\end{definition}

\begin{fact}\label{ex:compleqT}
Let $\Rbf_i$ be a relational system for $i<3$. If $\Rbf_0\leqT\Rbf_1$, then $\Rbf_0\leqT \Rbf_1\times \Rbf_2 \leqT (\Rbf_1;\Rbf_2)$ and 
$\Rbf_1\times \Rbf_2
\eqT \Rbf_2\times \Rbf_1$.
\end{fact}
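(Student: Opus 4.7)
The plan is to unpack each of the three claims into an explicit Tukey morphism; write $\Rbf_i = \la X_i, Y_i, \sqsubset_i\ra$ for $i<3$. No deep idea is needed beyond choosing suitable coordinate inclusions, constant maps, and swap maps, so the statement is really a bookkeeping fact about products and compositions of relational systems.

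For the first inequality $\Rbf_0 \leqT \Rbf_1 \times \Rbf_2$, I would start from a Tukey connection $(\Psi_-,\Psi_+)$ witnessing $\Rbf_0 \leqT \Rbf_1$, fix an arbitrary $x_2^\ast \in X_2$ (legitimate since the underlying sets of a relational system are nonempty), and set $\Phi_-(x) := (\Psi_-(x), x_2^\ast)$ and $\Phi_+(y_1,y_2) := \Psi_+(y_1)$. If $\Phi_-(x) \sqsubset^\times (y_1,y_2)$, then in particular $\Psi_-(x)\sqsubset_1 y_1$, so $x \sqsubset_0 \Psi_+(y_1)=\Phi_+(y_1,y_2)$, as required.

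For the second inequality $\Rbf_1 \times \Rbf_2 \leqT (\Rbf_1,\Rbf_2)$, the idea is to view an element $x_2 \in X_2$ as the constant map $Y_1 \to X_2$ taking value $x_2$. Explicitly, I would define $\Phi_-\colon X_1 \times X_2 \to X_1 \times X_2^{Y_1}$ by $\Phi_-(x_1,x_2) := (x_1, c_{x_2})$, where $c_{x_2}$ is the constant map with value $x_2$, and take $\Phi_+$ to be the identity on $Y_1\times Y_2$. Then $\Phi_-(x_1,x_2)\sqsubset_\ast (y_1,y_2)$ unfolds, by the definition of the composition, to $x_1\sqsubset_1 y_1$ together with $c_{x_2}(y_1) = x_2 \sqsubset_2 y_2$, which is exactly $(x_1,x_2) \sqsubset^\times (y_1,y_2) = \Phi_+(y_1,y_2)$.

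Finally, for $\Rbf_1 \times \Rbf_2 \eqT \Rbf_2 \times \Rbf_1$, in each direction I would simply take the coordinate-swap bijections on the $X$- and $Y$-sides; the required implication is nothing more than the commutativity of the logical conjunction in the definition of $\sqsubset^\times$. The only potential obstacle is ensuring the sets $X_i$ are nonempty so that the constant maps and chosen elements exist, but this is built into the standing definition of a relational system, so it causes no difficulty.
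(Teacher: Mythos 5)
Your proof is correct in all three parts. The paper states this as a \emph{Fact} and gives no proof, treating it as routine bookkeeping; your verification via the constant map $c_{x_2}$, the fixed point $x_2^\ast\in X_2$, and the coordinate swap is the standard argument one would supply, and each of your Tukey connections is checked correctly against the definitions of $\sqsubset^\times$ and $\sqsubset_\ast$.
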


The following theorem describes the effect of the composition on cardinal characteristics.

\begin{theorem}[{\cite[Thm.~4.10]{blass}}]\label{blascomp}
Let $\Rbf_e$ be a relational system for $e\in\{0,1\}$. Then $\bfrak(\Rbf_0;\Rbf_1) =\min\{\bfrak(\Rbf_0),\bfrak(\Rbf_1)\}$ and $\dfrak(\Rbf_0;\Rbf_1)=\dfrak(\Rbf_0)\cdot\dfrak(\Rbf_1)$. 
\end{theorem}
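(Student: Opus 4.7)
My plan is to handle the two equalities separately, each via matching upper and lower bounds. The key idea throughout is that a member of $X_0 \times X_1^{Y_0}$ should be thought of as ``an $x\in X_0$ together with a menu of potential secondary challenges, one for each possible response $y\in Y_0$ to $x$,'' and a member of $Y_0\times Y_1$ should be thought of as ``a response $y$ to the first challenge plus a response $b$ to whichever secondary challenge $y$ elicits.''

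For $\bfrak(\Rbf_0;\Rbf_1)=\min\{\bfrak(\Rbf_0),\bfrak(\Rbf_1)\}$, the two upper bounds come from lifting unbounded families. Given an unbounded $F_0\subseteq X_0$ and any fixed $f_0\in X_1^{Y_0}$, the set $\{(x,f_0):x\in F_0\}$ is unbounded in $(\Rbf_0;\Rbf_1)$ because a dominator $(y,b)$ would force $x\sqsubset_0 y$ for all $x\in F_0$. Dually, given an unbounded $F_1\subseteq X_1$ and any fixed $x_0\in X_0$, the set $\{(x_0,\hat c_{x_1}):x_1\in F_1\}$ where $\hat c_{x_1}(y):=x_1$ is the constant map, is unbounded. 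For the matching lower bound, take any $F\subseteq X_0\times X_1^{Y_0}$ of size strictly below $\min\{\bfrak(\Rbf_0),\bfrak(\Rbf_1)\}$: project to the first coordinate to obtain $F_0\subseteq X_0$ with $|F_0|<\bfrak(\Rbf_0)$, choose $y\in Y_0$ dominating $F_0$, then look at $\{f(y):(x,f)\in F\}\subseteq X_1$ (of size $<\bfrak(\Rbf_1)$) and choose $b\in Y_1$ dominating it. The single pair $(y,b)$ then $\sqsubset_*$-dominates all of $F$.

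For $\dfrak(\Rbf_0;\Rbf_1)=\dfrak(\Rbf_0)\cdot\dfrak(\Rbf_1)$, the upper bound is direct: if $D_0\subseteq Y_0$ and $D_1\subseteq Y_1$ are dominating families, then $D_0\times D_1$ dominates in $(\Rbf_0;\Rbf_1)$, since given $(x,f)$ we first pick $y\in D_0$ with $x\sqsubset_0 y$ and then $b\in D_1$ with $f(y)\sqsubset_1 b$. For the lower bound, take a dominating family $D\subseteq Y_0\times Y_1$ and project it: the first projection $\pi_0[D]$ is dominating in $\Rbf_0$ (apply the domination to any $(x,f)$ with arbitrary $f$), and the second projection $\pi_1[D]$ is dominating in $\Rbf_1$ (for $x_1\in X_1$, apply domination to $(x_0,\hat c_{x_1})$ for an arbitrary $x_0$; any dominator $(y,b)\in D$ yields $x_1=\hat c_{x_1}(y)\sqsubset_1 b$). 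Hence $|D|\geq\max\{\dfrak(\Rbf_0),\dfrak(\Rbf_1)\}$, which equals the product in the infinite case.

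The argument has no real obstacle: the only thing to be careful about is the use of constant functions $\hat c_{x_1}\in X_1^{Y_0}$, which requires $Y_0\neq\varnothing$ (harmless, as otherwise both sides are trivial), and the minor observation that $\dfrak(\Rbf_0)\cdot\dfrak(\Rbf_1)=\max\{\dfrak(\Rbf_0),\dfrak(\Rbf_1)\}$ once at least one factor is infinite, which is the only case of interest. Since the result is just cited from Blass, I would write it up very briefly, perhaps only sketching the lower bound for $\bfrak$ and the projection arguments for $\dfrak$, and leave the symmetric upper bounds to the reader.
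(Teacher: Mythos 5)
The paper does not actually prove this result; it cites Blass~\cite[Thm.~4.10]{blass}, so there is no in-paper argument to compare against. Your proof is essentially the standard one, and the $\bfrak$ part and the $\dfrak$ upper bound are complete and correct.

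The one place where your argument falls short of the stated theorem is the lower bound $\dfrak(\Rbf_0;\Rbf_1)\geq\dfrak(\Rbf_0)\cdot\dfrak(\Rbf_1)$: projecting a dominating family $D\subseteq Y_0\times Y_1$ to each coordinate only yields $|D|\geq\max\{\dfrak(\Rbf_0),\dfrak(\Rbf_1)\}$, which equals the product only when at least one factor is infinite. You flag this yourself, but the theorem is stated without that restriction, and the product genuinely differs from the max in finite cases (e.g.\ both $\Rbf_e=\la 2,2,=\ra$ gives $\dfrak(\Rbf_0;\Rbf_1)=4$, not $2$); the paper itself records that $\dfrak(\Rbf_b)$ can be as small as $1$ in~\autoref{rem:Rb-nonM}. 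The full lower bound requires looking at the fibers: set $B:=\set{y\in\pi_0[D]}{\set{b}{(y,b)\in D}\text{ is }\Rbf_1\text{-dominating}}$. If $|D|<\dfrak(\Rbf_0)\cdot\dfrak(\Rbf_1)$ then, since each $y\in B$ contributes at least $\dfrak(\Rbf_1)$ elements to $D$, one gets $|B|<\dfrak(\Rbf_0)$, so some $x_0\in X_0$ is not $\sqsubset_0$-bounded by any $y\in B$; then choose $f\in X_1^{Y_0}$ so that $f(y)$ is undominated by the fiber over $y$ whenever that fiber is non-dominating, and $(x_0,f)$ escapes every $(y,b)\in D$. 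This argument works uniformly for finite and infinite cardinals, where yours only covers the infinite case (which is admittedly all that this paper uses).
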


We introduce the following relational system for combinatorial purposes.

\begin{definition}\label{def:Ed}
  Let $b:=\seq{ b(n)}{ n<\omega}$ be a sequence of non-empty sets. Define the relational system $\Ed_b:=\la\prod b,\prod b,\neq^\infty\ra$ where $x=^\infty y$ means $x(n)=y(n)$ for infinitely many $n$. The relation $x\neq^\infty y$ means that \emph{$x$ and $y$ are eventually different}. Denote $\balc_{b,1}:=\bfrak(\Ed_b)$ and $\dalc_{b,1}:=\dfrak(\Ed_b)$.
  When $b(n)=\omega$ for all $n<\omega$, denote the relational system by $\Ed$ and its associated cardinal characteristics by $\balc_{\omega,1}$ and $\dalc_{\omega,1}$.
\end{definition}

Recall the following characterization of the cardinal characteristics associated with $\Mwf$. It is well-known that $\balc_{\omega,1}=\non(\Mwf)$ and $\dalc_{\omega,1}=\cov(\Mwf)$ (Bartoszy\'nski and Miller, see e.g.\ \cite[Thm.~5.1]{CMlocalc}). 
The one for $\add(\Mwf)$ below is due to Miller~\cite{Miller}.

\begin{theorem}[{\cite[Sec.~3.3]{CM}}]\label{charM2}
    \[\add(\Mwf) = \min(\{\bfrak\}\cup\set{\dalc_{b,1}}{b\in \omega^\omega}) \text{ and } \cof(\Mwf) = \sup(\{\dfrak\}\cup\set{\balc_{b,1}}{b\in \omega^\omega})\]
\end{theorem}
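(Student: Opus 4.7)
The plan is to deduce both equalities from Bartoszyński's chopped-real description of the meager ideal (\autoref{ChM}) together with the combinatorial criterion for $B_{x,I}\subseteq B_{y,J}$ in \autoref{baseM}. Both formulas are of Miller--Truss--Fremlin type, so I would prove each via two separate upper bounds (one using $\bfrak$ or $\dfrak$, one using each $\dalc_{b,1}$ or $\balc_{b,1}$) plus a single matching lower (resp.\ upper) bound witnessing the minimum (resp.\ supremum) is tight.

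For the upper bounds on $\add(\Mwf)$, the inequality $\add(\Mwf)\leq\bfrak$ follows from \autoref{ChM} via $\Ior\eqT\omega^\omega$ (\autoref{Charb-d}): an unbounded family $\{I_\alpha\}_{\alpha<\bfrak}\subseteq\Ior$ produces meager sets $B_{I_\alpha}$ whose union cannot be covered by any single $B_{y,J}$, since no such $J$ can coarsen all the $I_\alpha$'s (which is precisely what \autoref{baseM} would demand). For $\add(\Mwf)\leq\dalc_{b,1}$, fix $b\in\omega^\omega$ with $b\geq^*2$, choose $I\in\Ior$ with $2^{|I_n|}=b(n)$, and identify any $\Ed_b$-dominating family $\{y_\alpha\}_{\alpha<\dalc_{b,1}}$ with chopped reals through $I$; the sets $B_{y_\alpha,I}$ have non-meager union, as a single cover by some $B_{z,J}$ would, via \autoref{baseM}, translate into an element of $\prod b$ that is \emph{not} eventually different from any $y_\alpha$, contradicting dominance in $\Ed_b$.

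For the matching lower bound $\add(\Mwf)\geq\min(\{\bfrak\}\cup\set{\dalc_{b,1}}{b\in\omega^\omega})$, take $\kappa$ strictly below this minimum and meager sets $M_\alpha\subseteq B_{x_\alpha,I_\alpha}$ for $\alpha<\kappa$ (provided by \autoref{ChM}). Since $\kappa<\bfrak$, there is a common coarsening $J\in\Ior$ of all the $I_\alpha$'s. Setting $b(n)=2^{|J_n|}$ and encoding each $x_\alpha$ block-by-block in $J$ yields a family of size $\kappa<\dalc_{b,1}$ in $\prod b$, so one extracts $z\in\prod b$ eventually different from every encoding; decoded back into $\cantor$ this $z$, together with $J$, gives via \autoref{baseM} a single $B_{z,J}$ containing each $B_{x_\alpha,I_\alpha}$, whence the union is meager and $\kappa<\add(\Mwf)$. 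The dual formula for $\cof(\Mwf)$ is established analogously: the individual lower bounds $\cof(\Mwf)\geq\dfrak$ and $\cof(\Mwf)\geq\balc_{b,1}$ come from projecting cofinal families in $\Mwf$ (to $\Ior$ via the partition, and to $\prod b$ via the chopped real), while the matching upper bound is witnessed by the explicit cofinal family $\set{B_{z,J_\xi}}{\xi<\dfrak,\ z\in F_\xi}$, where $\{J_\xi\}$ is $\sqsubseteq$-dominating in $\Ior$ and each $F_\xi\subseteq\prod b_\xi$ witnesses $\bfrak(\Ed_{b_\xi})=\balc_{b_\xi,1}$ with $b_\xi(n)=2^{|J_\xi(n)|}$.

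The main obstacle will be the encoding-decoding step in the lower-bound argument for $\add(\Mwf)$: one must verify carefully that the condition supplied by \autoref{baseM}, namely $\forall^\infty n\, \exists k\colon I_{\alpha,k}\subseteq J_n$ and $x_\alpha{\upharpoonright}I_{\alpha,k}=z{\upharpoonright}I_{\alpha,k}$, really follows from $z$ being $\Ed_b$-eventually-different from the chosen $J$-block readings of the $x_\alpha$'s. The correct form of the encoding (for instance, which subblock inside each $J_n$ is used, or how the concatenation is arranged so that full-block agreement of $z$ with the encoded $x_\alpha$ forces subblock agreement inside $\cantor$) must be chosen so that the translation between the two agreement relations is completely clean.
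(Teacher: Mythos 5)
Your plan correctly isolates Talagrand's cofinal chopped-real description of meager sets (\autoref{ChM}) and the containment criterion \autoref{baseM} as the key combinatorial tools, and the easy upper bounds $\add(\Mwf)\leq\bfrak$ and $\add(\Mwf)\leq\dalc_{b,1}$ are fine (the latter already follows from $\add(\Mwf)\leq\cov(\Mwf)\leq\dalc_{b,1}$, cf.~\autoref{FSRb}). The matching lower bound for $\add(\Mwf)$, however, contains a genuine gap, and the step you flag as the ``main obstacle'' in your last paragraph is not a routine verification: what you propose to verify is actually false.

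The direction is backwards. From $\kappa<\dalc_{b,1}=\dfrak(\Ed_b)$, the fact that the $\kappa$-sized family $\{f_\alpha\}\subseteq\prod b$ with $f_\alpha(n)=x_\alpha{\upharpoonright}J_n$ is not $\Ed_b$-dominating yields a $z\in\prod b$ with $z=^\infty f_\alpha$ (infinitely often equal) for every $\alpha$; it does \emph{not} yield a $z$ eventually different from every $f_\alpha$ (that conclusion would need $\kappa<\balc_{b,1}=\bfrak(\Ed_b)$, which is not among your hypotheses). And eventual difference would be useless anyway: by \autoref{baseM}, $B_{x_\alpha,I_\alpha}\subseteq B_{z,J}$ requires that $z$ \emph{agree} with $x_\alpha$ on some $I_\alpha$-subblock of almost every $J_n$, and full $J_n$-block disagreement carries no information about subblock agreement. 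Even after the direction is corrected, a second ingredient is missing: the $z$ one obtains satisfies only an $\exists^\infty n$ statement, while \autoref{baseM} requires $\forall^\infty n$. The upgrade costs a \emph{second} application of $\kappa<\bfrak$: set $M_\alpha:=\set{m}{z{\upharpoonright}J_m=x_\alpha{\upharpoonright}J_m}$, take $g\in\omega^\omega$ with $g\geq^* g_\alpha$ for all $\alpha$ where $g_\alpha(n):=\min(M_\alpha\menos(n+1))$, and use $g$ to build a further coarsening $J'$ with $J\sqsubseteq J'$ whose blocks each contain some $J_m$ with $m\in M_\alpha$, simultaneously for all $\alpha$; only then does $B_{x_\alpha,I_\alpha}\subseteq B_{x_\alpha,J}\subseteq B_{z,J'}$ follow. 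The same $\exists^\infty$-versus-$\forall^\infty$ mismatch defeats your cofinal family $\set{B_{z,J_\xi}}{\xi<\dfrak,\ z\in F_\xi}$: given $B_{y,L}$ with $L\sqsubseteq J_\xi$, an $\Ed_{b_\xi}$-unbounded $F_\xi$ only produces infinitely many matching $J_\xi$-blocks, so the family must additionally be indexed by a dominating family of gap-bounds (still within the stated supremum, as it contributes another factor of $\dfrak$) together with the corresponding second coarsenings.
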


Therefore, 
to settle our claim, it suffices to prove:

\begin{theorem}\label{dRbup}
    For $b\in(\omega+1)^\omega$, $\Ed_b^\perp\leqT\Rbf_b\leqT (\Ed_b^\perp;\Ior)$. In particular, $\balc_{b,1}\leq \dfrak(\Rbf_b)\leq \max\{\balc_{b,1},\dfrak\}$ and $\min\{\dalc_{b,1},\bfrak\}\leq \bfrak(\Rbf_b)\leq \dalc_{b,1}$.
\end{theorem}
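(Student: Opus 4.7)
The plan is to exhibit a Tukey connection $\Rbf_b\leqT(\Ed_b^\perp;\Ior)$; once this is done, the two cardinal inequalities follow formally. Indeed, \autoref{Charb-d} gives $\bfrak(\Ior)=\bfrak$ and $\dfrak(\Ior)=\dfrak$, while by definition of the dual, $\bfrak(\Ed_b^\perp)=\dfrak(\Ed_b)=\dalc_{b,1}$ and $\dfrak(\Ed_b^\perp)=\bfrak(\Ed_b)=\balc_{b,1}$. Applying \autoref{blascomp} then yields $\bfrak(\Ed_b^\perp;\Ior)=\min\{\dalc_{b,1},\bfrak\}$ and $\dfrak(\Ed_b^\perp;\Ior)=\max\{\balc_{b,1},\dfrak\}$, so the Tukey connection gives exactly the stated bounds on $\bfrak(\Rbf_b)$ and $\dfrak(\Rbf_b)$.

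To build the Tukey connection, I would define $\Psi_-\colon \prod b\to \prod b\times \Ior^{\prod b}$ by $\Psi_-(f):=(f,F_f)$, where the ``interval function'' $F_f\colon \prod b\to \Ior$ encodes the agreement pattern between $f$ and the prospective witness. Concretely, for $y\in\prod b$ with $f=^\infty y$, enumerate $\set{k<\omega}{f(k)=y(k)}$ in increasing order as $k_0<k_1<\cdots$ and set
\[F_f(y):=\la\, [0,k_1),\ [k_1,k_2),\ [k_2,k_3),\ \ldots\ \ra\in \Ior,\]
which partitions $\omega$ into finite non-empty blocks, each of which contains an index on which $f$ and $y$ agree (the $0$-th block contains $k_0$ and the $i$-th block contains $k_i$ for $i\geq 1$). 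For $y$ with $f\neq^\infty y$, set $F_f(y)$ to any fixed element of $\Ior$; this value is immaterial. Then define $\Psi_+\colon \prod b\times \Ior\to \Ior\times \prod b$ by $\Psi_+(y,J):=(J,y)$.

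To verify the Tukey condition, assume $\Psi_-(f)\sqsubset_*(y,J)$, i.e.\ $f=^\infty y$ and $F_f(y)\sqsubseteq J$. Then for all but finitely many $n$, some block of $F_f(y)$ is contained in $J_n$, and by construction that block holds an index $k$ with $f(k)=y(k)$; hence $k\in J_n$ witnesses $\exists k\in J_n\colon f(k)=y(k)$. Thus $f\sqrb (J,y)=\Psi_+(y,J)$, as required.

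The main obstacle is minor and essentially bookkeeping: arranging the blocks of $F_f(y)$ so that every block is guaranteed to contain an agreement point between $f$ and $y$ while still tiling $\omega$. Starting the first block at $0$ (rather than at $k_0$) and using the half-open blocks $[k_i,k_{i+1})$ thereafter provides a clean way to do this; everything else is a direct invocation of \autoref{blascomp}.
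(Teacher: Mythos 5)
Your proof is correct and follows essentially the same route as the paper: the same maps $\Psi_-(f)=(f,F_f)$ and $\Psi_+(y,J)=(J,y)$, with the only cosmetic difference being that you spell out a concrete choice of $F_f(y)$ (the blocks $[0,k_1),[k_1,k_2),\dots$ determined by the agreement indices) where the paper merely stipulates that such an interval partition be chosen, and you also make explicit the cardinal arithmetic via \autoref{blascomp} that the paper leaves to the reader.
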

\begin{proof}
    The Tukey-inequality $\Ed_b^\perp\leqT\Rbf_b$ is immediate from the definitions, so we focus on the second one. 
    Define $\Psi_-\colon \prod b \to \prod b \times \Ior^{\prod b}$ by $\Psi_-(x):= (x,F_x)$ where, for $y\in \prod b$, if $y =^\infty x$ then $F_x(y):= I^y_x \in \Ior$ is chosen such that $\forall k<\omega\, \exists i\in I^y_{x,k}\colon y(i) = x(i)$; otherwise, $F_x(y)$ can be anything (in $\Ior$).

    Define $\Psi_+\colon \prod b\times\Ior\to \Ior\times \prod b$ by $\Psi_+(y,J) = (J,y)$. We check that $(\Psi_-,\Psi_+)$ is a Tukey connection. Assume that $x,y\in\prod b$, $J\in\Ior$ and that $\Psi_-(x)\sqsubset_* (y,J)$, i.e.\ $x =^\infty y$ and $I^y_x \sqsubseteq J$. Since each $I^y_{x,k}$ contains a point where $x$ and $y$ coincide, $I^y_x \sqsubseteq J$ implies that, for all but finitely many $n<\omega$, $J_n$ contains a point where $x$ and $y$ coincide, which means that $x\sqrb (J,y) = \Psi_+(y,J)$.
\end{proof}

\begin{corollary}\label{supRbM}
    For all $b\in(\omega+1)^\omega$, $\add(\Mwf)\leq\bfrak(\Rbf_b)$ and $\dfrak(\Rbf_b) \leq \cof(\Mwf)$.
\end{corollary}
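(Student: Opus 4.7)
The plan is to read off the corollary directly from the two preceding results: Theorem~\ref{dRbup} (\texttt{dRbup}) and Theorem~\ref{charM2} (\texttt{charM2}). No new construction should be needed, so I expect this to be a one-line consequence rather than a genuine argument, and the only ``obstacle'' is to check that the two inequalities combine in the right direction.

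First, I would invoke Theorem~\ref{dRbup}, which yields the estimate
\[
\dfrak(\Rbf_b)\leq \max\{\balc_{b,1},\dfrak\}
\]
for every $b\in\omega^\omega$. Then I would appeal to Miller's characterization of $\cof(\Mwf)$ as stated in Theorem~\ref{charM2}, namely
\[
\cof(\Mwf)=\sup\bigl(\{\dfrak\}\cup\{\balc_{b,1}:b\in\omega^\omega\}\bigr),
\]
which immediately gives $\balc_{b,1}\leq \cof(\Mwf)$ and $\dfrak\leq \cof(\Mwf)$ for the fixed $b$. Combining,
\[
\dfrak(\Rbf_b)\leq \max\{\balc_{b,1},\dfrak\}\leq \cof(\Mwf),
\]
as required.

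The only thing worth double-checking is the quantifier on $b$: Theorem~\ref{dRbup} gives the Tukey connection for each individual $b\in\omega^\omega$, and $\cof(\Mwf)$ is an upper bound for $\balc_{b,1}$ uniformly in $b$, so the bound $\dfrak(\Rbf_b)\leq \cof(\Mwf)$ indeed holds for every $b$ separately (which is what the statement asserts). No further case analysis or use of $\Ior\eqT \omega^\omega$ (Example~\ref{Charb-d}) is required beyond what is already encoded in Theorem~\ref{charM2}.
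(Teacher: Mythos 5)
Your proof is correct and is exactly the intended argument: the paper presents Theorem~\ref{dRbup} precisely so that \autoref{supRbM} falls out by combining it with Miller's characterization in Theorem~\ref{charM2}. Nothing more needs to be said.
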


Note that $\add(\Mwf) \leq \min\set{\bfrak(\Rbf_b)}{b\in\omega^\omega}$ already follows from \autoref{bas:NA} and \autoref{th:BJ}.

\begin{remark}\label{FSRb}
    For $b\in\omega^\omega$, $\balc_{b,1}\leq \non(\Mwf)$ and $\cov(\Mwf)\leq \dalc_{b,1}$. On the other hand, after a FS (finite support) iteration of uncountable cofinality lentgh of ccc non-trivial posets, $\non(\Mwf) \leq \cov(\Mwf)$, which implies by \autoref{dRbup} that $\bfrak \leq \bfrak(\Rbf_b)$ and $\dfrak(\Rbf_b)\leq \dfrak$. Hence, the consistency of $\bfrak(\Rbf_b)<\bfrak$ (and $\dfrak<\dfrak(\Rbf_b)$) cannot be obtained by FS iterations. The same applies to $\non(\MAwf) <\bfrak$.
\end{remark}

Concerning $\Rbf_\omega$, \autoref{dRbup} indicates that $\non(\Mwf) \leq \dfrak(\Rbf_\omega)\leq \max\{\non(\Mwf),\dfrak\} = \cof(\Mwf)$ and $\add(\Mwf)=\min\{\cov(\Mwf),\bfrak\}\leq \bfrak(\Rbf_\omega)\leq \cov(\Mwf)$. But more can be concluded.

\begin{lemma}\label{Romega-d}
   $\baire\leqT\Rbf_\omega$. 
\end{lemma}
\begin{proof}
    Let $\Psi_-\colon \baire\to \baire$ that sends $x\in\baire$ to some increasing $x'\in \baire$ above $x$ (everywhere). 
    Define $\Psi_+\colon \Ior\times\baire\to\baire$ such that, for $(I,h)\in\Ior\times\baire$, $\Psi_+(I,h)$ is the map in $\omega^\omega$ that sends each point in the interval $I_n$ to $\max_{k\in I_{n+1}}h(k)$. Then $(\Psi_-,\Psi_+)$ is a Tukey connection: if $x\in\baire$, $(I,h)\in \Ior\times\baire$ and $x'\sqsubset^\bullet (I,h)$, i.e.\ $\exists\, k_n\in I_n\colon x'(k_n)=h(k_n)$ for all but finitely many $n<\omega$, then $x(j)\leq x'(j) < x'(k_{n+1}) \leq \max_{k\in I_{n+1}}h(k)$ for all $j\in I_n$, i.e.\ $x\leq^* \Psi_+(I,h)$.
\end{proof}

\begin{theorem}[{\cite[Thm.~2.2.12]{BJ}}]\label{Romega}
   $\bfrak(\Rbf_\omega) = \add(\Mwf)$, and $\dfrak(\Rbf_\omega)=\cof(\Mwf)$.
\end{theorem}
\begin{proof}
    Immediately from \autoref{dRbup} and \autoref{Romega-d}.
\end{proof}

We also present further connections between $\Rbf_b$ and measure zero.

\begin{lemma}\label{RbE}
Let $b\in\omega^\omega$.  
    If $\sum_{k<\omega}\frac{1}{b(k)}<\infty$ then $\Cbf_\Ewf \leqT \Rbf_b$. In particular, $\bfrak(\Rbf_b) \leq \non(\Ewf)$ and $\cov(\Ewf)\leq \dfrak(\Rbf_b)$.
\end{lemma}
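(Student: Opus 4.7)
The plan is to exhibit an explicit Tukey connection $(\Psi_-,\Psi_+)\colon \Cbf_\Ewf \to \Rbf_b$. Since $\sum_k\frac{1}{b(k)}<\infty$ forces $b(k)\to\infty$, we may assume $b(k)\ge 2$ for all $k$. First I would fix, once and for all (depending only on $b$), a partition $J=\langle J_k \rangle_{k<\omega}\in\Ior$ of $\omega$ with $|J_k|=\lceil \log_2 b(k)\rceil$, together with a surjection $\pi_k\colon 2^{J_k}\to b(k)$ whose fibers have size at most $2$. Define
\[
\Psi_-(x)(k):=\pi_k(x{\restriction}J_k) \quad (x\in\cantor,\ k<\omega),
\]
so that $\Psi_-(x)\in\prod b$, and note that $\mu\{x\in\cantor: \Psi_-(x)(k)=j\}\le \frac{2}{b(k)}$ for every $k$ and every $j<b(k)$, where $\mu$ is the uniform measure on $\cantor$.

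Next, given $(I,h)\in\Ior\times\prod b$, set
\[
A_n(I,h):=\set{x\in\cantor}{\exists k\in I_n\colon \Psi_-(x)(k)=h(k)}, \qquad \Psi_+(I,h):=\bigcup_{N<\omega}\bigcap_{n\ge N} A_n(I,h).
\]
Each $A_n(I,h)$ is clopen, so every set $\bigcap_{n\ge N}A_n(I,h)$ is closed. The Tukey inequality on the combinatorial side is immediate: if $\Psi_-(x)\sqrb(I,h)$, then $x\in A_n(I,h)$ for all but finitely many $n$, so $x\in\Psi_+(I,h)$. Thus the only real content is checking that $\Psi_+(I,h)\in\Ewf$.

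The main step will be the measure estimate. Because $J$ is a partition and $I$ is a partition, the blocks $\bigcup_{k\in I_n}J_k$ are pairwise disjoint as $n$ varies; hence the events $A_n(I,h)$ are mutually independent with respect to $\mu$. By the union bound,
\[
\mu(A_n(I,h))\le \sum_{k\in I_n}\frac{2}{b(k)}=: c_n,
\]
and $c_n\to 0$ since $\min I_n\to\infty$ and $\sum_k\frac{1}{b(k)}<\infty$. Consequently $\mu\bigl(\bigcap_{n\ge N}A_n(I,h)\bigr)=\prod_{n\ge N}\mu(A_n(I,h))\le \prod_{n\ge N}c_n=0$, so each $\bigcap_{n\ge N}A_n(I,h)$ is a closed null set, and $\Psi_+(I,h)$ is their countable union, which lies in $\Ewf$.

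The mildly delicate points to watch are: (i) verifying the independence claim carefully (which reduces to the disjointness of the coordinate blocks $\bigcup_{k\in I_n}J_k$), and (ii) ensuring the tail sums $c_n$ indeed vanish, which uses the hypothesis $\sum 1/b(k)<\infty$ together with the fact that the intervals $I_n$ march off to infinity. Once these are in hand, $\Cbf_\Ewf\leqT \Rbf_b$, and the stated cardinal inequalities $\bfrak(\Rbf_b)\le\non(\Ewf)$ and $\cov(\Ewf)\le\dfrak(\Rbf_b)$ follow from the general Tukey inequalities together with \autoref{ex:CI}.
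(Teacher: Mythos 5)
Your proof is correct and uses the same core idea as the paper: for each $(I,h)$, the events ``some coordinate in the $n$-th block matches $h$'' are independent across $n$ (disjoint coordinate blocks), each has measure bounded by the block tail sum of $1/b(k)$ (up to a constant), and $\sum_k 1/b(k)<\infty$ makes these block sums go to $0$, so the infinite product vanishes and the resulting $F_\sigma$ set is a countable union of closed null sets. The only difference is bookkeeping: the paper takes $\Psi_-$ to be the identity on $\prod b$, works with the product measure $\Lb_b$ on $\prod b$, and invokes the Tukey equivalence $\Ewf(\prod b)\eqT\Ewf$ via the mixed-radix expansion map; you instead build an explicit block-coding surjection $\cantor\to\prod b$ with fibers of size at most $2$ and do the measure computation directly on $\cantor$, which avoids appealing to $\Ewf(\prod b)\eqT\Ewf$ at the price of carrying a harmless factor $2$ through the estimate. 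Both routes are sound; yours is marginally more self-contained. One cosmetic remark: you should also note that for $N$ small (where $c_n$ may exceed $1$) the bound $\mu(A_n)\le 1$ still forces $\prod_{n\ge N}\mu(A_n)=0$, since the tail where $c_n<1/2$ already kills the product.
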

\begin{proof}
    For $0<m<\omega$, consider the \emph{uniform measure} $\mu_m$ on $m$, which assigns measure $\frac{1}{m}$ to each singleton. Consider the measure $\Lb_b$ on (the completion of) the Borel $\sigma$-algebra of $\prod b$ obtained as the product measure of the uniform measures of each $b(i)$. We can define $\Ewf(\prod b)$ on $\prod b$ similarly, and thanks to the map
    \[x\in \prod b \mapsto \sum_{n<\omega}\frac{x(i)}{\prod_{k\leq n}b(k)},\]
    we have that $\Ewf(\prod b)\eqT \Ewf$ and $\Cbf_{\Ewf(\prod b)}\eqT \Cbf_\Ewf$, see details in~\cite[Sec.~7.1 (arXiv version)]{GaMej}.\footnote{In this reference, $\Nwf_{\Fin}$ is the null-ideal, while $\Nwf^*_{\Fin}$ is $\Ewf$.} So, for this proof, we can work with $\Ewf = \Ewf(\prod b)$.

    Let $F\colon \prod b\to \prod b$ the identity function. For $(J,h)\in \Ior\times\prod b$, define
    \[G(J,h) := \largeset{x\in \prod b}{x \sqrb (J,h)}.\]
    It is enough to show that $G(J,h)\in \Ewf$ to conclude that $(F,G)$ is the desired Tukey connection. It is clear that $G(J,h)$ is an $F_\sigma$-set, since
    \[G(J,h) = \bigcup_{m<\omega}\bigcap_{n\geq m}\bigcup_{k\in J_n}A^{h(k)}_k, \text{ where }A^{\ell}_k := \largeset{x\in \prod b}{x(k) = \ell} \text{ for }\ell<b(k),\]
    and each $A^\ell_k$ is clopen. Since $\Lb_b(A^\ell_k) = \frac{1}{b(k)}$, we obtain
    \[\Lb_b(G(J,h))) \leq \lim_{m\to \infty}\prod_{n\geq m}\sum_{k\in J_n}\frac{1}{b(k)}.\]
    This limit above is $0$ because $\sum_{k<\omega}\frac{1}{b(k)}<\infty$.
\end{proof}

\begin{remark}
    In the case $\sum_{k<\omega}\frac{1}{b(k)} = \infty$, it is possible to find a $J\in \Ior$ such that $\Lb_b(G(J,h)) = 1$ for all $h\in\prod b$. Indeed,
    \[\prod b\menos G(J,h) = \bigcap_{m<\omega}\bigcup_{n\geq m}\bigcap_{k\in J_n}\left(\prod b\menos A^{h(k)}_k\right),\]
    so
    \[\Lb_b\left(\prod b\menos G(J,h)\right) \leq \lim_{m\to\infty}\sum_{n\geq m}\prod_{k\in J_n}\left(1 - \frac{1}{b(k)}\right) \leq \lim_{m\to\infty}\sum_{n\geq m} e^{-\sum_{k\in J_n}\frac{1}{b(k)}}.\]
    Then, it is enough to find a $J\in\Ior$ such that $\sum_{k\in J_n}\frac{1}{b(k)} \geq n$ for all $n<\omega$.
\end{remark}

The previous lemma and \autoref{th:BJ} imply that $\non(\MAwf)\leq \non(\Ewf)$ and $\cov(\Ewf)\leq \cov(\MAwf)$. Note that this also follows from Zindulka's result $\EAwf = \MAwf$.

We conclude this section by showing 
Pawlikowski's claim 
$\minLc \leq \add(\SNwf)$ (\autoref{MainThm1c}). 

We begin with some notation:

\begin{itemize}
    \item For $s\in 2^{<\omega}$, denote $[s]:=\set{x\in 2^\omega}{ s\subseteq x}$.

    \item For $\sigma \in (2^{<\omega})^\omega$, define $\hgt_\sigma\colon \omega\to\omega$ by $\hgt_\sigma(n):=|\sigma(n)|$ for all $n<\omega$, which we call the \emph{height of $\sigma$}. Also, define
        \[[\sigma]_\infty:=\set{x\in 2^\omega}{\exists^\infty n\colon \sigma(n)\subseteq x}.\]
\end{itemize}

\begin{definition}\label{def:SN}
A set \emph{$X\subseteq 2^\omega$ has strong measure zero} if
\[\forall f\in\omega^\omega\, \exists \sigma\in(2^{<\omega})^\omega\colon f\leq \hgt_\sigma \text{ and }X\subseteq\bigcup_{i<\omega}[\sigma(i)].\]
Denote by $\SNwf$ the collection of strong measure zero subsets of $2^\omega$.
\end{definition}

The following characterization of $\SNwf$ is quite practical.

\begin{lemma}\label{charSN}
    Let $X\subseteq2^\omega$ and let $D\subseteq\omega^\omega$ be a dominating family. Then $X\subseteq2^\omega$ has strong measure zero in $2^\omega$ iff  
    \[\forall f\in D\, \exists\sigma\in (2^{<\omega})^\omega\colon \hgt_\sigma=f\text{\ and\ } X\subseteq[\sigma]_\infty.\]
\end{lemma}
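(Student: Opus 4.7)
For the $\Leftarrow$ direction I would use the dominating property of $D$ directly: given $g\in\omega^\omega$, pick $f\in D$ with $g\leq^* f$ and apply the hypothesis to obtain $\sigma\in(2^{<\omega})^\omega$ with $\hgt_\sigma=f$ and $X\subseteq[\sigma]_\infty$. Then $g\leq^* f=\hgt_\sigma$ and $X\subseteq[\sigma]_\infty\subseteq\bigcup_{i<\omega}[\sigma(i)]$, which is exactly what is required to witness the strong measure zero of $X$ for $g$.

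For the $\Rightarrow$ direction, fix $f\in D$; the plan is a ``partition into channels'' argument. Write $\omega=\bigsqcup_{k<\omega}A_k$ as a union of pairwise disjoint infinite sets, $A_k=\{a^k_0<a^k_1<\cdots\}$, and run one application of $\SNwf$ per channel. Concretely, for each $k$ I would apply the strong measure zero of $X$ to the monotonization $\tilde f_k(i):=\max_{j\leq i}f(a^k_j)$ to obtain $\tau_k\in(2^{<\omega})^\omega$ with $\tilde f_k\leq^*\hgt_{\tau_k}$ (so that $|\tau_k(i)|\geq f(a^k_i)$ for all but finitely many $i$) and $X\subseteq\bigcup_{i<\omega}[\tau_k(i)]$. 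Then I would define $\sigma$ by $\sigma(a^k_i):=\tau_k(i)\upharpoonright f(a^k_i)$ whenever the truncation is defined, and by an arbitrary element of $2^{f(a^k_i)}$ otherwise; this automatically gives $\hgt_\sigma=f$.

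The verification $X\subseteq[\sigma]_\infty$ uses that, in each channel $k$, the covering yields some $i$ with $\tau_k(i)\subseteq x$, from which $\sigma(a^k_i)\subseteq x$ whenever this $i$ lies in the (cofinite) good range; the pairwise disjointness of the $A_k$'s then produces a fresh witnessing index in $\omega$ for every successful channel. The principal obstacle is that a priori some $x\in X$ might only be caught by $\tau_k$ in the finite bad initial segment in certain channels, so that successful hits might accumulate only finitely often. My plan to handle this is to refine the partition to $\omega=\bigsqcup_{(k,l)\in\omega^2}A_{k,l}$ and run $\SNwf$ once per sub-channel, so that the bad segments from distinct sub-channels cannot simultaneously trap the same $x$; infinitely many good hits for every $x\in X$ then follow, completing the proof.
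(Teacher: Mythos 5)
Your $\Leftarrow$ direction is fine, and the core of your $\Rightarrow$ direction — partition $\omega$ into countably many infinite channels $A_k$, apply the strong measure zero hypothesis once per channel, and get infinitely many hits from the disjointness of the $A_k$ — is indeed the standard argument. The trouble is in the ``bad initial segment'' worry and in the patch you propose for it. That worry comes from reading the $\leq^*$ in \autoref{def:SN} literally; but as written that definition is vacuous (take $\sigma(0)=\varnothing$, so $[\sigma(0)]=2^\omega\supseteq X$, and pick any $\sigma(n)\in 2^{f(n)}$ for $n\geq 1$), so it cannot be what is intended. The standard definition demands $f\leq\hgt_\sigma$ for \emph{all} $n$ (equivalently, after truncating, $\hgt_\sigma=f$). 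Under that definition there is no bad range at all: applying it in channel $k$ to $g_k(i):=f(a^k_i)$ already yields $\tau_k$ with $|\tau_k(i)|\geq f(a^k_i)$ for every $i$, so $\sigma(a^k_i):=\tau_k(i)\upharpoonright f(a^k_i)$ is defined for every $i$, $[\tau_k(i)]\subseteq[\sigma(a^k_i)]$, and your single-partition construction proves $X\subseteq[\sigma]_\infty$ with no patch (and the monotonization $\tilde f_k$ is also unnecessary).

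The double-partition patch, moreover, would not close the gap even if it were needed. The covers $\tau_{k,l}$ are produced by independent applications of the hypothesis over which you have no control, and nothing prevents a fixed $x\in X$ from being caught, in \emph{every} sub-channel $(k,l)$, only by indices $i$ lying in the finite bad segment of $\tau_{k,l}$. Re-indexing the channels by $\omega\times\omega$ instead of $\omega$ just reproduces the same obstruction in more copies; it does not manufacture a single guaranteed good hit, let alone infinitely many. The fix is not a finer partition but the correct form of the definition, under which the bad segments never arise.
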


\begin{proof}[Proof of \autoref{MainThm1c}]
We show that, for any fixed $h\in\baire$ diverging to infinity and any dominating family $D\subseteq \baire$, $\SNwf \leqT \prod_{b\in D}\Lc(b,h)$, so we also have $\cof(\SNwf)\leq \dfrak\left(\prod_{b\in D}\Lc(b,h)\right)$.\footnote{However, this upper bound of $\cof(\SNwf)$ is not better than those explored in~\cite{CarMej23}.} 
Let $b_-(n):=\lfloor \log_2 b(n) \rfloor$ and fix an injection $s^n\colon 2^{b_-(n)}\to b(n)$ with left inverse $t^n$. 
First define $\Psi_-\colon \SNwf\to \prod_{b\in D}\prod b$. Let $X\in\SNwf$.  For $b\in D$, by \autoref{charSN} pick $\sigma^b_{X}\in(2^{<\omega})^\omega$ such that $\hgt_{\sigma^b_X}=b_-$ and $X\subseteq[\sigma^b_X]_\infty$, and set $\Psi_-(X):=\seq{\seq{s^n(\sigma^b_X(n))}{n<\omega}}{b\in D}$.

Now define $\Psi_+\colon \prod_{b\in D} \Swf(b,h)\to \SNwf$. 
Partition $\omega$ into intervals $I_n$ of length $h(n)$. Let $\bar\varphi = \seq{\varphi^b}{b\in D}\in \prod_{b\in D} \Swf(b,h)$. For each $b\in D$ pick $\psi^b\in\Swf(b,h)$ such that $\varphi^b(n) \subseteq \psi^b(n) = \set{\ell^b_i}{i\in I_n}$ (i.e.\ $|\psi^b(n)|=h(n)$) for all $n<\omega$. Set $\tau^b:=\seq{t^n(\ell^b_i)}{i<\omega}$ and $\Psi_+(\bar\varphi):=\bigcap_{b\in D}[\tau^b]_\infty$. We get that $\set{\hgt_{\tau^b}}{b\in D}$ forms a dominating family, so $\Psi_+(\bar\varphi)\in\SNwf$. Indeed, for $x\in\baire$, define $x'(n):= 2^{\max_{i\in I_n}x(i)}$, so $x'\leq^* b$ for some $b\in D$, i.e.\ $\max_{i\in I_n}x(i) \leq b_-(n)$ for all but finitely many $n$, thus $x(i) \leq b_-(n) = \hgt_{\tau^b}(i)$ for $i\in I_n$.

The pair $(\Psi_-,\Psi_+)$ is a Tukey-connection.
For $X\in\SNwf$ and $\bar\varphi\in\prod_{b\in D}\Lc(b,h)$, assume $\Psi_-(X)\sqsubset^\times \bar\varphi$, i.e.\ for all $b\in D$, for all but finitely many $n<\omega$, $s^n(\sigma^b_X(n)) \in \varphi^b(n)$, which means that $s^n(\sigma^b_X(n))=\ell^b_{i_n}$ for some $i_n\in I_n$. This implies $\sigma^b_X(n)=t^n(\ell^b_{i_n})=\tau^b(i_n)$. Therefore, $X\subseteq [\sigma^b_X]_\infty \subseteq [\tau^b]_\infty$ for any $b\in D$, so $X\subseteq \Psi_+(\bar\varphi)$.
\qedhere




\end{proof}

\section{Preservation and uf-linkedness}\label{Prelink}

For the reader’s convenience, we first recall the preservation properties that were developed for FS iterations of ccc posets by Judah and Shelah~\cite{JS} and Brendle~\cite{Br}, which were generalized in~\cite[Sect.~4]{CM}. We also review some recent tools from~\cite{CarMej23,BCM2} to control the cardinal characteristics associated with $\SNwf$ in forcing iterations,
and the notion of ultrafilter limits for forcing notions from~\cite{GMS,BCM}. 
These properties will be applied in the proof of our consistency results in \autoref{Secmain}. Furthermore, we introduce forcing notions to increase $\bfrak(\Rbf_b)$ and $\add(\SNwf)$, which have ultrafilter limits.

\begin{definition}\label{DefRelSys}
Let $\Rbf=\la X,Y,\sqsubset\ra$ be a relational system and let $\theta$ be a cardinal.  
\begin{enumerate}[label=\rm(\arabic*)]
  \item For a set $M$,
  \begin{enumerate}[label=\rm(\roman*)]
      \item An object $y\in Y$ is \textit{$\Rbf$-dominating over $M$} if $x\sqsubset y$ for all $x\in X\cap M$. 
      
      \item An object $x\in X$ is \textit{$\Rbf$-unbounded over $M$} if it $\Rbf^\perp$-dominating over $M$, that is, $x\not\sqsubset y$ for all $y\in Y\cap M$. 
  \end{enumerate}
  
  
  \item A family $\set{x_i}{i\in I}\subseteq X$ is \emph{strongly $\theta$-$\Rbf$-unbounded} if 
  $|I|\geq\theta$ and, for any $y\in Y$, $|\set{i\in I }{x_i\sqsubset y}|<\theta$.
\end{enumerate}
\end{definition}

The existence of strongly unbounded families is equivalent to a Tukey connection.

\begin{lemma}[{\cite[Lem.~1.16]{CM22}}]\label{unbT}
    Let $\Rbf=\la X,Y,\sqsubset\ra$ be a relational system, $\theta$ be an infinite cardinal, and $I$ be a set of size ${\geq}\theta$.
    \begin{enumerate}[label = \normalfont (\alph*)]
        \item $\Cbf_{[I]^{<\theta}}\leqT \Rbf$ iff there exists a strongly $\theta$-$\Rbf$-unbounded family $\set{x_i}{i\in I}$.

        \item $\bfrak(\Rbf)\geq\theta$ iff $\Rbf\leqT \Cbf_{[X]^{<\theta}}$.
    \end{enumerate}
\end{lemma}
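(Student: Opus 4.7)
Both parts of the lemma are exercises in unpacking definitions: once one writes out what a Tukey connection into or out of the relational system $\Cbf_{[Z]^{<\theta}}=\langle Z,[Z]^{<\theta},\in\rangle$ actually says, the equivalences become essentially tautological. The plan for each direction is to produce the pair $(\Psi_-,\Psi_+)$ explicitly, with one of the two maps being the obvious indexing map or the identity and the other being read straight off the hypothesis. The only non-routine ingredient is a single appeal to the axiom of choice in the harder direction of (b).

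For (a), from a Tukey connection $(\Psi_-,\Psi_+)\colon\Cbf_{[I]^{<\theta}}\to\Rbf$ I would set $x_i:=\Psi_-(i)$ for $i\in I$; the Tukey condition $\Psi_-(i)\sqsubset y\Rightarrow i\in\Psi_+(y)$ then gives $\set{i\in I}{x_i\sqsubset y}\subseteq\Psi_+(y)\in[I]^{<\theta}$ for every $y\in Y$, which is exactly strong $\theta$-$\Rbf$-unboundedness of $\seq{x_i}{i\in I}$. Conversely, given such a family I would take $\Psi_-(i):=x_i$ and $\Psi_+(y):=\set{i\in I}{x_i\sqsubset y}$; the strong unboundedness hypothesis makes $\Psi_+(y)$ a legitimate element of $[I]^{<\theta}$, and the Tukey implication becomes a tautology.

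For (b), assume first that $(\Psi_-,\Psi_+)\colon\Rbf\to\Cbf_{[X]^{<\theta}}$ is a Tukey connection and fix $F\subseteq X$ with $|F|<\theta$. I would set $A:=\Psi_-[F]\in[X]^{<\theta}$ and $y:=\Psi_+(A)$; since $\Psi_-(x)\in A$ for each $x\in F$, the Tukey implication yields $x\sqsubset y$ for all $x\in F$, so every subset of $X$ of size ${<}\theta$ is $\Rbf$-bounded and hence $\bfrak(\Rbf)\geq\theta$. For the converse, assuming $\bfrak(\Rbf)\geq\theta$, for each $A\in[X]^{<\theta}$ the set $\set{y\in Y}{\forall x\in A\colon x\sqsubset y}$ is non-empty, so by the axiom of choice I can pick $\Psi_+(A)$ in it; taking $\Psi_-:=\id_X$, the Tukey implication $\Psi_-(x)\in A\Rightarrow x\sqsubset\Psi_+(A)$ holds by construction.

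No real obstacle stands in the way: the argument is a direct definitional manipulation, and the only point worth flagging is the appeal to the axiom of choice in the second half of (b), which is entirely standard in this context.
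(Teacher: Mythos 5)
Your proof is correct, and it is the standard argument: the lemma is just a matter of translating the Tukey condition into the definition of a strongly $\theta$-$\Rbf$-unbounded family (for (a)) and into $\theta$-boundedness (for (b)). The paper does not give its own proof — it cites \cite[Lem.~1.16]{CM22} — so there is nothing to compare against, but your verification matches what one would expect that reference to contain. Two small points worth being explicit about, though they do not affect correctness: in (a) you should note that $\Psi_+(y)=\set{i\in I}{x_i\sqsubset y}$ really does land in $[I]^{<\theta}$ precisely because of the strong unboundedness hypothesis, which you do say; and in (b) the reverse direction needs $\Psi_+$ to be a \emph{total} function on $[X]^{<\theta}$, so the nonemptiness of $\set{y}{\forall x\in A\colon x\sqsubset y}$ for \emph{every} $A\in[X]^{<\theta}$ is exactly where the hypothesis $\bfrak(\Rbf)\geq\theta$ is used — again you flag this correctly via the choice function. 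No gaps.
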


We look at the following type of well-defined relational systems.

\begin{definition}\label{def:Prs}
Say that $\Rbf=\langle X,Y,\sqsubset\rangle$ is a \textit{Polish relational system (Prs)} if
\begin{enumerate}[label=\rm(\arabic*)]
\item\label{def:Prs1} $X$ is a Perfect Polish space,
\item\label{def:Prs2} $Y$ is a non-empty analytic subspace of some Polish $Z$, and
\item\label{def:Prs3} $\sqsubset=\bigcup_{n<\omega}\sqsubset_{n}$ where $\seq{{\sqsubset_{n}}}{n\in\omega}$ is some increasing sequence of closed subsets of $X\times Z$ such that, for any $n<\omega$ and for any $y\in Y$,
$({\sqsubset_{n}})^{y}=\set{x\in X}{x\sqsubset_{n}y }$ is closed nowhere dense.
\end{enumerate}
\end{definition}

\begin{remark}\label{Prsremark}
By~\autoref{def:Prs}~\ref{def:Prs3}, $\la X,\Mwf(X),\in\ra$ is Tukey below $\Rbf$ where $\Mwf(X)$ denotes the $\sigma$-ideal of meager subsets of $X$. Therefore, $\bfrak(\Rbf)\leq \non(\Mwf)$ and $\cov(\Mwf)\leq\dfrak(\Rbf)$.
\end{remark}

For the rest of this section, fix a Prs $\Rbf=\langle X,Y,\sqsubset\rangle$ and an infinite cardinal $\theta$. 

\begin{definition}[Judah and Shelah {\cite{JS}}, Brendle~{\cite{Br}}]\label{def:good}
A poset $\Por$ is \textit{$\theta$-$\Rbf$-good} if, for any $\Por$-name $\dot{h}$ for a member of $Y$, there is a non-empty set $H\subseteq Y$ (in the ground model) of size ${<}\theta$ such that, for any $x\in X$, if $x$ is $\Rbf$-unbounded over  $H$ then $\Vdash x\not\sqsubset \dot{h}$.

We say that $\Por$ is \textit{$\Rbf$-good} if it is $\aleph_1$-$\Rbf$-good.
\end{definition}

The previous is a standard property associated with preserving $\bfrak(\Rbf)$ small and $\dfrak(\Rbf)$ large after forcing extensions.

\begin{remark}
Notice that $\theta<\theta_0$
implies that any $\theta$-$\Rbf$-good poset is $\theta_0$-$\Rbf$-good. Also, if $\Por \lessdot\Qor$ and $\Qor$ is $\theta$-$\Rbf$-good, then $\Por$ is $\theta$-$\Rbf$-good.
\end{remark}

\begin{lemma}[{\cite[Lemma~2.7]{CM}}]\label{smallgds}
Assume that $\theta$ is a regular cardinal. Then any poset of size ${<}\theta$
is $\theta$-$\Rbf$-good. In particular, Cohen forcing $\Cor$ is $\Rbf$-good.
\end{lemma}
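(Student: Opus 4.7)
The plan is to use Shoenfield absoluteness to extract ground-model witnesses from the closed sets that encode when $\dot h$ might be $\sqsubset_n$-related to a given $x$. I will assume that $\theta$ is uncountable (if $\theta=\aleph_0$ the poset is finite and a direct enumeration of the values of $\dot h$ below each atom works), fix a $\Por$-name $\dot h$ for an element of $Y$, and for each $p\in\Por$ and $n<\omega$ consider
\[F_{p,n}\;:=\;\set{x\in X}{p\Vdash x\sqsubset_n \dot h}.\]
Because $\sqsubset_n\subseteq X\times Z$ is closed, a limit argument---taking any generic $G\ni p$ and evaluating $\dot h[G]\in Y$---shows that $F_{p,n}$ is closed in $X$. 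I will then fix a pruned tree $T_{p,n}\in V$ with $[T_{p,n}]^V=F_{p,n}$.

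The main step is to produce $y_{p,n}\in Y\cap V$ satisfying $F_{p,n}\subseteq(\sqsubset_n)^{y_{p,n}}$. Working in $V[G]$ for a $\Por$-generic $G\ni p$: every branch of $T_{p,n}$ in $V[G]$ is a limit of $V$-branches (by prunedness of $T_{p,n}$), each such $V$-branch $x$ satisfies $x\sqsubset_n\dot h[G]$ since $p\in G$, and closedness of $\sqsubset_n$ propagates the relation to all branches; hence $[T_{p,n}]^{V[G]}\subseteq(\sqsubset_n)^{\dot h[G]}$. Consequently the sentence ``$\exists y\in Y\colon [T_{p,n}]\subseteq(\sqsubset_n)^y$'' holds in $V[G]$. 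Using that $Y$ is analytic in $Z$ and that the inner inclusion is $\Pi^1_1$ in $y$, this sentence is $\Sigma^1_2$ in real parameters lying in $V$ (codes for $T_{p,n}$, $Y$, and $\sqsubset_n$); Shoenfield absoluteness reflects it back to $V$ and yields the desired $y_{p,n}$.

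Setting $H:=\set{y_{p,n}}{p\in\Por,\ n<\omega}$, regularity of $\theta$ gives $|H|\le|\Por|\cdot\aleph_0<\theta$. For any $x\in X$ which is $\Rbf$-unbounded over $H$, I have $x\not\sqsubset_n y_{p,n}$ for all $p,n$, hence $x\notin(\sqsubset_n)^{y_{p,n}}\supseteq F_{p,n}$, so no condition forces $x\sqsubset_n\dot h$ for any $n$; since $\sqsubset=\bigcup_n\sqsubset_n$, this yields $\Vdash x\not\sqsubset\dot h$, as required. The Cohen case follows immediately from $|\Cor|=\aleph_0<\aleph_1$. The main obstacle will be the reflection step: one must justify that the ground-model coding of $F_{p,n}$ behaves correctly when reinterpreted in $V[G]$, and it is precisely prunedness of $T_{p,n}$ together with closedness of $\sqsubset_n$ that makes every new $V[G]$-branch inherit the $\sqsubset_n\dot h[G]$-relation from its $V$-approximants.
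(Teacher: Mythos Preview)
The paper does not give its own proof of this lemma; it simply cites \cite[Lemma~2.7]{CM}. Your argument is correct and is essentially the standard proof: for each $p\in\Por$ and $n<\omega$ the set $F_{p,n}=\set{x\in X}{p\Vdash x\sqsubset_n\dot h}$ is closed, in any extension by a generic $G\ni p$ the reinterpretation of this closed set sits inside $(\sqsubset_n)^{\dot h[G]}$, and Shoenfield absoluteness (the relevant statement being $\Sigma^1_2$ in ground-model codes for $F_{p,n}$, $Y$ and $\sqsubset_n$) reflects a witness $y_{p,n}\in Y$ back to $V$; the set $H=\set{y_{p,n}}{p\in\Por,\ n<\omega}$ then works and has size ${\leq}|\Por|\cdot\aleph_0<\theta$ by regularity.

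Two small remarks. First, the phrase ``every branch of $T_{p,n}$ in $V[G]$ is a limit of $V$-branches'' is correct for trees on $\omega$; for a general perfect Polish $X$ you should either fix once and for all a continuous surjection $\omega^\omega\to X$ and pull everything back, or argue directly that the $V[G]$-reinterpretation of the closed code for $F_{p,n}$ equals the closure in $X^{V[G]}$ of the $V$-points, which is contained in the closed set $(\sqsubset_n)^{\dot h[G]}$ since $F_{p,n}$ is. Second, the final logical step is right but deserves one more word: if $x\notin F_{p,n}$ for all $p,n$ and some $q$ forced $x\sqsubset\dot h$, then some $q'\leq q$ and some $n$ would force $x\sqsubset_n\dot h$, putting $x\in F_{q',n}$---contradiction; hence $\Vdash x\not\sqsubset\dot h$.
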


We now present the instances of Prs and the corresponding good posets that we use in our applications.

\begin{example}\label{ExamPresPro}
\mbox{} 
\begin{enumerate}[label=\normalfont(\arabic*)]

    \item\label{Pres(null-cov)}  Define $\Omega_n:=\set{a\in [2^{<\omega}]^{<\aleph_0}}{\Lb(\bigcup_{s\in a}[s])\leq 2^{-n}}$ (endowed with the discrete topology) and put $\Omega:=\prod_{n<\omega}\Omega_n$ with the product topology, which is a perfect Polish space. For every $x\in \Omega$ denote 
     \[N_{x}:=\bigcap_{n<\omega}\bigcup_{m\geq n}\bigcup_{s\in x(m)}[s],\] which is clearly a Borel null set in $2^{\omega}$.
       
    Define the Prs $\Cn:=\la \Omega, 2^\omega, \sqsubset^{\rm n}\ra$ where $x\sqsubset^{\rm n} z$ iff $z\notin N_{x}$. Recall that any null set in $2^\omega$ is a subset of $N_{x}$ for some $x\in \Omega$, so $\Cn$ and $\Cbf_\Nwf^\perp$ are Tukey-Galois equivalent. Hence, $\bfrak(\Cn)=\cov(\Nwf)$ and $\dfrak(\Cn)=\non(\Nwf)$.
    
 Any $\mu$-centered poset is $\mu^+$-$\Cn$-good (\cite{Br}). In particular, $\sigma$-centered posets are $\Cn$-good.
    
    \item\label{Pres(non-mea)}
    The relational system $\Ed_b$ is Polish when $b=\seq{b(n)}{n<\omega}$ is a sequence of non-empty countable sets such that $|b(n)|\geq 2$ for infinitely many $n$.
    Consider $\Ed:=\la\baire,\baire,\neq^\infty\ra$. 
    By~\cite[Thm.~2.4.1 \& Thm.~2.4.7]{BJ} (see also~\cite[Thm.~5.3]{CMlocalc}),  $\bfrak(\Ed)=\non(\Mwf)$ and $\dfrak(\Ed)=\cov(\Mwf)$.
    
    \item\label{Pres(unb)} The relational system $\baire = \la\baire,\baire,\leq^{*}\ra$ is Polish.
     Any $\mu$-$\Fr$-linked poset (see~\autoref{Def:Fr}) is $\mu^+$-$\baire$-good (see~\autoref{mej:uf}).
    
    \item\label{Pres(uni-null)}  For each $k<\omega$, let $\id^k:\omega\to\omega$ such that $\id^k(i)=i^k$ for all $i<\omega$ and $\Hcal:=\largeset{\id^{k+1}}{k<\omega}$. Let $\Lc^*:=\la\baire, \Scal(\omega, \Hcal), \in^*\ra$ be the Polish relational system where \[\Swf(\omega, \Hcal):=\set{\varphi\colon \omega\to[\omega]^{<\aleph_0}}{\exists{h\in\Hcal}\, \forall{i<\omega}\colon|\varphi(i)|\leq h(i)},\]
     and recall that $x\in^*\varphi$ iff $\forall^\infty n\colon x(n)\in\varphi(n)$. As a consequence of~\cite[Thm.~2.3.9]{BJ} (see also~\cite[Thm.~4.2]{CMlocalc}), $\bfrak(\Lc^*)=\add(\Nwf)$ and $\dfrak(\Lc^*)=\cof(\Nwf)$.

     Any $\mu$-centered poset is $\mu^+$-$\Lc^*$-good (see~\cite{Br,JS}) so, in particular, $\sigma$-centered posets are $\Lc^*$-good. Besides,  Kamburelis~\cite{Ka} showed that any Boolean algebra with a strictly positive finitely additive measure is $\Lc^*$-good (in particular, any subalgebra of random forcing).

     \item For $b\in\omega^\omega$, $\Rbf_b$ is a Polish relational system when $b\geq^*2$ (cf.~\autoref{rem:Rb-nonM}).

     \item\label{(M-cohen)} Let $\Mbf := \la 2^\omega,\Ior\times 2^\omega, \sqsubm \ra$ where
     \[x \sqsubm (I,y) \text{ iff }\forall^\infty n\colon x\frestr I_n \neq y\frestr I_n.\]
     This is a Polish relational system and $\Mbf\eqT \Cbf_\Mwf$ (by \autoref{ChM}).

     Note that, whenever $M$ is a transitive model of $\thzfc$, $c\in 2^\omega$ is a Cohen real over $M$ iff $c$ is $\Mbf$-unbounded over $M$.

     \item\label{PrsSN} In~\cite[Sec.~5]{BCM2}, we present a Polish relation system $\Rbf^f_\Gwf$, parametrized by a countable set $\{f\}\cup\Gwf$ of increasing functions in $\omega^\omega$, which is useful to control $\add(\SNwf)$ and $\cof(\SNwf)$ in FS iterations (see \autoref{mainpresaddSN}). We do not need to review the definition of this relational system, but it is enough to indicate that any (poset forcing equivalent to a) Boolean algebra with a striclty positive finitely additive measure, and any $\sigma$-centered poset, are $\Rbf^f_\Gwf$-good (\cite[Thm.~5.8 \& Cor.~5.9]{BCM2}, cf.~\ref{Pres(uni-null)}).
\end{enumerate}
\end{example}

We now turn to FS (finite support) iterations. To fix some notation, for two posets $\Por$ and $\Qor$, we write $\Por\subsetdot\Qor$ when $\Por$ is a complete suborder of $\Qor$, i.e.\ the inclusion map from $\Por$ into $\Qor$ is a complete embedding.

\begin{definition}[Direct limit]\label{def:limdir}
We say that $\la\Por_i:\, i\in S\ra$ is a \emph{directed system of posets} if $S$ is a directed preorder and, for any $j\in S$, $\Por_j$ is a poset and $\Por_i\subsetdot\Por_j$ for all $i\leq_S j$.

For such a system, we define its \emph{direct limit} $\limdir_{i\in S}\Por_i:=\bigcup_{i\in S}\Por_i$ ordered by
\[q\leq p \sii \exists\, i\in S\colon p,q\in\Por_i\text{ and }q\leq_{\Por_i} p.\]
\end{definition}

Good posets are preserved along FS iterations as follows.

\begin{theorem}[{\cite[Sec.~4]{BCM2}}]\label{Comgood}
Let $\seq{ \Por_\xi,\Qnm_\xi}{\xi<\pi}$ be a FS iteration such that, for $\xi<\pi$, $\Por_\xi$ forces that $\Qnm_\xi$ is a non-trivial $\theta$-cc $\theta$-$\Rbf$-good poset. 
Let $\set{\gamma_\alpha}{\alpha<\delta}$ be an increasing enumeration of $0$ and all limit ordinals smaller than $\pi$ (note that $\gamma_\alpha=\omega\alpha$), and for $\alpha<\delta$ let $\dot c_\alpha$ be a $\Por_{\gamma_{\alpha+1}}$-name of a Cohen real in $X$ over $V_{\gamma_\alpha}$. 

Then $\Por_\pi$ is $\theta$-$\Rbf$-good. Moreover, 
if $\pi\geq\theta$ then $\Cbf_{[\pi]^{<\theta}}\leqT\Rbf$, $\bfrak(\Rbf)\leq\theta$ and $|\pi|\leq\dfrak(\Rbf)$.
\end{theorem}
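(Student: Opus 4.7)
The statement decomposes naturally into two parts: verifying that $\Por_\pi$ is $\theta$-$\Rbf$-good, and exhibiting the Tukey reduction $\Cbf_{[\pi]^{<\theta}}\leqT \Rbf$ via the Cohen sequence $\{\dot c_\alpha : \alpha<\delta\}$. My plan is to argue each separately, reusing a single ``support of a name'' lemma in both.

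For the first part, I would proceed by transfinite induction on $\pi$, carrying as hypothesis that $\Por_\pi$ is simultaneously $\theta$-cc (standard for FS iterations of $\theta$-cc posets, assuming $\theta$ is regular uncountable) and $\theta$-$\Rbf$-good. At a successor step $\pi=\xi+1$, view $\Por_{\xi+1}=\Por_\xi\ast\Qnm_\xi$. Given a $\Por_{\xi+1}$-name $\dot h$ for an element of $Y$, the forced $\theta$-$\Rbf$-goodness of $\Qnm_\xi$ in $V^{\Por_\xi}$ supplies a $\Por_\xi$-name $\dot H'\subseteq Y$ of size ${<}\theta$ witnessing goodness for $\dot h$; one then uses a maximal antichain in $\Por_\xi$ of size ${<}\theta$ (by $\theta$-cc) to decide $\dot H'$ piece by piece, and applies the inductive $\theta$-$\Rbf$-goodness of $\Por_\xi$ to collapse these pieces into a single ground-model $H\subseteq Y$ of size ${<}\theta$ that witnesses goodness of $\Por_{\xi+1}$ for $\dot h$. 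At a limit $\pi$, any $\Por_\pi$-name for an element of the analytic set $Y\subseteq Z$ is coded by countably many maximal antichains each of size ${<}\theta$; since every condition in a FS iteration has finite support, the union of all supports involved is a set $S\subseteq\pi$ with $|S|<\theta$, so the name essentially lives at an intermediate stage and the inductive hypothesis applies, with a small case analysis on $\cf(\pi)$ (combining ${<}\theta$ many witness sets along a cofinal sequence when $\cf(\pi)<\theta$).

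For the second part, \autoref{unbT} reduces the claim to showing that $\{\dot c_\alpha : \alpha<\delta\}$ is strongly $\theta$-$\Rbf$-unbounded in $V^{\Por_\pi}$. The key input is \autoref{def:Prs}~\ref{def:Prs3}: each $\sqsubset_n^y$ is closed nowhere dense in $X$, so any Cohen real $c\in X$ over a model $M$ containing $y$ avoids every $\sqsubset_n^y$ and is therefore $\Rbf$-unbounded over $M$; in particular $\dot c_\alpha$ is $\Rbf$-unbounded over $V_{\gamma_\alpha}$. Now fix $y\in Y$ in $V^{\Por_\pi}$ with name $\dot y$; the support argument from the previous paragraph locates $S\subseteq\pi$ with $|S|<\theta$ such that $\dot y$ is essentially a $\Por_S$-name. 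Since the $\gamma_\alpha$'s are cofinal in $\pi$, for all but ${<}\theta$ many $\alpha<\delta$ we have $S\subseteq\gamma_\alpha$, whence $\dot c_\alpha \not\sqsubset y$ for those $\alpha$. This is exactly strong $\theta$-$\Rbf$-unboundedness, and from $|\pi|=\delta$ one reads off both $\bfrak(\Rbf)\leq\theta$ and $|\pi|\leq\dfrak(\Rbf)$.

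The principal technical delicacy is the reflection-of-names argument used at the limit step of part one and again in part two: for $\theta$ regular uncountable, one must verify that every $\Por_\pi$-name for an element of an analytic set depends on only ${<}\theta$ coordinates, and moreover that this dependence can be localised to a bounded initial segment of the iteration whenever possible. The ingredients (FS iteration, $\theta$-cc, analyticity of $Y$) make this routine when $\cf(\pi)\geq\theta$, but when $\cf(\pi)<\theta$ one instead combines $\cf(\pi)$ many witnesses along a cofinal sequence, relying on the regularity of $\theta$ to keep the final witness of size ${<}\theta$. Once this is in place, both halves of the theorem proceed along the standard lines of FS-iteration preservation theory.
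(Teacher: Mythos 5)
Your reduction of the ``moreover'' clause to strong $\theta$-$\Rbf$-unboundedness of $\set{\dot c_\alpha}{\alpha<\delta}$ via \autoref{unbT}, and the observation that a Cohen real in $X$ over a model $M$ avoids every closed nowhere dense set $(\sqsubset_n)^y$ for $y\in Y\cap M$ (hence is $\Rbf$-unbounded over $M$), are both correct. The counting step, however, is wrong. From $|S|<\theta$ you infer that all but ${<}\theta$ many $\alpha<\delta$ satisfy $S\subseteq\gamma_\alpha$, but the set $\set{\alpha<\delta}{S\not\subseteq\gamma_\alpha}=\set{\alpha<\delta}{\omega\alpha\leq\sup S}$ is an initial segment whose size is governed by $|\sup S|$, not by $|S|$, and this can be $\geq\theta$. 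Concretely, with $\theta=\aleph_1$, $\pi=\omega_2$ and a nice $\Por_{\omega_1+\omega}$-name $\dot y$, the support $S$ is countable yet there are $\aleph_1$-many $\alpha$ with $\gamma_\alpha\leq\omega_1+\omega$, and your argument says nothing about whether $\dot c_\alpha\sqsubset y$ for those $\alpha$. So you only obtain $|\set{\alpha<\delta}{\dot c_\alpha\sqsubset y}|\leq\theta$, which is not strong $\theta$-$\Rbf$-unboundedness and does not yield $\Cbf_{[\pi]^{<\theta}}\leqT\Rbf$.

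The gap cannot be closed by support bookkeeping alone. For the ``small'' $\alpha$ --- those with $\gamma_\alpha$ below $\sup S$ --- one has to feed the $\theta$-$\Rbf$-goodness established in part one back into the argument: the goodness of the tail iterations provides, inside suitable intermediate models, witness sets of size ${<}\theta$ controlling $\dot c_\alpha$ against $\dot y$, and only then does the set of exceptional $\alpha$ get pushed below $\theta$. This makes the two halves of the theorem genuinely interdependent, contrary to the clean separation your plan proposes. A related soft spot in part one: when $\cf(\pi)<\theta$ the support of a name for a member of $Y$ may be cofinal in $\pi$, so you cannot relocate the name to an intermediate stage; the step ``combining ${<}\theta$ many witness sets along a cofinal sequence'' is exactly where the Polish structure of $\Rbf$ (that $\sqsubset=\bigcup_n\sqsubset_n$ with each $\sqsubset_n$ closed, \autoref{def:Prs}~\ref{def:Prs3}) must be used, but your sketch invokes that structure only in part two.
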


We even have nice theorems for $\SNwf$.

\begin{theorem}[{\cite[Thm.~5.10]{BCM2}}]\label{mainpresaddSN}
Let $\theta_0\leq \theta$ be uncountable regular cardinals, $\lambda=\lambda^{<\theta_0}$ a cardinal and let $\pi=\lambda\delta$ (ordinal product) for some ordinal $0<\delta<\lambda^+$. Assume $\theta \leq \lambda$ and $\cf(\pi)\geq \theta_0$. If $\Por$ is a FS iteration of length $\pi$ of non-trivial $\theta_0$-cc $\theta$-$\Rbf^f_\Gwf$-good posets of size ${\leq}\lambda$, 
then $\Por$ forces $\Cbf_{[\lambda]^{<\theta}}\leqT\SNwf$, in particular, 
$\add(\SNwf)\leq\theta$ and $\lambda\leq\cof(\SNwf)$.
\end{theorem}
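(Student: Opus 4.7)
The plan is to build, in the extension by $\Por$, a strongly $\theta$-unbounded family $\seq{X_\alpha}{\alpha<\lambda}\subseteq\SNwf$ with respect to $\subseteq$: for every $Z\in\SNwf$, $|\set{\alpha<\lambda}{X_\alpha\subseteq Z}|<\theta$. By \autoref{unbT}\,(a) this is precisely the Tukey connection $\Cbf_{[\lambda]^{<\theta}}\leqT\la\SNwf,\SNwf,\subseteq\ra$, from which $\add(\SNwf)\leq\theta$ and $\lambda\leq\cof(\SNwf)$ follow by reading off $\bfrak$ and $\dfrak$ on both sides (as for any ideal $\Iwf$, $\bfrak(\la\Iwf,\Iwf,\subseteq\ra)=\add(\Iwf)$ and $\dfrak(\la\Iwf,\Iwf,\subseteq\ra)=\cof(\Iwf)$).

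For the bookkeeping, since $\delta<\lambda^+$ and $\lambda$ is infinite one has $|\pi|=|\lambda\delta|=\lambda$; combined with the size bound on the iterands and $\lambda=\lambda^{<\theta_0}$, this gives $|\Por_\pi|\leq\lambda$ and $\theta_0$-cc of $\Por_\pi$ by the standard FS argument. The assumption $\cof(\pi)\geq\theta_0$ provides a cofinal set of $\lambda$-many limit ordinals $\seq{\gamma_\alpha}{\alpha<\lambda}$ along which Cohen reals $c_\alpha$ are added at FS limits, and I would set $X_\alpha:=\{c_\alpha\}$, which is trivially in $\SNwf$.

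The strong unboundedness of this family is obtained by applying \autoref{Comgood} to each instance of the Polish relational system $\Rbf^f_\Gwf$ (as the parameter $\{f\}\cup\Gwf$ varies over countable sets of increasing functions in $\baire$): since $\theta_0\leq\theta$, each iterand is $\theta$-cc and $\theta$-$\Rbf^f_\Gwf$-good, so $\Por_\pi$ is $\theta$-$\Rbf^f_\Gwf$-good and, since $|\pi|=\lambda\geq\theta$, the ``moreover'' clause of \autoref{Comgood} yields that the Cohen reals $\seq{c_\alpha}{\alpha<\lambda}$ form a strongly $\theta$-$\Rbf^f_\Gwf$-unbounded family for every such parameter. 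The bridge from this to SMZ non-containment is the design principle of $\Rbf^f_\Gwf$ in \cite[Sec.~5]{BCM2}: via the characterization \autoref{charSN}, any $Z\in\SNwf$ that contained $\theta$-many of the $c_\alpha$ would be witnessed, for each height in $\{f\}\cup\Gwf$, by a suitable covering $\sigma$ with $\hgt_\sigma=f$ (or $\hgt_\sigma$ a member of $\Gwf$), and these countably many coverings assemble into a single element of the $Y$-side of $\Rbf^f_\Gwf$ that bounds those $c_\alpha$, contradicting strong unboundedness.

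The main obstacle is precisely this bridge: engineering $\Rbf^f_\Gwf$ so that \emph{one} $Y$-side object encodes \emph{one} strong measure zero set (rather than merely a countable sequence of partial coverings indexed by the heights in $\{f\}\cup\Gwf$) is delicate and is the combinatorial heart of \cite{BCM2}. Once it is imported by citation, the remainder of the argument here is just the preservation theorem \autoref{Comgood} applied for each parameter $(f,\Gwf)$, the duality \autoref{unbT}\,(a), and the cardinal-arithmetic bookkeeping above.
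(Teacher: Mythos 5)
The paper does not prove this theorem; it is imported verbatim from \cite{BCM2}, and the relational system $\Rbf^f_\Gwf$ on which it rests is not even defined in this paper, only described informally in \autoref{ExamPresPro}~\ref{PrsSN}. So there is no in-text proof to compare your attempt against, and the question is whether your sketch stands on its own.

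It does not, and you say so yourself. The shape is right: take the Cohen reals $c_\alpha$ added at FS limit stages, argue that the singletons $\{c_\alpha\}$ form a strongly $\theta$-$\SNwf$-unbounded family of size $\lambda$, and invoke \autoref{unbT}. But the decisive step, the one you call ``the main obstacle,'' is exactly the one you leave as a black box: given $Z\in\SNwf$ in the extension with $\theta$-many $c_\alpha\in Z$, one must produce a single countable parameter $\{f\}\cup\Gwf$ and a single element of the $Y$-side of $\Rbf^f_\Gwf$ that $\sqsubset$-bounds all those $c_\alpha$, so as to contradict strong $\theta$-$\Rbf^f_\Gwf$-unboundedness. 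That translation is precisely what the definition of $\Rbf^f_\Gwf$ in \cite[Sec.~5]{BCM2} is engineered to make possible, and without that definition it is a genuine gap rather than an omitted routine verification; the proposal remains an outline of a proof, not a proof.

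Two smaller inaccuracies. The hypothesis $\cf(\pi)\geq\theta_0$ does not ``provide'' $\lambda$-many limit stages; that is automatic from $|\pi|=\lambda$. Rather, $\cf(\pi)\geq\theta_0$ together with $\theta_0$-cc ensures that the ${<}\theta_0$-sized data describing a purported SMZ cover of $\theta$-many of the $c_\alpha$ is captured at an intermediate stage, which is where your bridge argument would have to take place. Second, \autoref{Comgood} as stated only concludes $\Cbf_{[\pi]^{<\theta}}\leqT\Rbf$; that the Cohen reals $c_\alpha$ themselves witness strong unboundedness is true (a version is isolated as \autoref{lem:strongCohen}), but the quoted ``moreover'' clause does not literally say it, so that step needs its own citation.
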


We now present two preservation results for the covering of $\SNwf$, originally introduced by Pawlikowski~\cite{P90} and generalized and improved in~\cite{CarMej23}. Here, we use the notion of the \emph{segment cofinality} of an ordinal $\pi$:
\[\scf(\pi):=\min\set{|c|}{c\subseteq \pi \text{ is a non-empty final segment of }\pi}.\]

\begin{theorem}[{\cite{P90},~\cite[Thm.~5.4~(c)]{CarMej23}}]\label{thm:cfcovSN}
    Let $\seq{ \Por_\xi}{\xi\leq\pi}$ be a $\subsetdot$-increasing sequence of posets such that $\Por_\pi = \limdir_{\xi<\pi}\Por_\xi$. Assume that $\cf(\pi)>\omega$, $\Por_\pi$ has the $\cf(\pi)$-cc and $\Por_{\xi+1}$ adds a Cohen real over the $\Por_\xi$-generic extension for all $\xi<\pi$. Then $\pi\leqT \Cbf_{\SNwf}^\perp$, in particular $\cov(\SNwf) \leq \cf(\pi) \leq \non(\SNwf)$.
\end{theorem}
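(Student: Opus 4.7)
The plan is to build an explicit Tukey connection $(\Psi_-,\Psi_+)\colon \pi \to \Cbf_\SNwf^\perp$ inside the $\Por_\pi$-generic extension, which yields both inequalities in one stroke since $\bfrak(\pi)=\dfrak(\pi)=\cf(\pi)$. For $\alpha<\pi$ I would let $\Psi_-(\alpha):=(2^\omega)^{V^{\Por_\alpha}}$, the set of reals already present after stage $\alpha$, and for $x\in 2^\omega$ set
\[\Psi_+(x):=\min\set{\alpha<\pi}{x\in V^{\Por_\alpha}}.\]
The joint hypotheses $\cf(\pi)>\omega$ and $\cf(\pi)$-cc guarantee that every real in the $\Por_\pi$-extension appears in some intermediate $V^{\Por_\xi}$ with $\xi<\pi$: a name for a real uses countably many maximal antichains, each of size ${<}\cf(\pi)$, so each antichain already lies in some $\Por_{\xi_n}$ with $\xi_n<\pi$, and the countable supremum $\xi=\sup_n\xi_n$ remains below $\pi$ by uncountable cofinality. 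Thus $\Psi_+$ is well defined, and the Tukey condition ``$\alpha\geq\Psi_+(x)$ implies $x\in\Psi_-(\alpha)$'' is immediate from the definitions. The whole proof therefore boils down to checking that $\Psi_-(\alpha)\in\SNwf$ in $V^{\Por_\pi}$ for every $\alpha<\pi$.

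For this the natural tool is the Galvin--Mycielski--Solovay characterization: $X\subseteq 2^\omega$ is in $\SNwf$ iff $X+M\ne 2^\omega$ for every meager $M\subseteq 2^\omega$. Fix $\alpha<\pi$ and a meager $M\in V^{\Por_\pi}$. By the same capture argument applied to a Borel code for $M$, one finds $\xi$ with $\alpha\leq\xi<\pi$ such that $M$ is already meager (and coded) in $V^{\Por_\xi}$. Since $\pi$ is a limit of uncountable cofinality, pick any $\xi'$ with $\xi<\xi'<\pi$ and let $c$ denote the Cohen real that $\Por_{\xi'+1}$ adds over $V^{\Por_{\xi'}}$; in particular $c$ is Cohen over $V^{\Por_\xi}$. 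For every $v\in\Psi_-(\alpha)\subseteq V^{\Por_\xi}$, the real $c+v$ is just a $V^{\Por_\xi}$-coded translate of $c$ and so is itself Cohen over $V^{\Por_\xi}$; therefore it misses every meager Borel set coded there, in particular $c+v\notin M$, i.e.\ $c\notin v+M$. Ranging over $v$ yields $c\notin \Psi_-(\alpha)+M$, so $\Psi_-(\alpha)+M\ne 2^\omega$ and $\Psi_-(\alpha)$ has strong measure zero, as required.

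Combining the two steps gives the Tukey reduction $\pi\leqT \Cbf_\SNwf^\perp$ inside $V^{\Por_\pi}$, and hence $\cov(\SNwf)\leq\cf(\pi)\leq\non(\SNwf)$. The main obstacle is the capture argument used twice above, by which a real (or Borel code) of the $\Por_\pi$-extension is relocated into some intermediate $V^{\Por_\xi}$ with $\xi<\pi$; this is exactly where the assumptions $\cf(\pi)>\omega$ and $\Por_\pi$ having the $\cf(\pi)$-cc are both essential. Once this is granted, the remaining ingredients---that a Cohen real shifted by a ground-model real is still Cohen, and that Cohen reals avoid meager sets coded in the ground model---are entirely standard.
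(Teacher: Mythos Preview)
The paper does not contain its own proof of this theorem: it is stated with a citation to \cite{P90} and \cite{CarMej23} and used as a black box in \autoref{Secmain}. Your argument is correct and is essentially the standard one going back to Pawlikowski. The two ingredients you isolate---the capture lemma (every real in $V^{\Por_\pi}$ lies in some $V^{\Por_\xi}$ with $\xi<\pi$, using $\cf(\pi)$-cc together with $\cf(\pi)>\omega$) and the Galvin--Mycielski--Solovay characterization combined with the translation invariance of Cohen genericity---are exactly what is needed, and you apply them correctly.

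One cosmetic remark: you do not need the auxiliary $\xi'>\xi$; since by hypothesis $\Por_{\xi+1}$ already adds a Cohen real over $V^{\Por_\xi}$, you can take $c$ to be that real directly. Also, in verifying the capture lemma it is worth noting explicitly that a maximal antichain of $\Por_\pi$ contained in $\Por_\xi$ remains maximal in $\Por_\xi$ because $\Por_\xi\subsetdot\Por_\pi$, so the $\Por_\pi$-name really becomes a $\Por_\xi$-name; you gesture at this but the complete-suborder assumption is doing real work there.
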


\begin{theorem}[{\cite{P90},~\cite[Cor.~5.9]{CarMej23}}]\label{thm:precaliber}
Assume that $\theta\geq\aleph_1$ is regular. Let
$\Por_\pi=\seq{\Por_\xi,\Qnm_\xi}{ \xi<\pi}$ be a FS iteration of non-trivial precaliber $\theta$ posets such that $\cf(\pi)>\omega$ and $\Por_\pi$ has $\cf(\pi)$-cc,
and let $\lambda:=\scf(\pi)$. Then $\Por_\pi$ forces $\Cbf_{[\lambda]^{<\theta}}\leqT\Cbf_{\SNwf}^{\perp}$. In particular, whenever $\scf(\pi)\geq\theta$, $\Por_\pi$ forces $\cov(\SNwf)\leq\theta$ and $\scf(\pi)\leq\non(\SNwf)$.
\end{theorem}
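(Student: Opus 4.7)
By \autoref{unbT}~(a), establishing $\Cbf_{[\lambda]^{<\theta}}\leqT\Cbf_{\SNwf}^{\perp}$ in $V[G_\pi]$ reduces to producing a strongly $\theta$-$\Cbf_{\SNwf}^{\perp}$-unbounded family $\set{S_\alpha}{\alpha<\lambda}\subseteq\SNwf$: a sequence of $\lambda$ strong measure zero sets such that every $y\in 2^\omega$ lies in $S_\alpha$ for all but fewer than $\theta$ indices $\alpha$. Since $\cf(\pi)>\omega$ makes $\pi$ a limit ordinal, $\scf(\pi)=\cf(\pi)=\lambda$; fix a cofinal sequence $\langle\gamma_\alpha:\alpha<\lambda\rangle$ in $\pi$ of ordinals of uncountable cofinality, so that at each $\gamma_\alpha$ the FS limit adds a Cohen real $\dot d_\alpha$ over $V[G_{\gamma_\alpha}]$. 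I also use that FS iterations of precaliber $\theta$ posets are themselves precaliber $\theta$, so $\Por_\pi$ is precaliber $\theta$ and in particular $\theta$-cc; thus every name for a real has support of size ${<}\theta$ in $\pi$.

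For each $\alpha<\lambda$, I would construct $S_\alpha\in V[G_\pi]$ from $d_\alpha$ via the Pawlikowski template: choose a suitable dominating family $D\subseteq\omega^\omega$ and, for each $f\in D$, read off from $d_\alpha$ a sequence $\sigma^{f,\alpha}\in(2^{<\omega})^\omega$ with $\hgt_{\sigma^{f,\alpha}}=f$ by blocking, and set $S_\alpha:=\bigcap_{f\in D}[\sigma^{f,\alpha}]_\infty$. A routine verification using \autoref{charSN} shows $S_\alpha\in\SNwf$. The key structural feature required of the construction --- the technical heart of Pawlikowski's argument generalized in~\cite{CarMej23} --- is that for every $\Por_{\gamma_\alpha}$-name $\dot z$ of a real, the set of conditions in $\Por_\pi$ forcing $\dot z\in\dot S_\alpha$ is dense below each condition deciding $\dot z$ by stage $\gamma_\alpha$; this comes from the Cohen genericity of $d_\alpha$ over $V[G_{\gamma_\alpha}]$.

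For the strong unboundedness, suppose for contradiction that some $y\in 2^\omega\cap V[G_\pi]$ satisfies $|I|\geq\theta$ for $I:=\{\alpha<\lambda:y\notin S_\alpha\}$. Fix a name $\dot y$ and, for each $\alpha\in I$, a condition $p_\alpha\in G_\pi$ forcing $\dot y\notin\dot S_\alpha$; without loss of generality $I$ is cofinal in $\lambda$ (otherwise the argument reduces to \autoref{thm:cfcovSN}). Precaliber $\theta$ of $\Por_\pi$ yields a centered subfamily $\{p_\alpha\}_{\alpha\in J}$ with $|J|=\theta$, which after possibly thinning we take cofinal in $I$ (hence in $\lambda$). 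Since each $p_\alpha$ has finite support (FS) and $\dot y$ has support of size ${<}\theta$, and $\theta\leq\lambda=\cf(\pi)$, the union $U$ of all these supports is bounded in $\pi$. Choose $\alpha^*\in J$ with $\gamma_{\alpha^*}>\sup U$; then $\dot y$ is a $\Por_{\gamma_{\alpha^*}}$-name, and by the density property in the construction of $S_{\alpha^*}$ there is an extension of $p_{\alpha^*}$ in $\Por_\pi$ forcing $\dot y\in\dot S_{\alpha^*}$, contradicting $p_{\alpha^*}\Vdash \dot y\notin\dot S_{\alpha^*}$.

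The main obstacle is the design of the sequences $\sigma^{f,\alpha}$ so that both \textup{(i)} $S_\alpha$ is SN in $V[G_\pi]$ and \textup{(ii)} the density condition above holds; the latter is delicate because $[\sigma]_\infty$ is meager, so naively demanding $S_\alpha\supseteq 2^\omega\cap V[G_{\gamma_\alpha}]$ fails. A related technical point is arranging the centered subfamily $J$ to remain cofinal in $\lambda$, a standard but careful application of precaliber $\theta$ together with the $\cf(\pi)$-cc. The ``in particular'' bounds then follow immediately from the Tukey connection via $\bfrak(\Cbf_{[\lambda]^{<\theta}})=\theta$ and $\dfrak(\Cbf_{[\lambda]^{<\theta}})\geq\lambda$ (both valid under $\scf(\pi)=\lambda\geq\theta$), yielding $\cov(\SNwf)\leq\theta$ and $\scf(\pi)\leq\non(\SNwf)$.
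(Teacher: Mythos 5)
Your outline correctly identifies the target (a strongly $\theta$-$\Cbf_{\SNwf}^\perp$-unbounded family of strong measure zero sets, via \autoref{unbT}(a)), but several concrete steps fail. First, the identity $\scf(\pi)=\cf(\pi)$ is false in general: for $\pi=\omega_2\cdot\omega_1$ one has $\cf(\pi)=\aleph_1$ while every final segment has cardinality $\aleph_2$, so $\scf(\pi)=\aleph_2$. This is not cosmetic: in the paper's application (proof of \autoref{Mthm}) one has $\cf(\pi)=\lambda_3$ but $\scf(\pi)=\lambda_5$, and the needed conclusion is $\Cbf_{[\lambda_5]^{<\lambda_1}}\leqT\Cbf_{\SNwf}^\perp$. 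A cofinal sequence of Cohen reals only yields $\cf(\pi)$-many SMZ sets, whereas you must index a family of size $\scf(\pi)$, typically by the iteration steps in a minimal final segment of $\pi$, exploiting the non-triviality of each $\Qnm_\xi$ rather than just Cohen reals at limits. (Separately, Cohen reals appear at limit ordinals of \emph{countable}, not uncountable, cofinality in a FS iteration, so "the FS limit at $\gamma_\alpha$ adds a Cohen real over $V[G_{\gamma_\alpha}]$" for $\gamma_\alpha$ of uncountable cofinality is the wrong statement.)

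Second, the precaliber argument has gaps. You conclude that $U$ (the union of the supports of the centered conditions together with the countable support of $\dot y$) is bounded because $|U|\leq\theta\leq\cf(\pi)$; but a subset of $\pi$ of size exactly $\cf(\pi)$ can be cofinal, so boundedness would require $|U|<\cf(\pi)$, which is not guaranteed. Moreover, the density property you state ("conditions forcing $\dot z\in\dot S_\alpha$ are dense below each condition deciding $\dot z$ by stage $\gamma_\alpha$") is literally the assertion $\Vdash 2^\omega\cap V_{\gamma_\alpha}\subseteq\dot S_\alpha$: no condition in $\Por_{\gamma_\alpha}$ can then force $\dot y\notin\dot S_\alpha$ for a $\Por_{\gamma_\alpha}$-name $\dot y$. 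Your remark that "naively demanding $S_\alpha\supseteq 2^\omega\cap V[G_{\gamma_\alpha}]$ fails" thus contradicts your own density requirement; the sketch is internally inconsistent about what the construction achieves. In fact that inclusion is the right goal (after a Cohen real over $V_{\gamma_\alpha}$, the set $2^\omega\cap V_{\gamma_\alpha}$ becomes meager and can be covered by the prescribed small-interval families); the genuine difficulty is to secure $S_\alpha\in\SNwf$ in the full extension, handling every $f\in\omega^\omega\cap V[G_\pi]$ via the cofinal supply of Cohen reals, and to interleave this with a more careful precaliber/$\Delta$-system argument over names in $V$ rather than conditions taken from the generic filter. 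Note also that the paper does not prove this theorem internally --- it imports it from \cite{P90} and \cite[Cor.~5.9]{CarMej23} --- so a self-contained proof would need to reconstruct that argument in full.
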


To force a lower bound of $\bfrak(\Rbf)$, we use:

\begin{theorem}[{\cite[Thm.~2.12]{CM22}}]\label{itsmallsets}
Let $\Rbf=\la X,Y,\sqsubset\ra$ be a Polish relational system, $\theta$ an uncountable regular cardinal, and let  $\Por_\pi=\seq{\Por_\xi,\Qnm_\xi}{\xi<\pi}$ be a FS iteration of $\theta$-cc posets with $\cf(\pi)\geq\theta$. Assume that, for all $\xi<\pi$ and any $A\in[X]^{<\theta}\cap V_\xi$, there is some $\eta\geq\xi$ such that $\Qnm_\eta$ adds an $\Rbf$-dominating real over $A$. Then $\Por_\pi$ forces $\theta\leq\bfrak(\Rbf)$, i.e.\ $\Rbf\leqT\Cbf_{[X]^{<\theta}}$.
\end{theorem}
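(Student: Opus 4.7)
The goal is to show that, in any $\Por_\pi$-generic extension $V_\pi$, every family $F\subseteq X$ of size ${<}\theta$ is $\Rbf$-bounded; by \autoref{unbT}~(b) this is equivalent to $\Rbf\leqT\Cbf_{[X]^{<\theta}}$, and in particular yields $\bfrak(\Rbf)\geq\theta$.

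Let $\dot F$ be a $\Por_\pi$-name forced to be a subset of $X$ of cardinality ${<}\theta$. Working below a condition that decides $|\dot F|=\kappa$ for some fixed $\kappa<\theta$, enumerate $\dot F=\set{\dot x_\alpha}{\alpha<\kappa}$ as a sequence of $\Por_\pi$-names. The crucial first step is to \emph{localize} these names below some $\xi<\pi$. Since $X$ is Polish, every $\dot x_\alpha$ is determined by countably many nice names for naturals; by the $\theta$-cc each such nice name is built from an antichain of size ${<}\theta$, so the support of $\dot x_\alpha$ has cardinality ${<}\theta$ (regularity of $\theta$). Because $\cf(\pi)\geq\theta$, this support is bounded in $\pi$, yielding $\xi_\alpha<\pi$ with $\dot x_\alpha$ equivalent to a $\Por_{\xi_\alpha}$-name. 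As $\kappa<\theta\leq\cf(\pi)$, the ordinal $\xi:=\sup_{\alpha<\kappa}\xi_\alpha$ is still ${<}\pi$, so the interpretation $A:=\set{x_\alpha}{\alpha<\kappa}$ already lies in $V_\xi$ and belongs to $[X]^{<\theta}\cap V_\xi$.

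Now invoke the main hypothesis: there exists $\eta\geq\xi$ such that $\Qnm_\eta$ adds some $\dot y\in Y$ which is $\Rbf$-dominating over $A$, i.e.\ $\Vdash_{\Por_{\eta+1}} x\sqsubset\dot y$ for every $x\in A$. Since $V_{\eta+1}\subseteq V_\pi$, this $y$ remains a valid bound in the full extension, witnessing that $F=A$ is $\Rbf$-bounded in $V_\pi$. As $\dot F$ was arbitrary, $\bfrak(\Rbf)\geq\theta$ is forced, completing the proof.

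The main technical obstacle is the localization step: one must justify, using the conjunction of $\theta$-cc and $\cf(\pi)\geq\theta$ together with the FS structure of the iteration, that every $\Por_\pi$-name for an element of the Polish space $X$ is equivalent to a $\Por_\xi$-name for some $\xi<\pi$, and then aggregate ${<}\theta$ many such stages without escaping $\pi$. A secondary subtlety is that $|\dot F|$ is initially only a name for a cardinal below $\theta$; this is handled by passing to a maximal antichain (of size ${<}\theta$) of conditions deciding $|\dot F|$, and arguing below each such condition with a fixed $\kappa<\theta$.
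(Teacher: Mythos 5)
Your proof is essentially correct and follows what is, by all indications, the same standard route that the cited source takes: decide $|\dot F|=\kappa<\theta$, use $\theta$-cc (of the whole iteration, which follows from $\theta$-cc of the iterands together with regularity of $\theta$) plus $\cf(\pi)\geq\theta$ to localize the $\kappa$-many nice names for elements of the Polish space $X$ below a single stage $\xi<\pi$, invoke the hypothesis to obtain a dominating $\dot y$ added by some $\Qnm_\eta$ with $\eta\geq\xi$, and conclude by a density argument. One point you leave implicit but is essential for the final step (``this $y$ remains a valid bound in the full extension''): the relation $\sqsubset$ of a Polish relational system is Borel (indeed $\mathbf{\Sigma}^0_2$, being a countable union of closed relations per \autoref{def:Prs}), so the statement $x\sqsubset y$ is absolute between $V_{\eta+1}$ and $V_\pi$; for an arbitrary relational system the persistence of the bound would not be automatic. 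With that caveat spelled out, the argument is sound and matches the expected proof.
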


\begin{lemma}[{\cite[Lemma~4.5]{CM}}]\label{lem:strongCohen}
Assume that $\theta$ has uncountable cofinality. Let $\seq{\Por_{\alpha}}{\alpha<\theta}$ be a $\subsetdot$-increasing sequence of $\cf(\theta)$-cc posets such that  $\Por_\theta=\limdir_{\alpha<\theta}\Por_{\alpha}$. If $\Por_{\alpha+1}$ adds a Cohen real $\dot{c}_\alpha\in X$ over $V^{\Por_\alpha}$ for any $\alpha<\theta$, then $\Por_{\theta}$ forces that $\set{\dot{c}_\alpha}{\alpha<\theta}$ is a strongly $\theta$-$\Rbf$-unbounded family, i.e.\ $\theta\leqT \Rbf$.
\end{lemma}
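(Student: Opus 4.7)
The plan is to work in $V^{\Por_\theta}$ and show, for an arbitrary $y \in Y$ there, that $\set{\alpha<\theta}{\dot c_\alpha \sqsubset y}$ is bounded below $\theta$ (hence of size ${<}\theta$, since $\theta$ is a cardinal). The proof combines a standard $\cf(\theta)$-cc reflection argument with the fact that Cohen reals in $X$ avoid every Borel meager subset of $X$ coded in the ground model, which is exactly what the Prs definition~\ref{def:Prs}\ref{def:Prs3} is tailored for.

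First, I would fix a $\Por_\theta$-name $\dot y$ for an element of $Y$. Since $Y$ is an analytic subset of a Polish space, $\dot y$ can be presented as a sequence of names for naturals $\seq{\dot y(n)}{n<\omega}$, each of which is decided by an antichain $A_n \subseteq \Por_\theta$. By the $\cf(\theta)$-cc, $|A_n|<\cf(\theta)$. Because $\Por_\theta = \limdir_{\alpha<\theta}\Por_\alpha$, every condition in $A_n$ sits in some $\Por_\alpha$ with $\alpha<\theta$; since $\cf(\theta)$ is regular and $|A_n|<\cf(\theta)$, there exists $\beta_n<\theta$ with $A_n \subseteq \Por_{\beta_n}$. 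Then I would set $\beta := \sup_{n<\omega}\beta_n$, noting $\beta<\theta$ because $\cf(\theta)>\omega$. So $\dot y$ is equivalent to a $\Por_\beta$-name, and $y := \dot y^{G_\beta}\in V^{\Por_\beta}$.

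Next, I would invoke \ref{def:Prs}\ref{def:Prs3} for the Prs $\Rbf$: the set
\[
M_y := \set{x\in X}{x \sqsubset y} = \bigcup_{n<\omega}({\sqsubset_n})^y
\]
is a Borel meager subset of $X$, and it is coded in $V^{\Por_\beta}$. For any $\alpha$ with $\beta\le\alpha<\theta$, the real $\dot c_\alpha$ is Cohen in $X$ over $V^{\Por_\alpha} \supseteq V^{\Por_\beta}$, so it avoids $M_y$; that is, $\dot c_\alpha \not\sqsubset y$. Consequently $\set{\alpha<\theta}{\dot c_\alpha \sqsubset y} \subseteq \beta$, and $|\beta|<\theta$. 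Since $y\in Y$ was arbitrary (and distinct names give distinct reals because of the genericity used above), this shows $\set{\dot c_\alpha}{\alpha<\theta}$ is a strongly $\theta$-$\Rbf$-unbounded family, hence $\theta \leqT \Rbf$ by \autoref{unbT}.

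The only conceptual point to watch is the reflection in the second paragraph: one needs all three hypotheses simultaneously ($\cf(\theta)$-cc to bound antichain sizes, the direct-limit structure to place each individual condition in some $\Por_\alpha$ with $\alpha<\theta$, and $\cf(\theta)>\omega$ to absorb the countable supremum $\beta_n \mapsto \beta$). Everything else is a routine application of the Prs definition and Cohen genericity, so I do not expect any genuine obstacle beyond correctly bookkeeping these three hypotheses.
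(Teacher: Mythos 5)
Your argument follows the standard route one would expect for a lemma of this type: reflect the $\Por_\theta$-name $\dot y$ to a $\Por_\beta$-name for some $\beta<\theta$ via the chain condition and the direct-limit structure, and then observe that for $\alpha\geq\beta$ the Cohen real $\dot c_\alpha$ is Cohen over $V^{\Por_\alpha}\supseteq V^{\Por_\beta}$, hence avoids the meager $F_\sigma$ set $\bigcup_{n<\omega}(\sqsubset_n)^y$, which is precisely what Definition~\ref{def:Prs}~\ref{def:Prs3} is designed to yield. The paper cites the lemma from another source without reproducing a proof, so there is nothing in-text to compare against, but your proof is the natural one and the conclusion via Lemma~\ref{unbT} is applied correctly (given that the $\dot c_\alpha$'s are pairwise distinct, which your parenthetical remark justifies).

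One step deserves a caveat. You write ``by the $\cf(\theta)$-cc, $|A_n|<\cf(\theta)$'' where $A_n$ is a maximal antichain of $\Por_\theta$. The hypothesis, as stated, only says that each $\Por_\alpha$ for $\alpha<\theta$ is $\cf(\theta)$-cc; it does not explicitly assert that the direct limit $\Por_\theta$ is $\cf(\theta)$-cc, and this preservation is not automatic for an arbitrary $\subsetdot$-increasing chain at a limit of cofinality $\cf(\theta)$ (it is a $\Delta$-system argument for FS iterations, but that structure is not assumed here). In every application in the paper (and in the cited source) $\Por_\theta$ is ccc, so the point is harmless in practice, but a fully self-contained proof for the hypotheses as literally stated should either take the $\cf(\theta)$-cc of $\Por_\theta$ as part of the assumption or verify it separately before the name-reflection step. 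Apart from this bookkeeping issue, the argument is tight.
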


From now on, we restrict our attention to the notion of \emph{ultrafilter-limits} introduced in \cite{GMS}, and to the notion of \emph{filter-linkedness} by the second author~\cite{mejiavert}. More about the latter can be found in~\cite[Section 3]{BCM}.

 Given a poset $\Por$, the $\Por$-name $\dot{G}$ usually denotes the canonical name of the $\Por$-generic set. If $\bar{p}=\seq {p_n}{n<\omega}$ is a sequence in $\Por$, denote by $\dot{W}_\Por(\bar{p})$ the $\Por$-name of $\set{n<\omega}{p_n\in\dot{G}}$. When the forcing is understood from the context, we just write $\dot{W}(\bar{p})$.

\begin{definition}\label{Def:GMS}
Let $\Por$ be a poset, $D\subseteq \pts(\omega)$ a non-principal ultrafilter, and $\mu$ an infinite cardinal.
\begin{enumerate}[label=\rm(\arabic*)]
\item\label{GMS1} A set $Q\subseteq \Por$ has \emph{$D$-limits} if there is a function $\lim^{D}\colon Q^\omega\to \Por$ and a $\Por$-name $\dot D'$ of an ultrafilter extending $D$ such that, for any $\bar q = \seq{q_i}{i<\omega}\in Q^\omega$,
\[{\lim}^{D}\, \bar q \Vdash \dot{W}(\bar{q})\in \dot D'.\]

\item A set $Q\subseteq \Por$ has \emph{uf-limits} if it has $D$-limits for any ultrafilter $D$.

\item The poset $\Por$ is \emph{$\mu$-$D$-$\lim$-linked} if $\Por = \bigcup_{\alpha<\mu}Q_\alpha$ where each $Q_\alpha$ has $D$-limits. We say that $\Por$ is \emph{uniformly $\mu$-$D$-$\lim$-linked} if, additionally, the $\Por$-name $\dot D'$ from~\ref{GMS1} only depends on $D$ (and not on $Q_\alpha$, although we have different limits for each $Q_\alpha$).

\item The poset $\Por$ is \emph{$\mu$-uf-$\lim$-linked} if $\Por = \bigcup_{\alpha<\mu}Q_\alpha$ where each $Q_\alpha$ has uf-limits. We say that $\Por$ is \emph{uniformly $\mu$-uf-$\lim$-linked} if, additionally, for any ultrafilter $D$ on $\omega$, the $\Por$-name $\dot D'$ from~\ref{GMS1} only depends on $D$.
\end{enumerate}    
\end{definition}

For not adding dominating reals, we have the following weaker notion.

\begin{definition}[{\cite{mejiavert}}]\label{Def:Fr}
    Let $\Por$ be a poset and $F$ a filter on $\omega$. A set $Q\subseteq \Por$ is \emph{$F$-linked} if, for any $\bar p=\seq{p_n}{n<\omega} \in Q^\omega$, there is some $q\in \Por$ forcing that $F\cup \{\dot{W}(\bar p)\}$ generates a filter on $\omega$.
    We say that $Q$ is \emph{uf-linked (ultrafilter-linked)} if it is $F$-linked for any filter $F$ on $\omega$ containing the \emph{Frechet filter} $\Fr:=\set{\omega\menos a}{a\in[\omega]^{<\aleph_0}}$.
    
    For an infinite cardinal $\mu$, $\Por$ is \emph{$\mu$-$F$-linked} if $\Por = \bigcup_{\alpha<\mu}Q_\alpha$ for some $F$-linked $Q_\alpha$ ($\alpha<\mu$). When these $Q_\alpha$ are uf-linked, we say that $\Por$ is \emph{$\mu$-uf-linked}.
\end{definition}

For instance, random forcing is $\sigma$-uf-linked~\cite{mejiavert}, but it may not be $\sigma$-uf-$\lim$-linked (cf.~\cite[Rem.~3.10]{BCM}). It is clear that any uf-$\lim$-linked set $Q\subseteq \Por$ is uf-linked, which implies $\Fr$-linked.

\begin{theorem}[{\cite{mejiavert}}]\label{mej:uf}
Any $\mu$-$\Fr$-linked poset is $\mu^+$-$\omega^\omega$-good.
\end{theorem}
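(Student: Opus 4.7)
The plan is to show that the $\Fr$-linkedness hypothesis forces a boundedness phenomenon on the values that conditions in each $Q_\alpha$ can force onto $\dot h$, and to extract the witnessing family $H$ from these bounds. So let $\Por=\bigcup_{\alpha<\mu}Q_\alpha$ with each $Q_\alpha$ being $\Fr$-linked, and let $\dot h$ be a $\Por$-name for a member of $\baire$. For each $\alpha<\mu$ and $n<\omega$, set
\[
A_{\alpha,n} := \set{k\in\omega}{\exists\, p\in Q_\alpha\colon p\Vdash \dot h(n)\geq k}.
\]

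The key step is to prove that $A_{\alpha,n}$ is bounded in $\omega$. Suppose otherwise: we can pick $k_i\to\infty$ and $p_i\in Q_\alpha$ with $p_i\Vdash \dot h(n)\geq k_i$ for each $i<\omega$. Applying the $\Fr$-linkedness of $Q_\alpha$ to $\bar p=\seq{p_i}{i<\omega}\in Q_\alpha^\omega$ yields some $q\in \Por$ forcing that $\Fr\cup\{\dot W(\bar p)\}$ generates a filter on $\omega$, so in particular $q\Vdash \dot W(\bar p)$ is infinite. But whenever $i\in \dot W(\bar p)$ we have $p_i\in\dot G$, hence $\dot h(n)\geq k_i$; consequently $q\Vdash \dot h(n)\geq k_i$ for infinitely many $i$, which contradicts $\dot h(n)\in\omega$ since $k_i\to\infty$. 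This is the only place where the linkedness hypothesis is used, and it is the heart of the argument.

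With boundedness in hand, define $g_\alpha(n):=\sup A_{\alpha,n}\in\omega$, so $g_\alpha\in\baire$, and let $H:=\set{g_\alpha}{\alpha<\mu}$; clearly $|H|\leq\mu$. To finish, fix $x\in\baire$ that is $\baire$-unbounded over $H$, i.e.\ $\exists^\infty n\colon x(n)>g_\alpha(n)$ for every $\alpha<\mu$; we must show $\Vdash x\not\leq^*\dot h$. Assume toward a contradiction that some $q_0\in\Por$ forces $x\leq^*\dot h$. Then there is $q_1\leq q_0$ and $m_0<\omega$ with $q_1\Vdash \forall n\geq m_0\colon x(n)\leq \dot h(n)$. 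Choose $\alpha<\mu$ with $q_1\in Q_\alpha$. By the unboundedness of $x$ over $g_\alpha$, pick $n\geq m_0$ with $x(n)>g_\alpha(n)$. Since $q_1\in Q_\alpha$ and $q_1\Vdash \dot h(n)\geq x(n)$, the value $x(n)$ belongs to $A_{\alpha,n}$, so $x(n)\leq g_\alpha(n)$, a contradiction.

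The main (and essentially only) obstacle is the boundedness step, and it is precisely where the $\Fr$-linkedness is needed: the argument translates ``infinitely many incompatible-looking conditions force unboundedly large values'' into ``a single condition forces a non-natural-number value,'' which would be absurd. Everything else — defining $g_\alpha$ and running the goodness verification — is routine and parallels the usual pattern for $\baire$-goodness from $\sigma$-centered or similar linkedness hypotheses.
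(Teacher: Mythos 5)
Your proof is correct, and it is essentially the argument from the cited reference: the boundedness of the value sets $A_{\alpha,n}$ is exactly the point where $\Fr$-linkedness (forcing $\dot W(\bar p)$ to be infinite) contradicts $\dot h(n)$ being a natural number, and the rest (defining $g_\alpha(n)=\sup A_{\alpha,n}$ and running the goodness check) is the standard assembly. No gaps.
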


\begin{example}\label{exm:ufl}
The following are the instances of $\mu$-uf-lim-linked posets that we use in our applications.
\begin{enumerate}[label=\rm (\arabic*)]

    \item Any poset of size $\mu$ is uniformly $\mu$-uf-lim-linked (because singletons are uf-lim-linked). In particular, Cohen
forcing is uniformly $\sigma$-uf-lim-linked.
    
    \item \cite{GMS, BCM} The standard eventually different real forcing is uniformly $\sigma$-uf-lim-linked.
\end{enumerate}    
\end{example}

We now introduce a forcing notion $\Por_b$ that increases $\bfrak(\Rbf_b)$ (see~\autoref{RelsAM}) and prove that $\Por_b$ is uniformly $\sigma$-uf-$\lim$-linked. 

\begin{definition}\label{def_Pb}
Given $b\in\baire$, the poset $\Por_b$ is defined as follows:
A condition $p=(s,t,F)\in\Por_b$ if it fulfills the following:
\begin{itemize}
    \item $s\in\omega^{<\omega}$ is increasing with $s(0)>0$ (when $|s|>0$), 
    \item $t\in\Seq_{<\omega}(b):=\bigcup_{n<\omega}\prod_{i<n}b(i)$, and 
    \item $F\in[\prod b]^{<\aleph_0}$.
\end{itemize}
We order $\Por_b$ by setting $(s',t',F')\leq (s,t,F)$ iff $s\subseteq s'$, $t\subseteq t'$,
$F\subseteq F'$ and,
\[\forall f\in F\,\forall n \in|s'|\smallsetminus|s|\, \exists k\in[s'(n-1),s'(n))\colon f(k)=t'(k). \text{ (Here $s'(-1):=0$.)}\]

The poset $\Por_b$ is $\sigma$-centered, since for $s\in\omega^{<\omega}$ increasing, and for $t\in\Seq_{<\omega}(b)$, the set 
\[P_{s,t}:=\set{(s',t',F)\in\Por_b}{s'=s\textrm{\ and\ }  t'=t}\]
is centered and $\bigcup_{s\in\omega^{<\omega},\, t\in\Seq_{<\omega}(b)}P_{s,t}=\Por_b$.

Let $G$ be a $\Por_b$-generic filter over $V$. In $V[G]$, define 
\[r_{\gen}:=\bigcup\set{s}{\exists t,F\colon (s,t,F) \in G} \text{ and } 
h_{\gen}:=\bigcup\set{t}{\exists s,F\colon (s,t,F) \in G}.\]
Then $(r_{\gen},h_{\gen})\in\baire\times\prod b$ and, for every $f\in \prod b\cap V$, and for all but finitely many $n\in\omega$ there is some $k\in[r_{\gen}(n),r_{\gen}(n+1)]$ such that $f(k)=h_{\gen}(k)$. We can identify the generic real with $(J_{\gen},h_{\gen})\in\Ior\times\prod b$ where $J_{\gen,n} :=[r_\gen(n-1),r_\gen(n))$, which satisfies that, for every $f\in \prod b\cap V$, $f  \sqrb (J_{\gen},h_{\gen})$.

We will show that the sets 
\[P_{s,t,m}:=P_b(s,t,m)=\set{(s',t',F)\in\Por_b}{s'=s,\  t'=t \text{ and }|F|\leq m}\]
for $s\in\omega^{<\omega}$, $t\in\Seq_{<\omega}(b)$ and $m<\omega$, witness that $\Por_b$ is uniformly $\sigma$-uf-$\lim$-linked. 
For an ultrafilter $D$ on $\omega$,  and $\bar{p}=\seq{p_n}{n\in\omega}\in P_{s,t,m}$, we show how to define $\lim^D\bar p$. Let $p_n=(s,t,F_n) \in P_{s,t,m}$. Considering the lexicographic order $\lhd$ of $\prod b$, and let $\set{x_{n,k}}{k<m_n}$ be a $\lhd$-increasing enumeration of $F_n$ where $m_n \leq m$. Next find an unique  $m_*\leq m$ such that $A:=\set{n\in\omega}{m_n=m_*}\in D$. For each $k<m_*$, define $x_k:=\lim_n^D x_{n,k}$ in $\prod b$ where $x_k(i)$ is the unique member of $b(i)$ such that $\set{n\in A}{x_{n,k}(i) = x_k(i)} \in D$ (this coincides with the topological $D$-limit). Therefore, we can think of $F:=\set{x_k}{k<m_*}$ as the $D$-limit of $\seq{F_n}{n<\omega}$, so we define $\lim^D \bar p:=(s,t,F)$. Note that $\lim^D \bar p \in P_{s,t,m}$. 
\end{definition}


\begin{theorem}\label{uf:Pb}
The poset $\Por_b$ is uniformly $\sigma$-uf-$\lim$-linked:
For any ultrafilter $D$ on $\omega$, there is a $\Por_b$-name of an ultrafilter $\dot D'$ on $\omega$ extending $D$ such that,
for any $s\in \omega^{<\omega}$, $t\in \Seq_{<\omega}(b)$ , $m<\omega$ and $\bar p\in P_{s,t,m}^{\omega}$, $\lim^D \bar p \Vdash \dot W(\bar p)\in \dot D'$. 
\end{theorem}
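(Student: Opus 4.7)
The plan is to take $\dot D'$ to be (a name for) any ultrafilter extending the filter generated by $D$ together with the sets $\dot W(\bar p)$, where $\bar p\in V$ ranges over sequences in $\bigcup_{s,t,m}P_{s,t,m}^\omega$ whose $D$-limit lies in the generic. Because this description of the generating filter only mentions $D$ and $\dot G$, uniformity is automatic. The whole argument then reduces to two tasks: verifying $\lim^D\bar p\Vdash \dot W(\bar p)\in \dot D'$ for each such $\bar p$, and establishing the finite intersection property of the generating filter in $V[G]$. I would handle both by proving a single compatibility lemma.

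The key step I would establish first is the following \emph{single-sequence compatibility claim}: for $\bar p=\seq{p_n}{n<\omega}\in P_{s,t,m}^\omega$ with limit $q=(s,t,F)=\lim^D\bar p$, any $q'=(s',t',F')\leq q$, and any $A\in D$, there exists $n\in A$ with $q'$ and $p_n$ compatible in $\Por_b$. To prove it, write $F=\{x_0,\dots,x_{m_*-1}\}$ in $\lhd$-order and $p_n=(s,t,F_n)$ with $F_n=\{x_{n,0},\dots,x_{n,m_*-1}\}$ (also in $\lhd$-order) for $n$ in the $D$-large set where $m_n=m_*$, recalling that $x_k=\lim_n^D x_{n,k}$ coordinatewise. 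Setting $N:=s'(|s'|-1)$, each $A_k:=\set{n<\omega}{x_{n,k}\frestr N=x_k\frestr N}$ belongs to $D$, and intersecting with $A$ and $\set{n<\omega}{m_n=m_*}$ yields a set $A'\in D$. For any $n\in A'$, the triple $r:=(s',t',F'\cup F_n)$ witnesses compatibility: $r\leq q'$ is immediate, and in verifying $r\leq p_n$ the only nontrivial clause is the matching condition for elements of $F_n$. The hypothesis $q'\leq q$ already supplies, for each $x_k$ and each new index $i\in|s'|\setminus|s|$, a witness position $j_{k,i}<N$ with $x_k(j_{k,i})=t'(j_{k,i})$, and by construction of $A_k$ the same $j_{k,i}$ works for $x_{n,k}$.

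Once this is done, I would extend the argument to a finite tuple $\bar p_1,\dots,\bar p_\ell$ with a common generic refinement $q^*$ of their limits, choosing $N$ large enough to bound all coordinates appearing in $q^*$ and intersecting the analogous $D$-sets across every pair $(j,k)$. This produces, for any $A\in D$, some $n\in A$ together with a single condition refining $q^*$ that extends every $p_j(n)$ simultaneously, which translates to $A\cap\bigcap_{j\leq\ell}\dot W(\bar p_j)$ being forced nonempty and hence gives the required finite intersection property; the ultrafilter extension then exists in $V[G]$ by a standard application of choice. I expect the main obstacle to be purely notational, namely tracking the matching clauses across several sequences at once; the mathematical heart is the coordinatewise $D$-convergence $x_k=\lim^D_n x_{n,k}$, which provides the eventual agreement on the finite initial segment determined by $q'$ (resp.\ $q^*$) needed to transfer the extension conditions from $F$ to $F_n$ (resp.\ from each $F_j$ to $F_{j,n}$).
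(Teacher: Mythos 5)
Your proposal is correct and follows essentially the same route as the paper: reduce to showing that, given finitely many sequences with their $D$-limits, any common extension $q$ of those limits and any $A\in D$ admit an $n\in A$ and a further extension of $q$ that lies below every $p_{k,n}$; this yields the finite-intersection property of the generating filter and hence the name $\dot D'$. The only differences are cosmetic: you warm up with a single-sequence lemma and then extend, while the paper proves the $M$-sequence claim directly; and you detect $D$-largeness by intersecting coordinatewise-agreement sets up to a cutoff $N = s'(|s'|-1)$, whereas the paper phrases it via an open neighborhood $U_k$ in $(\prod b)^{m_{*,k}}$ containing the limit — both yield the same $D$-large sets of indices and the same choice $q' = (s,t,F\cup\bigcup_k F_{k,n})$.
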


To prove the former theorem, it suffices to show the following:

\begin{clm}
      Assume $M<\omega$, $\set{(s_k,t_k,m_k)}{k<M}\subseteq\omega^{<\omega}\times\Seq_{<\omega}(b)\times \omega$, $\set{\bar{p}^k}{k<M}$ such that each $\bar{p}^k=\seq{ p_{k,n}}{n<\omega}$ is a sequence in $P_{s_k,t_k,m_k}$, $q_k$ is the $D$-limit of $\bar{p}^k$ for each $k<M$, and $q\in\Por_b$ is stronger than every $q_k$. Then, for any $a\in D$, there are $n\in a$ and $q'\leq q$ stronger than $p_{k,n}$ for all $k<M$ (i.e.\ $q'$ forces $a\cap\bigcap_{k<M}\dot{W}(\bar{p}^k)\neq\emptyset$).
\end{clm}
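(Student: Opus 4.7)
The plan is to exploit the fact that the lexicographic $D$-limit construction guarantees coordinatewise agreement on a $D$-large set of indices, and to observe that the promise clause of $q \leq q_k$ only uses finitely many coordinates of each $x \in F_k$. I will use these coordinates to choose $n$.

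First, unpack the given data. For each $k < M$ write $q_k = (s_k, t_k, F_k)$ where, by \autoref{def_Pb}, $F_k = \{x_{k,0},\dots,x_{k,m_{k,*}-1}\}$ is the lex-ordered $D$-limit of the $F_{k,n}$'s, and let $A_k := \{n < \omega : m_{k,n} = m_{k,*}\} \in D$; for $n \in A_k$ enumerate $F_{k,n} = \{x_{k,0,n},\dots,x_{k,m_{k,*}-1,n}\}$ lexicographically, so that for every coordinate $i < \omega$ the set $\{n \in A_k : x_{k,j,n}(i) = x_{k,j}(i)\}$ belongs to $D$. Write $q = (s,t,F)$; since $q \le q_k$ we have $s_k \subseteq s$, $t_k \subseteq t$, $F_k \subseteq F$, and crucially for each $j < m_{k,*}$ and each $i \in |s|\setminus |s_k|$ there is a witness $\ell_{k,j,i} \in [s(i-1), s(i))$ with $x_{k,j}(\ell_{k,j,i}) = t(\ell_{k,j,i})$.

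Now the key step. Define
\[
B := a \cap \bigcap_{k<M} A_k \cap \bigcap_{k<M}\bigcap_{j<m_{k,*}}\bigcap_{i \in |s|\setminus|s_k|} \bigl\{n : x_{k,j,n}(\ell_{k,j,i}) = x_{k,j}(\ell_{k,j,i})\bigr\}.
\]
This is a finite intersection of sets in $D$, hence belongs to $D$; in particular it is non-empty, so pick any $n \in B$. Set
\[
q' := \Bigl(s,\, t,\, F \cup \bigcup_{k<M} F_{k,n}\Bigr).
\]

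It remains to verify that $q' \le q$ and $q' \le p_{k,n}$ for every $k < M$. The first is immediate since $q'$ has the same stem $(s,t)$ as $q$ and merely enlarges the finite set, so the promise clause in the definition of $\le$ is vacuous. For $q' \le p_{k,n} = (s_k, t_k, F_{k,n})$ note that $s_k \subseteq s$, $t_k \subseteq t$, and $F_{k,n} \subseteq F \cup \bigcup_{k'<M}F_{k',n}$ by construction. The only non-trivial point is the promise: given $f \in F_{k,n}$, write $f = x_{k,j,n}$ for some $j < m_{k,*}$, and for $i \in |s|\setminus|s_k|$ use the witness $\ell := \ell_{k,j,i}$, for which
\[
f(\ell) = x_{k,j,n}(\ell_{k,j,i}) = x_{k,j}(\ell_{k,j,i}) = t(\ell_{k,j,i}),
\]
the middle equality by choice of $n \in B$ and the last by the promise inherited from $q \le q_k$. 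This establishes the claim.

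The approach has no real obstacle; the point is simply to recognize that the $D$-limit encodes pointwise stabilization on every coordinate, and that only the finite set of witness coordinates $\{\ell_{k,j,i} : k<M,\, j<m_{k,*},\, i \in |s|\setminus|s_k|\}$ is needed to transfer the promise from $x_{k,j}$ to $x_{k,j,n}$. The main thing to watch is bookkeeping: one must use the \emph{lexicographic} enumeration so that the index $j$ of $x_{k,j,n}$ is unambiguously linked to the $j$ of its limit $x_{k,j}$, otherwise the reuse of $\ell_{k,j,i}$ would not be justified.
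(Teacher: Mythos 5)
Your proof is correct and takes essentially the same approach as the paper's. The paper phrases the $D$-largeness step topologically (observing that the set $U_k$ of promise-satisfying $m_{*,k}$-tuples is an open neighborhood of the $D$-limit tuple $\seq{x^k_j}{j<m_{*,k}}$, hence $b_k\in D$), while you make the underlying mechanism explicit by fixing witness coordinates $\ell_{k,j,i}$ from the condition $q\le q_k$ and intersecting the $D$-large coordinatewise-agreement sets; both arguments then merge the $F_{k,n}$'s into $q$ with the same stem $(s,t)$ and verify the promise clause identically.
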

\begin{proof}
Write $p_{k,n}=(s_k,t_k,F_{k,n})$, $q_k=(s_k,t_k,F_{k})$ where each $F_k = \set{x^k_j}{j<m_{*,k}}$ is the $D$-limit of $F_{k,n} = \set{x^{k,n}_j}{j<m_{*,k}}$ (increasing $\lhd$-enumeration) with $m_{*,k}\leq m_k$. Assume that $q = (s,t,F)\leq q_k$ in $\Por_b$ for all $k<M$. Let 
\begin{multline*}
 U_k:=\big\{\seq{x_j}{j<m_{*,k}} \st\, \\ 
 \forall j<m_{*,k}\, \forall \ell\in|s|\smallsetminus|s_k|\, \exists m\in[s(\ell-1),s(\ell))\colon x_j(m)=t(m)\big\},  
\end{multline*}
which is an open neighborhood of $\seq{x^k_j}{j<m_{*,k}}$ in $(\prod b)^{m_{*,k}}$. 
Then 
\[b_k := \big\{n<\omega \st\,  \forall j<m_{*,k}\, \forall \ell\in|s|\smallsetminus|s_k|\, \exists m\in[s(\ell-1),s(\ell))\colon x_{j}^{k,n}(m)=t(m) \big\}\in D.\]
Hence, $a\cap\bigcap_{k<M}b_k\neq\emptyset$, so choose $n\in a\cap\bigcap_{k<M}b_k$ and put $q'=(s,t,F')$ where $F':=F\cup \bigcup_{k<M}F_{k,n}$. This is a condition in $\Por_b$ because $|F'|\leq |F|+\sum_{k<M}m_{*,k}$. Furthermore, $q'$ is stronger than $q$ and $p_{n,k}$ for any $k<M$. 
\end{proof}

Now we define a forcing to increase the additivity of the strong measure zero ideal. This is a weakening of a forcing of Yorioka~\cite{Yorioka}.

\begin{definition}\label{foraddSN}
Let $f$ be an increasing function in $\baire$.  
Define $\Qor_{f}$ as the poset whose conditions are triples $(\sigma, N, F)$ such that $\sigma\in (2^{<\omega})^{<\omega}$, $N<\omega$ and $F\subseteq (2^{<\omega})^\omega$, satisfying the following requirements:
\begin{itemize}
    \item $|\sigma(i)| = f(i)$ for all $i<|\sigma|$,

    \item $|F|\leq N$ and $|\sigma|\leq N^2$, and 
    
    \item  $\forall\tau\in F\,  \forall n<\omega\colon |\tau(n)| = f((n+1)^2)$.
\end{itemize}
We order $\Qor_{f}$ by $(\sigma', N', F')\leq(\sigma, N, F)$ iff $\sigma\subseteq\sigma'$, $N\leq N'$, $F\subseteq F'$ and \[\forall\tau\in F\, \forall i\in N'\smallsetminus N\, \exists n<|\sigma'|\colon \sigma'(n)\subseteq\tau(i).\] 
\end{definition}

\begin{lemma}\label{denseQf}
Let $f\in\omega^\omega$ be increasing.
\begin{enumerate}[label = \normalfont (\arabic*)]
    \item\label{dense1} For $n<\omega$, the set $\set{(\sigma, N, F)\in \Qor_{f}}{n<N}$ is dense. Even more, if $(\sigma,N,F)\in \Qor_{f}$ and $N'\geq N$ in $\omega$, then there is some $\sigma'$ such that $(\sigma',N',F)\leq (\sigma,N,F)$ in $\Qor_{f}$.

    \item For $\tau\in (2^{<\omega})^\omega$, if $\forall i<\omega\colon |\tau(i)| = f((i+1)^2)$, then the set $\set{(\sigma, N, F)\in \Qor_{f}}{\tau\in F}$ is dense.

    \item For $n<\omega$, the set $\set{(\sigma, N, F)\in \Qor_{f}}{n<|\sigma|}$ is dense.
\end{enumerate}
\end{lemma}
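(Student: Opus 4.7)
The plan is to prove the stronger ``moreover'' form of \ref{dense1} first, and then derive the density statements of \ref{dense1} and (3) from it. Part (2) will be handled by a variant of the same construction.

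For \ref{dense1}, given $(\sigma,N,F)\in\Qor_f$ and $N'\geq N$, I would construct $\sigma'\supseteq\sigma$ of length at most $(N')^2$ by processing the obligations $(\tau,i)\in F\times[N,N')$ in increasing order of $i$ and, within each $i$, in some fixed order of $\tau\in F$. To each such obligation I would assign a fresh position $n$ just past the current length of $\sigma'$, and declare $\sigma'(n):=\tau(i)\restriction f(n)$; the lengths in between (if any) can be filled with arbitrary strings of the required length $f(\,\cdot\,)$. The key verification is that every position $n$ used at stage $i$ satisfies $n\leq(i+1)^2$, so that $f(n)\leq f((i+1)^2)=|\tau(i)|$ and the truncation makes sense. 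An inductive count shows that after processing all obligations up to and including stage $i$, the length of $\sigma'$ is at most $|\sigma|+|F|\cdot(i-N+1)\leq N^2+N(i-N+1)=N(i+1)\leq(i+1)^2$, which is the bound we need (and at the final stage $i=N'-1$ it gives $|\sigma'|\leq N\cdot N'\leq(N')^2$, so $(\sigma',N',F)$ is indeed a condition). The ordering $(\sigma',N',F)\leq(\sigma,N,F)$ is immediate from the assignment. Density of $\{(\sigma,N,F):n<N\}$ then follows by applying the moreover clause with $N':=\max(N,n+1)$.

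For (2), if $\tau\notin F$ I would set $N':=N+1$ and $F':=F\cup\{\tau\}$ (so $|F'|\leq N+1=N'$ and the length profile of $\tau$ fits $\Qor_f$), and run the construction from \ref{dense1} restricted to the single new stage $i=N$: append $|F|$ new positions to $\sigma$, the $k$-th one equal to $\tau_k(N)\restriction f(n)$ where $\tau_k\in F$ is enumerated. The length check $|\sigma|+|F|\leq N^2+N\leq(N+1)^2$ shows that $(\sigma',N',F')\in\Qor_f$, and the assignment witnesses the order relation.

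For (3), I would use the flexibility in \ref{dense1} that $\sigma'$ may be padded with arbitrary strings of the prescribed lengths up to the budget $(N')^2$, since no new obligations are generated by this padding. Given $n<\omega$, choose $N'\geq N$ with $(N')^2>n$, apply \ref{dense1} to obtain a condition $(\sigma_0,N',F)\leq(\sigma,N,F)$, and then extend $\sigma_0$ arbitrarily (keeping the lengths $|\sigma'(i)|=f(i)$) to some $\sigma'$ with $n<|\sigma'|\leq(N')^2$; the triple $(\sigma',N',F)$ is still in $\Qor_f$ and still below the original condition.

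The main obstacle is the bookkeeping in \ref{dense1}: one has to ensure that the positions used for the covering obligations at stage $i$ stay within $(i+1)^2$, and this is precisely where the inequalities $|F|\leq N\leq i+1$ and $|\sigma|\leq N^2$ combine to give the required room. Once this inductive bound is set up cleanly, parts (2) and (3) are minor adaptations.
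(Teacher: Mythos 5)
Your construction for part (1) is the same as the paper's: extend $\sigma$ by appending, for each new stage $i\in[N,N')$ and each $\tau\in F$, one fresh position carrying the truncation $\tau(i)\restriction f(\,\cdot\,)$, and the bound $|\sigma|+d|F|+i' \le N^2+dN+i' < N(N+d+1) < (N+d+1)^2$ (for stage $N+d$, $i'<|F|$) is exactly the inequality the paper uses, just presented there in closed form rather than inductively. Your derivations of (2) and (3) fill in what the paper summarizes as ``the other properties follow immediately'' and are correct: for (2), after applying (1) with $N'=N+1$, adding $\tau$ to $F$ creates no new obligations because the ordering on $\Qor_f$ only imposes covering for $\tau$ in the \emph{weaker} condition's third coordinate; for (3), padding $\sigma'$ with arbitrary strings of the prescribed lengths up to the budget $(N')^2$ likewise incurs no new obligations. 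The only wrinkle is the throwaway phrase ``the lengths in between (if any) can be filled with arbitrary strings'': since you always choose the next position, there are no gaps, so this clause is vacuous in part (1), but the idea is exactly what you need (and use correctly) in part (3). Overall this is the paper's proof, with the routine (2) and (3) spelled out.
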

\begin{proof}
    We show~\ref{dense1} (the other properties follow immediately from this). Let $(\sigma,N,F)\in\Qor_{f}$ and $N'\geq N$ in $\omega$. We need to extend $\sigma$ to $\sigma'$ to ensure that, for any $i\in N'\menos N$ and $\tau\in F$, $\tau(i)$ extends some $\sigma'(n)$. For this purpose, we aim for $|\sigma'| = |\sigma| + |F|(N'-N)$. 
    Enumerate $F=\set{\tau_i}{i<|F|}$. 
    For $d<N'-N$ and $i<|F|$, we have
    \[f(|\sigma|+d |F|+i) \leq f(N^2 + d N + i)<f(N(N+d+1)) < f((N+d+1)^2)= |\tau_i(N+d)|,\]
    so it is enough to define $\sigma'(|\sigma|+d |F|+i):= \tau_i(N+d)\frestr f(|\sigma|+d |F|+i)$.
\end{proof}

The poset $\Qor_{f}$ is ccc, even $\sigma$-$k$-linked for any $k<\omega$, since the set
\[Q^k_{\sigma,N}:=\set{(\sigma',N',F)\in\Qor_{f}}{\sigma'=\sigma,\ N'=N \text{ and }k|F|\leq N}\]
is $k$-linked and $\bigcup_{\sigma\in (2^{<\omega})^\omega, N<\omega} Q^k_{\sigma,N}$ is dense in $\Qor_{f}$ by \autoref{denseQf}~\ref{dense1}.

Let $G$ be a $\Qor_{f}$-generic filter over $V$. In $V[G]$, define \[\sigma_{\gen}:=\bigcup\set{\sigma}{\exists (N, F)\colon (\sigma, N, F) \in G}.\] Then $\sigma_{\gen}\in(2^{<\omega})^\omega$, $\hgt_{\sigma_\gen} = f$ and, for every $\tau\in (2^{<\omega})^\omega\cap V$, if $|\tau(i)|\geq f((i+1)^2)$ for all but finitely many $i<\omega$, then $[\tau]_\infty\subseteq\bigcup_{n<\omega}[\dot\sigma_\gen(n)]$.

We aim to show that $\Qor_{f}$ is uniformly $\sigma$-uf-$\lim$-linked, witnessed by
\[Q_{\sigma, N}:=Q_{f}(\sigma, N)=\set{(\tau, N', F)\in \Qor_{f}}{\tau=\sigma,\ N=N'}.\]
for $\sigma\in(2^{<\omega})^{<\omega}$ and $N<\omega$.\footnote{This set may be empty for some $\sigma$, but this does not hurt the arguments.} 
Let $D$ be an ultrafilter on $\omega$, and $\bar{p}=\seq{p_n}{n\in\omega}$ be a 
sequence in $Q_{\sigma, N}$ with $p_n = (\sigma,N,F_n)$. Since $|F_{n}|\leq N$, we can find $a_0\in D$ and $N_0<\omega$ such that $F_{n}=\set{\tau_{n,k}}{k\in N_0}$ (increasing enumeration using the lexicographic order of $(2^{<\omega})^\omega$ with respect to some canonical well-order of $2^{<\omega}$) for all $n\in a_0$. For each $k<N_0$, define $\tau_k=\lim_n^D \tau_{n,k}$ by
\[\tau_k(i)=s\textrm{\ iff\ }\set{n\in a_0}{\tau_{n,k}(i)=s}\in D,\]
which matches the topological $D$-limit in $\prod_{i<\omega}2^{f(i+1)^2}$. 
Then, the $D$-limit of $F_n$ can be defined as $F:=\set{\tau_k}{k<N_0}$ and $\lim^D \bar p := (\sigma,N,F)$. It is clear that this limit is in $Q_{\sigma,N}$.


\begin{theorem}\label{ufQf}
The poset $\Qor_f$ is uniformly $\sigma$-uf-$\lim$-linked: 
If $D$ is an ultrafilter on $\omega$, 
then there is a  $\Qor_{f}$-name of an ultrafilter $\dot D'$ on $\omega$ extending $D$ such that, for any $\sigma \in (2^{<\omega})^\omega$, $N<\omega$ and $\bar p\in Q_{\sigma,N}^\omega$, $\lim^D \bar p \Vdash W(\bar p)\in \dot D'$.
\end{theorem}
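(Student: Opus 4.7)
My plan is to mirror the proof of \autoref{uf:Pb} by reducing the theorem to the analogous claim: given $M<\omega$, parameters $(\sigma_k,N_k)$ for $k<M$, sequences $\bar p^k=\seq{p_{k,n}}{n<\omega}\in Q_{\sigma_k,N_k}^{\omega}$ with $D$-limits $q_k:=\lim^D\bar p^k$, any $q\in\Qor_f$ below every $q_k$, and any $a\in D$, one can find $n\in a$ and $q'\leq q$ in $\Qor_f$ with $q'\leq p_{k,n}$ for every $k<M$. From the claim, $\dot D'$ is defined in the standard way as a $\Qor_f$-name for an ultrafilter extending the filter generated by $D$ together with all sets $\dot W(\bar p)$ for which $\bar p\in Q_{\sigma,N}^{\omega}$ (for some $\sigma,N$) and $\lim^D\bar p\in\dot G$; this name depends only on $D$, which gives the uniformity clause.

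To prove the claim I would write $p_{k,n}=(\sigma_k,N_k,F_{k,n})$ with $F_{k,n}=\{\tau_{k,n,j}:j<N_{*,k}\}$ in the fixed lexicographic enumeration, $q_k=(\sigma_k,N_k,F_k)$ with $F_k=\{\tau_{k,j}:j<N_{*,k}\}$ and $\tau_{k,j}=\lim^D_n\tau_{k,n,j}$ in $\prod_i 2^{f((i+1)^2)}$, and $q=(\sigma,N,F)$. First I fix $N'\geq (M+1)N$ large enough that $|F^*|\leq N'$ (see below) and that the inequalities $|\sigma|+|F|(N'-N)\leq(N')^2$ and $|\sigma|+d|F|+l\leq (N+d+1)^2$ (for $d<N'-N$ and $l<|F|$) used in the \autoref{denseQf}\ref{dense1} construction are all satisfied. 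Topological convergence along $D$ turns each
\[b_k:=\bigl\{n<\omega:\forall j<N_{*,k}\,\forall i\in[N_k,N')\colon \tau_{k,n,j}(i)=\tau_{k,j}(i)\bigr\}\]
into a finite intersection of members of $D$, hence an element of $D$, so I can pick some $n\in a\cap\bigcap_{k<M}b_k$.

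Then I set $F^*:=F\cup\bigcup_{k<M}F_{k,n}$ (which has size at most $(M+1)N\leq N'$) and build $\sigma'\supseteq\sigma$ exactly as in \autoref{denseQf}\ref{dense1} applied to $(\sigma,N,F)$ and $N'$: enumerate $F=\{\tau^l:l<|F|\}$ and set $\sigma'(|\sigma|+d|F|+l):=\tau^l(N+d)\upharpoonright f(|\sigma|+d|F|+l)$ for $d<N'-N$ and $l<|F|$. The triple $q':=(\sigma',N',F^*)$ lies in $\Qor_f$ and satisfies $q'\leq q$ by the usual verification. Moreover $q'\leq p_{k,n}$ for every $k<M$: the covering at indices $i\in N\setminus N_k$ is inherited from $q\leq q_k$, since $n\in b_k$ forces $\tau_{k,n,j}(i)=\tau_{k,j}(i)$ and $\tau_{k,j}\in F_k\subseteq F$ was already covered by $\sigma$, while the covering at indices $i\in N'\setminus N$ uses the same identity $\tau_{k,n,j}(i)=\tau_{k,j}(i)$ together with the fact that $\tau_{k,j}\in F$ is covered at level $i$ by the newly chosen entries of $\sigma'$.

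The main subtlety, absent from the proof of \autoref{uf:Pb}, is the bookkeeping $|F|\leq N$ and $|\sigma|\leq N^2$ built into $\Qor_f$: a direct imitation of the $\Por_b$ argument would try to cover every element of $F^*$ (of size up to $(M+1)N$) at every new level $i\in N'\setminus N$, but at level $i=N$ the number of fresh positions of admissible height in $\sigma'$ is only $(N+1)^2-|\sigma|\geq 2N+1$, which is insufficient for large $M$. The topological convergence step is precisely what rescues the argument: by choosing $n$ so that $\tau_{k,n,j}(i)=\tau_{k,j}(i)$ on the entire range $[N_k,N')$, the covering obligations arising from the added members $\tau_{k,n,j}$ collapse to obligations for elements of $F_k\subseteq F$, leaving only $|F|\leq N$ genuinely new covers per level, which fit comfortably by the standard \autoref{denseQf}\ref{dense1} construction.
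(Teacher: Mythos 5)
Your proof is correct and follows essentially the same strategy as the paper's: reduce to the analogous claim for finitely many sequences and their $D$-limits, use topological $D$-convergence to put $b_k\in D$, and form $q'$ by appending $\bigcup_{k<M}F_{k,n}$ to $F$. The only organizational difference is that the paper first strengthens $q$ (via \autoref{denseQf}~\ref{dense1}) so that $|F|+\sum_{k<M}N_k\leq N$ and then leaves $(\sigma,N)$ untouched in $q'$, whereas you carry out the $(\sigma,N)\mapsto(\sigma',N')$ extension inline and discharge the covering obligations on the new levels $[N,N')$ through the identities $\tau_{k,n,j}(i)=\tau_{k,j}(i)$ together with $F_k\subseteq F$ — two presentations of the same argument.
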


Just as in~\autoref{uf:Pb}, to prove the foregoing theorem, it suffices to see the following:

\begin{clm}
      Assume $M<\omega$, $\set{(\sigma_k,N_k)}{k<M}\subseteq(2^{<\omega})^{<\omega}\times\omega$, $\set{\bar{p}^k}{k<M}$ such that each $\bar{p}^k=\seq{ p_{k,n}}{n<\omega}$ is a sequence in $Q_{\sigma_k,N_k}$, $q_k$ is the $D$-limit of $\bar{p}^k$ for each $k<M$, and $q\in\Qor_{f}$ is stronger than every $q_k$. If $a\in D$ then there are some $n\in a$ and $q'\leq q$ stronger than $p_{k,n}$ for all $k<M$.
\end{clm}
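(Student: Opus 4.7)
The plan is to adapt the proof of the analogous claim for $\Por_b$ above. The essential difference is that conditions in $\Qor_f$ satisfy the arithmetic constraints $|F|\leq N$ and $|\sigma|\leq N^2$, so we cannot simply enlarge $F$ to $F':=F\cup\bigcup_{k<M}F_{k,n}$ while keeping the same $N$; we must first enlarge $N$ (and extend $\sigma$ accordingly) so that $F'$ fits under the new bound.

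I would begin with the usual bookkeeping: write $p_{k,n}=(\sigma_k,N_k,F_{k,n})$, $q_k=(\sigma_k,N_k,F_k)$ and $q=(\sigma,N,F)$, and recall from the $D$-limit construction preceding \autoref{ufQf} the $D$-large sets $a_0^k\in D$ and numbers $N_{*,k}\leq N_k$ with $F_{k,n}=\{\tau_{k,n,j}:j<N_{*,k}\}$ for $n\in a_0^k$ and $F_k=\{\tau_{k,j}:j<N_{*,k}\}$, where each $\tau_{k,j}$ is the coordinatewise $D$-limit of $\langle\tau_{k,n,j}\rangle_{n\in a_0^k}$. The relation $q\leq q_k$ implies $F_k\subseteq F$, so each $\tau_{k,j}$ already belongs to $F$.

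Next I would fix $N^*\geq\max\bigl(N,\,|F|+\sum_{k<M}N_{*,k}\bigr)$ (this bound is independent of $n$) and pick $n\in a\cap\bigcap_{k<M}b_k$ where
\[b_k:=\{n\in a_0^k:\forall j<N_{*,k}\,\forall i<N^*\colon\tau_{k,n,j}(i)=\tau_{k,j}(i)\}\in D,\]
a finite intersection of $D$-large sets from the definition of the coordinatewise $D$-limit. I would then invoke \autoref{denseQf}~\ref{dense1} to extend $(\sigma,N,F)$ to $(\sigma^*,N^*,F)$, with $|\sigma^*|\leq N\cdot N^*\leq(N^*)^2$ and such that, for every $\tau\in F$ and every $i\in N^*\smallsetminus N$, some $\sigma^*(\ell)\subseteq\tau(i)$. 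Setting $F':=F\cup\bigcup_{k<M}F_{k,n}$, the choice of $N^*$ gives $|F'|\leq N^*$, so $q':=(\sigma^*,N^*,F')\in\Qor_f$.

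The relation $q'\leq q$ is immediate from the construction of $\sigma^*$. For $q'\leq p_{k,n}$, the only non-obvious requirement is that, for $\tau=\tau_{k,n,j}\in F_{k,n}$ and $i\in N^*\smallsetminus N_k$, some $\sigma^*(\ell)\subseteq\tau(i)$. By the choice of $n$, $\tau_{k,n,j}(i)=\tau_{k,j}(i)$ for every $i<N^*$, and since $\tau_{k,j}\in F$, the required covering is supplied either by $q\leq q_k$ (when $i\in N\smallsetminus N_k$) or by the construction of $\sigma^*$ (when $i\in N^*\smallsetminus N$). The main subtlety, and the reason the $\Por_b$ argument does not transfer verbatim, is that the coordinatewise $D$-limit equalities must be enforced on the full enlarged range $i<N^*$ rather than merely $i<N$, in order to transfer the covering from $\tau_{k,j}\in F$ to $\tau_{k,n,j}\in F_{k,n}$.
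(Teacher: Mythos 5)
Your proof is correct, and it is essentially the same argument as the paper's; you have simply made explicit a step that the paper compresses into one sentence. The paper opens by saying ``by strengthening $q$ if necessary, we assume that $|F|+\sum_{k<M}N_k\leq N$,'' which is precisely the passage to $(\sigma^*,N^*,F)$ via \autoref{denseQf}~\ref{dense1} that you carry out in detail, including the arithmetic check $|\sigma^*|\leq N\cdot N^*\leq(N^*)^2$. The one cosmetic deviation is your choice of the $D$-large sets $b_k$: you require coordinatewise \emph{equality} $\tau_{k,n,j}(i)=\tau_{k,j}(i)$ on the full range $i<N^*$ and then transfer the covering from $\tau_{k,j}\in F$ to $\tau_{k,n,j}$, whereas the paper defines $b_k$ directly by the covering condition $\exists\ell<|\sigma|\colon\sigma(\ell)\subseteq\tau^{k,n}_i(j)$ for $j\in N\smallsetminus N_k$ (mirroring the $\Por_b$ argument's open-neighborhood formulation). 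Your $b_k$ is a cylinder condition, hence trivially a finite intersection of $D$-large sets, and is contained in the paper's $b_k$; both lie in $D$ and both yield the desired $n$. So the two versions are interchangeable, and you have correctly identified the genuine difference from the $\Por_b$ case, namely that the cap $|F|\leq N$ forces one to enlarge $N$ and $\sigma$ before absorbing $\bigcup_{k<M}F_{k,n}$ into the side condition.
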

\begin{proof}
Write the forcing conditions as $p_{k,n}=(\sigma_k,N_k, F_{k,n})$ where $F_{k,n}=\set{\tau_i^{k,n}}{i<N_0^k}$ (increasing enumeration) with $N_0^k\leq N_k$, for all $n\in a$ (wlog), and let $q_k=(\sigma_k,N_k,F_{k})$ be such that each $F_k=\set{\tau^k_i}{i<N_0^k}$ is the $D$-limit of $\seq{ F_{k,n}}{n<\omega}$,
that is, $\tau_i^k=\lim_n^D \tau_i^{k,n}$ for $i< N_0^k$. 

Assume that $q=(\sigma,N,F) \leq q_k$ for all $k<M$. 
By strengthening $q$ if necessary, we assume that $|F|+\sum_{k<M}N_k\leq N$.
Then 
\[\forall i<N_0^k\, \forall j\in N\smallsetminus N_k\,\exists \ell<|\sigma|\colon\sigma(\ell)\subseteq\tau_i^k(j),\] 
so $b_k:=\set{n<\omega}{\forall i<N_0^k\,\forall j\in N\smallsetminus N_k\,\exists \ell<|\sigma|:\sigma(\ell)\subseteq\tau_i^{k,n}(j)}\in D$. Hence $a\cap\bigcap_{k<M}b_k$ is not empty. Choose an $n$ in that set and put $q':=(\sigma,N,F')$ where $F':=F\cup \bigcup_{k<M} F_{n,k}$. This is a condition in $\Qor_{f}$ because $|F'|\leq |F|+\sum_{k<M}N_k \leq N$. Thus $q'$ is stronger than $q$ and $p_{n,k}$ for $k<M$. 
   \end{proof}

\section{Consistency results}\label{Secmain}

In this section, we prove our main consistency results about the cardinal characteristics associated with $\NAwf$ and $\MAwf$. Concretely, we prove~\autoref{ThmCovma}--\ref{Thm:faddSN}. 

We start with a review of some other posets we will use in the proof of our consistency results.

\begin{definition}\label{defposet} Define the following forcing notions 
\begin{enumerate}[label=\normalfont(\arabic*)]
    \item \emph{Localization forcing} is the poset defined by $\Loc:=\set{(n,\varphi)\in\omega\times \Swf(\omega,\id_\omega)}{\exists m<\omega\,\forall i<\omega\colon |\varphi(i)|\leq m}$ ordered by $(n',\varphi')\leq(n,\varphi)$ iff $n\leq n'$, $\varphi'\frestr n = \varphi\frestr n$ and $\varphi(i)\subseteq\varphi'(i)$ for every $i<\omega$. This forcing is used to increase $\add(\Nwf)$. Recall that $\Loc$ is $\sigma$-linked, hence ccc.
    
    \item\label{defposet2} \emph{Hechler forcing} is defined by  $\Dor=\omega^{<\omega}\times\baire$,
ordered by $(t,g)\leq(s,f)$ if $s\subseteq t$, $f\leq g$ and $f(i)\leq t(i)$ for all $i\in |t|\menos|s|$. This forcing is used to increase $\bfrak$. Recall that $\Dor$ is $\sigma$-centered.

  \item For an infinite cardinal  $\theta$, $\Fn_{<\theta}(A,B)$ denotes the poset of partial functions from $A$ into $B$ of size ${<}\theta$, ordered by $\supseteq$.

  \item $\Cor_\lambda:=\Fn_{<\aleph_0}(\lambda\times\omega,2)$ is the poset adding $\lambda$-many Cohen reals, and denote random forcing by $\Bor$.
\end{enumerate}
\end{definition}

We now begin proving our main consistent results. In particular, we establish~\autoref{ThmCovma}:

\begin{theorem}\label{Thm:covMA}
 Let $\theta<\nu\leq\lambda$ be uncountable cardinals such that $\theta^{<\theta}=\theta$, $\nu^{\theta} = \nu$ and $\lambda^{\aleph_0} = \lambda$. Then there is a poset, preserving cofinalities, forcing 
\[
     \cov(\Nwf) = \aleph_1  \leq \add(\Mwf) = \cof(\Mwf) = \theta 
      \leq \cov(\MAwf)\leq \nu \leq  \non(\Nwf) =\cfrak =\lambda.
\]
In particular, it is consistent with $\thzfc$ that $\cov(\MAwf) < \non(\Nwf)$.
\end{theorem}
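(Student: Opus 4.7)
I would split the proof into an arithmetic step and a forcing step.
For the arithmetic step, I claim that $\cof(\Mwf) = \theta$ already implies $\cov(\MAwf) \leq \nu$ under the stated cardinal hypotheses.
Indeed, since $\dfrak \leq \cof(\Mwf) = \theta$, there is a dominating family $D \subseteq \baire$ with $|D| = \theta$; by \autoref{prodRb}, $\cov(\MAwf) \leq \prod_{b \in D}\dfrak(\Rbf_b)$, and by \autoref{supRbM} each $\dfrak(\Rbf_b) \leq \cof(\Mwf) = \theta$.
Hence $\cov(\MAwf) \leq \theta^{\theta} = 2^{\theta}$ (using the standard identity $\theta^\theta = 2^\theta$ for infinite $\theta$).
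Finally, $\nu^\theta = \nu$ forces $2^\theta \leq \nu^\theta = \nu$, so $\cov(\MAwf) \leq \nu$ automatically.

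\textbf{Forcing the target constellation.}
It then suffices to construct a ccc (hence cofinality-preserving) poset $\Por$ forcing
\[
\cov(\Nwf) = \aleph_1,\quad \add(\Mwf) = \cof(\Mwf) = \theta,\quad \non(\Nwf) = \cfrak = \lambda,
\]
which is the left-side separation of \autoref{leftGMS} with $\theta_0 = \theta_1 = \aleph_1$, $\theta_2 = \theta_3 = \theta$, $\theta_4 = \lambda$.
I would construct $\Por$ as an FS matrix iteration of height $\lambda$ whose iterands are $\sigma$-centered (principally Hechler $\Dor$ and Cohen $\Cor$).
Since $\sigma$-centered posets are $\Cn$-good by \autoref{ExamPresPro}~\ref{Pres(null-cov)}, the whole iteration is $\Cn$-good by \autoref{Comgood}, so $\cov(\Nwf) = \aleph_1$ is preserved.
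A column of $\Dor$ of length $\theta$ forces $\bfrak = \dfrak = \theta$ via \autoref{itsmallsets}, and Cohen reals cofinal in the column pin the middle Cichoń values at $\theta$, yielding $\add(\Mwf) = \cof(\Mwf) = \theta$.
Cofinal Cohen reals along the height produce, via \autoref{lem:strongCohen}, a strongly $\lambda$-$\Cn$-unbounded family, forcing $\non(\Nwf) \geq \lambda$; equality follows because $\lambda^{\aleph_0} = \lambda$ (used for the nice-name count) caps $\cfrak$ at $\lambda$.

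\textbf{Main obstacle.}
The delicate point is keeping $\cof(\Mwf)$ pinned at $\theta$ even though the iteration runs for $\lambda$ many steps: a naive $\lambda$-length FS iteration of the same iterands would blow $\cof(\Mwf)$ up to $\lambda$ as well.
The matrix structure prevents this by confining the cofinalities reached by the relational systems controlling $\cof(\Mwf)$ and $\dfrak$ to the columns of length $\theta$ (via ultrafilter preservation in the sense of \autoref{Def:GMS}), while only the relational system $\Cn$ governing $\non(\Nwf)$ is allowed to reach $\lambda$.
Under the stronger arithmetic $\lambda^{<\theta} = \lambda$ this coincides with the original GMS construction; under our weaker hypothesis $\lambda^{\aleph_0} = \lambda$ the uf-linkedness framework of \autoref{Prelink}---in particular the $\sigma$-uf-$\lim$-linkedness properties of Cohen, Hechler, and $\Por_b$, which by \autoref{mej:uf} gives the needed goodness to preserve $\dfrak = \theta$---supplies the extra flexibility.
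I expect the main technical work to be verifying that the matrix iteration is ccc and that the ultrafilter limits used to cap $\cof(\Mwf)$ at $\theta$ are preserved through the full height of the construction.
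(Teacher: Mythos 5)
Your ``arithmetic step'' has a fatal gap. You bound $\cov(\MAwf)\leq\prod_{b\in D}\dfrak(\Rbf_b)\leq\theta^\theta=2^\theta$ and then invoke $\nu^\theta=\nu$ to get $2^\theta\leq\nu$. But that cardinal arithmetic must hold \emph{in the forcing extension}, where $2^\theta\geq 2^{\aleph_0}=\cfrak=\lambda\geq\nu$; whenever $\lambda>\nu$ this gives $2^\theta>\nu$, so the crude bound $2^\theta$ is simply too large to deliver $\cov(\MAwf)\leq\nu$. The hypothesis $\nu^\theta=\nu$ cannot survive the extension. What the paper actually does is avoid the coarse cardinal-product estimate entirely: first force with $\Fn_{<\theta}(\nu,\theta)$ to pin the $\theta$-dominating number $\dfrak_\theta:=\dfrak(\la\theta,\leq\ra^\theta)$ at $\nu$ (this is preserved by ccc forcing), and then show via the generics of $\Por_b$ that the dominating family $D$ of Hechler reals satisfies $\prod_{b\in D}\Rbf_b\eqT\la\theta,\leq\ra^\theta$, giving the sharp bound $\cov(\MAwf)\leq\dfrak(\prod_{b\in D}\Rbf_b)=\dfrak_\theta=\nu$, which is in general strictly below $2^\theta$.

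Your ``forcing step'' also targets the wrong constellation. The statement requires $\add(\Mwf)=\cof(\Mwf)=\theta$, i.e.\ the entire inner diamond $\bfrak=\non(\Mwf)=\cov(\Mwf)=\dfrak=\theta$, with $\cfrak=\lambda$ possibly much larger. But the left-GMS configuration you choose (with $\theta_4=\lambda$) forces $\cov(\Mwf)=\dfrak=\lambda$, not $\theta$. No matrix iteration or ultrafilter machinery is needed here: after the $\Fn_{<\theta}(\nu,\theta)$ step, the paper uses a plain FS iteration of length $\lambda\theta$ (whose cofinality is $\theta$) of the $\sigma$-centered iterands $\Dor\ast\Por_{\dot d_\xi}$. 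The cofinality $\theta$ pins $\bfrak=\dfrak=\non(\Mwf)=\cov(\Mwf)=\theta$ via cofinally many Cohen and Hechler reals, while $\Cn$-goodness of the iterands and $\lambda^{\aleph_0}=\lambda$ give $\cov(\Nwf)=\aleph_1$ and $\non(\Nwf)=\cfrak=\lambda$. Your discussion of keeping $\cof(\Mwf)$ low is addressed correctly by the choice of iteration length, not by uf-limits, and the weaker hypothesis $\lambda^{\aleph_0}=\lambda$ is sufficient precisely because only nice-name counting is needed.
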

\begin{proof}
First force with $\Fn_{<\theta}(\nu,\theta)$ to obtain $\dfrak_\theta = 2^\theta = \nu$ in its generic extension, where $\dfrak_\theta := \dfrak(\la \theta,\theta,\leq\ra^\theta)$ (as a relational system), which coincides with the canonical dominating number of $\theta^\theta$. Notice that cardinalities (and cofinalities) are preserved, as well as the cardinal arithmetic hypothesis.

Aftwerwards, perform a FS iteration $\Por = \seq{\Por_\xi,\Qnm_\xi}{\xi<\lambda\theta}$ where $\Qor_\xi$ is a $\Por_\xi$-name of $\Dor\ast \Por_{\dot d_\xi}$ where $\dot d_\xi$ is the name of the dominating real over $V_\xi := V^{\Por_\xi}$ added by $\Dor$. The iterands of this iteration are $\Cn$-good (see \autoref{ExamPresPro}~\ref{Pres(null-cov)}), so $\Por$ forces $\Cbf_{[\lambda]^{<\aleph_1}} \leqT \Cn$ by \autoref{Comgood}. On the other hand, $\Por$ forces $\cfrak=\lambda$, so it follows that $\cov(\Nwf) = \aleph_1$ and $\non(\Nwf) = \cfrak$. On the other hand, by the cofinaly-many Cohen and dominating reals $\seq{\dot d_{\lambda\rho}}{\rho<\theta}$ added along the iteration, since $\cf(\lambda\theta)=\theta$, we obtain $\bfrak = \non(\Mwf) = \cov(\Mwf) = \dfrak = \theta$. which implies $\add(\Mwf) = \cof(\Mwf)=\theta$. Even more, we obtain $\omega^\omega \eqT \Cbf_\Mwf \eqT \theta$.

In the final generic extension $V_{\lambda\theta}$, it is clear that $D:=\set{d_{\lambda\rho}}{\rho<\theta}$ is $\leq^*$-increasing and dominating in $\baire$. Denote $d'_\rho := d_{\lambda\rho}$ for $\rho<\theta$. We show that $\Rbf_{d'_\rho} \eqT \theta$. On the one hand, $\theta \eqT \Cbf_\Mwf \leqT \Rbf_{d'_\rho}$. For the converse, define $F\colon \prod d'_\rho \to \theta$ such that, for $x\in \prod d'_\rho$, $F(x)$ is some ordinal $\eta>\rho$ such that $x\in V_{\lambda\eta}$; and define $F'\colon \theta\to \Ior\times\baire$ such that $F'(\varrho)$ is the $\Por_{d'_\varrho}$-generic real added by $\Qor_{\lambda\varrho}$ when $\varrho\geq\rho$, otherwise $F'(\varrho):= F'(\rho)$. It is clear that $(F,F')$ is the desired Tukey connection. 

Since $\Por$ is ccc, the equality $\dfrak_\theta = \nu$ is preserved (see e.g.~\cite[Lem.~6.6]{CarMej23}). Now, by \autoref{prodRb},
\[\Cbf_\MAwf \leqT \prod_{b\in D}\Rbf_b \eqT \la\theta,\leq \ra^\theta,\]
so $\cov(\MAwf) \leq \dfrak_\theta = \nu$.
\end{proof}

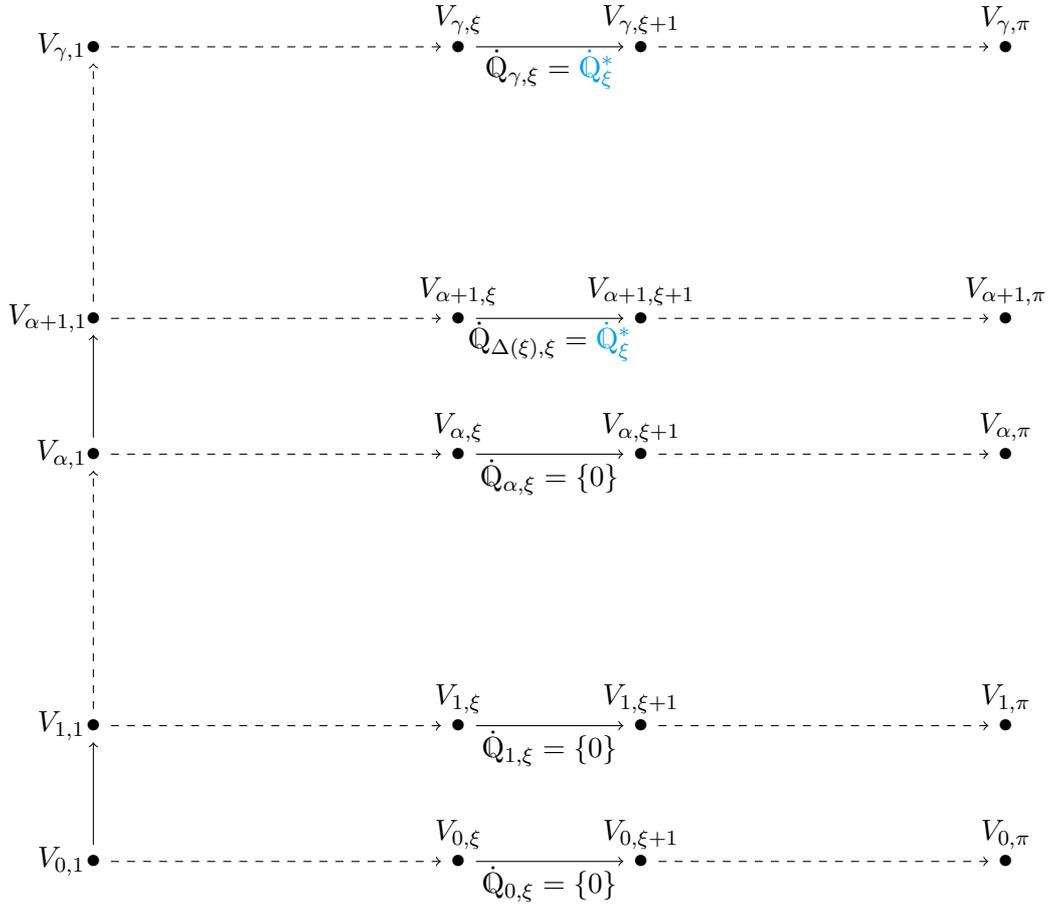
\begin{figure}[ht]
\centering
\begin{tikzpicture}[scale=1.2]
\small{

\node (f00) at (0,0){$\bullet$};
\node (f01) at (0,1.5){$\bullet$};
\node (f02) at (0,4.5) {$\bullet$} ;
\node (f03) at (0,6) {$\bullet$} ;
\node (f04) at (0,9) {$\bullet$} ;
\node (fxi0) at (4,0){$\bullet$};
\node (fxi1) at (4,1.5){$\bullet$};
\node (fxi2) at (4,4.5) {$\bullet$};
\node (fxi3) at (4,6) {$\bullet$} ;
\node (fxi4) at (4,9) {$\bullet$} ;
\node (fxi+10) at (6,0){$\bullet$};
\node (fxi+11) at (6,1.5){$\bullet$};
\node (fxi+12) at (6,4.5) {$\bullet$};
\node (fxi+13) at (6,6) {$\bullet$} ;
\node (fxi+14) at (6,9) {$\bullet$} ;
\node (fpi0) at (10,0){$\bullet$};
\node (fpi1) at (10,1.5){$\bullet$};
\node (fpi2) at (10,4.5) {$\bullet$};
\node (fpi3) at (10,6) {$\bullet$} ;
\node (fpi4) at (10,9) {$\bullet$} ;

\draw   (f00) edge [->] (f01);
\draw[dashed]    (f01) edge [->] (f02);
\draw       (f02) edge [->] (f03);
\draw[dashed]       (f03) edge [->] (f04);
        
\draw[dashed]   (f00) edge [->] (fxi0);
\draw      (fxi0) edge [->] (fxi+10);
\draw[dashed]       (fxi+10) edge [->] (fpi0);
 
\draw[dashed]   (f01) edge [->] (fxi1);
\draw        (fxi1) edge [->] (fxi+11);
\draw[dashed]        (fxi+11) edge [->] (fpi1); 
        
\draw[dashed]  (f02) edge [->] (fxi2);
\draw        (fxi2) edge [->] (fxi+12);
\draw[dashed]        (fxi+12) edge [->] (fpi2);  
        
\draw[dashed]  (f03) edge [->] (fxi3);
\draw       (fxi3) edge [->] (fxi+13);
\draw[dashed]      (fxi+13) edge [->] (fpi3);  
        
\draw[dashed]  (f04) edge [->] (fxi4);
\draw        (fxi4) edge [->] (fxi+14);
\draw[dashed]        (fxi+14) edge [->] (fpi4);         

\node at (-0.35,0) {$V_{0,1}$};
\node at (-0.35,1.5) {$V_{1,1}$};
\node at (-0.35,4.5) {$V_{\alpha,1}$};
\node at (-0.5,6) {$V_{\alpha+1,1}$};
\node at (-0.35,9) {$V_{\gamma,1}$};

\node at (4,0.3) {$V_{0,\xi}$};
\node at (6,0.3) {$V_{0,\xi+1}$};

\node at (4,1.8) {$V_{1,\xi}$};
\node at (6,1.8) {$V_{1,\xi+1}$};

\node at (4,4.8) {$V_{\alpha,\xi}$};
\node at (6,4.8) {$V_{\alpha,\xi+1}$};

\node at (4,6.3) {$V_{\alpha+1,\xi}$};
\node at (6,6.3) {$V_{\alpha+1,\xi+1}$};

\node at (4,9.3) {$V_{\gamma,\xi}$};
\node at (6,9.3) {$V_{\gamma,\xi+1}$};

\node at (10,0.3) {$V_{0,\pi}$};
\node at (10,1.8) {$V_{1,\pi}$};
\node at (10,4.8) {$V_{\alpha,\pi}$};
\node at (10,6.3) {$V_{\alpha+1,\pi}$};
\node at (10,9.3) {$V_{\gamma,\pi}$};

\node at (5,-0.25) {$\Qnm_{0,\xi}=\{0\}$};
\node at (5,1.25) {$\Qnm_{1,\xi}=\{0\}$};
\node at (5,4.25) {$\Qnm_{\alpha,\xi}=\{0\}$};
\node at (5,5.75) {$\Qnm_{\Delta(\xi),\xi}=\subiii{\Qnm^*_\xi}$};
\node at (5,8.75) {$\Qnm_{\gamma,\xi}=\subiii{\Qnm^*_\xi}$};




}
\end{tikzpicture}
\caption{A simple matrix iteration}
\label{matrixuf}
\end{figure}

We use the matrix iterations with ultrafilters method from~\cite{BCM}, which we examine below, to force many simultaneous values in Cicho\'n's diagram.

\begin{definition}[{\cite[Def.~2.10]{BCM}}]\label{Defmatsimp}
A~\emph{simple matrix iteration} of ccc posets (see~\autoref{matrixuf}) is composed of the following objects: 
\begin{enumerate}[label=\rm (\Roman*)]
    \item ordinals $\gamma$ (height) and $\pi$ (length);
    \item a function $\Delta\colon \pi\to\gamma$; 
    \item a sequence of posets $\seq{\Por_{\alpha,\xi}}{\alpha\leq \gamma,\ \xi\leq \pi}$ where $\Por_{\alpha,0}$ is the trivial poset for any $\alpha\leq \gamma$;
    \item for each $\xi<\pi$, $\dot{\Qor}^*_\xi$ is a 
     $\Por_{\Delta(\xi),\xi}$-name of a poset such that $\Por_{\gamma,\,\xi}$ forces it to be ccc;
    \item $\Por_{\alpha,\xi+1}=\Por_{\alpha,\,\xi}\ast\Qnm_{\alpha,\xi}$, where  
\[\dot{\mathbb{Q}}_{\alpha,\xi}:=
\begin{cases}
    \Qnm^*_\xi & \textrm{if  $\alpha\geq\Delta(\xi)$,}\\
    \{0\}         & \textrm{otherwise;}
\end{cases}\]
    \item for $\xi$ limit, $\Por_{\alpha,\xi}:=\limdir_{\eta<\xi}\Por_{\alpha,\eta}$. 
\end{enumerate}

It is known that $\alpha\leq\beta\leq\gamma$ and $\xi\leq\eta\leq\pi$ imply $\Por_{\alpha,\xi}\subsetdot\Por_{\beta,\eta}$, see e.g.~\cite{B1S} and \cite[Cor.~4.31]{CM}.  If $G$ is $\Por_{\gamma,\pi}$-generic
over $V$, we denote $V_{\alpha,\xi}= [G\cap\Por_{\alpha,\xi}]$ for all $\alpha\leq\gamma$ and $\xi\leq\pi$. 
\end{definition}

\begin{lemma}[{\cite[Lemma~5]{BrF}, see also~\cite[Cor.~2.6]{mejiavert}}]\label{realint}
 Assume that $\Por_{\gamma, \pi}$ is a simple matrix iteration as in~\autoref{Defmatsimp} with $\cf(\gamma)>\omega$. 
Then, for any $\xi\leq\pi$,
\begin{enumerate}[label=\rm (\alph*)]
    \item  $\Por_{\gamma,\xi}$ is the direct limit of $\seq{\Por_{\alpha,\xi}}{\alpha<\gamma}$, and
    \item if $\eta<\cf(\gamma)$ and $\dot{f}$ is a $\Por_{\gamma,\xi}$-name of a function from $\eta$ into $\bigcup_{\alpha<\gamma}V_{\alpha,\xi}$ then $\dot{f}$ is forced to be equal to a $\Por_{\alpha,\xi}$-name for some $\alpha<\gamma$.  In particular, the reals in $V_{\gamma,\xi}$ are precisely the reals in $\bigcup_{\alpha<\gamma}V_{\alpha,\xi}$.
\end{enumerate}  
\end{lemma}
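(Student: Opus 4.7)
My plan is to prove the two parts simultaneously by transfinite induction on $\xi\le\pi$, since (b) will follow from (a) together with the ccc assumption.

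For part (a), I would proceed by induction on $\xi$. The case $\xi=0$ is trivial, and the limit case is immediate from the finite support structure: if $\xi$ is a limit ordinal, then $\Por_{\gamma,\xi}=\limdir_{\eta<\xi}\Por_{\gamma,\eta}$, and any condition $p\in\Por_{\gamma,\xi}$ already belongs to some $\Por_{\gamma,\eta}$ with $\eta<\xi$; by the induction hypothesis, $p\in\Por_{\alpha,\eta}\subsetdot\Por_{\alpha,\xi}$ for some $\alpha<\gamma$, so the direct-limit property is inherited. The delicate case is the successor step $\xi=\xi'+1$, and this is where I expect the main obstacle. A condition in $\Por_{\gamma,\xi'+1}$ has the form $(p,\dot q)$ where $p\in\Por_{\gamma,\xi'}$ and $\dot q$ is a $\Por_{\gamma,\xi'}$-name for an element of $\dot{\Qor}^*_{\xi'}$ (which is itself a $\Por_{\Delta(\xi'),\xi'}$-name). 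Since $\Por_{\gamma,\xi'}$ is ccc in $V$ (preserved by the induction on $\gamma$, or assumed since the tower is a simple matrix iteration of ccc posets), the name $\dot q$ is determined by countably many maximal antichains $A_n\subseteq\Por_{\gamma,\xi'}$. Using $\cf(\gamma)>\omega$ and the induction hypothesis applied to $\xi'$, each element of $\bigcup_n A_n$ sits in some $\Por_{\alpha_i,\xi'}$ with $\alpha_i<\gamma$; the supremum $\alpha^*:=\sup_i \alpha_i$ is still below $\gamma$. Taking $\alpha:=\max\{\alpha^*,\Delta(\xi')\}<\gamma$ and $p$ itself in some $\Por_{\alpha,\xi'}$ (enlarging $\alpha$ if necessary), both $p$ and $\dot q$ can be viewed inside $\Por_{\alpha,\xi'+1}=\Por_{\alpha,\xi'}\ast\dot{\Qor}^*_{\xi'}$, as desired.

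For part (b), let $\dot f$ be a $\Por_{\gamma,\xi}$-name of a function from $\eta<\cf(\gamma)$ into $\bigcup_{\alpha<\gamma}V_{\alpha,\xi}$. For each $i<\eta$, pick a maximal antichain $A_i\subseteq\Por_{\gamma,\xi}$ such that every $p\in A_i$ decides a level $\alpha_{i,p}<\gamma$ and a $\Por_{\alpha_{i,p},\xi}$-name $\dot x_{i,p}$ for which $p\Vdash \dot f(i)=\dot x_{i,p}$. By ccc, each $A_i$ is countable, so using~(a) and $\cf(\gamma)>\omega$, all conditions in $A_i$ and all witnessing levels are bounded by some $\beta_i<\gamma$. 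Setting $\alpha:=\sup_{i<\eta}\beta_i$, we obtain $\alpha<\gamma$ since $\eta<\cf(\gamma)$, and $\dot f$ is (forced to be equal to) a $\Por_{\alpha,\xi}$-name. The ``in particular'' clause about reals is the case $\eta=\omega$ (applicable since $\cf(\gamma)>\omega$) applied to functions $\omega\to\omega$.

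The only real subtlety is verifying the successor step of (a) cleanly; the rest is a routine combination of ccc-reflection with the hypothesis $\cf(\gamma)>\omega$. I would state and use the standard fact that for any ccc $\Por_{\gamma,\xi'}$-name $\dot q$ of an element of a set, $\dot q$ is determined by countably many maximal antichains, to avoid a lengthy detour.
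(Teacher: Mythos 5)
The paper does not actually prove this lemma; it cites it from Brendle--Fischer \cite{BrF} and from the second author's earlier work \cite{mejiavert}, relying on those references. Your reconstruction is correct and is the standard argument for this fact, so there is no paper-internal proof to compare against.

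The only place that deserves a little more care is the successor step of part (a). You invoke that a $\Por_{\gamma,\xi'}$-name $\dot q$ for an element of $\dot{\Qor}^*_{\xi'}$ is determined by countably many (in fact, one suffices) maximal antichains; the additional point worth making explicit is that along the chosen maximal antichain $A\subseteq\Por_{\gamma,\xi'}$ the decided values can be taken to be $\Por_{\Delta(\xi'),\xi'}$-names, precisely because $\dot{\Qor}^*_{\xi'}$ is itself a $\Por_{\Delta(\xi'),\xi'}$-name for a poset. That is what guarantees that the mixture of these values over $A$, once $A\subseteq\Por_{\alpha,\xi'}$ for some $\alpha\geq\Delta(\xi')$ (and $A$ is still maximal in $\Por_{\alpha,\xi'}$ since $\Por_{\alpha,\xi'}\subsetdot\Por_{\gamma,\xi'}$), is genuinely a $\Por_{\alpha,\xi'}$-name, so that $(p,\dot q)$ lands in $\Por_{\alpha,\xi'+1}$. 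You implicitly account for this by taking $\alpha\geq\Delta(\xi')$, so no gap, just a point you might spell out. Part (b) is the routine ccc-reflection using part (a) and $\eta<\cf(\gamma)$, and the ``in particular'' clause for reals is correctly reduced to the case $\eta=\omega$.
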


Using a Polish relational system that is Tukey-equivalent with $\Cbf_\Mwf$ (see \autoref{ExamPresPro}~\ref{(M-cohen)}) we have the following result.

\begin{theorem}[{\cite[Thm.~5.4]{CM}}]\label{matsizebd}
Let $\Por_{\gamma, \pi}$ be a simple matrix iteration as in~\autoref{Defmatsimp}. Assume that, for any $\alpha<\gamma$, there is some $\xi_\alpha<\pi$ such that $\Por_{\alpha+1,\xi_\alpha}$ adds a Cohen real $\dot{c}_\alpha\in X$ over $V_{\alpha,\xi_\alpha}$. 
   Then, for any $\alpha<\gamma$, $\Por_{\alpha+1,\pi}$ forces that $\dot{c}_{\alpha}$ is Cohen over $V_{\alpha,\pi}$. 

   In addition, if $\cf(\gamma)>\omega_1$ and $f\colon \cf(\gamma)\to\gamma$ is increasing and cofinal, then  $\Por_{\gamma,\pi}$ forces that $\set{\dot{c}_{f(\zeta)}}{\zeta<\cf(\gamma)}$ is a strongly $\cf(\gamma)$-$\Cbf_\Mwf$-unbounded family. In particular, $\Por_{\gamma,\pi}$ forces $\gamma\leqT \Cbf_\Mwf$ and $\non(\Mwf)\leq\cf(\gamma)\leq\cov(\Mwf)$. 
\end{theorem}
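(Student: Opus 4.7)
The plan is to prove the first assertion by transfinite induction on $\eta\in[\xi_\alpha,\pi]$ that $\Por_{\alpha+1,\eta}$ forces $\dot c_\alpha$ to be Cohen over $V_{\alpha,\eta}$. The base case $\eta=\xi_\alpha$ is the hypothesis. At a successor step $\eta\to\eta+1$, I split according to whether $\Delta(\eta)>\alpha$ or $\Delta(\eta)\leq\alpha$. In the first subcase, $\dot{\Qor}_{\alpha,\eta}$ is trivial so $V_{\alpha,\eta+1}=V_{\alpha,\eta}$, and the conclusion is immediate. In the second, both $\dot{\Qor}_{\alpha,\eta}$ and $\dot{\Qor}_{\alpha+1,\eta}$ equal $\dot{\Qor}^*_\eta$, and a single generic $\dot g$ realizes this iterand at both levels. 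Since $\dot c_\alpha\in V_{\alpha+1,\eta}$ is Cohen over $V_{\alpha,\eta}$ (IH) and $\dot g$ is $\dot{\Qor}^*_\eta$-generic over the larger model $V_{\alpha+1,\eta}$, the pair $(\dot c_\alpha,\dot g)$ is mutually generic over $V_{\alpha,\eta}$; the product lemma then yields that $\dot c_\alpha$ is Cohen over $V_{\alpha,\eta}[\dot g]=V_{\alpha,\eta+1}$, as required.

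For the limit step at $\eta$, two subcases arise. When $\cf(\eta)>\omega$, the ccc of $\Por_{\alpha,\eta}$ together with cofinality ensures that every real in $V_{\alpha,\eta}$ already lies in some $V_{\alpha,\eta'}$ with $\eta'<\eta$, so every meager set in $V_{\alpha,\eta}$ is coded below $\eta$ and the inductive hypothesis applies. When $\cf(\eta)=\omega$, a finer nice-name analysis is needed: fix a cofinal $\omega$-sequence $\seq{\eta_n}{n<\omega}$ in $\eta$, decompose a nice $\Por_{\alpha,\eta}$-name for the code of a meager set into contributions along this sequence, and combine the Cohen-ness established at each $V_{\alpha,\eta_n}$ into genericity over the direct-limit model $V_{\alpha,\eta}$.

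The second assertion follows from the first combined with \autoref{lem:strongCohen}. For cofinal increasing $f\colon\cf(\gamma)\to\gamma$, \autoref{realint} (applicable since $\cf(\gamma)>\omega_1>\omega$) presents $\Por_{\gamma,\pi}$ as the direct limit of the $\subsetdot$-increasing sequence $\seq{\Por_{f(\zeta)+1,\pi}}{\zeta<\cf(\gamma)}$ of ccc (hence $\cf(\gamma)$-cc) posets, and by the first part each $\Por_{f(\zeta)+1,\pi}$ adds a Cohen real $\dot c_{f(\zeta)}$ over $V_{f(\zeta),\pi}$. Invoking \autoref{lem:strongCohen} with $\Rbf=\Cbf_\Mwf$ (using the characterization of Cohen reals as $\Cbf_\Mwf$-unbounded reals) yields that $\set{\dot c_{f(\zeta)}}{\zeta<\cf(\gamma)}$ is strongly $\cf(\gamma)$-$\Cbf_\Mwf$-unbounded, and \autoref{unbT} then gives $\gamma\leqT\Cbf_\Mwf$, whence $\non(\Mwf)\leq\cf(\gamma)\leq\cov(\Mwf)$.

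The main obstacle is the limit step of the induction at countable cofinality, since meager sets in $V_{\alpha,\eta}$ need not be coded in any proper initial $V_{\alpha,\eta'}$; a new real may be produced by diagonalising across cofinally many stages. Resolving this hinges on a careful factorisation of nice $\Por_{\alpha,\eta}$-names along a cofinal $\omega$-sequence, together with the commutativity of finite-support iteration, so that the Cohen-ness proved at intermediate stages can be assembled into genericity at the direct-limit model.
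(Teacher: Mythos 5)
The paper does not prove this result; it cites it as \cite[Thm.~5.4]{CM}. So I can only evaluate your proposal on its own merits. Your high-level structure is the right one: induct on $\eta\in[\xi_\alpha,\pi]$, using the product/commutation lemma at successor steps (including the case distinction on $\Delta(\eta)$ vs.\ $\alpha$), using ccc reflection of reals at limits of uncountable cofinality, and deducing the second assertion from the first via \autoref{realint} and \autoref{lem:strongCohen} (noting $\cf(\gamma)\eqT\gamma$ and that a Cohen real is precisely an $\Mbf$-unbounded real, with $\Mbf\eqT\Cbf_\Mwf$).

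However, there is a genuine gap at the limit step of countable cofinality, and you acknowledge it yourself in your final paragraph. Saying one should ``decompose a nice $\Por_{\alpha,\eta}$-name along a cofinal $\omega$-sequence and combine'' is a description of the goal, not an argument; it does not explain how to handle a dense set $\dot D\in V_{\alpha,\eta}$ that is genuinely new, i.e.\ not a member of any $V_{\alpha,\eta_n}$. The crux is a density argument that interleaves three things: (i) since $\Por_{\alpha+1,\eta}$ is a direct limit, every condition lives in some $\Por_{\alpha+1,\eta_n}$; (ii) one must pass to a reduction in the bottom row $\Por_{\alpha,\eta}$ to decide finitely much of $\dot D$ (using that membership in the nowhere dense pieces $B^n_{x,I}$ of a meager set only needs finitely much information, i.e.\ the Polish structure of $\Mbf$); and (iii) one must return to the top row, using the inductive Cohen-ness of $\dot c_\alpha$ over the $V_{\alpha,\eta_n}$ to decide the required bits of $\dot c_\alpha$ compatibly with that extension. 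Without carrying this out, the induction does not close. This is exactly the content of the Blass--Shelah/Brendle--Fischer-style preservation-of-unboundedness lemma for matrix iterations (cf.\ \cite{BrF} and the treatment in \cite[Sec.~4--5]{CM}), and it is the part that makes the theorem nontrivial. Either fill in that density argument explicitly, or invoke the preservation framework for Polish relational systems applied to $\Mbf$.
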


\begin{definition}[{\cite[Def.~4.2]{BCM}}]\label{Defmatuf}
Let $\theta\geq\aleph_1$ and let $\Por_{\gamma, \pi}$ be a simple matrix iteration as in~\autoref{Defmatsimp}. Say that $\Por_{\gamma, \pi}$ is a~\emph{${<}\theta$-uf-extendable matrix iteration} if for each $\xi<\pi$, $\Por_{\Delta(\xi),\xi}$ forces that $\Qnm_\xi$ is a $\theta_\xi$-uf-linked poset for some cardinal $\theta_\xi<\theta$ (decided in the ground model).
\end{definition}

The next result shows the effect of uf-extendable matrix iterations on $\la\baire,\leq^*\ra$.
 
\begin{theorem}[{\cite[Thm.~4.4]{BCM}}]\label{mainpres}
Assume that $\theta\leq\mu$ are uncountable cardinals with $\theta$ regular. Let $\Por_{\gamma,\pi}$ be a ${<}\theta$-uf-extendable matrix iteration as in~\autoref{Defmatuf} such that
\begin{enumerate}[label = \rm (\roman*)]
    \item $\gamma\geq\mu$ and $\pi\geq\mu$,
    \item for each $\alpha<\mu$, $\Delta(\alpha)=\alpha+1$ and $\Qnm^*_\alpha$ is Cohen forcing, and
    \item $\dot c_\alpha$ is a $\Por_{\alpha+1,\alpha+1}$-name of the Cohen real in $\omega^\omega$ added by $\Qnm^*_\alpha$.
\end{enumerate}
Then $\Por_{\alpha,\pi}$ forces that $\set{\dot c_\alpha}{\alpha<\mu}$ is strongly $\theta$-$\omega^\omega$-unbounded, in particular, $\Cbf_{[\mu]^{<\theta}}\leqT \omega^\omega$.\footnote{Although the conclusion in the cited reference is different, the same proof works.}
\end{theorem}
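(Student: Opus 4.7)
The plan is to combine the Cohen-real preservation from the first part of \autoref{matsizebd} with an ultrafilter-limit argument exploiting the uf-linkedness of the iterands $\Qnm^*_\xi$. By \autoref{unbT}, it suffices to show that $\{\dot c_\alpha:\alpha<\mu\}$ is strongly $\theta$-$\omega^\omega$-unbounded in the extension by $\Por_{\gamma,\pi}$, i.e., for any $\Por_{\gamma,\pi}$-name $\dot g$ for a real in $\omega^\omega$ and any $S\subseteq\mu$ with $|S|\geq\theta$, to find some $\alpha\in S$ forcing $\dot c_\alpha\not\leq^*\dot g$.

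First I would carry out the standard uf-limit construction along the matrix: using the $\theta_\xi$-uf-linkedness of each $\Qnm^*_\xi$ (with $\theta_\xi<\theta$), build by a double recursion on $(\alpha,\xi)$ a coherent system $\seq{\dot D_{\alpha,\xi}}{\alpha\leq\gamma,\ \xi\leq\pi}$ of $\Por_{\alpha,\xi}$-names for ultrafilters on $\omega$ with $\dot D_{\alpha,\xi}\subseteq\dot D_{\beta,\eta}$ whenever $(\alpha,\xi)\leq(\beta,\eta)$, essentially as in \cite[Sec.~4]{BCM}; the key successor step in $\xi$ invokes the uf-limit through an uf-linked iterand. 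Then by \autoref{matsizebd} each $\dot c_\alpha$ (for $\alpha<\mu$) is forced to be Cohen over $V_{\alpha,\pi}$, so for every $h\in\omega^\omega\cap V_{\alpha,\pi}$ the set $\{n:\dot c_\alpha(n)>h(n)\}$ is $\dot D_{\alpha,\pi}$-positive, since a Cohen real is unbounded modulo any ground-model filter.

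The main step, and the principal obstacle, is a \emph{push-down lemma}: given $\dot g$ and $S\subseteq\mu$ of size $\theta$, one should produce $\alpha\in S$ and a $\Por_{\alpha,\pi}$-name $\dot h$ for a real with $\Vdash\{n:\dot g(n)\leq \dot h(n)\}\in\dot D_{\gamma,\pi}$. This follows the pattern of \cite[Thm.~4.4]{BCM}: for $\alpha\in S$ sufficiently high, the coherence of $\dot D$ together with repeated uf-limits across the uf-linked iterands above row $\alpha$ allow one to trap $\dot g$ below a real in $V_{\alpha,\pi}$ modulo $\dot D_{\gamma,\pi}$; the delicate point, and where I expect to spend the bulk of the work, is bookkeeping the interaction between the horizontal passage (in $\pi$) and the vertical passage (through the rows), since both contribute reals to $V_{\gamma,\pi}$. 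Granting the push-down, Cohen-ness of $\dot c_\alpha$ over $V_{\alpha,\pi}$ renders $\{n:\dot c_\alpha(n)>\dot h(n)\}$ a $\dot D_{\alpha,\pi}$-positive set, hence $\dot D_{\gamma,\pi}$-positive by coherence, and intersecting with $\{n:\dot g(n)\leq\dot h(n)\}\in\dot D_{\gamma,\pi}$ produces an infinite set on which $\dot c_\alpha(n)>\dot g(n)$; thus $\dot c_\alpha\not\leq^*\dot g$, yielding strong $\theta$-$\omega^\omega$-unboundedness and hence $\Cbf_{[\mu]^{<\theta}}\leqT\omega^\omega$ via \autoref{unbT}.
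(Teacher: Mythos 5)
The paper does not give its own proof of this theorem: it cites \cite[Thm.~4.4]{BCM}, with a footnote noting only that the conclusion is stated differently there but ``the same proof works.'' So the comparison must be to that proof, which your proposal reproduces at the level of skeleton: reduce via \autoref{unbT}, build coherent ultrafilter names along the matrix using the uf-limits, rely on \autoref{matsizebd} to keep $\dot c_\alpha$ Cohen over $V_{\alpha,\pi}$, and formulate a push-down lemma trapping $\dot g$ below something at an earlier row modulo the ultrafilter. You correctly identify the push-down lemma as the technical core, and you leave it as a sketch — that is an acknowledged gap, but it is where essentially all the content of \cite[Thm.~4.4]{BCM} lives.

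There is, however, a genuine error in the concluding step: ``hence $\dot D_{\gamma,\pi}$-positive by coherence.'' Coherence gives $\dot D_{\alpha,\pi}\subseteq \dot D_{\gamma,\pi}$, and positivity transfers \emph{downward} along filter inclusion, not upward: a set meeting every member of the smaller filter need not meet every member of the larger one. In fact, since $\dot D_{\gamma,\pi}$ is an ultrafilter and $\{n:\dot c_\alpha(n)>\dot h(n)\}$ lives in $V_{\alpha+1,\pi}\subseteq V_{\gamma,\pi}$, the ultrafilter simply decides this set, and nothing in the construction as you describe it forces the favorable decision. Cohen-ness of $\dot c_\alpha$ over $V_{\alpha,\pi}$ only controls intersections with sets from $V_{\alpha,\pi}$, which is exactly $\dot D_{\alpha,\pi}$-positivity and nothing stronger. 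Repairing this requires either arranging at the time column $\alpha+1$ is processed that the Cohen-escape sets enter the ultrafilter — which is problematic since $\dot h$ is only produced at column $\pi$ — or restructuring the argument so the ultrafilter is used solely to bound $\dot g$ by a name at an earlier \emph{row}, after which the matrix-preserved Cohen-ness of $\dot c_\alpha$ over that row is invoked directly (a $\leq^*$-argument, not a positivity argument) to escape. In short: the final implication does not follow from coherence, so as written the proof does not close, independently of the unproved push-down.
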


Now, we have developed enough machinery to prove~\autoref{Mainthm} and~\ref{Thm:faddSN}. Here,
we denote the relational systems (some introduced in~\autoref{ExamPresPro}) $\Rbf_0:=\Lc^*$, $\Rbf_1:=\Cn$, and $\Rbf_2:=\baire$.

\begin{theorem}\label{Mthm}
Let $\lambda_0\leq\lambda_1\leq\lambda_2\leq\lambda_3\leq\lambda_4$ be uncountable regular cardinals, and $\lambda_5$ a cardinal such that $\lambda_5\geq\lambda_4$ and $\cof([\lambda_5]^{<\lambda_i})=\lambda_5 = \lambda_5^{\aleph_0}$ for $i\leq 2$. Then there is a ccc poset forcing:

\begin{enumerate}[label=\rm(\arabic*)]
    \item $\cfrak=\lambda_5$;
    
    \item $\Rbf_i\eqT\Cbf_{[\lambda_5]^{<{\lambda_i}}}$ for $0\leq i\leq2$;

    \item $\Cbf_{[\lambda_5]^{<\lambda_0}} \leqT \SNwf$ and $\Cbf_\SNwf^\perp \eqT \Cbf_{[\lambda_5]^{<\lambda_1}}$;

    \item $\lambda_3\leqT\Cbf_\Mwf$ and $\lambda_4\leqT \Cbf_\Mwf$; and

    \item $\Rbf_b\leqT\lambda_4\times\lambda_3$ for each $b\in\baire$.
\end{enumerate}
In particular, it is forced that:
\begin{align*}
\add(\Nwf) & =\non(\NAwf)=\add(\SNwf)=\lambda_0\leq\cov(\Nwf)=\cov(\SNwf)=\lambda_1\leq\add(\Mwf)=\bfrak=\lambda_2\\
&\leq\non(\MAwf)=\non(\Mwf)=\lambda_3\leq\cov(\Mwf)=\sup\set{\dfrak(\Rbf_b)}{b\in\omega^\omega} = \lambda_4\\
&\leq\dfrak=\non(\SNwf) = \non(\Nwf)=\cfrak=\lambda_5.    
\end{align*}
\end{theorem}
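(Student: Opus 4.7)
The plan is to construct a ${<}\lambda_3$-uf-extendable matrix iteration $\Por_{\gamma,\pi}$ in the sense of \autoref{Defmatuf}, over a ground model satisfying GCH below $\lambda_5$, with height $\gamma = \lambda_4$ and length $\pi$ an ordinal of cardinality $\lambda_5$ with $\cf(\pi) \geq \lambda_3$ and $\scf(\pi) = \lambda_5$. The iterands $\Qnm^*_\xi$ are drawn from a short menu: Cohen forcing $\Cor$ (placed cofinally at each row to create the strongly unbounded Cohen family of \autoref{matsizebd}); restricted random forcing $\Bor^{V_{\Delta(\xi),\xi}}$ (to boost $\cov(\Nwf)$ up to $\lambda_1$); Hechler forcing $\Dor$ (to boost $\bfrak$ up to $\lambda_2$); and, crucially, the poset $\Por_{\dot b}$ from \autoref{def_Pb}, iterated along bookkept $\Por_{\Delta(\xi),\xi}$-names $\dot b$ for members of $\omega^\omega$, which boosts $\bfrak(\Rbf_b)$ to $\lambda_3$ for every $b$ in the top model. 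By \autoref{uf:Pb} each $\Por_b$ is uniformly $\sigma$-uf-$\lim$-linked, and all other iterands are standard $\sigma$-uf-linked, so the uf-extendability hypothesis is satisfied.

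Every iterand is $\Cn$-, $\Lc^*$-, and $\omega^\omega$-good (via \autoref{mej:uf} and \autoref{ExamPresPro}) and $\Rbf^f_\Gwf$-good (\autoref{ExamPresPro}\ref{PrsSN}). Applying \autoref{Comgood} in turn to $\Lc^*$, $\Cn$ and $\omega^\omega$ yields $\add(\Nwf) \leq \lambda_0$, $\cov(\Nwf) \leq \lambda_1$, and $\bfrak = \add(\Mwf) \leq \lambda_2$; \autoref{mainpresaddSN} yields $\add(\SNwf) \leq \lambda_0$. Since all iterands are precaliber $\lambda_1^+$, \autoref{thm:precaliber} (with $\scf(\pi) = \lambda_5$) gives $\cov(\SNwf) \leq \lambda_1$, while $|\Por_{\gamma,\pi}| = \lambda_5$ bounds $\cfrak, \non(\Nwf), \non(\SNwf), \dfrak$ above by $\lambda_5$. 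In the opposite direction, the scheduled random reals give $\lambda_1 \leq \cov(\Nwf)$, the Hechler reals combined with the uf-extendable structure give $\lambda_2 \leq \bfrak$ via \autoref{mainpres}, and the Cohen reals at the $\lambda_4$-many rows yield $\lambda_3, \lambda_4 \leqT \Cbf_\Mwf$ via \autoref{matsizebd}. Crucially, the $\Por_{\dot b}$-iterands plus \autoref{itsmallsets} applied to the Prs $\Rbf_b$ force $\lambda_3 \leq \bfrak(\Rbf_b)$ for every $b \in \omega^\omega$, hence $\lambda_3 \leq \non(\MAwf)$ by \autoref{th:BJ}; the matching upper bound $\non(\MAwf) \leq \non(\Mwf) \leq \lambda_3$ comes from \autoref{bas:NA} and \autoref{matsizebd}. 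The sup bound $\sup_b \dfrak(\Rbf_b) \leq \lambda_4$ follows from \autoref{dRbup}: $\Rbf_b \leqT (\Ed^\perp_b, \Ior)$, and the right-hand side is Tukey-bounded by $\lambda_4 \times \lambda_3$ in the model because $\Cbf_\Mwf \eqT [\lambda_4]^{<\lambda_3}$ and $\omega^\omega \eqT [\lambda_4]^{<\lambda_3}$ there.

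The Tukey equivalences $\Cbf_{[\lambda_5]^{<\lambda_i}} \eqT \Rbf_i$ for $i \leq 2$ follow from \autoref{unbT} combined with the good-preservation bounds on the left and the cofinally-scheduled reals on the right. For the $\SNwf$-equivalences, $\Cbf_{[\lambda_5]^{<\lambda_0}} \leqT \SNwf$ comes from \autoref{mainpresaddSN}, and $\Cbf_\SNwf^\perp \eqT \Cbf_{[\lambda_5]^{<\lambda_1}}$ from \autoref{thm:precaliber} together with the inequality $\cov(\Nwf) \leq \cov(\SNwf)$ (via $\NAwf \subseteq \MAwf \subseteq \SNwf$ and the ZFC chain $\non(\MAwf)\leq\non(\Ewf)$ combined with standard inclusions), pinning $\cov(\SNwf) = \lambda_1$.

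The principal technical obstacle will be arranging the schedule $\Delta\colon \pi \to \gamma$ and the bookkeeping of names so that: (i) every $\Por_{\alpha,\xi}$-name $\dot b$ for an element of $\omega^\omega$ is handled by some later coordinate $\eta \geq \xi$ with $\Delta(\eta) = \alpha$ at which $\Qnm^*_\eta = \Por_{\dot b}$; (ii) Cohen, Hechler and random reals appear cofinally at every row; (iii) the overall length has $\scf(\pi) = \lambda_5$ and $\cf(\pi) \geq \lambda_3$. Though standard in spirit, the new ingredient is verifying that $\Por_b$ integrates with the uf-extendable framework, for which \autoref{uf:Pb} is indispensable. A secondary delicate point is that the preservation of $\bfrak(\Rbf_b) \geq \lambda_3$ by all other iterands must be established uniformly in $b$, which reduces to goodness of those iterands with respect to the Prs $\Rbf_b$; since each $\Por_b$ is $\sigma$-uf-linked (hence $\omega^\omega$-good), and \autoref{itsmallsets} only requires cofinal production of $\Rbf_b$-dominating reals, the machinery of \autoref{Comgood} and \autoref{itsmallsets} will carry through.
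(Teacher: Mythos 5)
Your plan has the right skeleton — a uf-extendable matrix iteration over $\Cor_{\lambda_5}$, height $\lambda_4$, length of cofinality $\lambda_3$, with Cohen rows and the new $\Por_b$ iterands — but several of the load-bearing details are off or missing.

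\textbf{Restriction of iterands to small submodels.} You list unrestricted Hechler $\Dor$ and random restricted only to the lower-row model $\Bor^{V_{\Delta(\xi),\xi}}$. Neither of these can be used. Unrestricted $\Dor$ adds dominating reals, so it is not $\Fr$-linked and fails to be $\omega^\omega$-good; your invocation of \autoref{Comgood} for $\omega^\omega$ to get $\bfrak\leq\lambda_2$ therefore collapses, and in fact Hechler cofinally would push $\bfrak$ up to $\cf(\pi)\geq\lambda_3$, overshooting whenever $\lambda_2<\lambda_3$. Similarly, a random real over $V_{\Delta(\xi),\xi}$ is $\Cn$-dominating over a model of size $\lambda_5$, which breaks $\lambda_1$-$\Cn$-goodness, so you cannot force $\cov(\Nwf)\leq\lambda_1$. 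The paper avoids this by using $(\Qor_j^+)^N$ for a \emph{small} transitive model $N$ with $|N|<\lambda_j$ (bookkeeping clause (E$j$)): the restricted posets are small, hence $\lambda_j$-$\Rbf_j$-good by \autoref{smallgds} and $|N|$-uf-lim-linked by \autoref{exm:ufl}, while still supplying enough dominating/random/slalom reals for \autoref{itsmallsets} to push the left-side cardinals \emph{up} to the intended values.

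\textbf{Missing $\Loc$ iterands and missing lower bounds.} You only assert $\add(\Nwf)\leq\lambda_0$ via goodness; nowhere in your menu is there an iterand that adds slalom-dominating reals, so the lower bound $\add(\Nwf)\geq\lambda_0$ never materializes. The paper has $\Qor^+_0=\Loc$ restricted to models of size $<\lambda_0$ for exactly this reason. The same issue recurs for the other left-side cardinals: the lower bounds $\lambda_1\leq\cov(\Nwf)$ and $\lambda_2\leq\bfrak$ come from \autoref{itsmallsets} applied to the \emph{restricted} $\Bor^N$ and $\Dor^N$ iterands, not (as you suggest) from $\Cn$-/$\omega^\omega$-goodness or from \autoref{mainpres}, which gives upper bounds $\bfrak\leq\theta$, not lower bounds.

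\textbf{uf-extendability parameter.} You declare a ${<}\lambda_3$-uf-extendable iteration. Then \autoref{mainpres} only yields $\Cbf_{[\lambda_5]^{<\lambda_3}}\leqT\omega^\omega$, i.e.\ $\bfrak\leq\lambda_3$. The paper deliberately uses a ${<}\lambda_2$-uf-extendable iteration (and restricted Hechler of size $<\lambda_2$ so that the uf-linkedness constraint is met) precisely to pin $\bfrak\leq\lambda_2$.

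\textbf{The bound $\Rbf_b\leqT\lambda_4\times\lambda_3$.} Your derivation via \autoref{dRbup}, $\Rbf_b\leqT(\Ed_b^\perp;\Ior)$, cannot deliver the needed bound: $\dfrak(\Ed_b^\perp;\Ior)=\balc_{b,1}\cdot\dfrak$, and in the model $\dfrak=\lambda_5>\lambda_4$, so the composition only gives $\dfrak(\Rbf_b)\leq\lambda_5$. (Also $\omega^\omega\not\eqT\Cbf_{[\lambda_4]^{<\lambda_3}}$ in the model, since $\bfrak=\lambda_2$ and $\dfrak=\lambda_5$.) The paper instead constructs an explicit Tukey connection $(\Psi_-,\Psi_+)\colon\Rbf_b\to\lambda_4\times\lambda_4\lambda_3$ indexed by where $b$ first appears in the matrix and using the scheduled $\Por_b$-generic reals $(J_\xi,h_\xi)$; this is where the bookkeeping $t\colon\lambda_4\lambda_3\to\lambda_4$ and the enumeration (E$^\rho$) earn their keep.

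In short: the architecture is right, but you need (i) to restrict $\Loc$, $\Bor$, $\Dor$ to transitive models of size $<\lambda_0,\lambda_1,\lambda_2$ respectively (not to lower-row models), (ii) to actually schedule the $\Loc$ iterands, (iii) to use $\lambda_2$ as the uf-extendability parameter, and (iv) to prove $\Rbf_b\leqT\lambda_4\times\lambda_3$ by a direct construction rather than by the composition bound of \autoref{dRbup}.
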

\begin{proof}
For each $\rho<\lambda_4\lambda_3$ denote $\lambda_\rho:=\lambda_4+\lambda_5\rho$. Fix a bijection $g=(g_0, g_1,g_2):\lambda_5\to\{0, 1,2\}\times\lambda_4\times\lambda_5$ and a function $t\colon\lambda_4\lambda_3\to\lambda_4$ such that, for any $\alpha<\lambda_4$, $t^{-1}\llbracket\{\alpha\}\rrbracket$ is cofinal in $\lambda_4\lambda_3$.
 
We are going to build a ccc poset of the form $\Cor_{\lambda_5}\ast\Por$ where $\Por$  is constructed as follows: 

Let $V_{0,0}:=V^{\Cor_{\lambda_5}}$.  We construct $\Por:=\Por_{\gamma,\,\pi}$ from a ${<}\lambda_2$-uf-extendable matrix iteration with $\gamma=\lambda_4$ and $\pi=\lambda_4+\lambda_5\lambda_4\lambda_3$, starting with:
\begin{enumerate}[label=\rm (C\arabic*)]
    \item\label{(C1)} $\Delta(\alpha):=\alpha+1$ and $\Qnm^*_{\alpha}=\Cor_{\alpha}$ for $\alpha\leq\lambda_4$. 
\end{enumerate}

Let us define the matrix iteration at each $\xi=\lambda_\rho+\varepsilon$ for $\rho<\lambda_4\lambda_3$ and $\varepsilon<\lambda_5$ as follows. Denote
\begin{align*}
\Qor^+_0 & := \Loc, & \Qor^+_1 &:= \Bor, & \Qor^+_2 &:=\Dor,\\
X_0 & := \omega^\omega, & X_1 & := \Omega, &  X_2 & := \omega^\omega. 
\end{align*}
For $j<3$, $\rho<\lambda_4\lambda_3$ and $\alpha<\lambda_4$, choose 
\begin{enumerate}[label=(E$j$)]
   \item\label{Ej} a collection $\set{\Qnm_{j,\alpha,\zeta}^\rho}{\zeta<\lambda_5}$ of nice $\Por_{\alpha,\lambda_\rho}$-names for posets of the form $(\Qor^+_j)^N$ for some transitive model $N$ of ZFC with $|N|<\lambda_{j}$
   such that, for any $\Por_{\alpha,\lambda_\rho}$-name $\dot F$ of a subset of $X_j$ of size ${<}\lambda_{j}$, there is some $\zeta<\lambda_5$ such that, in $V_{\alpha,\lambda_\rho}$, $\Qnm^\rho_{j,\alpha,\zeta} = (\Qor^+_j)^N$ for some $N$ containing $\dot F$,\footnote{This is possible by the assumption $\cof([\lambda_5]^{<\lambda_j}) = \lambda_5$, which is preserved after any ccc forcing extension.} and 
\end{enumerate}

\begin{enumerate}[label=(E${}^\rho$)]
   \item\label{Erho} an enumeration $\set{\dot{b}_{\zeta}^\rho}{\zeta<\lambda_5}$ of all the nice $\Por_{t(\rho),\lambda_\rho}$-names for all the members of $(\omega\menos\{0\})^\omega$,
  
\end{enumerate} 
and set: 
\begin{enumerate}[label=\rm (C\arabic*)]
\setcounter{enumi}{1}
    \item if $\xi=\lambda_\rho+2\varepsilon$ for some $\varepsilon<\lambda_5$, put $\Delta(\xi):=t(\rho)$ and 
    $\Qnm^*_{\xi}=\Por_{\dot b^\rho_{\varepsilon}}^{V_{\Delta(\xi),\xi}}$; and
    
    \item  if $\xi=\lambda_\rho+2\varepsilon+1$ for some $\varepsilon<\lambda_5$, put $\Delta(\xi):=g_1(\varepsilon)$ and $\Qnm^*_{\xi}=\Qnm^{\rho}_{g(\varepsilon)}$.  
\end{enumerate}
According to~\autoref{Defmatsimp}, the above settles the construction of $\Por$ as a ${<}\lambda_2$-uf-extendable matrix iteration by \autoref{exm:ufl} and \autoref{uf:Pb}. First, observe that $\Por$ is ccc. It is also clear that $\Por$ forces $\cfrak = \lambda_5$ by the assumption $\lambda_5 = \lambda_5^{\aleph_0}$. We now prove that $\Por$ forces what we want:

\begin{enumerate}[label=\rm($\boxplus_\arabic*$)]
    \item\label{p1} $\Por$ forces  $\Rbf_0\eqT\Cbf_{[\lambda_5]^{<{\lambda_0}}}$: $\Cbf_{[\lambda_5]^{<{\lambda_0}}}\leqT\Rbf_0$ is forced by~\autoref{Comgood} because, for each $\xi<\pi$, $\Por_{\gamma, \xi}$ forces that $\Qnm_{\gamma, \xi}$ is $\lambda_0$-$\Rbf_0$-good. Indeed, the case $\xi=\lambda_\rho+2\varepsilon$ for some $\rho<\lambda_4\lambda_3$ and $\varepsilon<\lambda_5$ follows by~\autoref{ExamPresPro}~\ref{Pres(uni-null)}; when $\xi=\lambda_\rho+2\varepsilon+1$ for some $\rho<\lambda_4\lambda_3$ and $\varepsilon<\lambda_5$, we distinguish three subcases: the subcase $g_0(\varepsilon)=0$ is clear by~\autoref{smallgds}; the subcases $g_0(\varepsilon)=1$ and $g_0(\varepsilon)=2$ follow by~\autoref{ExamPresPro}~\ref{Pres(uni-null)}. 
    
    On the other hand, let $\dot A$ be a $\Por$-name for a subset of $\omega^\omega$ of size ${<}\lambda_0$. By employing~\autoref{realint} we can find $\alpha<\lambda_4$ and $\rho<\lambda_4\lambda_3$ such
that $\dot A$ is $\Por_{\alpha,\lambda_\rho}$-name. By~(E0), we can find a $\zeta<\theta_6$ and a $\Por_{\alpha,\lambda_\rho}$-name $\dot N$ of a transitive model
of ZFC of size ${<}\lambda_0$ such that $\Por_{\alpha,\lambda_\rho}$ forces that $\dot N$ contains $\dot A$ as a subset and $\Loc^{\dot N}=\Qnm_{0,\alpha,\zeta}^\rho$, so the
generic slalom added by $\Qnm^*_{\xi}=\Qnm_{g(\varepsilon)}^\rho$ localizes all the reals in $\dot A$ where $\varepsilon:=g^{-1}(0,\alpha,\zeta)$ and $\xi=\lambda_\rho+2\varepsilon+1$. Hence, by utilizing~\autoref{itsmallsets}, $\Por$ forces that $\Rbf_0\leqT\Cbf_{[\lambda_5]^{<{\lambda_0}}}$ because $|\omega^\omega|=|\pi|=\lambda_5$.

\item\label{p2} $\Por$ forces that $\Rbf_i\eqT\Cbf_{[\lambda_5]^{<{\lambda_i}}}$ for $i\in\{1,2\}$: For $i=1$, since $\Por$ can be obtained by the FS iteration $\seq{\Por_{\lambda_4,\xi},\Qnm_{\lambda_4,\xi}}{\xi<\pi}$ and 
all its iterands are $\lambda_1$-$\Rbf_1$-good (see~\autoref{ExamPresPro}~\ref{Pres(null-cov)}),  $\Por$ forces $\Cbf_{[\lambda_5]^{<{\lambda_1}}}\leqT \Rbf_1$ by applying~\autoref{Comgood}; and for $i=2$, since the matrix iteration is ${<}\lambda_2$-uf-extendable, by~\autoref{mainpres}, $\Por$ forces $\Cbf_{[\lambda_5]^{<{\lambda_2}}}\leqT\Rbf_2$.

On the other hand, $\Por$ forces that $\Rbf_i\leqT\Cbf_{[\lambda_5]^{<{\lambda_i}}}$ for $i\in\{1,2\}$ by a similar argument as in~\ref{p1} (using~\ref{Ej} for $j\in\{1,2\}$).

\item\label{p2.1} $\Por$ forces that $\Cbf_{[\lambda_5]^{<\lambda_0}}\leqT \SNwf$: Immediate from~\autoref{mainpresaddSN} because all iterands are $\lambda_0$-$\Rbf^f_\Gwf$-good (see \autoref{ExamPresPro}~\ref{PrsSN}).

\item\label{p2.2} $\Por$ forces that $\Cbf_\SNwf^\perp \eqT \Cbf_{[\lambda_5]^{<\lambda_1}}$: Since $\Por$ is obtained by a FS iteration of precaliber $\lambda_1$ posets, by~\autoref{thm:precaliber} $\Por$ forces $\Cbf_{[\lambda_5]^{<{\lambda_1}}}\leqT\Cbf_\SNwf^\perp$ , and in this way $\Cbf_{[\lambda_5]^{<{\lambda_1}}}\eqT\Cbf_\SNwf^\perp$ because
$\Cbf^\perp_{\SNwf}\leqT\Cbf^\perp_{\Nwf}$ (in ZFC).

\item\label{p3} $\Por$ forces that $\lambda_3\leqT\Cbf_\Mwf$ and $\lambda_4\leqT\Cbf_\Mwf$:  Since $\cf(\pi)=\lambda_3$, the first one follow by applying~\autoref{lem:strongCohen} whereas the latter follow by~\autoref{matsizebd}.

\item\label{p4} $\Por$ forces that $ \Rbf_b\leqT\lambda_4\times\lambda_3$ for each $b\in\baire$: 
Since $\lambda_4\lambda_3 \eqT \lambda_3$ 
it suffices to prove that, in $V_{\gamma,\pi}$, there are maps $\Psi_-\colon\prod b\to\lambda_4\times\lambda_4\lambda_3$ and $\Psi_+\colon\lambda_4\times\lambda_4\lambda_3\to\Ior\times\prod b$ such that, for any $x\in\prod b$ and any $(\alpha,\rho)\in\lambda_4\times\lambda_4\lambda_3$, if $\Psi_-(x)\leq (\alpha,\rho)$, then $x\sqsubset\Psi_+(\alpha,\rho)$. To this end, denote by $(  J_{\xi}, h_{\xi})$ the $\Rbf_b$-dominating real over $V_{t(\rho),\xi}$ added by $\Qnm_{t(\rho),\xi}$ when $\xi=\lambda_\rho+2\varepsilon$ for some $\rho<\lambda_4\lambda_3$ and  $\varepsilon<\lambda_5$.
 
By~\autoref{realint}, there exists an $\alpha_{ b}<\lambda_4$ such that $b \in V_{\alpha_{ b},\pi}$. Moreover, since $\pi$ has cofinality $\lambda_3$, we can find $\rho_{ b}<\lambda_4\lambda_3$ such that $ b\in V_{\alpha_{ b},\lambda_{\rho_{ b}}}$. Now, for $x\in\prod b\cap V_{\lambda_4,\pi}$, we can find $\alpha_b\leq \alpha_x<\lambda_4$ and $\rho_b\leq \rho_x<\lambda_4\lambda_3$ such that $x\in V_{\alpha_x,\lambda_{\rho_x}}$, so put $\Psi_-(x):=(\alpha_x,\rho_x)$. 

For $(\alpha,\rho)\in\lambda_4\times\lambda_4\lambda_3$, find some $\rho'\geq \rho$ in $\lambda_4\lambda_3$ such that $t(\rho')=\alpha$. When $(\alpha,\rho)\geq (\alpha_b,\rho_b)$, since $b\in V_{\alpha,\lambda_{\rho'}}$, by~\ref{Erho} there is an $\varepsilon<\lambda_5$ such that $ b=b_{\varepsilon}^{\rho'}$, so define $\Psi_+(\alpha,\rho):=(J_{\xi}, h_{\xi})$ where $\xi=\lambda_{\rho'}+2\varepsilon$; otherwise, $\Psi_+(\alpha,\rho)$ can be anything. It is clear that $(\Psi_-,\Psi_+)$ is the required Tukey connection.

\item $\Por$ forces $\add(\SNwf) = \non(\NAwf)=\lambda_0$: 
Since $\add(\Nwf) \leq \non(\NAwf) \leq \add(\SNwf)$, it is enough to show that $\Por$ forces $\add(\SNwf)\leq \lambda_0$. But this is immediate from~\ref{p2.1}.


\item $\Por$ forces $\non(\MAwf) = \lambda_3$ and $\sup\set{\dfrak(\Rbf_b)}{b\in\omega^\omega} = \lambda_4$: By~\ref{p3} and~\ref{p4}, since $\Cbf_\Mwf \leqT \Rbf_b$ whenever $b\geq^*2$, $\bfrak(\Rbf_b) = \lambda_3$ and $\dfrak(\Rbf_b) = \lambda_4$. Hence, $\non(\MAwf) = \lambda_3$ by \autoref{th:BJ}.
\end{enumerate}

This finishes the proof of the theorem.
\end{proof}

We now proceed to show~\autoref{Thm:faddSN}.

\begin{theorem}\label{Thm:ufQsn}
Under the same hypothesis as in \autoref{Mthm}, 
there is a ccc poset forcing:
\begin{enumerate}[label=\rm(\arabic*)]
    \item $\cfrak=\lambda_5$;
    
    \item $\Lc^*\eqT \omega^\omega \eqT \Cbf_{[\lambda_5]^{<{\lambda_0}}}$;

    \item $\lambda_3\leqT\Cbf_\Mwf$ and $\lambda_4\leqT \Cbf_\Mwf$; 

    \item\label{SN4} $\lambda_3\leqT\Cbf^\perp_\SNwf$ and $\lambda_4\leqT \Cbf^\perp_\SNwf$;

    \item $\Rbf_b\leqT \lambda_4\times \lambda_3$ for all $b\in\omega^\omega$;

    \item\label{SN6} $\SNwf \leqT (\lambda_4\times \lambda_3)^{\lambda_5}$; and

    \item\label{SN7} $\Cbf^\perp_\Nwf \leqT \lambda_4\times\lambda_3$.
\end{enumerate}
In particular, it is forced that:
\begin{align*}
\add(\Nwf) & =\bfrak=\lambda_0\leq
\add(\SNwf)= \cov(\SNwf)= \non(\MAwf)= \cov(\Nwf) = \non(\Mwf)=\lambda_3\\
 & \leq \cov(\Mwf)=\sup_{b\in \omega^\omega}\dfrak(\Rbf_b) = \non(\SNwf) = \non(\Nwf) = \lambda_4\leq\dfrak=\cfrak=\lambda_5.
\end{align*}
\end{theorem}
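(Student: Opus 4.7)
My plan is to adapt the construction of \autoref{Mthm} by replacing its Hechler iterand $\Dor$ with the new poset $\Qor_f$ (\autoref{foraddSN}) while keeping $\Por_b$ (\autoref{def_Pb}), the restricted random $\Bor^N$, and the restricted localization $\Loc^N$ as the other iterands, thereby collapsing $\cov(\Nwf),\add(\Mwf),\non(\MAwf),\cov(\SNwf)$, and $\add(\SNwf)$ at the common value $\lambda_3$. Concretely, I would first force with $\Cor_{\lambda_5}$ to secure $\cfrak\geq\lambda_5$, and then build a ${<}\lambda_0$-uf-extendable matrix iteration $\Por=\Por_{\gamma,\pi}$ with $\gamma=\lambda_4$ and $\pi=\lambda_4+\lambda_5\lambda_4\lambda_3$, putting $\Delta(\alpha)=\alpha+1$ and $\Qnm^*_\alpha=\Cor_\alpha$ for $\alpha\leq\lambda_4$. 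Beyond, at each $\xi=\lambda_\rho+\varepsilon$ (with $\lambda_\rho=\lambda_4+\lambda_5\rho$, $\rho<\lambda_4\lambda_3$, $\varepsilon<\lambda_5$), use bookkeeping bijections analogous to the proof of \autoref{Mthm} to iterate ccc posets of four kinds: (i) $\Loc^N$ for a small transitive $N$ (to force $\add(\Nwf)\geq\lambda_0$ via \autoref{itsmallsets}); (ii) $\Bor^N$ for small $N$ (to contribute to the Tukey bound (7)); (iii) $\Por_b$ for each nice name of $b\in(\omega\menos\{0\})^\omega$ (to contribute to (5)); (iv) $\Qor_f$ for each nice name of an increasing $f\in\omega^\omega$ (to contribute to (6) and to the second half of (4)). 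All four types are uniformly $\sigma$-uf-$\lim$-linked (by previous results together with \autoref{uf:Pb} and \autoref{ufQf}), so the resulting matrix is ${<}\aleph_1$-uf-extendable, hence ${<}\lambda_0$-uf-extendable.

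For conclusions (1)--(3), I would reuse the arguments from \autoref{Mthm}. Statement (2) combines \autoref{mainpres} (which yields $\Cbf_{[\lambda_5]^{<\lambda_0}}\leqT\omega^\omega$) with the $\omega^\omega$-goodness of every iterand (via \autoref{mej:uf}) and the $\Lc^*$-goodness of every iterand (by smallness for $\Loc^N$ and $\Bor^N$, Kamburelis-goodness of random, $\sigma$-centeredness of $\Por_b$, and the analogous argument for $\Qor_f$), applied through \autoref{Comgood}; the $\Loc^N$-bookkeeping plus \autoref{itsmallsets} then upgrade to $\Lc^*\eqT\Cbf_{[\lambda_5]^{<\lambda_0}}$ and give $\add(\Nwf)=\bfrak=\lambda_0$ and $\dfrak=\cfrak=\lambda_5$. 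For (3), $\lambda_3\leqT\Cbf_\Mwf$ is \autoref{lem:strongCohen} applied at $\cf(\pi)=\lambda_3$, and $\lambda_4\leqT\Cbf_\Mwf$ is \autoref{matsizebd}. For (4), $\lambda_3\leqT\Cbf^\perp_\SNwf$ is immediate from \autoref{thm:cfcovSN} since $\cf(\pi)=\lambda_3$; the companion $\lambda_4\leqT\Cbf^\perp_\SNwf$ will be derived from an \autoref{itsmallsets}-type argument for $\SNwf$, using the $\Qor_f$-bookkeeping of (iv) to cover every $<\lambda_4$-sized set of reals appearing in an intermediate column.

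The remaining conclusions (5), (6), (7) will follow the template of the $\Rbf_b\leqT\lambda_4\times\lambda_3$ argument in \autoref{Mthm}: to each final-model object one assigns its ``first appearance'' pair $(\alpha_x,\rho_x)\in\lambda_4\times\lambda_4\lambda_3$ via \autoref{realint}, and for a target $(\alpha,\rho)\geq(\alpha_x,\rho_x)$ reads off the bookkept generic added at some later matrix position---a $\Por_b$-generic $(J_\gen,h_\gen)$ for (5), a random-generic null set for (7), and a $\Qor_f$-generic $\sigma_\gen$ for (6). The main obstacle is (6), $\SNwf\leqT(\lambda_4\times\lambda_3)^{\lambda_5}$: by \autoref{charSN}, an SMZ set $X$ in the final model is witnessed, for each $f$ in some dominating family of size $\dfrak=\lambda_5$, by a cover $\sigma_f$ of height $f$, so $X$ is encoded by $\lambda_5$-many pairs in $\lambda_4\times\lambda_4\lambda_3$, yielding the $(\lambda_4\times\lambda_3)^{\lambda_5}$ upper bound. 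The hard part will be organizing the $\Qor_f$-bookkeeping so that for every extension-level $f$ the matched stage's $\Qor_f$-generic genuinely delivers an $f$-height cover of $X$; this rests on \autoref{realint} (every nice $f$-name is supported at a bounded column) together with the cofinality of the bookkeeping fibres through $t$. Once (5)--(7) are established, the displayed Cicho\'n values follow by combining these Tukey bounds with the ZFC inequalities displayed in \autoref{cichonext} and routine Tukey arithmetic.
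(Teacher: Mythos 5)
Your overall architecture (a $<\lambda_0$-uf-extendable matrix iteration of height $\lambda_4$ and length $\lambda_4+\lambda_5\lambda_4\lambda_3$ over $\Cor_{\lambda_5}$, bookkeeping four kinds of iterands, concluding via Tukey arithmetic) is the right one, and your treatments of (1)--(3), (5), and the first half of (4) match the paper. However, there is a genuine gap at the random iterand. You propose to keep ``the restricted random $\Bor^N$ for small $N$'' as in \autoref{Mthm}, crediting it with establishing (7), i.e.\ $\Cbf^\perp_\Nwf\leqT\lambda_4\times\lambda_3$, hence $\cov(\Nwf)\geq\lambda_3$ and $\non(\Nwf)\leq\lambda_4$. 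But a random real over a small model $N$ only avoids null sets coded in $N$, so the bookkeeping of small $N$'s gives at best $\Cn\leqT\Cbf_{[\lambda_5]^{<\lambda_j}}$ for whatever $\lambda_j$ bounds $|N|$; combined with the dual goodness bound this pins $\cov(\Nwf)$ at that small $\lambda_j$, not at $\lambda_3$. Moreover, bookkeeping $<\lambda_3$-sized models is not available under the stated hypothesis (only $\cof([\lambda_5]^{<\lambda_i})=\lambda_5$ for $i\leq 2$ is assumed). The paper instead puts the \emph{full} random forcing of the intermediate model, $\Qnm^*_\xi=\Bor^{V_{\Delta(\xi),\xi}}$ with $\Delta(\xi)=t(\rho)$, at the positions $\xi=\lambda_\rho+4\varepsilon+2$; then for any $x\in V_{\alpha_x,\lambda_{\rho_x}}$ one can find a later stage whose random generic over a column $\geq\alpha_x$ avoids $N_x$, which is exactly the Tukey map $\Psi_-(N_x)=(\alpha_x,\rho_x)$ needed for (7). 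With restricted random the pair $(\alpha_x,\rho_x)$ gives you no control over whether $x$ lands in the relevant $N$, and (7) fails.

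Two smaller points. For the second half of (4), you suggest an ``\autoref{itsmallsets}-type argument''; that theorem gives lower bounds on $\bfrak(\Rbf)$ (Tukey bounds of the form $\Rbf\leqT\Cbf_{[X]^{<\theta}}$), not Tukey embeddings of ordinals into $\Cbf^\perp_\SNwf$. The paper instead applies \autoref{thm:cfcovSN} a second time, to the column sequence $\seq{\Por_{\alpha,\pi}}{\alpha\leq\lambda_4}$: by \autoref{realint} this is a direct limit, and by \autoref{matsizebd} each $\Por_{\alpha+1,\pi}$ adds a Cohen real over $V_{\alpha,\pi}$, so $\lambda_4\leqT\Cbf^\perp_\SNwf$ is immediate. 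For (6), your outline is essentially right, but you glide over the shift between $f$ and $f'(i):=f((i+1)^2)$: a $\Qor_f$-generic $\sigma_\gen$ has height $f$ and catches precisely the $\tau$'s of height $f'$ (those put in the side condition $F$). So the Tukey map $\Phi_-$ must first represent $A\in\SNwf$ by witnesses $\tau^A_f$ of height $f'$ (available by \autoref{charSN}), and the match-up with the bookkept stages $\xi_f$ goes through that reindexing; without it the heights don't line up.
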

\begin{proof}
    We proceed as in \autoref{Mthm}. Set $\lambda_\rho$ ($\rho<\lambda_4\lambda_3$) and $t$ as in there, and fix a bijection $g\colon \lambda_5 \to \lambda_4\times\lambda_5$. First add $\lambda_5$-many Cohen reals, and afterwards construct a  ${<}\lambda_0$-uf-extendable matrix iteration $\Por = \Por_{\gamma,\pi}$ with $\gamma = \lambda_4$ and $\pi = \lambda_4 + \lambda_5\lambda_4\lambda_3$, defining the first $\lambda_4$-many steps as in \ref{(C1)}.

    For $\rho<\lambda_4\lambda_3$ and $\alpha<\lambda_4$, choose 
\begin{enumerate}[label=(F0)]
   \item\label{F0} a collection $\set{\Qnm_{\alpha,\zeta}^\rho}{\zeta<\lambda_5}$ of nice $\Por_{\alpha,\lambda_\rho}$-names for posets of the form $\Loc^N$ for some transitive model $N$ of ZFC with $|N|<\lambda_{0}$
   such that, for any $\Por_{\alpha,\lambda_\rho}$-name $\dot F$ of a subset of $\baire$ of size ${<}\lambda_{0}$, there is some $\zeta<\lambda_5$ such that, in $V_{\alpha,\lambda_\rho}$, $\Qnm^\rho_{\alpha,\zeta} = \Loc^N$ for some $N$ containing $\dot F$, and 
\end{enumerate}

\begin{enumerate}[label=(F${}^\rho$)]
   \item\label{Frho} enumerations $\set{\dot{b}_{\zeta}^\rho}{\zeta<\lambda_5}$ and $\set{\dot{f}_{\zeta}^\rho}{\zeta<\lambda_5}$ of all the nice $\Por_{t(\rho),\lambda_\rho}$-names for all the members of $(\omega\menos\{0\})^\omega$, and for all the increasing functions in $\omega^\omega$, respectively, 
\end{enumerate}
and set: 
\begin{enumerate}[label=\rm (C\arabic*)]
\setcounter{enumi}{1}
    \item if $\xi=\lambda_\rho+4\varepsilon$ for some $\varepsilon<\lambda_5$, put $\Delta(\xi):=t(\rho)$ and 
    $\Qnm^*_{\xi}=\Por_{\dot b^\rho_{\varepsilon}}^{V_{\Delta(\xi),\xi}}$;

    \item if $\xi=\lambda_\rho+4\varepsilon+1$ for some $\varepsilon<\lambda_5$, put $\Delta(\xi):=t(\rho)$ and 
    $\Qnm^*_{\xi}=\Qor_{\dot f^\rho_{\varepsilon}}^{V_{\Delta(\xi),\xi}}$;

    \item if $\xi=\lambda_\rho+4\varepsilon+2$ for some $\varepsilon<\lambda_5$, put $\Delta(\xi):=t(\rho)$ and 
    $\Qnm^*_{\xi}=\Bor^{V_{\Delta(\xi),\xi}}$; and 
    
    \item if $\xi=\lambda_\rho+4\varepsilon+3$ for some $\varepsilon<\lambda_5$, put $\Delta(\xi):=g_1(\varepsilon)$ and $\Qnm^*_{\xi}=\Qnm^{\rho}_{g(\varepsilon)}$. 
\end{enumerate}
The construction is indeed a ${<}\lambda_0$-uf-extendable iteration. We prove the claims related to $\SNwf$, as the rest can be proved as in \autoref{Mthm}.

\ref{SN4} $\Por$ forces $\lambda_3\leqT\Cbf^\perp_\SNwf$ and $\lambda_4\leqT \Cbf^\perp_\SNwf$:  Immediately by \autoref{thm:cfcovSN} applied to $\seq{\Por_{\lambda_4,\xi}}{ \xi\leq \pi}$ and $\seq{\Por_{\alpha,\pi}}{ \alpha\leq \lambda_4}$, respectively.

\ref{SN6} Work in $V_{\gamma,\pi}$. Let $D\subseteq \omega^\omega$ be the set of all increasing functions. For each $f\in D$ let $f'\in\baire$ be defined by $f'(i) := f((i+1)^2)$. Since $\lambda_4\lambda_3\eqT \lambda_3$, we construct a Tukey connection $\Phi_-\colon \SNwf\to (\lambda_4\times\lambda_4\lambda_3)^D$, $\Phi_+\colon (\lambda_4\times\lambda_4\lambda_3)^D \to \SNwf$.

For $A\in\SNwf$, we can find $\la \tau^A_f\st\, f\in D\ra \subseteq (2^{<\omega})^\omega$ such that $\hgt_{\tau^A_f} = f'$ and $A\subseteq \bigcap_{f\in D}[\tau^A_f]_\infty$. By \autoref{realint}, for each $f\in D$ find $(\alpha^A_f,\rho^A_f)\in \lambda_4\times\lambda_4\lambda_3$ such that $f,\tau^A_f\in V_{\alpha^A_f,\lambda_{\rho^A_f}}$. So set $\Phi_-(A):= \seq{(\alpha^A_f,\rho^A_f)}{f\in D}$.

Whenever $\xi = \lambda_\rho + 4\varepsilon +1$ for some $\rho<\lambda_4\lambda_3$ and $\varepsilon<\lambda_5$, let $\sigma^*_{\xi} \in 2^{f^\rho_\varepsilon}$ be the $\Qor_{f^\rho_\varepsilon}$-generic real over $V_{\Delta(\xi),\xi}$ added in $V_{\Delta(\xi),\xi+1}$. 
Let $z=\seq{(\beta_f,\varrho_f)}{f\in D}$ in $(\lambda_4\times \lambda_4\lambda_3)^D$. For each $f\in D$, find $\varrho'_f\geq \varrho_f$ in $\lambda_4\lambda_3$ such that $t(\varrho'_f) = \beta_f$. When $f\in V_{\beta_f,\lambda_{\varrho'_f}}$, find $\varepsilon_f<\lambda_5$ such that $f = f^{\varrho'_f}_{\varepsilon_f}$, and let $\sigma_f := \sigma^*_{\xi_f}$ where $\xi_f:= \lambda_{\varrho'_f}+3\varepsilon_f+1$, otherwise let $\sigma_f$ be anything in $2^f$. Set $\Phi_+(z) := \bigcap_{f\in D}\bigcup_{n<\omega}[\sigma_f(n)]$, which is clearly in $\SNwf$.

It remains to show, by using the notation above, that $\Phi_-(A)\leq z$ implies $A\subseteq \Phi_+(z)$. If $\Phi_-(A)\leq z$, i.e.\ $\alpha^A_f\leq \beta_f$ and $\rho^A_f\leq \varrho_f$ for all $f\in D$, then $f,\tau^A_f\in V_{\beta_f,\varrho'_f}$, so $\sigma_f = \sigma^*_{\xi_f}$ and $[\tau^A_f]_\infty \subseteq \bigcup_{n<\omega}[\sigma_f(n)]$. Therefore, $A\subseteq \Phi_+(z)$. 
\end{proof}

\begin{remark}
Under further assumptions in the ground model, we could force some value to $\cof(\SNwf)$ by using techniques from~\cite{CarMej23} as in~\cite[Sec.~6]{BCM2}.    
\end{remark}

\section{Open problems}

We were able to determine in \autoref{MainThm1} that $\add(\NAwf)= \non(\NAwf)$, but the case of $\MAwf$ is unknown.

\begin{problem}\label{QanMA}
    Does $\thzfc$ prove $\add(\MAwf) = \non(\MAwf)$?
\end{problem}

Recall that $\add(\MAwf) = \non(\MAwf)$ follows from $\non(\MAwf) \leq \bfrak$ (see \autoref{cor:addnonMA}). In the case $\bfrak<\non(\MAwf)$, we obtain by \autoref{chPawmn} that $\bfrak=\add(\Mwf) \leq \add(\MAwf) \leq \non(\MAwf)$.

Another possible equality is considered in the following.

\begin{problem}\label{QaddMAM}
    Does $\thzfc$ prove $\add(\MAwf) = \add(\Mwf)$?
\end{problem}

Both problems cannot have positive answers simultaneously because it is consistent with ZFC that $\bfrak < \non(\MAwf)$, which follows from \autoref{Mthm}.

Notice that $\add(\MAwf) = \add(\Mwf)$ is equivalent to $\add(\MAwf)\leq \bfrak$ by \autoref{chPawmn}. So, in contrast, we may ask:

\begin{problem}
Is it consistent with $\thzfc$ that $\bfrak<\add(\MAwf)$?
\end{problem}

Since $\add(\NAwf) = \non(\NAwf)$, we know the consistency of $\bfrak<\add(\NAwf)$ with ZFC (see~(\ref{cmresult}) in~\autoref{sec:intro}).

In \autoref{Sec:zfc} we mentioned that $\cov(\SNwf)=\cov(\MAwf) = \cov(\NAwf)=\cfrak$ in Sacks model, so these covering numbers do not have ``reasonable" upper bounds in ZFC other than $\cfrak$. The consistency of $\cov(\SNwf)<\add(\Mwf)$ with ZFC is known~\cite{P90}, and we proved the consistency of $\cov(\MAwf) < \non(\Nwf)$ in \autoref{Thm:covMA}. However, we do not know the answer to the following.

\begin{problem}
    Is it consistent with $\thzfc$ that $\cov(\NAwf) < \cfrak$?
\end{problem}

We now discuss about the cofinality numbers. Yorioka and the authors have investigated the cofinality of $\SNwf$. Yorioka~\cite{Yorioka} proved that it is consistent with ZFC that $\cof(\SNwf)<\cfrak$. Building in his work, we~\cite{cardona, CarMej23} have obtained nice lower and upper bounds for $ \cof(\SNwf)$, which led us to considerably improve Yorioka's results. As to the cofinality of $\NAwf$ and $\MAwf$, we do not know anything about their behavior. For instance, we may ask: 

\begin{problem}\label{prob:cof}
    Does $\thzfc$ prove some inequality among $\cof(\NAwf)$, $\cof(\MAwf)$, $\cof(\SNwf)$ and $\cfrak$?
\end{problem}

Notice that $\MAwf$ does not have a Borel base because $\MAwf\subseteq\SNwf$, and no perfect subset of $\cantor$ is in $\SNwf$. The same applies to $\NAwf$.

In this work,
we have solved~\autoref{P:ma_na}~\ref{P:ma_na3}, i.e.\ the consistency of $\non(\NAwf) < \bfrak < \non(\MAwf)$ with ZFC, but the answer to the remaining questions are unknown: Are each of the following statements consistent with ZFC?
\begin{enumerate}[label=\normalfont(\alph*)]
    \item\label{O1} $\bfrak<\non(\NAwf)<\non(\MAwf)$.
    
    \item\label{O2} $\non(\NAwf)<\non(\MAwf)<\bfrak$.
\end{enumerate}
We know that 
\[\add(\Mwf)\leq\non(\MAwf)\leq \non(\Ewf) \leq \min\{\non(\Mwf), \non(\Nwf)\}.\]
Therefore, any FS iterations of ccc posets (with length of uncountable cofinality) forces that 
$\bfrak\leq\non(\MAwf)\leq\non(\Ewf)$
because any such iterations forces that $\non(\Mwf)\leq\cov(\Mwf)$. So we can conclude that FS iterations do not work to solve~\ref{O2}. Hence, alternative
methods are required.

One of our original intentions to introduce the poset $\Qor_f$ from \autoref{foraddSN}, which works to increase $\add(\SNwf)$, was to solve the following.

\begin{problem}[{\cite[Q.~8.1]{BCM2}}]
    Are each of the following statements consistent with $\thzfc$?
    \begin{enumerate}[label = \normalfont (\arabic*)]
        \item $\add(\Nwf) < \add(\SNwf) < \bfrak$.
        \item $\add(\Nwf) < \bfrak < \add(\SNwf)$.
    \end{enumerate}
\end{problem}

Since $\minLc \leq \add(\SNwf)$ and $\add(\Nwf) = \min\{\bfrak,\minLc\}$ (see~\autoref{chNPaw} and~\ref{chPawmn}), a necessary condition of the above is that $\add(\Nwf) = \minLc$. It is unclear to us why $\Qor_f$ should not increase $\minLc$. On the other hand, the bounding number $\bfrak$ is not a problem because $\Qor_f$ is uniformly $\sigma$-uf-$\lim$-linked, so it can be controlled.



The second author \cite{mejiamatrix} has constructed a forcing model where the four cardinal
characteristics associated with $\Nwf$ are pairwise different, the first author ~\cite{Car23} has produced a similar model for $\Ewf$, and the first model for $\Mwf$ (without using large cardinals) appears in~\cite{BCM}. 
In this context, we ask:

\begin{problem}
Are the following statements consistent?
\begin{enumerate}[label=\rm (\arabic*)]  

    \item $\non(\NAwf)<\cov(\NAwf)<\cof(\NAwf)$.
    
    \item $\add(\MAwf)<\cov(\MAwf)<\non(\MAwf)<\cof(\MAwf)$. 
    
    \item $\add(\MAwf)<\non(\MAwf)<\cov(\MAwf)<\cof(\MAwf)$.
\end{enumerate}  
\end{problem}

Although we only considered one transitive additivity of a translation invariant ideal $\Iwf$ on $2^\omega$ to show its relationship with the uniformity of $\IAwf$, there are more transitive versions of the cardinal characteristics associated with $\Iwf$ as below.\footnote{In~\cite{BJ} they are denoted by $\cov^\star(\Iwf)$, $\non^\star(\Iwf)$ and $\cof^\star(\Iwf)$, while $\add^\star(\Iwf)$ is $\add^*_t(\Iwf)$ and $\add^{\star\star}(\Iwf)$ is $\add_t(\Iwf)$.} For $A,B\subseteq 2^\omega$, write $A\subseteq_+ B$ when $A\subseteq y+ B$ for some $y\in 2^\omega$. 
\begin{align*}
    \text{Transitive additivity of $\Iwf$: } && \add_t(\Iwf) & :=\bfrak(\Iwf,\Iwf,\subseteq_+),\\
    \text{Transitive covering of $\Iwf$: } && \cov_t(\Iwf) & :=\min\set{|X|}{X\subseteq\cantor\text{ and }\exists A\in\Iwf\colon A+X=\cantor},\\
    \text{Transitive uniformity of $\Iwf$: } && \non_t(\Iwf) & := \non(\Iwf),\\
    \text{Transitive cofinality of $\Iwf$: } && \cof_t(\Jwf) & := \dfrak(\Iwf,\Iwf,\subseteq_+).
\end{align*} 
Let us notice that $\add^*_t(\Jwf)$ and $\cof_t(\Jwf)$, as well as $\cov_t(\Jwf)$ and $\non_t(\Jwf)$, are not dual pairs of cardinal characteristics. 

\autoref{Figaddetic} illustrates the relationship between the transitive cardinal characteristics and the cardinal characteristics associated with $\Iwf$. See details in~\cite{Kra}.

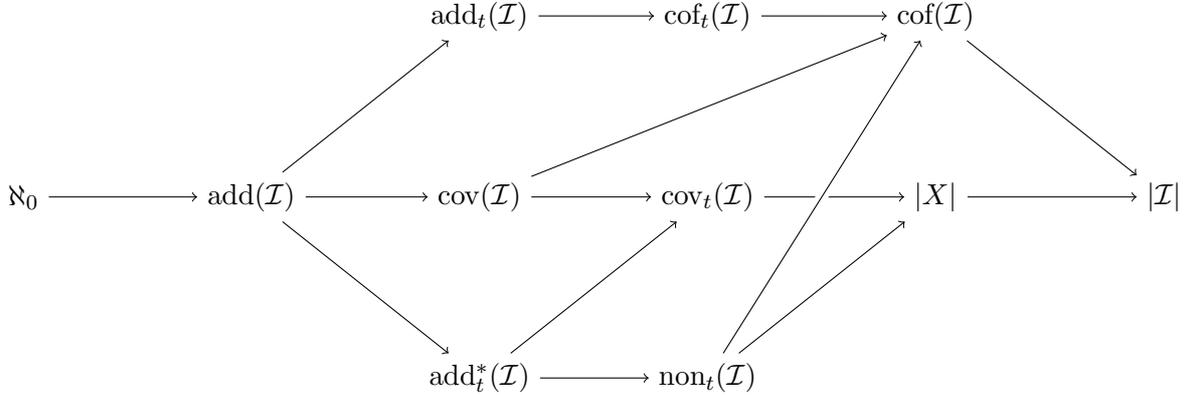
\begin{figure}[ht]
\centering
\begin{tikzpicture}[scale=1.2]
\small{
\node (azero) at (-2,1) {$\aleph_0$};
\node (addI) at (0.5,1) {$\add(\Iwf)$};
\node (covI) at (3,1) {$\cov(\Iwf)$};
\node (addtI) at (3,-1) {$\add_t^*(\Iwf)$};
\node (coftdI) at (3,3) {$\add_t(\Iwf)$}; 
\node (coftI) at (5.5,3) {$\cof_t(\Iwf)$};
\node (cofI) at (8,3) {$\cof(\Iwf)$};
\node (covtI) at (5.5,1) {$\cov_t(\Iwf)$};
\node (nontI) at (5.5,-1) {$\non_t(\Iwf)$};
\node (sizX) at (8,1) {$|X|$};
\node (sizI) at (10.5,1) {$|\Iwf|$};

\draw (azero) edge[->] (addI);
\draw (addI) edge[->] (covI);
\draw (addI) edge[->] (addtI);
\draw (covI) edge[->] (covtI);
\draw (covI) edge[->] (cofI);
\draw (covtI) edge[->] (sizX);
\draw (nontI) edge[->] (sizX);
\draw (addtI) edge[->] (nontI);
\draw (addI) edge[->] (coftdI);
\draw (addtI) edge[->] (covtI);
\draw (coftdI) edge[->] (coftI);
\draw (coftI) edge[->] (cofI);
\draw (sizX) edge[->] (sizI);
\draw (cofI) edge[->] (sizI);
\draw (nontI) edge[line width=.15cm,white,-] (cofI)
      (nontI) edge[->] (cofI);
}
\end{tikzpicture}
\caption{Hasse diagram of inequalities of the transitive cardinal characteristics associated with a translation invariant ideal $\Iwf$ on $2^\omega$.}
\label{Figaddetic}
\end{figure}

The transitive covering number was the first transitive cardinal that was studied. It appeared implicitly in 1938 in the famous Rothberger Theorem~\cite{Ro}, which states that $\cov_t(\Nwf)\leq\non(\Mwf)$ and $\cov_t(\Mwf)\leq\non(\Nwf)$. Later, Pawlikowski in~\cite{paw85} accomplished a complete description of the transitive additivity and cofinality of the null and the meager ideal. He proved in ZFC the following statements.

\begin{theorem}[{\cite{paw85}}]\label{Paw:gen}
\mbox{}
\begin{enumerate}[label=\rm(\alph*)]
    \item\label{Paw:gena} $\cof_t(\Mwf)=\dfrak$ and $\add_t(\Mwf)=\bfrak$.

    \item\label{Paw:genb} $\cof_t(\Nwf)=\cof(\Nwf)$ and $\add_t(\Nwf)=\add(\Nwf)$.

    \item\label{Paw:genc} $\add(\Nwf)=\min\{\bfrak,\add_t^*(\Nwf)\}$ and $\add(\Mwf)=\min\{\bfrak,\add_t^*(\Mwf)\}$.
\end{enumerate}    
\end{theorem}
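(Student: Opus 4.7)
The three parts use quite different machinery, so I would treat them separately. Part (c) is essentially a restatement of \autoref{chPawmn}, using the identity $\non(\IAwf) = \add_t^*(\Iwf)$ noted earlier in the introduction, so no new work is required beyond invoking that theorem.

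For (a), the plan is to establish a Tukey equivalence $\la \Mwf, \Mwf, \subseteq_+\ra \eqT \Ior$ and then invoke \autoref{Charb-d}, which gives $\Ior \eqT \baire$ and hence $\bfrak(\Ior) = \bfrak$, $\dfrak(\Ior) = \dfrak$. By Talagrand's theorem (\autoref{ChM}), every meager set $F \subseteq \cantor$ is contained in some $B_{x,I} = x + B_I$, so $F \subseteq_+ B_I$; hence $\set{B_I}{I \in \Ior}$ is $\subseteq_+$-cofinal in $\Mwf$. Next, for $I, J \in \Ior$, I would show
\[B_I \subseteq_+ B_J \iff I \sqsubseteq J.\]
The ($\Leftarrow$) direction: given $I \sqsubseteq J$, pick for each sufficiently large $n$ a witness $k_n$ with $I_{k_n} \subseteq J_n$, and define $y \in \cantor$ to be $0$ on $\bigcup_n I_{k_n}$ (arbitrary elsewhere); then \autoref{baseM} yields $B_I = B_{0,I} \subseteq B_{J,y} = y + B_J$. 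The ($\Rightarrow$) direction is immediate from \autoref{baseM}, since the interval-containment condition there is independent of the $y$ parameter. Combined with cofinality, this gives the Tukey equivalence and hence $\add_t(\Mwf) = \bfrak$ and $\cof_t(\Mwf) = \dfrak$.

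For (b), the ``easy'' directions $\add(\Nwf) \leq \add_t(\Nwf)$ and $\cof_t(\Nwf) \leq \cof(\Nwf)$ are immediate because $\subseteq$ implies $\subseteq_+$. For the reverse inequalities I would use Bartoszy\'nski's slalom characterization $\Lc^* \eqT \la\Nwf,\Nwf,\subseteq\ra$ (\autoref{ExamPresPro}~\ref{Pres(uni-null)}) and upgrade it to a translation-invariant Tukey connection $\Lc^* \leqT \la\Nwf,\Nwf,\subseteq_+\ra$. Concretely, one associates to each $f \in \baire$ a canonical Borel null set $N_f$ built from cylinders whose lengths are dictated by $f$, and to each $A \in \Nwf$ a slalom $\varphi_A$ extracted from a standard measure-$2^{-n}$ covering of $A$. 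The key design constraint is that $\varphi_A$ depends only on translation-invariant data---essentially the sequence of Lebesgue measures of cylinders meeting $A$---so that $\varphi_{y+A}$ and $\varphi_A$ agree up to a shift absorbed by the finite-error tolerance in $\in^*$; this then forces $N_f \subseteq y + A \Rightarrow f \in^* \varphi_A$, yielding the Tukey connection and hence $\add_t(\Nwf) \leq \bfrak(\Lc^*) = \add(\Nwf)$ and $\cof(\Nwf) = \dfrak(\Lc^*) \leq \cof_t(\Nwf)$. The hard part will be verifying this translation invariance: Bartoszy\'nski's standard slalom encoding is tailored to set inclusion, not to the group action of $(\cantor,+)$, and making it genuinely translation-stable---at the cost of slightly enlarging the slalom growth bound, which is harmless because $\Hcal$ allows arbitrary polynomial growth---is the principal technical step of the entire theorem.
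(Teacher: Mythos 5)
First, a framing remark: the paper itself does not prove \autoref{Paw:gen} --- it is cited from Pawlikowski~\cite{paw85} in the discussion section, with no argument supplied. So there is no in-paper proof to compare against; what follows is an assessment of your attempt on its own merits.

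Parts (a) and (c) are correct and complete. For (c) you are right that this is literally \autoref{chPawmn} rephrased through the identity $\non(\IAwf)=\add_t^*(\Iwf)$, which the paper records explicitly in the introduction. For (a), the biconditional $B_I\subseteq_+ B_J\iff I\sqsubseteq J$ is exactly right: the ``$\Rightarrow$'' direction is immediate since \autoref{baseM} forces the interval-containment clause regardless of the translation parameter, and the ``$\Leftarrow$'' direction follows by picking witnesses $I_{k_n}\subseteq J_n$ and zeroing $y$ on $\bigcup_n I_{k_n}$. Combined with Talagrand's theorem giving $\subseteq_+$-cofinality of $\set{B_I}{I\in\Ior}$ and with transitivity of $\subseteq_+$, your two Tukey maps go through, and \autoref{Charb-d} converts $\Ior$ into $\baire$. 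This is a clean, self-contained proof of $\add_t(\Mwf)=\bfrak$ and $\cof_t(\Mwf)=\dfrak$.

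Part (b) has a genuine gap, and you essentially say so yourself. The overall strategy --- establish $\Lc^*\leqT\langle\Nwf,\Nwf,\subseteq_+\rangle$, after which both nontrivial inequalities $\add_t(\Nwf)\leq\add(\Nwf)$ and $\cof(\Nwf)\leq\cof_t(\Nwf)$ fall out --- is the right one. But the specific design you propose does not obviously close. You want $\Psi_+(B)=\varphi_B$ to ``depend only on translation-invariant data, essentially the sequence of Lebesgue measures of cylinders meeting $B$.'' However, the Tukey requirement is that $N_f\subseteq y+B$ must force $f\in^*\varphi_B$ uniformly in $y$. To recover $f$ from the inclusion $N_f\subseteq y+B$, the slalom must be able to see \emph{where} the covering cylinders of $y+B$ sit relative to the cylinders composing $N_f$; the measures of the covering cylinders are the same for every translate and therefore carry no information distinguishing one $f$ from another. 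So ``measures only'' is too weak a data set for $\varphi_B$, and the claim that $\varphi_{y+A}$ and $\varphi_A$ ``agree up to a shift absorbed by $\in^*$'' is left entirely unexplained --- the shift in question is by an arbitrary real $y$, not a finite perturbation that $\in^*$ can ignore. Pawlikowski's actual argument builds a cofinal family of null sets whose combinatorial type (not just its measure profile) is stable under translation, and the extraction of the slalom exploits that stability; simply declaring it a ``technical step'' does not substitute for that construction. As written, part (b) is a plausible-sounding plan, not a proof.

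You also separated the two parts of (b) but then used one global Tukey connection to get both; it is worth stating explicitly that the easy inclusions $\langle\Nwf,\Nwf,\subseteq_+\rangle\leqT\langle\Nwf,\Nwf,\subseteq\rangle$ and the claimed hard one $\langle\Nwf,\Nwf,\subseteq\rangle\leqT\langle\Nwf,\Nwf,\subseteq_+\rangle$ together give a Tukey \emph{equivalence}, which is the cleanest way to present the result once the hard connection is actually built.
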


Recall that $\add_t^*(\Nwf)\leq\add_t^*(\Mwf)$ follows from $\NAwf\subseteq \MAwf$. In addition, it is proved in~\cite[Thm.~2.7.14]{BJ} that $\cov_t(\Mwf) = \min\set{\dfrak(\Ed_b)}{b\in\omega^\omega}$. In fact, $\cov_t(\Mwf) = \non(\SNwf)$ (by Galvin's, Mycielski's and Solovay's characterization of $\SNwf$) and $\cov_t(\Nwf) = \non(\SMwf)$, where $\SMwf$ denotes the collection of strong meager subsets of $2^\omega$.

On the other hand, Kraszewski~\cite{KraS2} studied the transitive cardinals of the $\sigma$-ideal $\Swf_2$, the least nontrivial productive 
$\sigma$-ideal of subsets of the Cantor space $\cantor$. Concretely, he proved that $\add_t^*(\Swf_2)=\non(\Swf_2)=\sfrak_\omega$ (the last equality was proven by Cich\'on and Kraszewski~\cite{CicKra} where $\sfrak_\omega$ is a variaton of the splitting number), $\add_t(\Swf_2) = \aleph_1$ and  $\cof_t(\Swf_2)=\cov_t(\Swf_2)=\cfrak$.

From the rest of this section, we say that the cardinal characteristics in~\autoref{Figaddetic}, except $|X|$, $|\Iwf|$ and $\aleph_0$, are \emph{the $8$ cardinal characteristics associated with $\Iwf$}. We could ask the following:

\begin{problem}\label{prob:4tc}
For each of the ideals $\Mwf$, $\Nwf$ and $\Ewf$: Is it consistent with $\thzfc$ that their associated cardinal characteristics are pairwise different?
    
    
    

    
\end{problem}

We may have repetitions in some cases, e.g.\ $\add_t(\Nwf) = \add(\Nwf)$ and $\cof_t(\Nwf) = \cof(\Nwf)$, and also dependence, like $\add(\Mwf) = \min\{\add_t(\Mwf),\cov_t(\Mwf)\}$. More generaly, $\add(\Iwf) = \min\{\add_t(\Iwf),\add^*_t(\Iwf)\}$ (see~\cite{Kra}).

Regarding $\Nwf$, the constellation of~\autoref{Sep4Nt} holds in the second author's matrix iteration construction from~\cite[Thm.~13]{mejiamatrix}. 
On the other hand, Brendle~\cite{Brshatt} developed a sophisticated technique, called~\emph{Shattered iterations}, to obtain a model of ZFC satisfying the constellation of~\autoref{Sep4Nt2}.

\begin{figure}[ht]
\centering
\begin{tikzpicture}[scale=1]
\small{
\node (aleph1) at (-3,3) {$\aleph_1$};
\node (addn) at (0,3){$\subiii{\add(\Nwf)=\add_t(\Nwf)}$};
\node (covn) at (0,8){$\subiii{\cov(\Nwf)}$};
\node (covtn) at (2,8){$\subiii{\cov_t(\Nwf)}$};
\node (nonn) at (10,3) {$\subiii{\non(\Nwf)}$} ;
\node (cfn) at (10,8) {$\subiii{\cof(\Nwf)=\cof_t(\Nwf)}$} ;
\node (addm) at (4,3) {$\add(\Mwf)$} ;
\node (covm) at (6,3) {$\cov(\Mwf)$} ;
\node (nonm) at (4,8) {$\non(\Mwf)$} ;
\node (cfm) at (6,8) {$\cof(\Mwf)$} ;
\node (b) at (4,5.5) {$\bfrak$};
\node (nonna) at (2,4.8) {\subiii{$\add_t^*(\Nwf)$}};
\node (d) at (6,5.5) {$\dfrak$};
\node (c) at (12,8) {$\cfrak$};
\draw (aleph1) edge[->] (addn)
      (addn) edge[->] (covn)
      (covtn) edge [->] (nonm)
    (covn) edge [->] (covtn)
     (nonm)edge [->] (cfm)
      (cfm)edge [->] (cfn)
      (cfn) edge[->] (c);

\draw 
(addn) edge [->]  (nonna)
   (addn) edge [->]  (addm)
   (addm) edge [->]  (covm)
      (covm) edge [->]  (nonn)
   (nonn) edge [->]  (cfn);
\draw (addm) edge [->] (b)
      (b)  edge [->] (nonm);
\draw (covm) edge [->] (d)
      (d)  edge[->] (cfm);
\draw (b) edge [->] (d);

\draw (nonna) edge [line width=.15cm,white,-] (nonn)
    (nonna) edge [->] (nonn)
(nonna) edge [->] (covtn);

\draw[color=sug,line width=.05cm] (-2.3,2.5)--(-2.3,8.5);     
 
\draw[color=sug,line width=.05cm] (-2.3,6.5)--(3,6.5);
\draw[color=sug,line width=.05cm] (3,2.5)--(3,6.5);
\draw[color=sug,line width=.05cm] (8,2.5)--(8,8.5);
\draw[color=sug,line width=.05cm] (8,5.5)--(12,5.5);

\draw[circle, fill=yellow,color=yellow] (1,5.8) circle (0.4);
\draw[circle, fill=yellow,color=yellow] (5,6.5)   circle (0.4);
\draw[circle, fill=yellow,color=yellow] (9,6.5) circle (0.4);
\draw[circle, fill=yellow,color=yellow] (9,4.5) circle (0.4);
\node at (1,5.8) {$\theta_1$};
\node at (5,6.5) {$\theta_2$};
\node at (9,6.5) {$\theta_4$};
\node at (9,4.5) {$\theta_3$};
}
\end{tikzpicture}
\caption{Separation of the cardinals associated with $\Nwf$ where $\aleph_1\leq\theta_1\leq \theta_2\leq \theta_3$ are regular cardinals and $\theta_4\geq\theta_3$ is a cardinal such that $\theta_4^{<\theta_1}=\theta_4$. This constellation was forced in~\cite[Thm.~13]{mejiamatrix}.}
\label{Sep4Nt}
\end{figure}
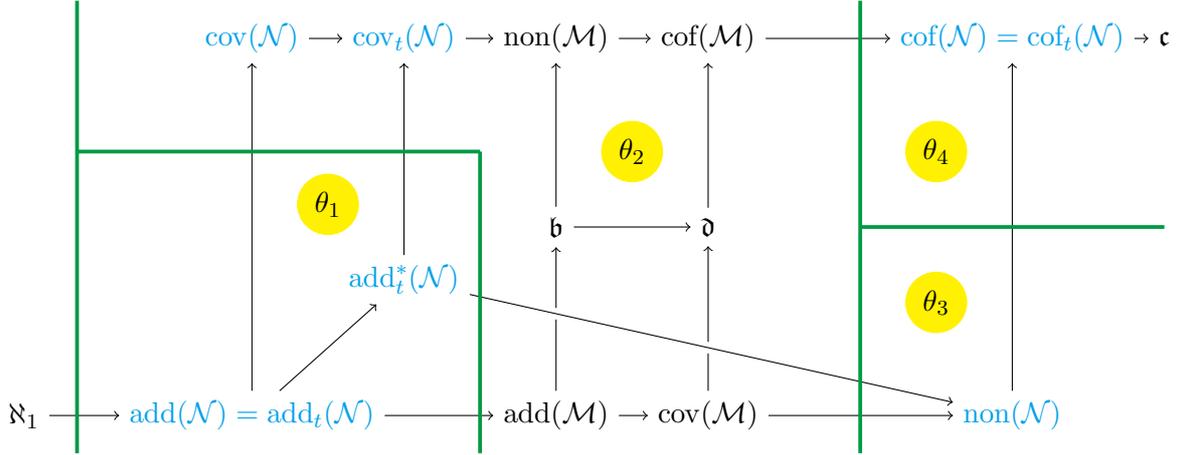

\begin{figure}[ht]
\centering
\begin{tikzpicture}[scale=1]
\small{
\node (aleph1) at (-3,3) {$\aleph_1$};
\node (addn) at (0,3){$\subiii{\add(\Nwf)=\add_t(\Nwf)}$};
\node (covn) at (0,8){$\subiii{\cov(\Nwf)}$};
\node (covtn) at (2,8){$\subiii{\cov_t(\Nwf)}$};
\node (nonn) at (10,3) {$\subiii{\non(\Nwf)}$} ;
\node (cfn) at (10,8) {$\subiii{\cof(\Nwf)=\cof_t(\Nwf)}$} ;
\node (addm) at (4,3) {$\add(\Mwf)$} ;
\node (covm) at (6,3) {$\cov(\Mwf)$} ;
\node (nonm) at (4,8) {$\non(\Mwf)$} ;
\node (cfm) at (6,8) {$\cof(\Mwf)$} ;
\node (b) at (4,5.5) {$\bfrak$};
\node (nonna) at (2,4.8) {\subiii{$\add_t^*(\Nwf)$}};
\node (d) at (6,5.5) {$\dfrak$};
\node (c) at (12,8) {$\cfrak$};
\draw (aleph1) edge[->] (addn)
      (addn) edge[->] (covn)
      (covtn) edge [->] (nonm)
    (covn) edge [->] (covtn)
     (nonm)edge [->] (cfm)
      (cfm)edge [->] (cfn)
      (cfn) edge[->] (c);

\draw 
(addn) edge [->]  (nonna)
   (addn) edge [->]  (addm)
   (addm) edge [->]  (covm)
      (covm) edge [->]  (nonn)
   (nonn) edge [->]  (cfn);
\draw (addm) edge [->] (b)
      (b)  edge [->] (nonm);
\draw (covm) edge [->] (d)
      (d)  edge[->] (cfm);
\draw (b) edge [->] (d);

\draw (nonna) edge [line width=.15cm,white,-] (nonn)
    (nonna) edge [->] (nonn)
(nonna) edge [->] (covtn);

\draw[color=sug,line width=.05cm] (-2.3,6.5)--(5,6.5);
\draw[color=sug,line width=.05cm] (3,2.5)--(3,6.5);
\draw[color=sug,line width=.05cm] (5,6.5)--(5,4.4);
\draw[color=sug,line width=.05cm] (7.2,8.5)--(7.2,4.4);
\draw[color=sug,line width=.05cm] (5,4.4)--(11,4.4); 

\draw[circle, fill=yellow,color=yellow] (5,7.2) circle (0.4);
\draw[circle, fill=yellow,color=yellow] (5,3.6) circle (0.4);
\draw[circle, fill=yellow,color=yellow] (8.5,5.8) circle (0.4);
\node at (5,3.6) {$\theta_1$};
\node at (5,7.2) {$\theta_2$};
\node at (8.5,5.8) {$\theta_3$};

}
\end{tikzpicture}
\caption{Separation of the cardinals associated with $\Nwf$ with a different order where $\aleph_1\leq\theta_1\leq \theta_2\leq \theta_3$ are regular cardinals. This constellation is forced in~\cite[Cor.~30]{Brshatt}.}
\label{Sep4Nt2}
\end{figure}
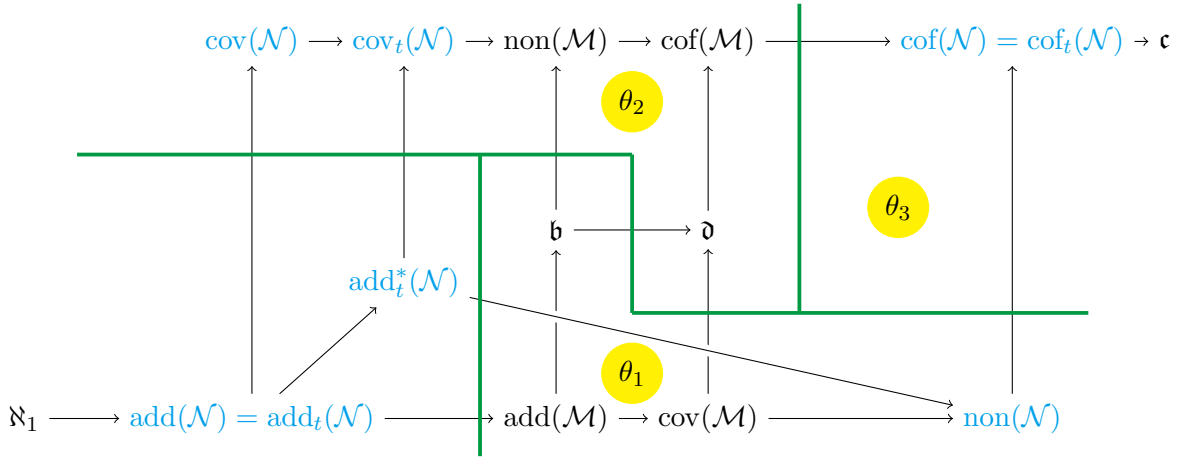

So far, shattered iterations have been used only once to deal with models in which many cardinal characteristics in Cicho\'n's diagram assume simultaneously distinct values with the order $\cov(\Mwf)<\non(\Mwf)$, so this approach may help to solve several instances of~\autoref{prob:4tc}. 

Concerning $\Mwf$, the constellation of~\autoref{Sep4M4N} holds in the forcing model from~\cite[Thm.~7.1]{BCM2}, but there the value of $\add_t^*(\Mwf)$ is unclear. It is even a challenge to separate $\add_t^*(\Mwf)$ from $\non(\Mwf)$.

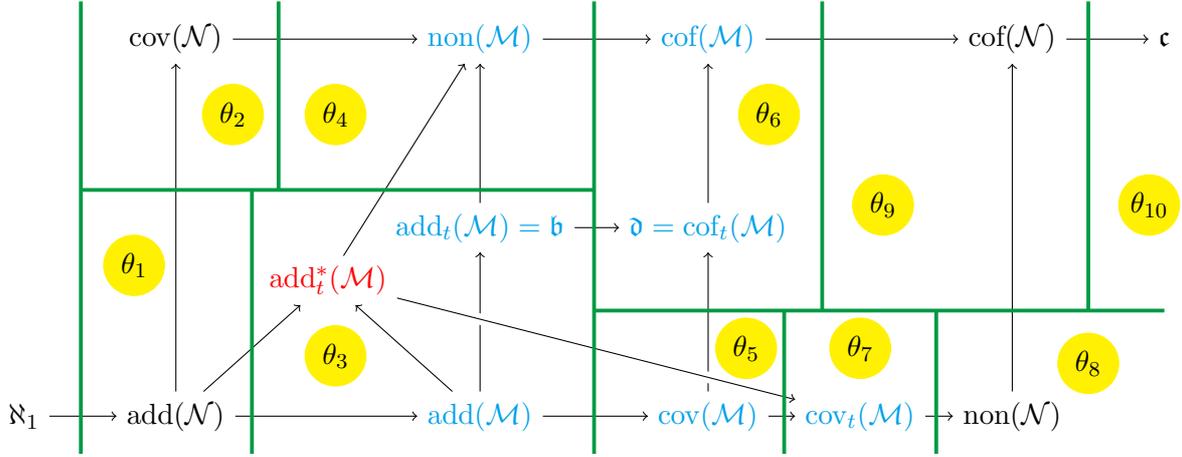
\begin{figure}[ht]
\centering
\begin{tikzpicture}[scale=1]
\small{
\node (aleph1) at (-3,3) {$\aleph_1$};
\node (addn) at (-1,3){$\add(\Nwf)$};
\node (covn) at (-1,8){$\cov(\Nwf)$};
\node (nonn) at (10,3) {$\non(\Nwf)$} ;
\node (cfn) at (10,8) {$\cof(\Nwf)$} ;
\node (addm) at (3,3) {$\subiii{\add(\Mwf)}$} ;
\node (covm) at (6,3) {$\subiii{\cov(\Mwf)}$} ;
\node (covtm) at (8,3) {$\subiii{\cov_t(\Mwf)}$} ;
\node (nonm) at (3,8) {$\subiii{\non(\Mwf)}$} ;
\node (cfm) at (6,8) {$\subiii{\cof(\Mwf)}$} ;
\node (b) at (3,5.5) {$\subiii{\add_t(\Mwf)=\bfrak}$};
\node (nonma) at (1,4.8) {\red{$\add_t^*(\Mwf)$}};
\node (d) at (6,5.5) {$\subiii{\dfrak=\cof_t(\Mwf)}$};
\node (c) at (12,8) {$\cfrak$};
\draw (aleph1) edge[->] (addn)
      (addn) edge[->] (covn)
    (covn) edge [->] (nonm)
     (nonm)edge [->] (cfm)
      (addn) edge [->]  (nonma)
      (cfm)edge [->] (cfn)
      (cfn) edge[->] (c);

\draw 
(addn) edge [->]  (addm)
   (addm) edge [->]  (covm)
      (covm) edge [->]  (covtm)
   (covtm) edge [->]  (nonn)
   (nonn) edge [->]  (cfn);
\draw (addm) edge [->] (b)
      (b)  edge [->] (nonm);
\draw (covm) edge [->] (d)
      (d)  edge[->] (cfm);
\draw (b) edge [->] (d);

\draw (nonma) edge [line width=.15cm,white,-] (covtm)
      (nonma) edge [->] (covtm);

\draw (addm) edge [->] (nonma)
      (nonma) edge [line width=.15cm,white,-] (nonm)
      (nonma) edge [->] (nonm);

\draw[color=sug,line width=.05cm] (-2.25,6)--(3,6);  
\draw[color=sug,line width=.05cm] (-2.25,2.5)--(-2.25,8.5);
\draw[color=sug,line width=.05cm] (0,2.5)--(0,6);
\draw[color=sug,line width=.05cm] (0.35,6)--(0.35,8.5);
\draw[color=sug,line width=.05cm] (3,6)--(4.5,6);
\draw[color=sug,line width=.05cm] (4.5,2.5)--(4.5,8.5);
\draw[color=sug,line width=.05cm] (7.5,4.4)--(7.5,8.5);
\draw[color=sug,line width=.05cm] (7,2.5)--(7,4.4);
\draw[color=sug,line width=.05cm] (9,2.5)--(9,4.4);
\draw[color=sug,line width=.05cm] (11,4.4)--(11,8.5);
\draw[color=sug,line width=.05cm] (4.5,4.4)--(12,4.4);

\draw[circle, fill=yellow,color=yellow] (1.1,7) circle (0.4);
\draw[circle, fill=yellow,color=yellow] (-0.25,7) circle (0.4);
\draw[circle, fill=yellow,color=yellow] (1.1,3.8) circle (0.4);
\draw[circle, fill=yellow,color=yellow] (-1.55,5) circle (0.4);
\draw[circle, fill=yellow,color=yellow]  (6.5,3.9)  circle (0.4);
\draw[circle, fill=yellow,color=yellow]  (6.8,7)  circle (0.4);
\draw[circle, fill=yellow,color=yellow]  (8,3.9)  circle (0.4);
\draw[circle, fill=yellow,color=yellow]  (11,3.7)  circle (0.4);
\draw[circle, fill=yellow,color=yellow] (8.3,5.8) circle (0.4);
\draw[circle, fill=yellow,color=yellow] (11.8,5.8) circle (0.4);
\node at (-1.55,5) {$\theta_1$};
\node at (1.1,3.8) {$\theta_3$};
\node at (-0.25,7) {$\theta_2$};
\node at (1.1,7) {$\theta_4$};
\node at (6.8,7) {$\theta_6$};
\node at (8,3.9) {$\theta_7$};
\node at (11,3.7) {$\theta_8$};
\node at (6.5,3.9) {$\theta_5$};
\node at (8.3,5.8) {$\theta_9$};
\node at (11.8,5.8) {$\theta_{10}$};
}
\end{tikzpicture}
\caption{Cicho\'n's maximum with $\cov_t(\Mwf)$ 
where for $i\leq 9$, $\theta_i$ is an uncountable regular cardinal such that $\theta_i\leq\theta_j$ for any $i\leq j$, and $\theta_{10}\geq \theta_9$ is a cardinal such that $\theta_{10}=\theta_{10}^{\aleph_0}$. 
This constellation was proved in~\cite[Thm.~7.1]{BCM2}. The value of $\add_t^*(\Mwf)$ is unclear.}
\label{Sep4M4N}
\end{figure}

\begin{problem}
Is the constellation in~\autoref{conj4Mt} consistent with $\thzfc$?
\end{problem}
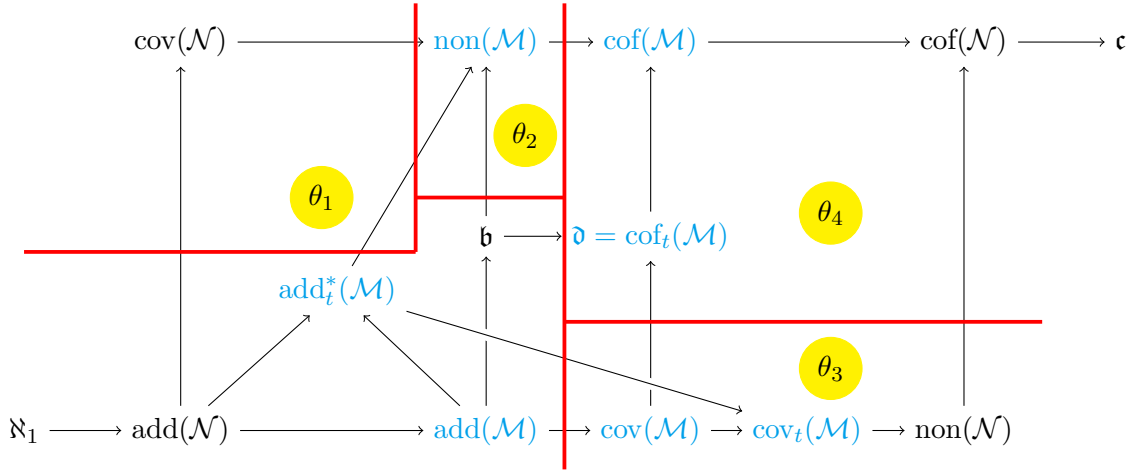
\begin{figure}[ht]
\centering
\begin{tikzpicture}[scale=1.03]
\small{
\node (aleph1) at (-2,3) {$\aleph_1$};
\node (addn) at (0,3){$\add(\Nwf)$};
\node (covn) at (0,8){$\cov(\Nwf)$};
\node (nonn) at (10,3) {$\non(\Nwf)$} ;
\node (cfn) at (10,8) {$\cof(\Nwf)$} ;
\node (addm) at (3.9,3) {$\subiii{\add(\Mwf)}$} ;
\node (covm) at (6,3) {$\subiii{\cov(\Mwf)}$} ;
\node (covtm) at (8,3) {$\subiii{\cov_t(\Mwf)}$} ;
\node (nonm) at (3.9,8) {$\subiii{\non(\Mwf)}$} ;
\node (cfm) at (6,8) {$\subiii{\cof(\Mwf)}$} ;
\node (b) at (3.9,5.5) {$\bfrak$};
\node (nonma) at (2,4.8) {\subiii{$\add_t^*(\Mwf)$}};
\node (d) at (6,5.5) {$\subiii{\dfrak=\cof_t(\Mwf)}$};
\node (c) at (12,8) {$\cfrak$};
\draw (aleph1) edge[->] (addn)
      (addn) edge[->] (covn)
    (covn) edge [->] (nonm)
     (nonm)edge [->] (cfm)
      (addn) edge [->]  (nonma)
      (cfm)edge [->] (cfn)
      (cfn) edge[->] (c);

\draw 
(addn) edge [->]  (addm)
   (addm) edge [->]  (covm)
      (covm) edge [->]  (covtm)
   (covtm) edge [->]  (nonn)
   (nonn) edge [->]  (cfn);
\draw (addm) edge [->] (b)
      (b)  edge [->] (nonm);
\draw (covm) edge [->] (d)
      (d)  edge[->] (cfm);
\draw (b) edge [->] (d);

\draw (nonma) edge [line width=.15cm,white,-] (covtm)
      (nonma) edge [->] (covtm);

\draw (addm) edge [->] (nonma)
      (nonma) edge [line width=.15cm,white,-] (nonm)
      (nonma) edge [->] (nonm);

\draw[color=red,line width=.05cm] (-2,5.3)--(3,5.3);     
\draw[color=red,line width=.05cm] (3,5.3)--(3,8.5);
\draw[color=red,line width=.05cm] (3,6)--(4.9,6);
\draw[color=red,line width=.05cm] (4.9,8.5)--(4.9,2.5);
\draw[color=red,line width=.05cm] (4.9,4.4)--(11,4.4);

\draw[circle, fill=yellow,color=yellow] (4.4,6.8) circle (0.4);
\draw[circle, fill=yellow,color=yellow] (1.8,6) circle (0.4);
\draw[circle, fill=yellow,color=yellow]  (8.3,3.8)  circle (0.4);
\draw[circle, fill=yellow,color=yellow] (8.3,5.8) circle (0.4);
\node at (1.8,6) {$\theta_1$};
\node at (4.4,6.8) {$\theta_2$};
\node at (8.3,3.8) {$\theta_3$};
\node at (8.3,5.8) {$\theta_4$};
}
\end{tikzpicture}
\caption{A constellation of the transitive cardinals associated with $\Mwf$ (Open question).}
\label{conj4Mt}
\end{figure}

Lastly, regarding~$\Ewf$, in~\autoref{sec:intro}, we mentioned that $\aleph_1=\bfrak=\non(\EAwf)<\cov(\Nwf)=\aleph_2$ holds in the model obtained by a FS iteration of length $\aleph_2$ of random forcing. There,  $\aleph_1=\add_t^*(\Ewf)=\non_t(\Ewf)<\cov_t(\Ewf)=\cof_t(\Ewf)=\aleph_2$ also holds. On the other hand, the constellation of~\autoref{SepofE} is forced in the first author's matrix iteration with ultrafilters from~\cite[Thm.~5.4]{Car23}, but the values of $\add_t^*(\Ewf)$, $\add_t(\Ewf)$, $\cov_t(\Ewf)$ and $\cof_t(\Ewf)$ are unclear. As yet it is not known how to separate more than three transitive cardinals associated with $\Ewf$.

\begin{figure}[ht]
\centering
\begin{tikzpicture}[scale=1.06]
\small{
\node (aleph1) at (-2.8,3) {$\aleph_1$};
\node (addn) at (-1,3){$\add(\Nwf)$};
\node (covn) at (-1,8.5){$\cov(\Nwf)$};
\node (nonn) at (10,3) {$\non(\Nwf)$} ;
\node (cfn) at (10,8.5) {$\cof(\Nwf)$} ;
\node (addm) at (3.19,3) {$\subiii{\add(\Mwf)=\add(\Ewf)}$} ;
\node (covm) at (7.4,3) {$\cov(\Mwf)$} ;
\node (nonm) at (3.19,8.5) {$\non(\Mwf)$} ;
\node (cfm) at (7.4,8.5) {$\subiii{\cof(\Mwf)=\cof(\Ewf)}$} ;
\node (nonea) at (0.5,3.8) {\red{$\add_t^*(\Ewf)$}};
\node (addte) at (2.2,5.75) {\red{$\add_t(\Ewf)$}};
\node (b) at (3.19,5.5) {$\bfrak$};
\node (d) at (7.4,5.5) {$\dfrak$};
\node (c) at (11.5,8.5) {$\cfrak$};
\node (none) at (0.5,5) {$\subiii{\non(\Ewf)}$};
\node (cove) at (5.8,4.5) {$\subiii{\cov(\Ewf)}$};
\node (covte) at (8.5,7.25) {$\red{\cov_t(\Ewf)}$};
\node (cofte) at (4.25,7.5) {$\red{\cof_t(\Ewf)}$};
\draw (aleph1) edge[->] (addn)
      (addn) edge[->] (covn)
      (covn) edge [->] (nonm)
      (nonm)edge [->] (cfm)
      (cfm)edge [->] (cfn)
      (cfn) edge[->] (c);

\draw(addm) edge [->]  (cove);
\draw
   (addn) edge [->]  (addm)
   (addm) edge [->]  (covm)
   (covm) edge [->]  (nonn)
   (nonn) edge [->]  (cfn);
\draw (addm) edge [->] (b)
      (b)  edge [->] (nonm);
\draw (covm) edge [->] (d)
      (d)  edge[->] (cfm);
\draw (b) edge [->] (d);

 \draw   (nonea) edge [->] (none);      
 \draw   (none) edge [->] (nonm);
     
\draw (none) edge [line width=.15cm,white,-] (nonn)
      (none) edge [->] (nonn);
      
\draw (cove) edge [line width=.15cm,white,-] (covn)
      (cove) edge [<-] (covn);
\draw (cove) edge [line width=.15cm,white,-] (covm)
      (cove) edge [<-] (covm);

\draw (addm) edge [->] (nonea); 
\draw (addm) edge [line width=.15cm,white,-] (addte)
(addm) edge [->] (addte);
\draw (cofte) edge [->] (cfm);
\draw (addte) edge [line width=.15cm,white,-] (cofte)
(addte) edge [->] (cofte);
\draw (cove) edge [line width=.15cm,white,-] (covte)
      (cove) edge [->] (covte);
\draw (cove) edge [line width=.15cm,white,-] (cfm)
(cove) edge [->] (cfm);
\draw (nonea) edge [line width=.15cm,white,-] (covte)
(nonea) edge [->] (covte);
\draw (covte) edge [line width=.15cm,white,-] (c)
      (covte) edge [->] (c);
\draw[color=sug,line width=.05cm] (-2.25,2.5)--(-2.25,9);
\draw[color=sug,line width=.05cm] (-2.25,4.45)--(1.4,4.45);
\draw[color=sug,line width=.05cm] (1.4,2.5)--(1.4,6.8);
\draw[color=sug,line width=.05cm] (1.4,6.8)--(4.9,6.8);
\draw[color=sug,line width=.05cm] (4.9,2.5)--(4.9,9);
\draw[color=sug,line width=.05cm] (4.9,5)--(11,5); 

\draw[circle, fill=yellow,color=yellow] (-1.6,3.8) circle (0.4);
\draw[circle, fill=yellow,color=yellow] (2.2,4) circle (0.4);
\draw[circle, fill=yellow,color=yellow] (0.45,7) circle (0.4);
\draw[circle, fill=yellow,color=yellow] (8.75,4.2) circle (0.4);
\draw[circle, fill=yellow,color=yellow] (8.75,6.5) circle (0.4);
\node at (-1.6,3.8) {$\theta_0$};
\node at (2.2,4) {$\theta_1$};
\node at (0.45,7) {$\theta_2$};
\node at (8.75,4.2) {$\theta_3$};
\node at (8.75,6.5) {$\theta_4$};
}
\end{tikzpicture}
\caption{Separation of the cardinals associated with $\Ewf$ where $\theta_0\leq\theta_1\leq \theta_2\leq \theta_3$ are uncountable regular cardinals, and $\theta_4$ is a cardinal such that $\theta_3\leq\theta_4=\theta_4^{{<}\theta_1}$, as
forced in~\cite[Thm.~5.6]{Car23}. The values of $\add^*_t(\Ewf)$, $\add_t(\Ewf)$, $\cov_t(\Ewf)$ and $\cof_t(\Ewf)$ are unclear.}
\label{SepofE}
\end{figure}
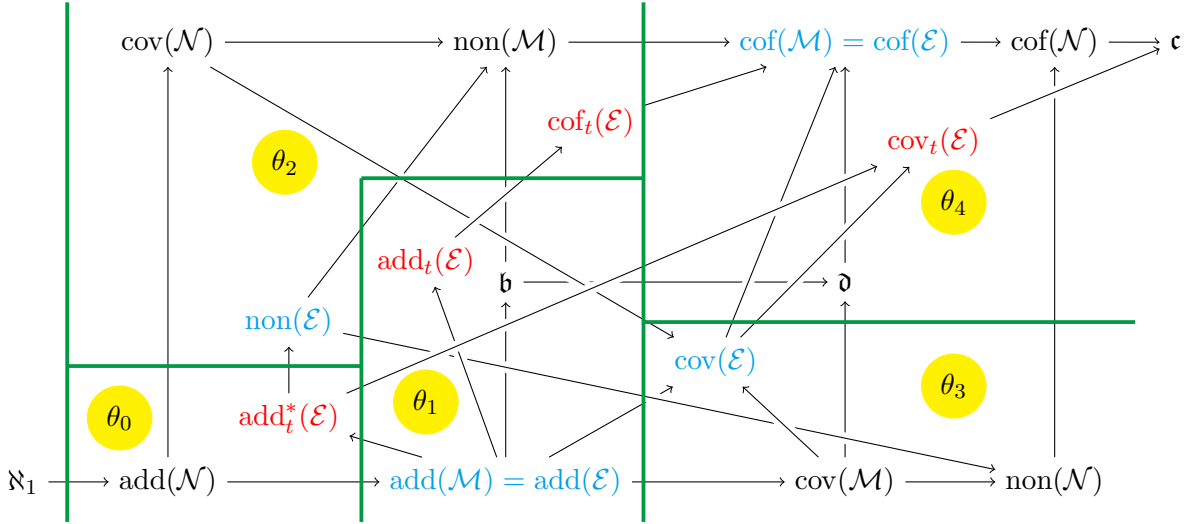



\newpage

{\small
\bibliography{bibli}
\bibliographystyle{alpha}
}


\end{document}